\documentclass{article}

\usepackage{verbatim}

\usepackage{amsfonts}

\usepackage{amsthm}

\usepackage{amssymb}

\usepackage{amsmath}

\usepackage{mathabx}

\usepackage{mathtools}

\usepackage{tikz}
\usetikzlibrary{decorations.pathreplacing}

\usepackage{enumerate}

\usepackage[all]{xy}

\usepackage{graphicx}

\usepackage{centernot}

\usepackage{stmaryrd}

\usepackage[pagebackref,colorlinks,linkcolor=blue,citecolor=blue,urlcolor=blue,hypertexnames=true]{hyperref}

\usepackage[pagebackref,hypertexnames=true]{hyperref}

\newcommand{\N}{\mathbb{N}}

\newcommand{\Z}{\mathbb{Z}}

\newcommand{\R}{\mathbb{R}}

\newcommand{\Q}{\mathbb{Q}}

\newcommand{\C}{\mathbb{C}}

\newcommand{\cind}{\mathrm{CInd}}

\newcommand{\Hawaii}{Hawai\kern.05em`\kern.05em\relax i}

\theoremstyle{plain}
\newtheorem{theorem}{Theorem}[section]
\newtheorem{lemma}[theorem]{Lemma}
\newtheorem{corollary}[theorem]{Corollary}
\newtheorem{proposition}[theorem]{Proposition}

\newtheorem{definition-theorem}[theorem]{Definition / Theorem}

\newtheorem{question}[theorem]{Question}

\newtheorem*{conjecture*}{Conjecture}
\newtheorem*{theorem*}{Theorem}

\theoremstyle{definition}
\newtheorem{definition}[theorem]{Definition}
\newtheorem{example}[theorem]{Example}

\newtheorem*{acks*}{Acknowledgements}

\theoremstyle{remark}
\newtheorem{remark}[theorem]{Remark}

\newtheorem*{example*}{Example}  
\newtheorem*{remark*}{Remark}

\begin{document}
\title{The Local Lifting Property, Property FD, and stability of approximate representations}
\author{Francesco Fournier-Facio and Rufus Willett}
\date{\today}

\maketitle

\begin{abstract}
    We establish Kirchberg's Local Lifting Property and Lubotzky--Shalom's Property FD for classes of finitely generated groups of central importance in geometric and combinatorial group theory: $3$-manifold groups, limit groups, and certain one-relator groups and right-angled Artin groups. We deduce that such groups are very flexibly stable, with respect to normalized unitarily invariant norms. In the appendix, we show that these groups also have Kechris's property (E)MD, and hence are stable in finite actions, in the selse of Gohla--Thom. The exposition is made accessible to operator algebraists and group theorists alike.
\end{abstract}

\tableofcontents

\section{Introduction}

Let $\Gamma$ be a group.  Its group $C^*$-algebra $C^*\Gamma$ is a completion of the complex group algebra $\C[\Gamma]$, determined by the following property: the unitary representation theory of $\Gamma$ is exactly the same as the $*$-representation theory of $C^*\Gamma$.  This correspondence allows one to use analytic properties of $C^*\Gamma$ to study the group $\Gamma$.

In this paper we look at two such properties: Kirchberg's \emph{local lifting property (LLP)} \cite{Kirchberg:1993aa} as well as the stronger \emph{lifting property (LP)}, and \emph{property FD} of Lubotzky--Shalom \cite{Lubotzky:2004xw} as well as the weaker \emph{property RFD}.  These properties are quite different, but have both recently come into prominence for applications to the approximate unitary representations of the group, and the subject of representation stability.  They are also of interest to operator algebraists in their own right.

The LLP and property FD are well-understood for free and amenable groups, but few examples were known outside these cases.  The first main purpose of this paper is to show that some of the most important classes of groups from geometric and combinatorial group theory -- $3$-manifold groups, limit groups, and many one-relator and right-angled Artin groups -- have the LP and property FD.  Although the LP and property FD are in principle quite different, the techniques we use for both are similar: the key point is to establish and use various permanence properties such as closure under appropriate amalgamated products, and appropriate extensions.

Our second goal is to give applications to representation stability for these classes of groups.  We use the LLP and property FD to show versions of `very flexible stability' for the classes of groups we cover in our first goal. 

A third goal -- which to some extent accounts for the length of the paper\footnote{And to a great extent accounts for the length of the bibliography.} -- is to make the operator algebraic techniques we use accessible to geometric group theorists, and vice versa; we thus hope to encourage more interactions between these two fields.  As such, we also include some `well-known' and `folklore' results from both geometric group theory and operator algebras that are difficult to extract from the literature for outsiders to these fields. 

We now discuss the motivations from representation stability.

\subsubsection*{Property RFD and the LLP in representation stability}

For the discussion below, we will focus on the LLP, and a weaker version of property FD called RFD.  Rather than give the definitions of the LLP and RFD formally here, we will instead give a rough idea of how they are used in the subject of representation stability and postpone the formal definitions to the body of the paper: see Definitions \ref{llp def} (for the LP and LLP), \ref{fd def} (for FD) and \ref{rfd def} (for RFD).  Let $\Gamma$ be a countable group; for simplicity we will say that $\Gamma$ is LLP or RFD if its $C^*$-algebra $C^*\Gamma$ has those properties.

Let $U_n$ denote the unitary group of $\C^n$, equipped with the operator norm.
An \emph{asymptotic representation} of $\Gamma$ is a sequence $(\phi_n \colon \Gamma\to U_{k_n})_{n=1}^\infty$ of maps such that $\|\phi_n(gh)-\phi_n(g)\phi_n(h)\|\xrightarrow{n\to\infty} 0$.  The subject of representation stability, which has bloomed in recent years (see for example the introduction of \cite{ultrametric:stability}), asks to what extent $(\phi_n)_n$ can be approximated by a sequence of honest representations.

The LLP implies that one can do this, in a rather weak sense.  Precisely, for each $n$ there exists a Hilbert space $H_n$, a unitary representation $\pi_n \colon \Gamma\to U(H_n)$ and an isometric inclusion $v_n \colon \C^{k_n}\to H_n$ such that 
$$
\|v_n^*\pi_n(g)v_n-\phi_n(g)\|\xrightarrow{n\to\infty}0
$$
for all $g\in \Gamma$.  The operator $v_n^*\pi_n(g)v_n$ is the compression of $\pi_n(g)$ onto the range of $v_n$, transported back to an operator on $\C^{k_n}$.  Thus, one can approximate the asymptotic representation $(\phi_n)_n$ by a sequence of `corners' of honest (but possibly infinite-dimensional) representations.

On the other hand, RFD says that any unitary representation of $\Gamma$ can be approximated in an appropriate sense by finite-dimensional representations.  It (together with the LLP) implies in particular that we can take all the Hilbert spaces $H_n$ appearing in the above discussion to be finite-dimensional. 

Putting this discussion together and making it precise, one has the following result.

\begin{theorem}\label{vfs intro}
Let $\Gamma$ be LLP and RFD.  Let $(\phi_n \colon \Gamma\to U_{k_n})$ be an asymptotic representation of $\Gamma$.

Then there exists a sequence of finite-dimensional Hilbert spaces $H_n$, representations $\pi_n \colon \Gamma\to U(H_n)$, and isometric inclusions $v_n \colon \C^{k_n}\to H_n$ such that $\|v_n^*\pi_n(g)v_n-\phi_n(g)\|\xrightarrow{n\to\infty}0$.
\end{theorem}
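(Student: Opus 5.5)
The plan is to pass through the product-modulo-sum C*-algebra. Let $\prod_n M_{k_n}$ be the C*-algebra of bounded sequences $(a_n)$ with $a_n\in M_{k_n}$, and $\bigoplus_n M_{k_n}$ the ideal of sequences with $\|a_n\|\to 0$. Let $Q=\prod_n M_{k_n}/\bigoplus_n M_{k_n}$ with quotient map $q$. The asymptotic representation gives a group homomorphism $\Gamma\to U(Q)$, namely $g\mapsto q((\phi_n(g))_n)$. This is multiplicative because $\|\phi_n(gh)-\phi_n(g)\phi_n(h)\|\to 0$, and unitary because each $\phi_n(g)$ is unitary. By the universal property of $C^*\Gamma$ it extends to a $*$-homomorphism $\Phi\colon C^*\Gamma\to Q$.

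Next I use the LLP in its lifting form. Since $C^*\Gamma$ has the LLP, $\Phi$ is locally liftable: for every finite-dimensional operator system $E\subseteq C^*\Gamma$ there is a unital completely positive (ucp) map $\psi_E\colon E\to\prod_n M_{k_n}$ with $q\circ\psi_E=\Phi|_E$. Because $\Gamma$ is countable, enumerate $\Gamma=\{g_1,g_2,\dots\}$ and let $E_m$ be the span of $1,u_{g_i},u_{g_i}^*$ for $i\le m$. Each $M_{k_n}$ is injective, so the $n$-th coordinate of $\psi_{E_m}$ extends by Arveson's extension theorem to a ucp map $\psi_{m,n}\colon C^*\Gamma\to M_{k_n}$. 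The lifting condition says that $\|\psi_{m,n}(u_{g_i})-\phi_n(g_i)\|\to 0$ as $n\to\infty$ for each $i\le m$. A diagonal argument then gives a single sequence of ucp maps $\psi_n\colon C^*\Gamma\to M_{k_n}$ with $\|\psi_n(u_g)-\phi_n(g)\|\to 0$ for every $g\in\Gamma$. To do this, choose $N_1<N_2<\cdots$ so that for $n\ge N_m$ the error is below $1/m$ for $g_1,\dots,g_m$, and set $\psi_n=\psi_{m,n}$ for $N_m\le n<N_{m+1}$.

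Then I apply Stinespring's theorem to each $\psi_n$. This gives a representation $\sigma_n$ of $C^*\Gamma$ (equivalently a unitary representation of $\Gamma$) on a Hilbert space $K_n$, together with an isometry $w_n\colon\C^{k_n}\to K_n$ such that $\psi_n=w_n^*\sigma_n(\cdot)w_n$. This already yields the infinite-dimensional statement described before Theorem \ref{vfs intro}.

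The main obstacle is making $H_n$ finite-dimensional, and this is where RFD enters. RFD (Definition \ref{rfd def}) implies that finite-dimensional representations are dense in the representation space. Equivalently, every state, and more generally every ucp map into $M_k$, is a point-weak$^*$ limit of compressions of finite-dimensional representations. My first attempt is the following. For fixed $n$, consider the finite set $\{g_1,\dots,g_n\}$. The representation $\sigma_n$ is weakly contained in a direct sum of finite-dimensional representations, and by Fell-topology approximation the vector functionals $\langle\sigma_n(\cdot)w_ne_i,w_ne_j\rangle$ can be approximated within $1/n$ on $g_1,\dots,g_n$ by matrix coefficients of a finite-dimensional representation $\rho_n$ on $L_n$. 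The difficulty is that these matrix coefficients come from vectors $\xi_i\in L_n$ that are only approximately orthonormal. I would handle this by choosing the approximating coefficients to be $\langle\rho_n(\cdot)\xi_i,\xi_j\rangle$ with the Gram matrix close to the identity, and then orthonormalizing via $(\xi_i)\mapsto(\xi_i)\cdot G^{-1/2}$. This produces a genuine isometry $v_n\colon\C^{k_n}\to H_n:=L_n$ with $\|v_n^*\rho_n(g_i)v_n-\psi_n(u_{g_i})\|$ small for $i\le n$. Since $k_n$ is fixed, entrywise closeness gives operator-norm closeness up to a factor depending on $k_n$, so I will choose the entrywise tolerance to be $1/(nk_n^2)$. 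Setting $\pi_n=\rho_n$ and combining with the previous estimate gives $\|v_n^*\pi_n(g)v_n-\phi_n(g)\|\to 0$ for each $g$.
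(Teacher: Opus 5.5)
Your proof is correct. It differs from the paper's mainly in how the ucp maps $\psi_n$ are obtained.

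\textbf{Lifting step.} The paper gets the $\psi_n$ in Lemma \ref{llp approx} from Corollary \ref{mat lift}. That corollary rests on Kirchberg's Theorem \ref{llp to lp}: a separable LLP algebra admits global ucp lifts into quotients of QWEP algebras, and $\prod_n M_{k_n}$ is injective, hence QWEP. You use only the definition of the LLP:
\begin{itemize}
\item local ucp lifts on the finite-dimensional operator systems $E_m$, which you may take unital by Remark \ref{ucp vs ccp};
\item Arveson extension into each $M_{k_n}$;
\item a diagonal splicing over the blocks $[N_m,N_{m+1})$, which works because membership in the ideal is a condition only on the tail of the coordinates.
\end{itemize}

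\textbf{Comparison.} Your route is more elementary and self-contained, since it needs neither QWEP nor Kirchberg's theorem. It uses countability of $\Gamma$ exactly where the paper does; there it enters through separability in Theorem \ref{llp to lp}. It produces a genuine global ucp lift $(\psi_n)_n$ of $\Phi$. It also works verbatim for the ideals $J$ attached to arbitrary normalized unitarily invariant norms as in Remark \ref{ass hom}, not just the operator norm. The paper's route is shorter once the black box is granted, and that black box covers lifts into arbitrary QWEP quotients.

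\textbf{RFD step.} This is essentially the paper's Lemma \ref{s corner}. To make your ``Fell approximation'' concrete:
\begin{itemize}
\item Enlarge the Stinespring space and apply item (iii) of Theorem \ref{rep approx}.
\item Compress the vectors $w_ne_j$ to the finite-dimensional essential space of the approximating representation.
\item Include $e\in\Gamma$ in the finite set, so the Gram matrix is automatically close to the identity.
\end{itemize}
You then orthonormalize with $G^{-1/2}$. The paper instead uses a functional-calculus unitary $u_i$ close to $1$ with $u_i ww^* u_i^*\leq\pi_i(1)$. These are the same small-perturbation idea carried out in two equivalent ways.
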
    

In the language of representation stability, this precisely says that if $\Gamma$ is LLP and RFD, then it is \emph{very flexibly stable} with respect to the operator norm: compare Definition \ref{stab def}.  Versions of this had previously appeared in the literature, see for example \cite[Theorem 6.4]{eckhardt:shulman}.  Our set-up is more flexible in several ways:
\begin{itemize}
\item We can allow for other norms on the unitary groups satisfying reasonable conditions: for example, Theorem \ref{vfs intro} applies equally well to approximate representations with respect to the Hilbert--Schmidt norm, and the normalized Schatten $p$-norms.
\item We can allow for other families of representations in place of the finite-dimensional ones: for example, if $\Gamma$ has property FD in the sense of Lubotzky--Shalom, we can assume that the $\pi_n$ are finite-dimensional, and also factor through a finite quotient of $\Gamma$.
\item Under appropriate $K$-theoretic (or cohomological) assumptions, we can force the `complementary part' of $\pi_n$ to the corner $\phi_n$ to be an honest representation.
\end{itemize}

See Section \ref{s:stability} below for details, particularly Theorem \ref{stability thm}, and Corollaries \ref{examples very flexibly} and \ref{examples alg comp}.

\subsubsection*{New examples with the (L)LP and property (R)FD}

As discussed above, our first goal in this paper is to expand the class of groups with the LLP and RFD.  Actually, we typically prove something stronger: the LP rather than the LLP, and property FD rather than RFD.  

For amenable groups, the LLP has been known for fifty years, thanks to the Choi--Effros lifting theorem \cite{Choi:1976aa}.  Property FD for amenable groups is equivalent to residual finiteness, so also quite well-understood.

Both properties were also known for free groups, and known to be closed under some important operations such as free products (for the (L)LP and RFD, and we prove this also for FD) and certain semi-direct products with amenable groups: see Sections \ref{s:plp} and \ref{s:fd} for detailed background and references.  Our main results substantially increase the known class of examples as follows.

\begin{theorem}
\label{intro fd llp}
If $\Gamma$ is finitely generated and satisfies one of the following properties, then its $C^*$-algebra has the LP and it has property FD.
    \begin{enumerate}[(i)]
        \item\label{intro fd llp manifold} $\Gamma = \pi_1(M)$, where $M$ is a connected manifold of dimension at most $3$;
        \item\label{intro fd llp vfbc} $\Gamma$ is virtually free-by-cyclic;
        \item\label{intro fd llp 1rel} $\Gamma$ is a one-relator group, and $\Gamma$ has either torsion, negative immersions, non-trivial center or a small cancellation relation;
        \item\label{intro fd llp limit} $\Gamma$ is a limit group\footnote{Goldbring--Seward--Tucker-Drob \cite[Theorem 4.4]{treeable} previously showed that limit groups have ``property MD'', which implies FD: we discuss property MD more below.};
        \item\label{intro fd llp raag} $\Gamma$ is a right-angled Artin group on a chordal graph.
    \end{enumerate}
In particular, all the groups above are very flexibly stable with respect to the operator norm, the Hilbert--Schmidt norm, or the normalized Schatten $p$-norm.
\end{theorem}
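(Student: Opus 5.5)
The plan is to reduce every case to a few permanence properties, applied to two base classes: free groups and finitely generated amenable residually finite groups. Both base classes have the LP (by Kirchberg for free groups, by Choi--Effros for amenable groups) and property FD (by Lubotzky--Shalom for free groups; for amenable groups FD is equivalent to residual finiteness). The permanence results I would establish, or cite from the later sections on the LP and FD, are the following.

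\begin{enumerate}[(a)]
\item Closure under free products, and under amalgamated free products and HNN extensions over finite groups.
\item Closure under amalgamated products $A *_C B$ and HNN extensions over a subgroup $C$ that is suitably well behaved. The natural hypothesis is that $C$ is amenable and that the profinite topologies are compatible: $C$ separable, and the finite-index subgroups of $A$ and $B$ induce a common profinite topology on $C$ (efficiency). This is the condition under which a residual-finiteness style argument and a conditional-expectation argument can be run.
\item Closure under passing to finite-index overgroups and to subgroups. The LP passes to subgroups because $C^*\Lambda \subseteq C^*\Gamma$ admits a conditional expectation; FD passes similarly. For a finite-index overgroup one induces representations.
\item Closure under extensions $1\to N\to \Gamma\to \Z\to 1$, or more generally by an amenable quotient, when $N$ has the LP and FD. This uses the crossed-product description $C^*\Gamma = C^*N \rtimes \Z$.
\end{enumerate}

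With these in hand, the cases go as follows.

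\emph{Case (ii), virtually free-by-cyclic groups.} Apply (d) with $N$ free to get the free-by-cyclic group, then (c) to pass to the virtual version. For FD one also needs that the $\Z$-action can be approximated through finite quotients. This follows from residual finiteness properties of free-by-cyclic groups, specifically the conjugacy separability and subgroup separability known in this setting.

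\emph{Case (i), manifold groups of dimension at most $3$.} Dimensions $1$ and $2$ give free groups, surface groups and finite groups. Surface groups are amalgams of free groups over cyclic subgroups that are efficient, so (b) applies. In dimension $3$, first reduce to compact manifolds via Scott's core theorem. Then use the prime decomposition together with (a) to reduce to prime closed manifolds or those with incompressible boundary. Geometrization and the Agol--Wise virtual fibering theorem show that every such group is virtually free-by-cyclic or virtually surface-by-cyclic, except for graph manifolds, Sol and Nil groups, and spherical groups. Virtually surface-by-cyclic groups are handled by (d), and spherical, Sol and Nil groups are amenable and residually finite. For non-fibered graph manifolds I would use Wilton--Zalesskii efficiency of the JSJ decomposition together with the tree version of (b), amalgamating Seifert pieces over $\Z^2$.

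\emph{Case (iii), one-relator groups.} Each hypothesis should give a structural statement. With torsion, the group is virtually special hyperbolic. With negative immersions, the group is virtually free-by-cyclic or has a free-by-cyclic hierarchy, by Linton. With nontrivial center, the group is $\Z$-by-free or a torus-knot-type amalgam of $\Z$'s. With a small cancellation relator, the group is virtually special by Wise. I worry that ``virtually special hyperbolic'' does not fall under the amenable amalgams of (b). So I would instead run an induction along the Magnus--Moldavanskii hierarchy, which consists of HNN extensions over free subgroups. That in turn needs a version of (b) over free, non-amenable edge groups satisfying separability. If this is unavailable, I would use virtual specialness together with (c) to reduce to subgroups of RAAGs of the appropriate kind.

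\emph{Cases (iv) and (v), limit groups and chordal RAAGs.} Limit groups have a cyclic hierarchy built from free groups, free abelian groups and surface groups by amalgamation over cyclic subgroups, which are efficient (Wilton). This gives the result by induction using (a) and (b). RAAGs on chordal graphs decompose along clique separators as iterated amalgams of RAAGs over free abelian subgroups, whose base cases are free abelian groups. The same efficiency-based induction then applies, because subgroups of RAAGs generated by subsets of the vertices are retracts.

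The final sentence of the theorem follows from Theorem \ref{stability thm}, since the LP implies the LLP and FD implies RFD. I expect the main obstacle to be (b) for the LP: amalgamated free products do not preserve the LP in general. The argument must use amenability of the edge group, via Choi--Effros applied to $C^*C$ and a Boca-type theorem that lifts through amalgamation over a finite-dimensional or nuclear algebra. That argument is delicate, and so is the FD analogue, which requires building finite-quotient representations compatible on the edge group.
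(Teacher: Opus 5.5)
There is a genuine gap in your permanence property (b). Your arguments for surface groups, graph manifolds, limit groups and the one-relator cases all rest on it, and for the LP it is false under exactly the hypotheses you list. Let $\alpha,\beta$ generate a finite-index free subgroup $F_2\le SL(2,\Z)$, and put $A=\Z^2\rtimes_\alpha\Z$, $B=\Z^2\rtimes_\beta\Z$ and $C=\Z^2$.
\begin{itemize}
\item $A$ and $B$ are polycyclic, hence amenable with the LP and subgroup separable.
\item $C$ is amenable and normal in both, with quotient $\Z$.
\item $m\Z^2\rtimes\Z$ meets $C$ in $m\Z^2$, so both factors induce the full profinite topology on $C$.
\end{itemize}
Yet $A\Asterisk_C B\cong\Z^2\rtimes F_2$ fails even the LLP (Example \ref{cex} and the remark after Corollary \ref{fp gp}). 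So the Boca/Pisier/Enders--Shulman theorem (Theorem \ref{fp the}) does not extend to nuclear, or even commutative, amalgamating algebras such as $C^*\Z^2\cong C(\mathbb{T}^2)$; it really needs $C$ finite-dimensional. Over $\Z$ edge groups, which your surface and limit-group hierarchies use, preservation of the (L)LP is open. Your one-relator fallbacks fail as well. The version of (b) over free edge groups is false for FD and open for the LLP: $F_2\times F_2$ is an amalgam of two copies of $F_2\times\Z$ over a common retract $F_2$, with all your separability hypotheses satisfied, but it is not RFD. Embedding a virtually special group in a RAAG gains nothing either, since that RAAG is typically not chordal and may contain $F_2\times F_2$.

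The missing idea is the paper's main new permanence result, Theorem \ref{llp the}. If $1\to K\to\Gamma\to\Lambda\to 1$ is exact with $\Lambda$ amenable and not necessarily split, then the LLP passes from $K$ to $\Gamma$, and so does the LP when $\Gamma$ is countable. The proof realizes $C^*\Gamma$, up to Morita equivalence, as a crossed product $B\rtimes\Lambda$ with $B$ Morita equivalent to $C^*K$ (Packer--Raeburn, Echterhoff). It then applies Buss--Echterhoff--Willett together with Morita invariance of the LLP and of the separable LP.

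Your (d), justified via $C^*\Gamma=C^*N\rtimes\Z$, only covers split extensions. Your (c) for finite-index overgroups (``induce representations'') works for FD but says nothing about lifting ccp maps; in the paper, virtually LP $\Rightarrow$ LP is deduced from Theorem \ref{llp the} (Corollary \ref{fi cor}).

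With Theorem \ref{llp the}, the paper gets the LP from free-by-amenable structure rather than from amalgams:
\begin{itemize}
\item $3$-manifold groups lie in Linnell's class (Kielak--Linton);
\item limit groups are free-by-(torsion-free nilpotent) (Kochloukova);
\item chordal RAAGs have free commutator subgroup (Servatius--Droms--Servatius);
\item the listed one-relator groups are virtually free-by-cyclic (Kielak--Linton).
\end{itemize}

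On the FD side the paper likewise never amalgamates over non-retract edge groups:
\begin{itemize}
\item for chordal RAAGs it amalgamates only over amenable retracts, via Shulman--Skalski and a relative FD free product theorem;
\item for limit groups it uses the Lubotzky--Shalom co-amenability criterion with efficient FD subgroups, via Wilton's separability;
\item for closed graph manifolds it uses the same criterion after passing to a finite cover with a non-separating JSJ torus, writing $\pi_1=K\rtimes\Z$ with $K$ a directed union of efficient FD subgroups (via Wilton--Zalesskii);
\item for virtually free-by-cyclic groups with possibly infinitely generated fibre it uses Linton's embedding into (finitely generated free)-by-$\Z$ groups.
\end{itemize}
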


\begin{proof}
    \eqref{intro fd llp manifold} is Example \ref{manifold lp} and Theorem \ref{manifold fd}.
    \eqref{intro fd llp vfbc} is Corollary \ref{vfbc} and Proposition \ref{vfbc fd}.
    \eqref{intro fd llp 1rel} follows from \eqref{intro fd llp vfbc} and Examples \ref{or tor}, \ref{or ni}, \ref{or center} and \ref{or sc}.
    \eqref{intro fd llp limit} is Example \ref{limit lp} and Proposition \ref{limit fd}.
    \eqref{intro fd llp raag} is Example \ref{raag lp} and Proposition \ref{raag fd}.
\end{proof}

We also give some new examples with the LP, but that do not have property FD (as they are not even residually finite); most prominent amongst these are the Baumslag--Solitar groups $BS(n,m)$ (Example \ref{GBS}) and the Baumslag--Gersten group (Proposition \ref{prop:BG}).

The proof of Theorem \ref{intro fd llp} relies on deep results in geometric and combinatorial group theory, as well as some new permanence properties that we establish in this paper.  Most prominently amongst our new permanence properties, we show that the LLP is preserved by extensions by amenable groups, and that property FD is preserved by certain amalgamated free products.

\subsubsection*{Property MD}

The representation-theoretic property FD has a measurable analogue, Kechris's \emph{property MD} \cite{MD:kechris}, see also the survey \cite{MD:burton:kechris}. Property MD in turn implies a stability property for approximate representations in permutations, called \emph{stability in finite actions} \cite{gohla:thom}. It turns out that all of our proofs of property FD can be adapted to this setting.

\begin{theorem}
\label{intro thm md}
    The groups from Theorem \ref{intro fd llp} also have property (E)MD, and hence are stable in finite actions.
\end{theorem}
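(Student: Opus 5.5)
We plan to mirror the structure of the proof of property FD for the groups in Theorem \ref{intro fd llp}, replacing each representation-theoretic permanence result by its measurable analogue for (E)MD. Recall that $\Gamma$ has property MD if the finite actions (actions factoring through finite quotients, equipped with normalized counting measure) are weakly dense in the space of measure-preserving actions of $\Gamma$ on a standard probability space. Property EMD is the analogous density of the finite actions among ergodic actions. Kechris showed that MD is equivalent to EMD for residually finite groups, that free groups and residually finite amenable groups have MD, and that MD passes to subgroups and finite-index overgroups. Gohla--Thom showed that MD implies stability in finite actions. The second claim of the theorem therefore follows once MD is established, and the whole task is to establish MD.

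We will carry out three steps in order.

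\textbf{Step 1: extensions.} We prove that if $N \trianglelefteq \Gamma$ with $N$ having MD and $\Gamma/N$ amenable and residually finite, plus suitable separability hypotheses, then $\Gamma$ has MD. This covers the free-by-cyclic groups, and the finite-index permanence then handles the virtually free-by-cyclic case. Here we follow the argument used for FD in the semidirect product $F\rtimes\Z$ case. We approximate a $\Gamma$-action by inducing or co-inducing finite $N$-actions that are almost invariant under the $\Z$-twist, using Rokhlin/F\o lner tiles in the amenable quotient.

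\textbf{Step 2: amalgamated products and HNN extensions.} We show that $A*_C B$ has MD when $A$ and $B$ have MD and $C$ is suitably separable. The separability condition should be the same one used for FD: for instance, $C$ finite, or $C$ cyclic or abelian and closed in the profinite topology with a suitable "efficiency" condition. The method is to approximate a given action restricted to $A$ and to $B$ by finite actions whose restrictions to $C$ are isomorphic, and then glue the two via a permutation conjugacy. For finite $C$, the glueing is a matching of orbit-type counts. For infinite $C$, we need finite actions of $A$ and $B$ that agree on $C$ exactly. We obtain these by factoring both through a common finite quotient of $C$, which is exactly where the separability hypotheses enter.

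\textbf{Step 3: apply to each class.} We feed the structural input already used in the FD proofs into Steps 1 and 2. For $3$-manifold groups, we use geometrization: virtually special or virtually fibred pieces and graph-of-groups decompositions along $\Z^2$ and $\Z$ subgroups with efficiency, together with the Seifert fibred pieces. For limit groups, MD is known from Goldbring--Seward--Tucker-Drob. For the one-relator cases, we use the virtually free-by-cyclic reductions. For chordal RAAGs, we use iterated amalgamations over cliques $\Z^k$.

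The main obstacle is Step 2 for infinite edge groups. Matching the restrictions to $C$ exactly, rather than approximately, requires a measurable version of the "separable over the edge group" argument. We would first try to approximate the given actions by finite ones in which the $C$-action factors through a prescribed finite quotient $C/C_0$. We would then use a counting argument to equalize multiplicities of each $C/C_0$-orbit type by taking disjoint unions of copies.
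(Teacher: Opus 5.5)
\textbf{Gap in Step 2.} Your overall strategy of mirroring the FD proofs is right, but Step 2 for infinite edge groups is a genuine gap, and the mechanism you sketch cannot close it. Weak approximation of $a|_A$ and $a|_B$ by finite actions only makes the proportions of the various $C$-orbit types in the two finite approximants \emph{approximately} equal. Gluing, however, needs an \emph{exact} isomorphism of finite $C$-sets.

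Taking disjoint unions of copies rescales every multiplicity by the same factor, so it cannot repair a discrepancy in proportions. You would instead have to pad with finite $A$-sets and $B$-sets of prescribed $C$-orbit type. That means finding finite-index normal $M\lhd A$ and $N\lhd B$ with $C\cap M=C\cap N$ equal to a prescribed $E$, and in general only special $E$ arise. For example, if $A=\Z^2\rtimes_\alpha\Z$ and $C=\Z^2$, only $\alpha$-invariant $E$ occur.

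Your guess about the FD case is also off. The paper's FD amalgamation result (Corollary \ref{fd am}) is over \emph{amenable retracts}, not over separable or efficient subgroups. Separability or efficiency of $C$ cannot be the operative hypothesis: $F_2\times F_2=(F_2\times\Z)*_{F_2}(F_2\times\Z)$ is an amalgam of MD groups over an efficient retract, yet it is not MD (not even RFD). So any gluing theorem would have to use the structure of $C$ essentially, and you give no argument that does.

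\textbf{How the paper proceeds.} The paper never glues finite actions over infinite edge groups.

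For chordal RAAGs, the clique group $\Lambda\cong\Z^k$ is an amenable retract of both factors, so $\Gamma_1*_\Lambda\Gamma_2\cong(N_1*N_2)\rtimes\Lambda$. The paper then uses two facts:
\begin{enumerate}[(i)]
\item MD relative to $\Lambda$ passes to free products (Theorem \ref{MD freeprod relative}). This is Tucker-Drob's argument with the $N_i$ chosen $\Lambda$-invariant and the final kernel replaced by a characteristic subgroup of the finitely generated group $\Gamma_1/N_1*\Gamma_2/N_2$.
\item $N\rtimes\Lambda$ is MD if and only if $N$ is MD relative to $\Lambda$ (Theorem \ref{MD amenableext iff}).
\end{enumerate}

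For a closed graph manifold that is neither Seifert fibred nor virtually a torus bundle, the argument runs as follows.
\begin{enumerate}[(i)]
\item Pass to a finite cover with a non-separating JSJ torus and write $\pi_1=K\rtimes\Z$.
\item $K$ is an increasing union of fundamental groups of compact graph manifolds with toroidal boundary. These are virtually free-by-cyclic, hence MD.
\item These subgroups are efficient, by Wilton--Zalesskii and Lemma \ref{efficient subgraph}.
\item Apply the MD analogue of Lubotzky--Shalom (Theorem \ref{MD LS}), proved via coinduction and Bowen--Tucker-Drob's $a\prec\cind_\Lambda^\Gamma(a|_\Lambda)\times p_{\Gamma/\Lambda}$.
\end{enumerate}
Seifert fibred pieces go through the same criterion using subgroup separability.

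\textbf{Two smaller corrections.} First, Kechris did not prove MD $\Leftrightarrow$ EMD for residually finite groups, and your description of EMD is not his definition. Tucker-Drob proved MD $\Leftrightarrow$ EMD for groups without property (T), so you also need that the infinite groups in question virtually surject onto $\Z$. Second, Step 1 handles free-by-cyclic groups only when the fibre is finitely generated, since that is when relative MD comes for free via characteristic subgroups. For infinitely generated fibres, the paper embeds a finitely generated $F\rtimes\Z$ into some $F_0\rtimes\Z$ with $F_0$ finitely generated (Linton), and then uses that MD passes to subgroups.
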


Some of the permanence properties for FD that we use to prove Theorem \ref{intro fd llp} have MD analogs in the literature \cite{MD:bowen:tuckerdrob, treeable}, but some do not, so to prove Theorem \ref{intro thm md} we also need to prove new permanence properties. Since these results and the methods of proof are quite different from the rest of the paper, we leave them, as well as a more in-depth discussion and motivation of this property, to Appendix \ref{appendix md}. Theorem \ref{intro thm md} is proved as Theorem \ref{MD list}.

\subsubsection*{Open questions}

Let us give a brief summary some questions that are left open by our work, and seem interesting. 

\begin{question}
\label{q:or:llp}
Do all one relator groups have the (L)LP?
\end{question}

See Remark \ref{LP:soficity} for a large class of examples for which this question is open.

There are non-(R)FD one-relator groups due to the existence of non residually finite one-relator groups, but it is reasonable to ask the following.

\begin{question}
\label{q:or:fd}
Is every residually finite one-relator group (R)FD?
\end{question}

In another direction, the following seems to be a particularly challenging group for which the (L)LP is open.

\begin{question}
\label{q:ftimesf}
Does $F_2\times F_2$ have the (L)LP?
\end{question}

The group $F_2\times F_2$ is known not to have (R)FD thanks to the negative solution of the Connes embedding problem \cite{Salle:2023aa}: see \cite{Ozawa:2004ab}.  However, the proof is very difficult and indirect, and it would be good to have a more concrete understanding.

The presence of $F_2 \times F_2$ subgroups is therefore a significant obstacle for generalizing the (L)LP to right-angled Artin groups beyond the chordal case, and a genuine obstruction for property (R)FD. Excluding this case the problem might be approachable.

\begin{question}
\label{q:raag}
Let $\Gamma$ be a right-angled Artin group whose defining graph has no induced squares. Is $\Gamma$ (L)LP? Is it (R)FD?
\end{question}

The first non-chordal example is the pentagon, for which we already do not know the answer (see \cite[Problem 35]{sri:problems} for a related question).

\begin{question}
Is there an example of an a-T-menable group without the (L)LP?
\end{question}

Note that all the examples from Theorem \ref{intro fd llp} are a-T-menable: see the discussion in the proof of Corollary \ref{examples alg comp} below.  From the previous question, $F_2\times F_2$ could be an example.  See also the discussion in Remark \ref{llp t} relative to this and the next question.  

There are a-T-menable groups that are not (R)FD, as there are non residually finite a-T-menable groups.  There are also more subtle examples of a-T-menable residually finite groups that are not FD: for example $SL(2,\Z[1/p])$ is not FD as it has property ($\tau$) (see for example \cite[Example 4.3.3 E]{Lubotzky:1994tw}) but not property (T), and it is a-T-menable (see for example \cite[Theorem 5.1]{Guentner:2005xr}) and residually finite (one can see this by mapping it to appropriate congruence quotients, or by using that finitely generated linear groups are residually finite \cite{malcev}).

\begin{question}
\label{q:T}
Is there an example of an infinite property (T) group (or just a group with property (T) relative to an infinite subgroup) with the (L)LP?
\end{question}

A particular case of the next question also appears in \cite[Question 6.5]{Lubotzky:2004xw}.

\begin{question}
Is there an example of an infinite property (T) group (or just a group with property (T) relative to an infinite subgroup) with (R)FD?
\end{question}

A particularly interesting class of groups with (relative) property (T) is given by higher rank arithmetic groups.  For these (R)FD typically fails: see \cite{Bekka:1999kx} and \cite[Theorem 3.1]{Lubotzky:2004xw}.  The LLP is also known to fail for some of these groups (for example, $SL(n,\Z)$ for $n>2$) through applications of results from  \cite{Ioana:2020aa}.  It seems to be open whether higher rank arithmetic groups always fail the (L)LP.

Compare Remark \ref{RFD:MAP:FD:RF} below for the next question.

\begin{question}
\label{q:rfdnotfd}
Is there a finitely generated group that is RFD, but not FD?
\end{question}

The next question is more purely operator algebraic: it is motivated by our results in Subsection \ref{subs:central}, but seems natural in its own right.

\begin{question}
\label{q:cocycles}
Let $\Gamma$ be a group with the (L)LP, and let $\sigma\in Z^2(\Gamma;S^1)$ be a normalized $S^1$-valued $2$-cocycle.  Does the twisted group $C^*$-algebra $C^*(\Gamma;\sigma)$ have the (L)LP? 
\end{question}

The analogous question has a negative answer for (R)FD: for example, if $\Gamma=\Z^2$ there are cocycles for which $C^*(\Gamma;\sigma)$ is a simple (infinite-dimensional) $C^*$-algebra and so has no non-zero finite-dimensional representations (see for example \cite[Example 2.8.15]{Echterhoff:2009jo}).

Finally, we note that all of our results about the LLP needed a countability assumption in order to be strengthened to the LP. We do not know to what extent this is always necessary.

\begin{question}
\label{q:lp:uncountable}
Is there an uncountable group with the LP, and in particular, does a free group, or a free abelian group, of uncountable rank have the LP?
\end{question}

This also relates to the question of whether the LP and LLP are non-equivalent for \emph{group} $C^*$-algebras: see Remark \ref{sep rem} below.

\subsubsection*{Outline of the paper}

Throughout the paper, we have tried to minimize the background required of the reader.  On $C^*$-algebras, we expect the reader to understand the basic theory up to what a $*$-representation of a $C^*$-algebra is, and the GNS construction relating states and cyclic representations; the reader will also have to take some facts about $C^*$-algebra tensor products on faith if they do not already have that background.  We have generally not assumed much knowledge of geometric or combinatorial group theory beyond what one might see in a first course in algebraic topology; in particular, the necessary background from Bass--Serre theory is recalled the first time it is used (Subsection \ref{ss:gog}).  Having said that, some examples will be difficult to understand without some more background in e.g.\ three manifold theory or the theory of amenable or property (T) groups.

The first three sections of the paper (after this introduction) focus on the (L)LP.  
In Section \ref{s:lp bg} we summarize necessary background on the (L)LP for $C^*$-algebras, including the basic definitions.  In Section \ref{s:plp} we discuss permanence properties for the class of groups whose $C^*$-algebra has the (L)LP, as well as summarizing the known examples and non-examples.  Much of this is known, or folklore, but the material on central extensions, extensions by amenable groups, and some of the material on graphs of groups is new; these will be the principal ingredients we use to give new examples.  In Section \ref{s:ex} we discuss our new examples of groups with the (L)LP, including all the groups in Theorem \ref{intro fd llp}.  

In Sections \ref{s:fd} we switch subjects to property FD.  We first recall (and to some extent generalize) some necessary background on the representation theory of $C^*$-algebras and density in the Fell topology, and then summarize the known results.  Our one new permanence result on property FD shows that it is preserved under free products; we actually show something a little more general than this in order to cover certain amalgamated products that are important for our applications.  In Section \ref{s:ex fd} we give our new examples with property FD, including all the groups in Theorem \ref{intro fd llp}: most of this is done using similar ideas to those we used for the (L)LP; the exception is the material for three-manifold groups, which requires some new ideas in the case of closed graph manifolds and is rather more substantial for FD than for the (L)LP.

In Section \ref{s:stability}, we discuss applications to representation stability.  In particular, we establish a rather general version of Theorem \ref{vfs intro}, and also discuss the connections with cohomological assumptions.

Finally, in Appendix \ref{appendix md}, we discuss property MD, and prove Theorem \ref{intro thm md}.

\begin{acks*}
FFF is supported by the Herchel Smith Postdoctoral Fellowship Fund. RW is supported by the US NSF (DMS 2247968) and the Simons Foundation (MP-TSM-00002363).  The authors thank Vadim Alekseev, Lewis Bowen, Will Cohen, Kristin Courtney, Alon Dogon, Siegfried Echterhoff, Sam Fisher, Jonathan Fruchter, Liam Hanany, Marco Linton, Tatiana Shulman, Pieter Spaas, Andreas Thom, Robin Tucker-Drob, Henry Wilton and Julian Wykowski for useful conversations. They also would like to thank the Isaac Newton Institute for Mathematical Sciences, Cambridge, for support and hospitality during the program Operators, Graphs, Groups, where work on this paper was undertaken. This work was supported by EPSRC grant EP/Z000580/1.
\end{acks*}

\section{Background on the (L)LP for $C^*$-algebras}\label{s:lp bg}

In this section, we define the (local) lifting property ((L)LP) for $C^*$-algebras, and record some relevant background.  The modern theory\footnote{There are several much earlier antecedents, however: ideas around the LP go back at least to work of Arveson \cite{Arveson:1974aa}, and around the LLP at least to work of Effros--Haagerup \cite[Theorem 3.2]{Effros:1985aa}.} of these notions starts with Kirchberg's seminal work \cite{Kirchberg:1993aa}.  See \cite[Chapter 13]{Brown:2008qy} and \cite[Chapter 9]{Pisier:2020aa} for relatively recent textbook treatments.

The following notion will not be used in a significant way, but is needed to state the definition of the local lifting property.

\begin{definition}
An \emph{operator system} is a self-adjoint subspace\footnote{Some authors also require it to be closed; this distinction will not be important to us.} of a unital $C^*$-algebra that contains the unit.
\end{definition}

The significance of operator systems is that they are the smallest subspaces of $C^*$-algebras where there is a good theory of positive elements, and of linear maps preserving positivity.  Here an element in an operator system is \emph{positive} if it is positive in the ambient $C^*$-algebra.  Note also that if $E\subseteq A$ is an operator system, then $M_n(E)\subseteq M_n(A)$ is also an operator system inside the $C^*$-algebra of $n\times n$ matrices over $A$, and thus we may talk about positive elements in matrices over $E$.

\begin{definition}
Let $E$ be an operator system or a $C^*$-algebra\footnote{The reason for the ``or a $C^*$-algebra'' hypothesis is to allow non-unital $C^*$-algebras; a unital $C^*$-algebra is itself an operator system.}, and let $A$ be a $C^*$-algebra.  A map $\phi \colon E\to A$ is \emph{completely positive} (cp) if it is linear and for all $n$, the induced map 
$$
\phi^{(n)} \colon M_n(E)\to M_n(A)
$$
defined by applying $\phi$ entry-wise takes positive elements to positive elements.  It is \emph{contractive completely positive} (ccp) if it is cp and the norm of each $\phi^{(n)}$ is at most one.  It is \emph{unital completely positive} (ucp) if $E$ is an operator system, and $\phi$ is cp and unital.
\end{definition}

\begin{remark}\label{ucp vs ccp 0}
There are close relations between ucp and ccp maps.  First note that a ucp map is automatically ccp: see for example \cite[Proposition 3.2]{Paulsen:2003ib}.  

As a sort of converse, recall that any $C^*$-algebra admits a \emph{unitization} $A^+$; which is the smallest unital $C^*$-algebra containing $A$ if $A$ is non-unital, and is isomorphic to $A\oplus \C$ if $A$ is unital: see for example \cite[Section 2.9]{Arveson:2002aa}.  If $\phi \colon A\to B$ is a linear map between $C^*$-algebras and $B$ is unital, then there is a unique unital linear map $\phi^+ \colon A^+\to B$ that restricts to $\phi$ on $A$.  One has that $\phi^+$ is ucp if and only if $\phi$ is ccp: see for example \cite[Proposition 2.2.1]{Brown:2008qy}.
\end{remark}

\begin{remark}\label{stinespring}
For an outsider to operator algebras, it is probably mysterious why one should care about ccp or ucp maps, as opposed to $*$-homomorphisms.   The main reason for their importance is \emph{Stinespring's dilation theorem}: this says that for a ucp map $\phi \colon A\to \mathcal{B}(H)$ from a unital $C^*$-algebra to the bounded operators on Hilbert space there are an isometric inclusion of Hilbert spaces $v \colon H\to H'$ and a unital $*$-homomorphism $\pi \colon A\to \mathcal{B}(H')$ such that $\phi(a)=v^*\pi(a)v$\footnote{Conversely, it is straighforward to see that any map of this form is ucp.} for all $a\in A$.  The original reference is \cite{Stinespring:1955aa}; see for example \cite[Theorem 1.5.3 and Remark 1.5.4]{Brown:2008qy} for a modern textbook treatment.

Stinespring's theorem has many consequences: an application we will use is that $C^*$-algebra tensor products are functorial for ccp maps (see for example \cite[Theorem 3.5.3]{Brown:2008qy}). 
We will not explicitly use Stinespring's theorem until much later in this note (Section \ref{s:stability}).
\end{remark}

\begin{definition}\label{llp def}
Let $A$ be a unital $C^*$-algebra, let $\pi \colon B\to B/J$ be a quotient $*$-homomorphism of $C^*$-algebras, and let $\phi \colon A\to B/J$ be a ccp map. 

The map $\phi$ is \emph{liftable} if the dashed arrow in the diagram below can be filled in with a ccp map
$$\xymatrix{ & B \ar[d]^-\pi \\
A \ar[r]_-\phi \ar@{-->}[ur] & B/J}$$
so that the diagram commutes.

The map $\phi$ is \emph{locally liftable} if for any finite dimensional operator system $E$ in $A$ with inclusion map $\iota \colon E\to A$, the dashed arrow in the diagram below
$$
\xymatrix{ & & B \ar[d]^-\pi \\
E  \ar@{-->}[urr]  \ar[r]_-\iota & A \ar[r]_-\phi  & B/J}
$$
can be filled in with a ccp map so that the diagram commutes.

A unital $C^*$-algebra $A$ has the \emph{lifting property} (LP) (respectively, \emph{local lifting property} (LLP)) if every ccp map from $A$ into a quotient $C^*$-algebra is liftable (respectively, locally liftable). A non-unital $C^*$-algebra has the (local) lifting property if its unitization $A^+$ does.
\end{definition}

\begin{remark}\label{ucp vs ccp}
Our definitions of the (L)LP are based on \cite[Definition 13.1.1]{Brown:2008qy}, and are not quite the same as those of Kirchberg \cite[Section 2, page 453]{Kirchberg:1993aa}, who requires that ucp maps into quotients of unital $C^*$-algebras (locally) lift to ucp maps.  The `ucp' and `ccp' variants are equivalent by \cite[Lemma 13.1.2]{Brown:2008qy}.
\end{remark}

We next give some fundamental results on the LP and LLP.  For the next definition, let $A$ and $B$ be $C^*$-algebras, and let $A\odot B$ be their algebraic tensor product (over $\C$) equipped with its canonical $*$-algebra structure as in \cite[Section 3.1]{Brown:2008qy}.  In general, $A\odot B$ admits many completions to a $C^*$-algebra: the two most important are the \emph{maximal} completion $A\otimes_{\max} B$ and the \emph{minimal} completion $A\otimes B$.  As the names suggest, $A\otimes_{\max} B$ is the completion for the  largest $C^*$-norm on $A\odot B$, and $A\otimes B$ is the completion for the smallest $C^*$-norm (the latter is \emph{Takesaki's theorem}, from \cite{Takesaki:1963aa}): see \cite[Chapter 3]{Brown:2008qy} for background.

\begin{definition}\label{nuc pair}
A pair $(A,B)$ of $C^*$-algebras is \emph{nuclear} if the canonical quotient map $A\otimes_{\max}B\to A\otimes B$ is the identity.  A $C^*$-algebra $A$ is \emph{nuclear} if $(A,B)$ is a nuclear pair for any $C^*$-algebra $B$.
\end{definition}

The following is a fundamental theorem of Kirchberg \cite[Proposition 2.2]{Kirchberg:1993aa}\footnote{Kirchberg's theorem was since given a simpler proof by Pisier \cite{Pisier:1996aa}; Pisier's proof forms the basis for multiple textbook expositions such as \cite[Corollary 13.2.5]{Brown:2008qy} or \cite[Theorem 9.38]{Pisier:2020aa}.}; it will be a useful criterion to deduce the LLP. 

\begin{theorem}[Kirchberg]\label{kir the}
A $C^*$-algebra $A$ has the LLP if and only if $(A,\mathcal{B}(H))$ is a nuclear pair, where $H$ is a separable infinite-dimensional Hilbert space. \qed
\end{theorem}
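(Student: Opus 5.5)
The forward direction reduces the maximal norm to a universal algebra through a lift, and the converse reformulates local liftability as a positivity statement in tensor products; the main obstacle is the converse step that passes from nuclearity with $\mathcal{B}(H)$ alone to an arbitrary quotient $B/J$. Throughout, by Remark \ref{ucp vs ccp} we may work with unital $A$ and ucp maps.

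\textbf{The universal object and the external input.} Write $A$ as a quotient $q \colon C^*(F)\to A$ of the full $C^*$-algebra of a free group $F$ on a suitable set of unitary generators. The one deep input, which I would take from Kirchberg's work, is that $C^*(F)\otimes_{\max}\mathcal{B}(H)=C^*(F)\otimes\mathcal{B}(H)$. This is proved by approximating in $\mathcal{B}(H)$ through finite-dimensional compressions, using the freeness of the generators and the fact that unitaries in $\mathcal{B}(H)$ dilate.

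\textbf{LLP $\Rightarrow$ nuclear pair.} Fix $x\in A\odot\mathcal{B}(H)$. Then $x\in E\odot\mathcal{B}(H)$ for some finite-dimensional operator system $E\subseteq A$. The LLP applied to the identity map $A\to C^*(F)/\ker q$ gives a ucp map $\psi \colon E\to C^*(F)$ with $q\circ\psi=\iota$. The map $q\otimes\mathrm{id}$ is a $*$-homomorphism $C^*(F)\otimes_{\max}\mathcal{B}(H)\to A\otimes_{\max}\mathcal{B}(H)$, so it is contractive, and it sends $(\psi\otimes\mathrm{id})(x)$ to $x$. Combining this with the universal fact and with the min-contractivity of ucp maps on the spatial tensor product gives
$$
\|x\|_{\max}\le\|(\psi\otimes\mathrm{id})x\|_{C^*(F)\otimes_{\max}\mathcal{B}(H)}=\|(\psi\otimes\mathrm{id})x\|_{\min}\le\|x\|_{\min}.
$$
Hence the canonical quotient map is isometric, i.e.\ $(A,\mathcal{B}(H))$ is a nuclear pair. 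The only care needed is that $\psi\otimes\mathrm{id}$ is min-bounded on the operator system $E\odot\mathcal{B}(H)$, which follows from Stinespring/Arveson extension.

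\textbf{Nuclear pair $\Rightarrow$ LLP: reformulation.} Take a ucp map $\phi \colon A\to B/J$ and a finite-dimensional operator system $E\subseteq A$. A ucp map $E\to D$ is the same thing as a positive element of $E^{*}\otimes D$, where $E^*$ is the dual operator system; for finite-dimensional $E$ there is no ambiguity in the tensor norm on that side. Liftability of $\phi|_E$ therefore amounts to lifting a positive element of $E^*\otimes(B/J)$ to a positive element of $E^*\otimes B$, possibly after a small perturbation, which is handled by a standard Effros--Haagerup approximation argument using that ucp maps into $B/J$ form a closed set. Positive elements do lift along $E^*\otimes_{\max}B\to E^*\otimes_{\max}(B/J)$, because the maximal tensor product is exact. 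The task is thus to show that the relevant element is positive for the max structure, i.e.\ that on the relevant finite pieces of $A\odot(B/J)$ the max and min norms agree.

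\textbf{Nuclear pair $\Rightarrow$ LLP: the main obstacle.} This agreement is where I expect the real work. I would embed $B/J\subseteq\mathcal{B}(K)$ and use nuclearity of $(A,\mathcal{B}(K))$. The difficulty is that the max norm is not injective, so I cannot simply restrict from $\mathcal{B}(K)$ to $B/J$.

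My first attempt is Pisier's route. Rather than varying the target, I would reduce every lifting problem to the single universal one, $\mathcal{B}(H)$ written as a quotient of $C^*(F)$. Equivalently, I would test the LLP against the dual object: identify ucp maps $E\to B/J$ with states on $E\otimes_{\max}C^*(F')$ and show they factor through $\mathcal{B}(H)$ via a GNS/Stinespring argument.

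If that stalls, the fallback is to show that nuclearity with $\mathcal{B}(H)$ forces the identity of $A$ to be locally liftable along $q$, and then compose. Concretely, I would use the max/min equality in the form $A\otimes_{\max}C^*(F)^{\mathrm{op}}$-states versus $A\otimes\mathcal{B}(H)$-states to produce local ucp lifts $E\to C^*(F)$. Any local lift through $C^*(F)$ then gives local lifts into every $B$, since $C^*(F)$ has the LP by the freeness of its generators.
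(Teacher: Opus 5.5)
The paper gives no proof of this statement. It is quoted from Kirchberg, with Pisier's simplification, so your outline has to stand on its own. Your forward direction is the standard argument and is correct, granted Kirchberg's theorem $C^*(F)\otimes_{\max}\mathcal{B}(H)=C^*(F)\otimes\mathcal{B}(H)$. Two small points:
\begin{itemize}
\item For non-separable $A$ the free group $F$ is uncountable. The theorem extends to that case because every element of $\mathbb{C}[F]\odot\mathcal{B}(H)$ lives over a finitely generated free factor, which is a retract.
\item The reduction to unital $A$ needs that $(A,\mathcal{B}(H))$ is nuclear iff $(A^+,\mathcal{B}(H))$ is.
\end{itemize}

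\textbf{The converse has a genuine gap.} You reduce the problem to showing that $\hat\phi\in E^*\otimes(B/J)$ is positive for the max structure, and that statement is false in general. In the maximal operator-system tensor product, positivity of an element of $E^*\otimes D$ means the corresponding map $E\to D$ is a norm limit of cp maps factoring through matrix algebras. Take $\phi=\mathrm{id}_A$, $J=0$, and all finite-dimensional $E\subseteq A$, then Arveson-extend the first leg. Your target statement would then force $A$ to be nuclear. This fails for $A=C^*(F_2)$, which satisfies the hypothesis.

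If you instead realize the max structure through $E^*\subseteq\mathcal{B}(H)$, the positive lifts land in $\mathcal{B}(H)\otimes_{\max}B$ rather than in $E^*\otimes B$. In general there is no ucp projection of $\mathcal{B}(H)$ onto $E^*$ to bring them back.

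Your alternatives do not supply the mechanism either:
\begin{itemize}
\item The universal lifting problem is $q\colon C^*(F)\to A$, not $\mathcal{B}(H)$ as a quotient of $C^*(F)$.
\item The hypothesis gives no control over $A\otimes_{\max}C^*(F)^{\mathrm{op}}$. For $A=C^*(F_2)$, max and min differ there, by the negative solution of the Connes embedding problem.
\item Your fallback correctly names the reduction (lift $\mathrm{id}_A$ locally along $q$, then use the LLP of $C^*(F)$), but not how to carry it out.
\end{itemize}

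\textbf{What is missing} is a second use of Kirchberg's theorem, applied to the extension $0\to I\to C^*(F)\to A\to 0$ with $I=\ker q$.
\begin{enumerate}
\item Tensoring with $\mathcal{B}(H)$ in the max norm gives an exact sequence. Max equals min on the middle term (Kirchberg) and on the right term (hypothesis). Hence $0\to I\otimes\mathcal{B}(H)\to C^*(F)\otimes\mathcal{B}(H)\to A\otimes\mathcal{B}(H)\to 0$ is exact for the minimal tensor product.
\item Work with min-positivity, which is automatic. Embed the finite-dimensional operator system $E^*$ unitally and completely order isomorphically into $\mathcal{B}(H)$; $H$ separable suffices. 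Then $\iota_E$ becomes a positive element $t\in E^*\otimes A\subseteq\mathcal{B}(H)\otimes A$.
\item Lift $t$ to a positive $s\in\mathcal{B}(H)\otimes C^*(F)$, and let $s_0=s_0^*\in E^*\odot C^*(F)$ be any algebraic lift. By exactness, $s-s_0\in\mathcal{B}(H)\otimes I$.
\item Take an approximate unit $(e_\lambda)$ of $I$ and set $f_\lambda=(1-e_\lambda)^{1/2}$. The element $(1\otimes f_\lambda)s_0(1\otimes f_\lambda)$ lies in $E^*\odot C^*(F)$ and still lifts $t$. It differs from the positive element $(1\otimes f_\lambda)s(1\otimes f_\lambda)$ by a self-adjoint term of norm $\eta_\lambda\to0$.
\item Adding $\eta_\lambda(1\otimes 1)$ gives cp maps $E\to C^*(F)$ lifting $\iota_E+\eta_\lambda\omega(\cdot)1$, where $\omega$ is the state serving as the unit of $E^*$.
\item A cp lift of a ucp map can be corrected to a ucp lift. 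Arveson's closedness of liftable maps then gives a ucp lift of $\iota_E$. Your composition with the LLP of $C^*(F)$ finishes the proof.
\end{enumerate}
This is the Effros--Haagerup argument, and it is exactly the step your proposal leaves open.
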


There is a useful recent characterization of the LP for separable $C^*$-algebras, due to Pisier \cite[Theorem 1.2]{Pisier:2022aa}; it can sometimes be used analogously to Theorem \ref{kir the}.  For the statement, recall that if $\{D_i\}_{i\in I}$ is a family of $C^*$-algebras, then $\prod_{i\in I}D_i$ denotes the $C^*$-algebra of bounded tuples $(d_i)_{i\in I}$ with $d_i\in D_i$, equipped with pointwise operations and the supremum norm.

\begin{theorem}[Pisier]\label{pis the}
Let $A$ be a separable $C^*$-algebra.  Then $A$ has the LP if and only if for any family $\{D_i\}_{i\in I}$ of $C^*$-algebras, the canonical map
$$
\Bigg(\prod_{i\in I}D_i\Bigg)\otimes_{\max} A\to \prod_{i\in I}(D_i\otimes_{\max} A)
$$
is injective.\qed
\end{theorem}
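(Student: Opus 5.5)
The statement has two directions. For the forward implication I would reduce to full free group $C^*$-algebras. For the converse I would convert the injectivity hypothesis into approximate liftability and then upgrade that to liftability using separability.

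\emph{($\Rightarrow$).} Since $A$ is separable and unital, write $A=C^*(F)/I$ for a countable free group $F$, with quotient map $q\colon C^*(F)\to A$. By the LP, the identity map of $A$ lifts to a ucp map $s\colon A\to C^*(F)$ with $q\circ s=\mathrm{id}_A$ (using Remark \ref{ucp vs ccp} to pass to ucp). Maximal tensor products are functorial for ucp maps, by Remark \ref{stinespring}, so $\mathrm{id}\otimes s$ and $\mathrm{id}\otimes q$ are contractions at the level of both $(\prod_i D_i)\otimes_{\max}(\cdot)$ and $D_i\otimes_{\max}(\cdot)$. For $t$ in the algebraic tensor product $(\prod_i D_i)\odot A$ this gives
$$
\|t\|_{(\prod D_i)\otimes_{\max}A}\le \|(\mathrm{id}\otimes s)t\|_{(\prod D_i)\otimes_{\max}C^*(F)} ,
\qquad
\sup_i\|(\mathrm{id}\otimes s)t_i\|_{D_i\otimes_{\max}C^*(F)}\le \sup_i \|t_i\|_{D_i\otimes_{\max}A}.
$$
Hence, once the statement is known to hold isometrically on the algebraic tensor product for $C^*(F)$, the same follows for $A$. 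An isometry on a dense subalgebra extends to an isometry, so the canonical map is injective.

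It therefore remains to prove the statement for $C^*(F)$. Here I would use Pisier's linearization trick: the max-norm of a general element of $(\prod D_i)\odot C^*(F)$ is controlled by norms of degree-one expressions $x_0\otimes 1+\sum_k x_k\otimes U_{g_k}$, with matrix coefficients $x_k\in M_N(\prod D_i)$ and $U_g$ the canonical unitaries of $C^*(F)$. For such expressions I would use the known factorization description of the max-norm: $\|\sum_k x_k\otimes U_{g_k}\|_{\max}\le 1$ if and only if $x_k=a_k^*b_k$ with $\|\sum_k a_k^*a_k\|\le1$ and $\|\sum_k b_k^*b_k\|\le 1$. Factorizations chosen coordinatewise in each $D_i$ with uniform bounds then assemble into a factorization in $\prod_i D_i$, which gives the required norm equality. \textbf{I expect this free group step to be the main obstacle}: both the linearization and the factorization formula for the max-norm need careful justification.

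\emph{($\Leftarrow$).} By Arveson's theorem, for separable $A$ the point-norm limits of liftable ucp maps into $B/J$ are again liftable. It therefore suffices to show that for every finite-dimensional operator system $E\subseteq A$ and every $\varepsilon>0$, $\phi|_E$ lifts with cb-norm at most $1+\varepsilon$; one can then correct this to a ucp lift and pass to the limit. To obtain these approximate lifts I would realize $B/J$ inside a product: choose a quasicentral approximate unit $(e_\lambda)$ of $J$ and set $D_\lambda=B$, so that $B/J$ is a quotient of a subalgebra of $\prod_\lambda D_\lambda$ modulo the appropriate ideal. The positive functional on $(B/J)\otimes_{\max}A^{op}$, or equivalently the tensor encoding $\phi|_E$ in $E^*\otimes B/J$, then comes from the product side. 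Injectivity of the canonical map says that its max-norm can be read off in the coordinates $D_\lambda\otimes_{\max}A$. This lets me lift $\phi|_E$ approximately to some single coordinate $B$, with control on the completely bounded norm. The delicate point in this direction is keeping the norm estimates uniform when passing between $E^*\otimes(\cdot)$ and the max tensor products.
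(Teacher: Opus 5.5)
The direction ($\Leftarrow$) has a genuine gap at the reduction step. Arveson's theorem says that, for separable $A$, a point-norm limit of liftable ucp maps \emph{defined on all of $A$} is liftable. A lift $\psi_E\colon E\to B$ of $\phi|_E$ on a finite-dimensional operator system $E$ cannot in general be extended to a ccp map $A\to B$, because $B$ is not injective. So your local lifts give no sequence of liftable maps on $A$ to which Arveson's theorem could be applied. In fact, ucp lifts of $\phi|_E$ for every such $E$ are exactly what the LLP provides. If your reduction were valid, it would show that the LLP implies the LP for every separable $C^*$-algebra, which is a well-known open problem (compare Remark \ref{sep rem}; Theorem \ref{llp to lp} gives such global lifts only into quotients of QWEP algebras). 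The second half of your sketch does not repair this. The tensor encoding $\phi|_E$ lives in $E^*\otimes(B/J)$, and its relevant norm is the min (cb) norm, not a max norm. Also, a ucp map $\phi$ determines no canonical positive functional on $(B/J)\otimes_{\max}A^{op}$. So you never explain how injectivity of $(\prod_i D_i)\otimes_{\max}A\to\prod_i(D_i\otimes_{\max}A)$ produces a lift at all. What is missing is a way to use the product condition for a suitable infinite family, applied to all of $A$ at once, to produce liftable ucp maps on the whole of $A$ that converge point-norm to $\phi$ (or a global lift directly). The paper itself gives no proof of this statement; it quotes Pisier \cite[Theorem 1.2]{Pisier:2022aa}.

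Your ($\Rightarrow$) direction is sound in outline. The ucp splitting of $C^*(F)\to A$ correctly reduces everything to $C^*(F)$. The linearization can be made rigorous as follows. Set $\mathcal{D}=\prod_i D_i$, unitized if needed. Suppose the norm coming from $\prod_i(D_i\otimes_{\max}C^*(F))$ agrees with the max norm on $M_N(\mathcal{D})\odot\mathrm{span}\{1,U_g\}$ for all $N$. Then any commuting pair $(\pi,\sigma)$ gives a unital complete contraction on this operator space. Its ucp (Arveson) extension has $\mathcal{D}\otimes 1$ and the $1\otimes U_g$ in its multiplicative domain, so it is a $*$-homomorphism. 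For the degree-one estimate you do not need factorizations inside each $M_N(D_i)$ with uniform control, which is the delicate part of your plan. Instead, factor $\pi(x_k)=a_k^*b_k$ in the von Neumann algebra $M_N(\pi(\mathcal{D})'')$, which commutes with $\sigma(U_{g_k})$. This gives $\|\cdot\|_{\max}=\|\cdot\|_{\min}$ on $\mathcal{D}\odot\mathrm{span}\{1,U_g\}$ at all matrix levels, and min norms of algebraic tensors visibly commute with products. Finally, $A$ is not assumed unital, but passing to $A^+$ is harmless: $\otimes_{\max}$ is exact and preserves ideal inclusions, so the product condition transfers between $A$ and $A^+$.
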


See \cite[Section 3]{Enders:2024aa} for other useful characterizations of the LP.  We will not, however, use those in this paper.

We next recall the \emph{Choi--Effros lifting theorem} from  \cite{Choi:1976aa}; this gives a fundamental class of $C^*$-algebras with the LP.  See for example \cite[Theorem C.3]{Brown:2008qy} or \cite[Theorem 3.3.6]{Higson:2000bs} for textbook expositions\footnote{Both of these textbook expositions are based on Arveson's simpler approach \cite[Theorem 7 and Corollary on page 351]{Arveson:1977aa} to the Choi--Effros lifting theorem.}.  One can also deduce it as an immediate consequence of Pisier's theorem \ref{pis the} above.

\begin{theorem}[Choi--Effros]\label{cet}
Any separable nuclear $C^*$-algebra has the LP. \qed
\end{theorem}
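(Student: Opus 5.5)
The plan is to deduce Theorem \ref{cet} from Pisier's characterization, Theorem \ref{pis the}. Let $A$ be separable and nuclear, and let $\{D_i\}_{i\in I}$ be any family of $C^*$-algebras. We must show that the canonical map
$$
\Phi\colon \Bigg(\prod_{i\in I}D_i\Bigg)\otimes_{\max} A\to \prod_{i\in I}(D_i\otimes_{\max} A)
$$
is injective. Since $A$ is nuclear, every pair involving $A$ is nuclear (Definition \ref{nuc pair}), so we may replace each maximal tensor product by the minimal one. The goal then becomes to show that the canonical map
$$
\Bigg(\prod_{i\in I}D_i\Bigg)\otimes A\to \prod_{i\in I}(D_i\otimes A)
$$
is injective, where $\otimes$ is the minimal tensor product. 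To make this replacement legitimate, I would check that the canonical maps commute with the identifications $D\otimes_{\max}A\cong D\otimes A$. This holds because all the maps are the unique continuous extensions of the obvious map on the algebraic tensor product $\big(\prod D_i\big)\odot A$.

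For the minimal tensor product, injectivity is a concrete spatial statement, and I would prove it by representing everything faithfully on Hilbert space. Choose faithful representations $D_i\subseteq\mathcal{B}(H_i)$ and $A\subseteq \mathcal{B}(K)$. Then $\prod_i D_i$ acts faithfully on $\bigoplus_i H_i$. By definition, $\big(\prod D_i\big)\otimes A$ is the closure of the algebraic tensor product in $\mathcal{B}\big((\bigoplus_i H_i)\otimes K\big)=\mathcal{B}\big(\bigoplus_i (H_i\otimes K)\big)$. Likewise each $D_i\otimes A$ is the closure of $D_i\odot A$ in $\mathcal{B}(H_i\otimes K)$.

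The key observation is that, for $x$ in $\big(\prod D_i\big)\odot A$, the operator $x$ on $\bigoplus_i (H_i\otimes K)$ is block diagonal, and its $i$th block is exactly the image of $x$ in $D_i\odot A$. Consequently the norm of $x$ equals the supremum over $i$ of the norms of its components, so the canonical map is isometric on the dense subalgebra. It therefore extends to an isometric, hence injective, map on the completion, and Theorem \ref{pis the} then gives the LP.

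The only step needing care is the first reduction. Nuclearity of $A$ provides $D\otimes_{\max}A=D\otimes A$ for every $D$, including $D=\prod D_i$, and this is exactly what the reduction needs. Separability of $A$ is used only to invoke Theorem \ref{pis the}. I do not anticipate a real obstacle beyond verifying the compatibility of the canonical maps with these identifications.
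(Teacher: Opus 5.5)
Your proof is correct, and it is the route the paper itself points to: the paper gives no argument beyond citing the Arveson-style textbook proofs, remarking that the theorem is ``an immediate consequence of Pisier's theorem'' (Theorem \ref{pis the}). Your proposal supplies the details of that deduction. Nuclearity replaces $\otimes_{\max}$ by the minimal tensor product, and the block-diagonal computation on $\bigoplus_i (H_i\otimes K)$ shows that $\big(\prod_i D_i\big)\otimes A\to\prod_i(D_i\otimes A)$ is isometric, hence injective.
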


\begin{remark}
The Choi--Effros lifting theorem significantly predates Kirchberg's work on the (L)LP.  Following an idea of Arveson \cite{Arveson:1974aa}, the Choi--Effros lifting theorem is foundational for $K$-homology of $C^*$-algebras, which was the original motivation.
\end{remark}

\begin{corollary}\label{ce llp}
Any nuclear $C^*$-algebra satisfies the LLP.
\end{corollary}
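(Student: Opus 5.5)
The quickest route is through Kirchberg's criterion, Theorem \ref{kir the}, which makes no separability assumption. If $A$ is nuclear, then by Definition \ref{nuc pair} the pair $(A,B)$ is nuclear for every $C^*$-algebra $B$. Taking $B=\mathcal{B}(H)$ with $H$ separable and infinite-dimensional, $(A,\mathcal{B}(H))$ is a nuclear pair, and Theorem \ref{kir the} gives the LLP for $A$.

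Two technical points need care. First, Kirchberg's theorem must apply to non-unital $A$, where by Definition \ref{llp def} the LLP means the LLP of $A^+$. If one prefers to work only with unital algebras, note that $A^+$ is nuclear whenever $A$ is. This is standard, since nuclearity passes to extensions and $0\to A\to A^+\to\C\to 0$ is exact with $\C$ nuclear. Alternatively, one can check directly that $A^+\otimes_{\max}B$ and $A^+\otimes B$ agree, because both contain $A\otimes B$ as an ideal with quotient $B$. Second, we should make sure Theorem \ref{kir the} is being used as stated for arbitrary (possibly non-separable) $A$. Kirchberg's statement is indeed general.

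An alternative argument avoids Kirchberg's theorem and uses Theorem \ref{cet} together with the local nature of the LLP. Suppose we are given a finite-dimensional operator system $E\subseteq A$ and a ccp map $\phi\colon A\to B/J$. We would try to find a separable nuclear $C^*$-subalgebra $A_0\subseteq A$ containing $E$, and then lift $\phi|_{A_0}$ by Choi--Effros. The obstacle is that nuclearity does not obviously pass to separable subalgebras: one needs a separable nuclear subalgebra containing a given separable set. That can be arranged with the completely positive approximation property and a countable closure argument, but the details are somewhat involved. For that reason I would present the Kirchberg route as the main proof and mention this one only as a remark.
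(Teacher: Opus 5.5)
Your proposal is correct and matches the paper's proof: the paper also gets the corollary immediately from Kirchberg's criterion (Theorem \ref{kir the}) applied with $B=\mathcal{B}(H)$. It likewise mentions, as an alternative, the same Choi--Effros route through a separable nuclear subalgebra containing a given finite-dimensional operator system, and your extra care about unitizations is sound.
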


\begin{proof}
This is immediate from Theorem \ref{kir the}.  Alternatively, it can be deduced from the Choi--Effros lifting theorem and the fact\footnote{This is not quite obvious.  The difficulty is that a (separable) $C^*$-subalgebra of a nuclear $C^*$-algebra need not be nuclear.  However it is always contained in a separable nuclear $C^*$-subalgebra: see for example \cite[Exercise 2.3.8]{Brown:2008qy}.}  that any finite-dimensional operator system in a nuclear $C^*$-algebra is contained in a separable nuclear $C^*$-subalgebra.
\end{proof}

We conclude this discussion with a useful result of Kirchberg that gives a sufficient condition for the LLP to imply the existence of a global lift.    The original reference is \cite[Proposition 2.2 (iv)]{Kirchberg:1993aa}\footnote{To deduce the result of Theorem \ref{llp to lp} from \cite[Proposition 2.2 (iv)]{Kirchberg:1993aa}, one needs also the fact that the WEP passes to ideals.}, or see \cite[Corollary 3.12]{Ozawa:2004ab} for a relatively self-contained proof.  The statement requires the notion of a ``QWEP'' $C^*$-algebra: what exactly this means is not important for us, but see \cite[page 452]{Kirchberg:1993aa} for the definition, and Corollary \ref{mat lift} below for an application.

\begin{theorem}[Kirchberg]\label{llp to lp}
Let $A$ be a separable unital $C^*$-algebra with the LLP, let $B$ be a unital QWEP $C^*$-algebra, and let $\phi \colon A\to B/J$ be a ucp map from $A$ to a quotient of $B$.  Then $\phi$ admits a ucp lift $\widetilde{\phi} \colon A\to B$.\qed
\end{theorem}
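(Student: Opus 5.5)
The plan is to follow Kirchberg's strategy, working in the ucp formulation (equivalent to the ccp one by Remark \ref{ucp vs ccp}). It has three stages: use the LLP to produce local ucp lifts, use separability to reduce to a countable exhausting family of finite-dimensional operator systems, and use the QWEP hypothesis on $B$ to glue the local lifts into a global one.

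First I would realize $B$ as a quotient of a $C^*$-algebra with the WEP: $B = C/I$ with $C$ having the WEP. Composing, $B/J$ becomes $C/I'$, where $I'$ is the preimage of $J$. I would then reduce to lifting through $C\to C/I'$. Any ucp lift $A\to C$ pushed down to $B$ is a ucp lift of $\phi$, so it suffices to prove the theorem when $B$ itself has the WEP. I would also replace $J$ by an ideal of a WEP algebra, using the footnoted fact that the WEP passes to ideals where it is needed.

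Next I would fix an increasing sequence $E_1\subseteq E_2\subseteq\cdots$ of finite-dimensional operator systems with dense union in $A$, which exists by separability. By the LLP, each $\phi|_{E_n}$ has a ucp lift $\psi_n\colon E_n\to B$. The standard Arveson-type argument then upgrades local lifts to a global one: the set of ucp maps $A\to B/J$ admitting a ucp lift is closed in the point-norm topology. That argument combines a quasicentral approximate unit $(e_\lambda)$ of $J$ with the correction
$$
\psi' = e^{1/2}\psi e^{1/2} + (1-e)^{1/2}\psi''(1-e)^{1/2},
$$
which patches successive lifts on larger and larger $E_n$ with summable errors. So it suffices to show that $\phi$ is a point-norm limit of liftable ucp maps, or more concretely to extend each $\psi_n$ from $E_n$ to a ucp map on all of $A$ while still lifting $\phi$ approximately.

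Extending the local lifts is where the WEP is used, and I expect it to be the main obstacle. For an injective target, Arveson's extension theorem would extend $\psi_n$ to $A$ at once. For a WEP algebra $B\subseteq B^{**}$, I would first extend into $B^{**}$ by injectivity of $B(H)\supseteq B^{**}$, then compress through a ucp conditional expectation $B(H)\to B^{**}$. The weak expectation property then lets one return approximately to $B$ on finite-dimensional subsystems. Making this approximate return precise requires Kirchberg's tensor characterization of the WEP, namely $B\otimes_{\max}C^*F_\infty = B\otimes C^*F_\infty$, combined with Theorem \ref{kir the} for $A$. In the end I expect to cite \cite[Corollary 3.12]{Ozawa:2004ab} for exactly this step, since the remaining stages are routine.
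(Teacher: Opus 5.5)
Your proposal rests on the same input as the paper, which gives no argument of its own and simply cites Kirchberg's result (\cite[Proposition 2.2 (iv)]{Kirchberg:1993aa} together with the fact that the WEP passes to ideals, or \cite[Corollary 3.12]{Ozawa:2004ab}); your outline ends by citing that same corollary, so this is essentially the paper's approach. One correction to the sketch: the WEP only gives a \emph{weak} expectation $B(H)\to B^{**}$ that is the identity on $B$, not a conditional expectation onto $B^{**}$ (which would make $B^{**}$ injective and hence $B$ nuclear); this is still enough to extend $\psi_n$, since $\psi_n$ takes values in $B$, while the approximate return from $B^{**}$ to $B$ is the non-routine content that the citation supplies.
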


From the next section we will focus on group $C^*$-algebras, and we will typically present results on the LLP in general, and the same results on the LP under an additional countability assumption.  The following remark provides some justification for why one might expect that.

\begin{remark}\label{sep rem}
Kirchberg \cite[Proposition 8.1]{Kirchberg:1993aa} showed that a positive solution to the Connes embedding problem (a famous problem in operator algebra theory) is equivalent to every separable $C^*$-algebra being QWEP.  It would follow from this and Theorem \ref{llp to lp} that the LP and LLP are equivalent for separable $C^*$-algebras\footnote{Here we use that to check the LP for a separable unital $C^*$-algebra, it suffices to check that ccp maps to quotients $B/J$ with $B$ separable always admit ccp lifts.}. 
This provides some evidence for the equivalence of the LP and LLP in general for separable $C^*$-algebras.  However, it certainly cannot provide a proof, as the Connes embedding problem was recently shown to have a negative solution: see \cite{Salle:2023aa} and references discussed there.  

On the other hand, the LP and LLP are not equivalent for non-separable $C^*$-algebras\footnote{We do not know if they fail to be equivalent for non-separable \emph{group} $C^*$-algebras (Question \ref{q:lp:uncountable})}.
For example, if $\ell^\infty(\N)$ is the $C^*$-algebra of bounded functions from $\N$ to $\C$, and $C_0(\N)$ is the ideal of functions that tend to zero at infinity, then the quotient map $\ell^\infty(\N)\to \ell^\infty(\N)/C_0(\N)$ does not admit a bounded linear (in particular, ccp) splitting: see for example \cite[Exercise 13.1.1]{Brown:2008qy}.  Hence the $C^*$-algebra $\ell^\infty(\N)/C_0(\N)$ does not have the LP.  It is nuclear (as commutative: see for example \cite[Proposition 2.4.2]{Brown:2008qy}), however, so does have the LLP.  Note that the $C^*$-algebra $\ell^\infty(\N)$ is also QWEP (as injective, or as nuclear), so this also shows the necessity of the separability assumption on $A$ in Theorem \ref{llp to lp}.
\end{remark}

\section{Permanence properties for the (L)LP}\label{s:plp}

In this section, we survey known facts on the (L)LP for group $C^*$-algebras, and prove some new results. These will then be applied in the next section to cover several important classes of groups from topology, as well as geometric and combinatorial group theory.

\subsection{Group $C^*$-algebras and crossed products}

We need a basic definition from the representation theory of $C^*$-algebras.

\begin{definition}\label{dg rep}
A representation $\pi \colon A\to \mathcal{B}(H)$ of a $*$-algebra on a Hilbert space is \emph{nondegenerate} if $\pi(A)H$ is dense in $H$\footnote{If $A$ is unital, this is the same as the representation being unital.}.  It is \emph{degenerate} if it is not nondegenerate.
\end{definition}

For the next definition, let $\C[\Gamma]$ denote the complex group $*$-algebra, and recall that nondegenerate $*$-representations of $\C[\Gamma]$ are the same thing as (linear extensions of) unitary representations of $\Gamma$.  We will abuse notation slightly by not distinguishing between the two.

\begin{definition}\label{gp c*}
Let $\Gamma$ be a discrete group.  Then its \emph{group $C^*$-algebra}\footnote{Also called the \emph{maximal} or \emph{full} group $C^*$-algebra if one wants to distinguish it from other possible $C^*$-completions of $\C[\Gamma]$.} is the completion of the complex group $*$-algebra $\C[\Gamma]$ for the norm defined for $a\in \C[\Gamma]$ by
$$
\|a\|\coloneqq\sup\{\|\pi(a)\|_{\mathcal{B}(H)}\mid \pi \colon \Gamma\to \mathcal{B}(H) \text{ a unitary representation}\}.
$$ 
We write $C^*\Gamma$ or $C^*(\Gamma)$ for the group $C^*$-algebra of $\Gamma$.
\end{definition}

\begin{definition}
A discrete group $\Gamma$ has the \emph{(local) lifting property} ((L)LP) if its $C^*$-algebra $C^*\Gamma$ does.  We will also sometimes abuse grammar and say for example ``$\Gamma$ is LLP'' if that makes for smoother sentences.
\end{definition}

Note that group $C^*$-algebras are functorial: a homomorphism of groups induces a $*$-homomorphism of $C^*$-algebras.  The following basic facts about the maps on $C^*$-algebras induced by subgroup inclusions will be important for us: see for example \cite[Proposition 2.5.8 and Corollary 2.5.12]{Brown:2008qy} for a proof.

\begin{lemma}\label{subgp rem}
Let $\Lambda$ be a subgroup of a discrete group $\Gamma$.  Then the inclusion map $\Lambda\to \Gamma$ induces an injective map of $C^*$-algebras $C^*\Lambda\to C^*\Gamma$.  

Moreover, the map $\C[\Gamma]\to \C[\Lambda]$ that sets all coefficients of elements in $\Gamma\setminus \Lambda$ to zero extends to a ucp map $C^*\Gamma\to C^*\Lambda$ that splits this inclusion. \qed
\end{lemma}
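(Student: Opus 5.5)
The plan is to build both maps from the same source: restriction of unitary representations, and of positive definite functions, between $\Lambda$ and $\Gamma$, handled at the level of states via the GNS construction.

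\emph{Injectivity.} The inclusion $\Lambda\to\Gamma$ induces a $*$-homomorphism $\iota\colon C^*\Lambda\to C^*\Gamma$. Restricting any unitary representation of $\Gamma$ to $\Lambda$ shows $\|\iota(a)\|\le\|a\|$ for $a\in\C[\Lambda]$. For the reverse inequality, I will show that every unitary representation $\sigma$ of $\Lambda$ is contained in the restriction to $\Lambda$ of some representation of $\Gamma$. The natural choice is the induced representation $\mathrm{Ind}_\Lambda^\Gamma\sigma$, realised on the $\ell^2$-sum $\bigoplus_{\Gamma/\Lambda}H_\sigma$ by choosing coset representatives. Its restriction to $\Lambda$ contains $\sigma$ as the summand indexed by the trivial coset. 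Hence $\|\sigma(a)\|\le\|\iota(a)\|_{C^*\Gamma}$, so $\iota$ is isometric on $\C[\Lambda]$ and therefore injective on the completion.

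\emph{The conditional expectation.} Define $E\colon\C[\Gamma]\to\C[\Lambda]$ by truncating coefficients to $\Lambda$. It suffices to show that $E$ is completely positive and contractive for the maximal norms. It then extends to $C^*\Gamma$, and it is unital and the identity on $\C[\Lambda]$, so it splits $\iota$.

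I will argue at the level of positive definite functions. A state on $C^*\Lambda$ corresponds to a normalized positive definite function $\varphi$ on $\Lambda$. Its extension $\tilde\varphi$ by zero off $\Lambda$ is positive definite on $\Gamma$: indeed $\tilde\varphi(g)=\langle \mathrm{Ind}\,\sigma(g)\xi,\xi\rangle$, where $\xi$ is the GNS cyclic vector placed in the trivial-coset summand. For $a\in\C[\Gamma]$ and a state $\varphi$ of $C^*\Lambda$ one then has $\varphi(E(a^*a))=\tilde\varphi(a^*a)\ge0$, since $\tilde\varphi$ vanishes off $\Lambda$. Therefore $E(a^*a)\ge0$, and $E$ is positive.

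For complete positivity I will run the same argument with matrix-valued functions. An alternative that avoids this is a Stinespring-type formula: with $V\colon H_\sigma\to\mathrm{Ind}\,H_\sigma$ the inclusion onto the trivial-coset summand, $\sigma(E(a))=V^*\,\mathrm{Ind}\,\sigma(a)\,V$. This holds because $\mathrm{Ind}\,\sigma(g)$ moves the trivial-coset summand off itself when $g\notin\Lambda$. Taking $\sigma$ to be the universal representation of $C^*\Lambda$, the map $a\mapsto V^*\,\mathrm{Ind}\,\sigma(a)\,V$ is the compression of a $*$-representation of $C^*\Gamma$. Such compressions are ucp (the easy direction of Remark \ref{stinespring}), so $E$ extends to a ucp map $C^*\Gamma\to C^*\Lambda$.

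The main technical point is setting up the induced representation carefully with coset representatives and checking the compression identity. Everything else is formal.
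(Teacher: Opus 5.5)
Your proof is correct. The paper gives no proof of this lemma; it cites \cite[Proposition 2.5.8 and Corollary 2.5.12]{Brown:2008qy}, where the argument is phrased with positive definite functions rather than induced representations. There, injectivity amounts to the fact that a positive definite function on $\Lambda$, extended by zero, is positive definite on $\Gamma$, so every state of $C^*\Lambda$ is restricted from a state of $C^*\Gamma$. The splitting comes from two facts: $E$ is the multiplier $\sum_g a_g g\mapsto\sum_g\chi_\Lambda(g)a_g g$ by the positive definite function $\chi_\Lambda$, and multipliers by positive definite functions are completely positive on $C^*\Gamma$.

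Your identities $\tilde\varphi(g)=\langle\mathrm{Ind}\,\sigma(g)\xi,\xi\rangle$ and $\sigma(E(a))=V^*\,\mathrm{Ind}\,\sigma(a)\,V$ are exactly the representation-theoretic form of these two steps. The multiplier route is more general, since every positive definite function gives a cp map. However, it produces a map $C^*\Gamma\to C^*\Gamma$ whose range is the closure of $\C[\Lambda]$ inside $C^*\Gamma$, so injectivity must be proved first before it can be read as a map into $C^*\Lambda$.

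Your compression lands in $C^*\Lambda$ directly because the universal representation $\sigma$ is faithful. The image of $b\mapsto V^*\,\mathrm{Ind}\,\sigma(b)\,V$ lies in the closed subalgebra $\sigma(C^*\Lambda)$, and composing with $\sigma^{-1}$ gives the ucp map; you should say this explicitly. A consequence is that your first paragraph is unnecessary: the resulting map $\widetilde{E}$ satisfies $\widetilde{E}\circ\iota=\mathrm{id}_{C^*\Lambda}$, which on its own forces $\iota$ to be injective.
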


For later purposes, we also recall the notion of a crossed product $C^*$-algebra associated to an action of a group on a $C^*$-algebra.  

\begin{definition}\label{act cp}
Let $\Gamma$ be a discrete group.  A \emph{$\Gamma$-action} on a $C^*$-algebra $A$ is a homomorphism 
$$
\alpha \colon \Gamma\to \mathrm{Aut}(A)
$$
from $\Gamma$ to the group of $*$-automorphisms of $A$.  We also call the pair $(A,\alpha)$, or just $A$ if there is no risk of confusion, a \emph{$\Gamma$-$C^*$-algebra}.

The \emph{algebraic crossed product of $A$ by $\Gamma$}, denoted $A\rtimes_{alg,\alpha}\Gamma$ or $A\rtimes_{alg}\Gamma$ if there is no risk of confusion, is the $*$-algebra of formal sums 
$$
\sum_{g\in \Gamma} a_g g
$$
where each $a_g$ is an element of $A$ with only finitely many $a_g$ non-zero; the adjoint is determined by stipulating that it restricts to the given adjoint on $A$ and satisfies $g^*=g^{-1}$ for all $g\in \Gamma$; and the multiplication is determined by stipulating that it restricts to the given multiplications on $A$ and $\Gamma$ and that it satisfies
$$
gag^*=\alpha_g(a)
$$
for all $g\in \Gamma$ and $a\in A$.

A \emph{covariant pair} for a $\Gamma$-$C^*$-algebra is a pair $(\pi,u)$ where $\pi \colon A\to \mathcal{B}(H)$ is a nondegenerate representation of $A$ on a Hilbert space $H$, and $u \colon \Gamma\to U(H)$ is a unitary representation of $\Gamma$ on the same Hilbert space such that 
$$
u_g\pi(a)u_g^*=\pi(\alpha_g(a))
$$
for all $a\in A$ and $g\in \Gamma$.  The \emph{integrated form} of a covariant pair is the $*$-homomorphism 
$$
\pi\rtimes u \colon A\rtimes_{alg}\Gamma\to \mathcal{B}(H), \quad \sum a_gg\mapsto \sum \pi(a_g)u_g.
$$
Finally, the \emph{(maximal) crossed product} of $A$ by $\Gamma$, denoted $A\rtimes_{\alpha}\Gamma$ or just $A\rtimes \Gamma$ if there is no risk of confusion, is defined to be the completion of $A\rtimes_{alg}\Gamma$ for the norm
$$
\|b\|\coloneqq\sup\{\|(\pi\rtimes u)(b)\|_{\mathcal{B}(H)}\mid (\pi,u)\text{ a covariant pair}\}.
$$
\end{definition}

\begin{remark}\label{sd rem}
Note that $\C\rtimes \Gamma$ is canonically isomorphic to $C^*\Gamma$.  More generally, if $\Gamma=K\rtimes \Lambda$ is a semidirect product group, then the conjugation action $\alpha$ of $\Lambda$ on $K$ makes $C^*K$ a $\Lambda$-$C^*$-algebra, and we have $C^*\Gamma=C^*(K)\rtimes \Lambda$.  Indeed, this follows as unitary representations of $\Gamma$ are essentially the same thing as covariant pairs for $(C^*(K),\alpha)$.
\end{remark}

\subsection{Groups without the (L)LP}
\label{non ex}

We briefly mention the known examples of groups without the (L)LP - the rest of the paper will be about positive results, and we will use a particular example from this subsection to show necessity of some hypotheses.

The first existence proofs for groups without the LP were due to Ozawa \cite[Corollary 5]{Ozawa:2004aa}.  Later, Thom gave an explicit example of a group without the LLP \cite[page 198]{Thom:2010aa}.  Further examples without the LLP were given by Buss--Echterhoff--Willett \cite[Corollary 4.8]{Buss:2018nm}; these are again not explicit.

Fairly recently, Ioana--Spaas--Wiersma \cite{Ioana:2020aa} very significantly improved our understanding of groups without the (L)LP.  They used cohomological methods to show that many natural and interesting groups do not have the (L)LP; we single out one example.

\begin{example}\label{cex}
Consider the free group $F_2$ as a finite-index subgroup of $SL(2, \mathbb{Z})$. Then the semidirect product $\mathbb{Z}^2 \rtimes F_2$ does not have the LLP \cite[Corollary B]{Ioana:2020aa}.
\end{example}

We will not attempt to summarize all of the results of \cite{Ioana:2020aa} here, and just refer to the original paper: suffice to say that the techniques rely on variants of (relative) property (T).

\begin{remark}\label{llp t}
Failure of the (L)LP seems quite closely connected to property (T).  Indeed, the examples of Ozawa \cite[Corollary 5]{Ozawa:2004aa} and Thom \cite[page 198]{Thom:2010aa} mentioned above have property (T), and the examples of Ioana--Spaas--Wiersma \cite{Ioana:2020aa} all have (at least) property (T) with respect to an infinite subgroup.  

The examples of Buss--Echterhoff--Willett \cite[Corollary 4.8]{Buss:2018nm} use Osajda's probabilistic construction \cite[Theorem 4]{Osajda:2014ys} of groups whose Cayley graphs contain expanders.  It seems quite likely that these also have property (T) (compare \cite{Naor:2011aa}), but we do not know this.  On the other hand, from the methods used in \cite[Corollary 4.8]{Buss:2018nm} it seems plausible that the non-exact a-T-menable\footnote{\label{hp}a-T-menable groups are also  called groups with the \emph{Haagerup property} in operator algebra theory, due to the appearance of this property in Haagerup's seminal paper \cite{Haagerup:1979rq}.} groups contructed by Osajda in \cite[Theorem 2]{Osajda:2014ys} also do not have the (L)LP, but again we do not know this; note that a-T-menable groups cannot have property (T) relative to any infinite subgroup.

We do not know an example of a group with the LLP that has property (T), or even property (T) with respect to an infinite subgroup (Question \ref{q:T}). Note that \cite[Theorem 1.6]{Dogon:2023aa} says in particular that if there exists a property (T) group $\Gamma$ with the LLP\footnote{The statement assumes \emph{weak ucp stability}, but this is implied by the LLP: see Remark \ref{mat lift wucps} below).} such that $H_1(\Gamma) = 0 \neq H_2(\Gamma)$, then there exists a non-hyperlinear group.
\end{remark}

\subsection{Free and amenable groups}

Free and amenable groups are the fundamental examples of groups with the (L)LP.  We explain how to derive these results from the literature here.  

We need the following two fundamental theorems about $C^*$-algebras of discrete groups.  The first comes from \cite[Theorem 4.2]{Lance:1973aa} and the second from \cite{Hulanicki:1964aa}.  See for example \cite[Theorem 2.6.8]{Brown:2008qy} for a textbook exposition of both results.  For the statement of the first theorem, recall that the \emph{reduced group $C^*$-algebra} of a discrete group $\Gamma$, denoted $C^*_r\Gamma$, is the completion of $\C[\Gamma]$ for the norm it inherits from its image under the (left) regular representation $\lambda \colon \C[\Gamma]\to \mathcal{B}(\ell^2\Gamma)$.

\begin{theorem}[Lance]\label{lance the}
A discrete\footnote{The result can fail for non-discrete groups: for example, a result of Connes \cite[Corollary 6.9(c)]{Connes:1976fj} implies that the (reduced) $C^*$-algebra of any second countable connected group is nuclear.  On the other hand, Theorem \ref{hul the} holds for general locally compact groups.} group $\Gamma$ is amenable if and only if $C^*_r\Gamma$ is nuclear. \qed
\end{theorem}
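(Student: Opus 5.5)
We have to prove both directions of Lance's theorem: $\Gamma$ is amenable if and only if $C^*_r\Gamma$ is nuclear. We will use the characterization of nuclearity via the completely positive approximation property: $A$ is nuclear if and only if the identity map of $A$ is a point-norm limit of ccp maps that factor through matrix algebras $M_n(\C)$. This is the Choi--Effros/Kirchberg theorem, for which we refer to \cite{Brown:2008qy}. For amenability we use F\o lner sets together with the Hulanicki-type characterization: $\Gamma$ is amenable if and only if there are finitely supported positive definite functions tending to $1$ pointwise.

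\textbf{Amenable implies nuclear.} Let $(F_i)$ be a F\o lner net of finite subsets of $\Gamma$, and let $P_i$ be the projection of $\ell^2\Gamma$ onto $\ell^2(F_i)$. Define
$$
\phi_i \colon C^*_r\Gamma\to \mathcal{B}(\ell^2 F_i)\cong M_{|F_i|}(\C), \qquad \phi_i(a)=P_iaP_i.
$$
This map is ucp because it is a compression. Define $\psi_i \colon M_{F_i}(\C)\to C^*_r\Gamma$ by
$$
\psi_i(e_{x,y})=|F_i|^{-1}\lambda_{xy^{-1}}.
$$
To see that $\psi_i$ is cp, write $\psi_i(T)=|F_i|^{-1}V^*(T\otimes 1)V$ for an isometry-type map $V \colon \ell^2\Gamma\to \ell^2F_i\otimes\ell^2\Gamma$ built from the right regular representation, or equivalently verify positivity of the associated Schur-product matrix. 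A direct computation then gives
$$
\psi_i\circ\phi_i(\lambda_g)=\frac{|F_i\cap gF_i|}{|F_i|}\,\lambda_g .
$$
By the F\o lner condition this tends to $\lambda_g$. Since the maps are uniformly contractive, linearity and density give point-norm convergence on all of $C^*_r\Gamma$. Hence $C^*_r\Gamma$ has the completely positive approximation property, so it is nuclear.

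\textbf{Nuclear implies amenable.} This is the harder direction, and we plan to argue it via the trivial representation. Suppose $C^*_r\Gamma$ has ccp approximations $\psi_i\circ\phi_i\to\mathrm{id}$ through matrix algebras. The first step is to produce ucp maps $\theta_i \colon C^*_r\Gamma\to C^*_r\Gamma$ that converge to the identity pointwise and whose associated functions
$$
h_i(g)=\tau(\theta_i(\lambda_g)\lambda_g^*)
$$
are finitely supported. Here $\tau$ is the canonical trace. The key tool is that any ccp map $\theta$ on $C^*_r\Gamma$ yields the multiplier $g\mapsto\tau(\theta(\lambda_g)\lambda_g^*)$, which is positive definite. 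The main obstacle is the finite-support requirement, because a map factoring through $M_n$ need not have finitely supported symbol.

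To handle this, we first get a bounded positive definite function. Since the maps factor through finite-dimensional algebras, the symbol $h_i$ is of the form $\langle \pi(g)\xi,\xi\rangle$ with $\pi$ weakly contained in $\lambda\otimes\bar\lambda$-type representations, which are contained in multiples of $\lambda$. Such an $h_i$ is therefore a coefficient of a multiple of the regular representation, i.e. $h_i(g)=\sum_k\langle\lambda_g\xi_k,\xi_k\rangle$ with $\xi_k\in\ell^2\Gamma$. Since $h_i\to1$ pointwise, we conclude that $1_\Gamma$ is weakly contained in $\lambda$, and hence $\Gamma$ is amenable by Hulanicki.

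An alternative route, which we would try if the weak-containment bookkeeping becomes messy, goes through an invariant mean. Nuclearity gives the WEP-type conditional expectation from $\mathcal{B}(\ell^2\Gamma)$ onto $\lambda(\Gamma)''$. Composing this expectation with $\tau$ restricted to $\ell^\infty\Gamma$ yields a state on $\ell^\infty\Gamma$, and this state is invariant under translation, so $\Gamma$ is amenable.
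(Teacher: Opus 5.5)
The paper does not prove this statement. It quotes it from Lance \cite[Theorem 4.2]{Lance:1973aa}, with \cite[Theorem 2.6.8]{Brown:2008qy} as textbook reference, so I am comparing your proposal with the standard arguments.

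\textbf{Amenable $\Rightarrow$ nuclear.} This direction is the usual F\o lner argument and is correct. One slip: the dilation of $\psi_i$ is built from $\lambda$ itself, not from the right regular representation. The map $V\xi=|F_i|^{-1/2}\sum_{x\in F_i}\delta_x\otimes\lambda_x^*\xi$ is an isometry with $V^*(T\otimes 1)V=\psi_i(T)$; equivalently, the matrix $[\lambda_x\lambda_y^*]_{x,y}$ is positive. So $\psi_i$ is ucp, and there is no extra factor $|F_i|^{-1}$.

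\textbf{Nuclear $\Rightarrow$ amenable, first route.} There is a genuine gap at the decisive step. The claim that, because the maps factor through matrices, $h_i$ is a coefficient of ``$\lambda\otimes\bar\lambda$-type representations'' carries all the content, and you give no mechanism for it.

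\begin{itemize}
\item It is not a general feature of cp maps. Take $\theta=\mathrm{id}$, or $\theta(x)=\sum_k b_k^*xb_k$ with $b_k\in C^*_r\Gamma$. Then $g\mapsto\tau(\theta(\lambda_g)\lambda_g^*)$ is a coefficient of the conjugation representation $g\mapsto\lambda_g\rho_g$, where $\rho$ is the right regular representation. That representation contains the trivial one.
\item Your stated goal of finitely supported $h_i$ is never reached, and it is not needed.
\end{itemize}

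Here is one way to close the gap.
\begin{enumerate}
\item Write $\phi_i(x)=\sum_{p,q}\phi^{(i)}_{pq}(x)e_{pq}$ and $a_{pq}=\psi_i(e_{pq})$. Then $h_i(g)=\sum_{p,q}\phi^{(i)}_{pq}(\lambda_g)\,\tau(a_{pq}\lambda_g^*)$.
\item Each factor $\phi^{(i)}_{pq}(\lambda_g)$ has modulus at most $1$. Each $g\mapsto\tau(a_{pq}\lambda_g^*)=\langle\delta_{g^{-1}},a_{pq}^*\delta_e\rangle$ lies in $\ell^2\Gamma$. Hence $h_i\in\ell^2\Gamma$.
\item Set $\omega_i(x)\coloneqq\langle x\delta_e,\overline{h_i}\rangle$. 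This is a bounded functional on $C^*_r\Gamma$ with $\omega_i(\lambda_g)=h_i(g)$.
\item Your trace computation shows $h_i$ is positive definite. So $\omega_i\geq 0$ and $\|\omega_i\|=h_i(e)\leq 1$. This uniform bound is essential, because the naive bound on $\|\omega_i\|$ grows with the matrix size.
\item Let $\omega$ be a weak$^*$ cluster point of $(\omega_i)$. Then $\omega(\lambda_g)=1$ for every $g$, so $\omega$ is a character of $C^*_r\Gamma$. Thus $1_\Gamma\prec\lambda$, and $\Gamma$ is amenable.
\end{enumerate}
This needs neither Fell absorption nor a diagonal form $\sum_k\langle\lambda_g\xi_k,\xi_k\rangle$.

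\textbf{Nuclear $\Rightarrow$ amenable, fallback route.} This route is correct. It is essentially Lance's original argument (nuclear $\Rightarrow$ WEP $\Rightarrow$ invariant mean), and it can equally be phrased via injectivity of $L(\Gamma)$.
\begin{itemize}
\item You only need a ucp map $\Phi\colon\mathcal{B}(\ell^2\Gamma)\to L(\Gamma)$ that restricts to the identity on $C^*_r\Gamma$. A conditional expectation onto $\lambda(\Gamma)''$ is more than required.
\item You can get $\Phi$ directly from the CPAP. Extend each $\phi_i$ to $\mathcal{B}(\ell^2\Gamma)$ by Arveson's theorem, compose with $\psi_i$, and take a point-weak$^*$ cluster point.
\item Invariance of $\tau\circ\Phi|_{\ell^\infty\Gamma}$ comes from two facts. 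First, each $\lambda_g$ lies in the multiplicative domain of $\Phi$, so $\Phi(\lambda_gf\lambda_g^*)=\lambda_g\Phi(f)\lambda_g^*$. Second, $\tau$ is tracial on $L(\Gamma)$.
\end{itemize}

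The two routes trade off differently. The fallback avoids positive definite functions but passes through the von Neumann algebra $L(\Gamma)$. The repaired multiplier route stays inside $C^*_r\Gamma$ and ends at the criterion $1_\Gamma\prec\lambda$. As written, you should lead with the fallback, or add the $\ell^2$ argument above.
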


\begin{theorem}[Hulanicki]\label{hul the}
A discrete group $\Gamma$ is amenable if and only if the canonical quotient map $C^*\Gamma\to C^*_r\Gamma$ is the identity.  \qed
\end{theorem}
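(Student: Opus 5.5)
The plan is to prove the two implications separately. In both directions the bridge is the relation between the trivial representation and the regular representation $\lambda$.

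\emph{Amenable implies the quotient map is injective.} Since $\|a\|_{C^*_r\Gamma}\le \|a\|_{C^*\Gamma}$ always holds, it suffices to show that $\|\pi(a)\|\le\|\lambda(a)\|$ for every unitary representation $\pi$ and every $a\in\C[\Gamma]$.

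First, use amenability via Følner sets $F_n$. The unit vectors $\xi_n=|F_n|^{-1/2}1_{F_n}\in\ell^2\Gamma$ satisfy $\|\lambda_g\xi_n-\xi_n\|\to 0$ for each $g\in\Gamma$. Hence $|\sum_g a_g|=\lim_n|\langle\lambda(a)\xi_n,\xi_n\rangle|\le\|\lambda(a)\|$, so the trivial representation $1$ is weakly contained in $\lambda$.

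Second, apply Fell's absorption principle. The map $\delta_g\otimes\eta\mapsto\delta_g\otimes\pi_g^{-1}\eta$ is a unitary intertwining $\lambda\otimes\pi$ with $\lambda\otimes 1_H$, which is a multiple of $\lambda$. Tensoring is compatible with weak containment, so $\pi=1\otimes\pi$ is weakly contained in $\lambda\otimes\pi\cong\lambda\otimes 1_H$. This gives $\|\pi(a)\|\le\|\lambda(a)\|$. The only care needed is the tensor/weak-containment step: approximate $\langle(1\otimes\pi)(a)\eta,\eta\rangle$ by $\langle(\lambda\otimes\pi)(a)(\xi_n\otimes\eta),\xi_n\otimes\eta\rangle$ directly.

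\emph{The quotient map is injective implies amenable.} If $C^*\Gamma=C^*_r\Gamma$, then the trivial character extends continuously to $C^*_r\Gamma$. Hence $|\sum_g a_g|\le\|\lambda(a)\|$ for all $a\in\C[\Gamma]$.

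Fix a finite set $S\subseteq\Gamma$ and put $\mu=\frac{1}{2|S|}\sum_{s\in S}(s+s^{-1})$. Then $\lambda(\mu)$ is self-adjoint with $\|\lambda(\mu)\|\le 1$, and $\|\lambda(\mu)\|\ge 1$ by the inequality above applied to $a=\mu$. Since the norm of a self-adjoint operator is attained in the numerical range up to sign, apply the same inequality to $\mu^2$, which is positive. This yields unit vectors $\xi$ with $\langle\lambda(\mu^2)\xi,\xi\rangle=\|\lambda(\mu)\xi\|^2$ arbitrarily close to $1$, and by passing to $\lambda(\mu)\xi$ normalized, vectors with $\mathrm{Re}\langle\lambda(\mu)\xi,\xi\rangle$ close to $1$. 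As $\lambda(\mu)\xi$ is an average of unit-norm translates $\lambda_{s^{\pm1}}\xi$, uniform convexity of Hilbert space forces $\|\lambda_s\xi-\xi\|<\varepsilon$ for all $s\in S$. So $\lambda$ has almost invariant vectors.

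Finally, convert almost invariant vectors into amenability via the Reiter condition. Set $f=|\xi|^2\in\ell^1\Gamma$, a probability vector. By Cauchy--Schwarz,
$$\|s\cdot f-f\|_1\le\|\lambda_s\xi-\xi\|_2\,\|\lambda_s|\xi|+|\xi|\|_2\le 2\varepsilon.$$
Reiter's condition then gives an invariant mean as a weak-$*$ cluster point, or Følner sets via the standard level-set (Namioka) argument.

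I expect this last conversion to be the main obstacle. The operator-theoretic steps are formal, whereas getting genuine amenability from almost invariant vectors requires the Reiter/Følner machinery, and I would quote that from the standard references rather than reprove it.
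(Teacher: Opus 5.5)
\textbf{The gap: the sign step in the converse.} The failing step is ``by passing to $\lambda(\mu)\xi$ normalized''. Squaring throws away exactly the sign information you need.

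Write $T=\lambda(\mu)$ and $\eta=T\xi/\|T\xi\|$. Then $\langle T\eta,\eta\rangle=\langle T^3\xi,\xi\rangle/\langle T^2\xi,\xi\rangle$. The condition $\langle T^2\xi,\xi\rangle\approx 1$ only says that the spectral measure of $\xi$ is concentrated near $\{-1,1\}$. If it sits near $-1$, then $T\xi\approx-\xi$, so $\eta\approx-\xi$ and $\langle T\eta,\eta\rangle\approx-1$.

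This really occurs. Take $\Gamma=\Z$, $S=\{1\}$ and $\xi=N^{-1/2}\sum_{n=1}^N(-1)^n\delta_n$. Then $\|\lambda_1\xi+\xi\|=\sqrt{2/N}$, so $\langle T^2\xi,\xi\rangle\to 1$, yet these vectors are almost \emph{anti}-invariant.

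So, as written, you have only shown $\|T\|=1$. Almost invariant vectors for $S$ amount to $1\in\mathrm{spec}(T)$, and getting from the former to the latter is precisely what needs an argument. Any of the following short repairs works:
(i) $T$ has nonnegative matrix entries, so $|\langle T\xi,\xi\rangle|\le\langle T|\xi|,|\xi|\rangle$, and hence $\sup_{\|\zeta\|=1}\langle T\zeta,\zeta\rangle=\|T\|=1$;
(ii) apply your inequality to $a=\tfrac12(e+\mu)$, with $e$ the identity of $\Gamma$; its coefficients sum to $1$ and its image $\tfrac12(1+T)$ is a positive operator, so its norm (which is at least $1$) is attained in the numerical range, giving unit vectors with $\langle T\xi,\xi\rangle\to 1$;
(iii) insist that $e\in S$, which is harmless since $S$ is arbitrary; then uniform convexity applied to $\|T\xi\|\approx 1$ forces every $\lambda_{s^{\pm1}}\xi$ to be close to $\lambda_e\xi=\xi$.

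With this in place, the identity $\|\lambda_s\xi-\xi\|^2=2-2\,\mathrm{Re}\langle\lambda_s\xi,\xi\rangle$ and your Reiter estimate finish the argument.

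\textbf{Smaller points and comparison.} The first direction is the standard F\o lner-plus-Fell-absorption argument. Two minor points there:
\begin{enumerate}
\item For uncountable $\Gamma$ you need a F\o lner net rather than a sequence.
\item The diagonal coefficients $\langle\pi(a)\eta,\eta\rangle$ only control the numerical radius of $\pi(a)$. Either use $\langle(\lambda\otimes\pi)(a)(\xi_n\otimes\eta),\xi_n\otimes\zeta\rangle\to\langle\pi(a)\eta,\zeta\rangle$, or apply your bound to $a^*a$.
\end{enumerate}
Also, the Reiter step you flagged as the main obstacle is routine: any weak-$*$ cluster point of the $f$'s, viewed as states on $\ell^\infty\Gamma$, is an invariant mean.

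The paper itself gives no proof of this theorem; it cites Hulanicki and Brown--Ozawa (Theorem 2.6.8). The textbook converse is softer than yours; it is the $(7)\Rightarrow(1)$ step that the paper invokes in its remark on reduced group $C^*$-algebras. There one extends the trivial character of $C^*_r\Gamma$ to a state $\phi$ on $\mathcal{B}(\ell^2\Gamma)$. Since $|\phi(\lambda_g)|=1$, each $\lambda_g$ lies in the multiplicative domain of $\phi$. Hence $\phi(\lambda_g f\lambda_g^*)=\phi(f)$ for $f\in\ell^\infty\Gamma$, and $\phi|_{\ell^\infty\Gamma}$ is already an invariant mean. That route needs no spectral estimate and no Reiter step. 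Your Kesten-style route, once repaired, has the merit of producing almost invariant vectors in $\ell^2\Gamma$ explicitly.
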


\begin{corollary}\label{amen cor}
Any countable amenable group has the LP, and any amenable group has the LLP.
\end{corollary}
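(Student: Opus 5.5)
The plan is to combine Lance's and Hulanicki's theorems to get nuclearity of the full group $C^*$-algebra, and then read off the lifting properties from the Choi--Effros theorem and its corollary. Let $\Gamma$ be amenable. By Hulanicki's Theorem \ref{hul the}, the canonical quotient map $C^*\Gamma\to C^*_r\Gamma$ is an isomorphism, so $C^*\Gamma\cong C^*_r\Gamma$. By Lance's Theorem \ref{lance the}, $C^*_r\Gamma$ is nuclear. Hence $C^*\Gamma$ is nuclear; nuclearity is preserved by $*$-isomorphism, as Definition \ref{nuc pair} is stated purely in terms of tensor products.

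For the LLP, I will apply Corollary \ref{ce llp} directly to the nuclear algebra $C^*\Gamma$. No countability is needed here.

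For the LP, assume additionally that $\Gamma$ is countable. Then $\C[\Gamma]$ has a countable Hamel basis $\Gamma$. The rational-complex combinations of group elements therefore form a countable dense subset of $C^*\Gamma$, so $C^*\Gamma$ is separable. The Choi--Effros lifting theorem (Theorem \ref{cet}) then shows that $C^*\Gamma$ has the LP.

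One small point needs checking against Definition \ref{llp def}: the LP and LLP are defined for unital algebras. Here $C^*\Gamma$ is already unital, with unit the identity element $e\in\Gamma$, so no unitization is required. There is no real obstacle; the only place any care is needed is observing that separability is exactly what countability buys, which is why the LP statement requires a countable group.
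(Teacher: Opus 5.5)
Your argument is correct. The LP half is exactly the paper's: Hulanicki plus Lance give nuclearity, countability gives separability, and Choi--Effros finishes.

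For the LLP you take a different route. You apply Corollary~\ref{ce llp} directly to the possibly non-separable nuclear algebra $C^*\Gamma$. The paper instead reduces to the countable case. Any finite-dimensional operator system $E\subseteq C^*\Gamma$ lies in $C^*(\Gamma_0)\subseteq C^*\Gamma$ (via Lemma~\ref{subgp rem}) for some countable subgroup $\Gamma_0$. That subgroup is again amenable, so $C^*(\Gamma_0)$ has the LP by the first half. One then restricts a ccp map to $C^*(\Gamma_0)$, lifts it, and restricts the lift to $E$.

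Your version is a one-liner, but it rests on Corollary~\ref{ce llp}. That corollary in turn depends either on Kirchberg's Theorem~\ref{kir the}, or on the non-obvious fact that finite-dimensional operator systems in nuclear $C^*$-algebras lie inside separable nuclear subalgebras.

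The paper's version needs only the Choi--Effros theorem. It uses the group structure (subgroups of amenable groups are amenable) to produce the separable nuclear subalgebra explicitly. This sidesteps the issue, flagged in the footnote to Corollary~\ref{ce llp}, that $C^*$-subalgebras of nuclear algebras need not be nuclear.
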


\begin{proof}
The $C^*$-algebra of a discrete group is separable if and only if the group is countable.  The statement on the LP therefore follows from Theorems \ref{cet}, \ref{lance the}, and \ref{hul the}.    The statement on the LLP follows as given an arbitrary amenable group $\Gamma$, any finite dimensional operator system $E\subseteq C^*\Gamma$ is contained in a subalgebra of the form $C^*(\Gamma_0)$ where $\Gamma_0\leq \Gamma$ is a countable (amenable) subgroup.
\end{proof}

The other fundamental class of examples of (L)LP groups is due to Kirchberg: the original reference is \cite[Lemma 3.3]{Kirchberg:1994aa}; see \cite[Theorem 13.1.3]{Brown:2008qy} for a textbook exposition.

\begin{theorem}[Kirchberg]\label{free gp}
Let $F$ be a free group.  Then $C^*F$ has the LLP, and it has the LP if $F$ is countable. \qed
\end{theorem}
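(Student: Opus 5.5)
The plan is to prove the LP for countable free groups directly from the definition, by lifting generators, and then to get the LLP for arbitrary free groups by reducing to countable subgroups, as in the proof of Corollary \ref{amen cor}.

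\emph{Step 1: reduce to ucp maps and dilate.} By Remark \ref{ucp vs ccp}, it suffices to lift ucp maps $\phi \colon C^*F\to B/J$, where $B$ is unital and $\pi\colon B\to B/J$ is the quotient map. Represent $B$ faithfully on a Hilbert space. Apply Stinespring's theorem (Remark \ref{stinespring}) to $\phi$, viewed as a map into $\mathcal{B}(H)$, where $B/J\subseteq \mathcal{B}(H)$ faithfully.

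A cleaner route is the following. Every ucp map on $C^*F$ is determined by its values on products of generators. So first treat the case where $\phi$ is a unital $*$-homomorphism $\rho$, and then reduce the general case to it.

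\emph{Step 2: the homomorphism case.} For each free generator $s$ of $F$, the element $\rho(s)$ is a unitary in $B/J$. We want to lift it to a contraction $x_s\in B$. Contractions always lift: take any lift and apply functional calculus to truncate its norm. Form the unitary dilation
$$u_s=\begin{pmatrix} x_s & (1-x_sx_s^*)^{1/2}\\ (1-x_s^*x_s)^{1/2} & -x_s^*\end{pmatrix}\in M_2(B).$$
Modulo $M_2(J)$, $u_s$ becomes $\rho(s)\oplus(-\rho(s)^*)$. By freeness, $s\mapsto u_s$ defines a $*$-homomorphism $\tilde\rho\colon C^*F\to M_2(B)$. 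Compressing to the $(1,1)$ corner gives a ucp map $C^*F\to B$. Its image modulo $J$ is the corner of $\rho\oplus\rho'$, where $\rho'$ is the representation $s\mapsto -\rho(s)^*$. Since the corner projection commutes with the image of $\tilde\rho$ modulo $J$, this corner is exactly $\rho$, so we have a ucp lift. For countable $F$ the lifts are chosen generator by generator; there is no convergence issue, because we only make one choice per generator.

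\emph{Step 3: general ucp maps.} This is the main obstacle, because a ucp $\phi$ is not determined by finitely many unitaries. My first attempt is to use Theorem \ref{kir the} for the LLP instead: show that $C^*F\otimes_{\max}\mathcal{B}(H)\to C^*F\otimes\mathcal{B}(H)$ is injective. A representation of the max tensor product is a commuting pair consisting of a unitary representation of $F$ and a representation of $\mathcal{B}(H)$. The task is to show that such a pair is weakly dominated by min-continuous ones.

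Given a state on $C^*F\odot\mathcal{B}(H)$, restrict attention to finitely many generators $s_1,\dots,s_n$. Pass to $\mathcal{B}(H)''$-commutant unitaries $U_i$ in the commutant of the image of $\mathcal{B}(H)$. Approximate each $U_i$ in the strong topology by unitaries lying in the commutant's relevant copy. Then use that $\mathcal{B}(H)$ is injective (its commutants are nice), so that $\|\sum a_i\otimes U_i\|$ is bounded by the min norm. This is essentially Kirchberg's proof through the identity $C^*F\otimes_{\max}\mathcal{B}(H)= C^*F\otimes\mathcal{B}(H)$, which relies on freeness via the universal property for unitaries and on the commutant of $\mathcal{B}(H)$ being injective.

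From the LLP, I then obtain the LP for countable $F$ by a Kirchberg-type argument. I will take Theorem \ref{llp to lp} only where $B$ is QWEP, and otherwise appeal to the generator-lifting of Step 2 combined with Stinespring: dilate $\phi$ to a homomorphism into $\mathcal{B}$ of a larger module over $B/J$. The unresolved difficulty is to carry out this dilation inside a quotient of a $C^*$-algebra. I expect to have to cite \cite[Theorem 13.1.3]{Brown:2008qy} at that point.

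Finally, the LLP for arbitrary $F$ follows as in Corollary \ref{amen cor}. A finite-dimensional operator system in $C^*F$ lies in $C^*F_0$ for some countable free factor $F_0$. That subalgebra has a ucp conditional expectation (Lemma \ref{subgp rem}), so a local lift through $C^*F_0$ suffices.
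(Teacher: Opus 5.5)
\textbf{There is a genuine gap: Step~3, the passage from $*$-homomorphisms to general ucp maps, is never proved.} Your Step~2 is correct: you lift a unital $*$-homomorphism $C^*F\to B/J$ by lifting each generator to a contraction, taking its Halmos unitary dilation in $M_2(B)$, and compressing. Your last paragraph, reducing the LLP for arbitrary $F$ to the LP for a countable free factor, is also fine. But Step~3 is the whole content of Kirchberg's theorem, and the proposal has no argument there.

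The route through Theorem \ref{kir the} fails as written, for two reasons.
\begin{itemize}
\item Commutants of representations of $\mathcal{B}(H)$ need not be injective. Since $\mathcal{B}(H)$ is not nuclear, its universal representation generates a non-injective von Neumann algebra, and that algebra's commutant is therefore non-injective too.
\item ``Approximating the $U_i$ strongly by unitaries in the relevant copy'' gives no bound on $\|\sum a_i\otimes U_i\|$ by the minimal norm. A real proof along these lines uses injectivity of $\mathcal{B}(H)$ itself to control $\sum u_i\otimes x_i$ on the span of $1$ and the generators. It then needs an Arveson-extension and multiplicative-domain argument to reach all of $C^*F$.
\end{itemize}
Even granting the LLP, Theorem \ref{llp to lp} only lifts into QWEP algebras. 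Falling back on \cite[Theorem 13.1.3]{Brown:2008qy} is circular, since that is the statement being proved. The paper itself gives no proof here and simply cites Kirchberg and that theorem.

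The missing idea is exactly the ``dilation inside a quotient'' you flagged, and Hilbert modules supply it.
\begin{enumerate}
\item Use Kasparov's Stinespring theorem together with Kasparov stabilization. This is where countability of $F$, i.e.\ separability of $C^*F$, is used. It writes a ucp map $\phi\colon C^*F\to B/J$ as $\phi(a)=V^*\pi(a)V$. Here $\pi\colon C^*F\to M(\mathcal{K}\otimes B/J)$ is a unital $*$-homomorphism, $V\in M(\mathcal{K}\otimes B/J)$ satisfies $V^*V=e_{11}\otimes 1$, and $B/J$ is identified with the corner $(e_{11}\otimes 1)M(\mathcal{K}\otimes B/J)(e_{11}\otimes 1)$.
\item Since $\mathcal{K}\otimes B$ is $\sigma$-unital, Pedersen's noncommutative Tietze theorem makes $M(\mathcal{K}\otimes B)\to M(\mathcal{K}\otimes B/J)$ surjective. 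Your Step~2, applied with $M(\mathcal{K}\otimes B)$ in place of $B$, then gives a ucp lift $\widetilde{\pi}$ of $\pi$.
\item Lift $V$ to a contraction $\widetilde{V}$ with $\widetilde{V}=\widetilde{V}(e_{11}\otimes 1)$. Then $a\mapsto\widetilde{V}^*\widetilde{\pi}(a)\widetilde{V}$ is a ccp map into $(e_{11}\otimes 1)M(\mathcal{K}\otimes B)(e_{11}\otimes 1)\cong B$ that lifts $\phi$. This suffices by Remark \ref{ucp vs ccp}.
\end{enumerate}
This proves the LP for countable $F$ directly. Your final paragraph then gives the LLP for all free groups, and the detour through Theorem \ref{kir the} becomes unnecessary.
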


\begin{remark}
The reader will note that we need a countability assumption for the LP in both Corollary \ref{amen cor} and Theorem \ref{free gp}; this will be a common theme.  In fact, we do not know any example of a (discrete) uncountable group with the LP (Question \ref{q:lp:uncountable}).  We also do not know if uncountable free groups can have the LP, although experts seem to believe that they probably do not: compare \cite[Remark 13.1.5]{Brown:2008qy}.
\end{remark}

\begin{remark}
As a consequence of Theorem \ref{free gp}, note that if $\Gamma$ is a countable group, and we fix a surjection $F\to \Gamma$ from a countable free group, then $\Gamma$ has the LP if and only if the induced quotient map $C^*F\to C^*\Gamma$ admits a ucp splitting, and similarly for the LLP.  We will not use this fact in this paper.
\end{remark}

\begin{remark}\label{red rem}
One could also ask about the (L)LP for $C^*_r\Gamma$, but this is less interesting than for $C^*\Gamma$ for two reasons.  

First, one is often interested in applications to finite-dimensional representations, and $C^*_r\Gamma$ admits (non-zero) finite dimensional representations if and only if $\Gamma$ is amenable: essentially the same proof as \cite[Theorem 2.6.8, part (7)$\Rightarrow$ (1)]{Brown:2008qy} shows this, having replaced the one-dimensional representation appearing there with the character (trace) of a finite-dimensional representation.

Second, if we make a fairly minor assumption on $\Gamma$ such as hyperlinearity, then the LLP for $C^*_r\Gamma$ implies amenability of $\Gamma$: see for example \cite[Remark 6.5.12]{Brown:2006aa}.   Conversely, amenability of $\Gamma$ implies the LLP for $C^*_r\Gamma$ by Corollary \ref{ce llp} and Theorem \ref{lance the}.  Hence the LLP for $C^*_r\Gamma$ is very close to amenability (and possibly even the same).
\end{remark}

\subsection{Subgroups}

In this subsection, we show that the (L)LP passes to subgroups.  This is known (see for example \cite[Remark 9.14]{Pisier:2020aa} or \cite[Remark 1.2]{Ioana:2020aa}), but we provide a proof: this is partly to keep the paper self-contained, and partly as we will need the key lemma for other purposes later.

Variants of the next result are well-known: compare for example \cite[Corollary 2.6 (v)]{Kirchberg:1993aa}, which is in many ways much stronger.  We give a (short) proof here as we could not find exactly what we need in the literature.

\begin{lemma}\label{cp ret}
Let $\iota \colon B\to A$ and $\sigma \colon A\to B$ be ccp maps between $C^*$-algebras such that $\sigma\circ \iota=\mathrm{id}_B$\footnote{One might say that ``$B$ is a ccp retract of $A$'', although this is non-standard.}.  Then if $A$ has the LLP (respectively, LP), $B$ does too.
\end{lemma}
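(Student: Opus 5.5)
The plan is to compose lifts, after first reducing to the unital setting. By Definition \ref{llp def}, the (L)LP for a non-unital algebra means the (L)LP for its unitization. By Remark \ref{ucp vs ccp 0}, the ccp maps $\iota$ and $\sigma$ extend to unital maps $\iota^+ \colon B^+\to A^+$ and $\sigma^+ \colon A^+\to B^+$. These extensions are ucp, since any ccp map into a unital algebra extends to a ucp map on the unitization; if $B$ is already unital, we use $B^+\cong B\oplus\C$ and the map $B\oplus \C\to B^+$ accordingly. They still satisfy $\sigma^+\circ\iota^+=\mathrm{id}_{B^+}$, because both sides are unital and agree on $B$. Some care is needed in the cases where $A$ or $B$ is unital and the unit of $B$ is not sent to the unit of $A$. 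I expect this bookkeeping with unitizations to be the only real nuisance. It can be handled uniformly by always working with the forced unitization $B\oplus\C$ and $\widetilde{A}$, and using that a unital algebra has the (L)LP iff its forced unitization does; alternatively, use the ccp version of the definitions as in Remark \ref{ucp vs ccp}. Either way, we may assume $A$ and $B$ are unital and $\iota,\sigma$ are ccp with $\sigma\circ\iota=\mathrm{id}_B$.

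For the LP, let $\phi \colon B\to C/J$ be ccp with quotient map $\pi \colon C\to C/J$. Then $\phi\circ\sigma \colon A\to C/J$ is ccp, so the LP for $A$ gives a ccp lift $\psi \colon A\to C$ with $\pi\circ\psi=\phi\circ\sigma$. Set $\widetilde\phi=\psi\circ\iota$, which is ccp. We have $\pi\circ\widetilde\phi=\phi\circ\sigma\circ\iota=\phi$, so $\widetilde\phi$ is the required lift.

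For the LLP, let $E\subseteq B$ be a finite-dimensional operator system. The image $\iota(E)$ is a finite-dimensional self-adjoint subspace of $A$, but it need not contain the unit. So let $F=\iota(E)+\C 1_A$, which is a finite-dimensional operator system in $A$. Applying the LLP for $A$ to $\phi\circ\sigma$ gives a ccp map $\psi \colon F\to C$ with $\pi\circ\psi=(\phi\circ\sigma)|_F$. Then $\psi\circ\iota|_E \colon E\to C$ is a composition of completely positive maps with contractive $n$-th amplifications, hence ccp. It satisfies $\pi\circ\psi\circ\iota|_E=\phi\circ\sigma\circ\iota|_E=\phi|_E$, as required.

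Alternatively, for the LLP one could argue via Theorem \ref{kir the}: the ccp maps $\iota\otimes\mathrm{id}$ and $\sigma\otimes\mathrm{id}$ are functorial on both max and min tensor products with $\mathcal{B}(H)$ (Remark \ref{stinespring}). Then the identity on $B\odot\mathcal{B}(H)$ factors through $A\otimes_{\max}\mathcal{B}(H)=A\otimes\mathcal{B}(H)$, giving $\|x\|_{\max}\le\|x\|_{\min}$. I would present the direct lifting argument and mention this as a remark.
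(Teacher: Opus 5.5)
Your proposal is correct and is essentially the paper's argument: reduce to the unital case via the unitizations $\iota^+$, $\sigma^+$ (with $\sigma^+\circ\iota^+=\mathrm{id}_{B^+}$), lift $\phi\circ\sigma$ using the property for $A$, and precompose with $\iota$. The paper phrases the unital case with ucp maps via Remark \ref{ucp vs ccp} and leaves the LLP as ``similar'', whereas you stay with ccp maps and make the LLP explicit via $F=\iota(E)+\C 1_A$; like the paper, you use without proof that a unital algebra has the (L)LP exactly when $A\oplus\C$ does, and your alternative LLP argument through Theorem \ref{kir the} avoids that unitization bookkeeping entirely.
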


\begin{proof}
For the LP, we first assume that $A$ and $B$ are unital and that $\iota \colon B\to A$ and $\sigma \colon B\to A$ are ucp.  Let then $C$ be a unital $C^*$-algebra, $\pi \colon C\to C/J$ be a quotient map, and $\phi \colon B\to C/J$ a ucp map. By Remark \ref{ucp vs ccp}, we want to lift $\phi$ to a ucp map $B\to C$.  Consider the diagram
$$
\xymatrix{ A \ar@{-->}[r] \ar[d]^-\sigma \ar[dr]^-{\phi\circ \sigma} & C \ar[d]^-\pi \\
B \ar[r]^-\phi \ar@/^/[u]^-\iota  & C/J  }
$$
As $A$ has the LP, the dashed arrow can be filled in by a ucp map, say $\psi$, so that the upper triangle commutes.  The lift we want is then $\psi\circ \iota$.  

For the general case, assume at least one of $A$, $B$, $\iota$ or $\sigma$ fails to be unital.  We then replace this data by the ucp maps $\iota^+ \colon B^+\to A^+$ and $\sigma^+ \colon A^+\to B^+$ as in Remark \ref{ucp vs ccp 0}.  Then $A^+$ still has the LP (either by definition if $A$ is nonunital, or because it is isomorphic to $A\oplus \C$ if $A$ is unital).  Hence we may use the unital case to deduce that $B^+$ has the LP, whence $B$ does (again, either by definition if $B$ is nonunital, or because it is isomorphic to $B\oplus \C$).

The LLP can be handled similarly.
\end{proof}

The next corollary follows immediately from Lemmas \ref{subgp rem} and \ref{cp ret}\footnote{Lemma \ref{cp ret} is more than we need to establish Corollary \ref{subgp}, but we will use the stronger version later.}.

\begin{corollary}\label{subgp}
The LP and LLP both pass to subgroups. \qed
\end{corollary}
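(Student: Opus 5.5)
The plan is to apply Lemma \ref{cp ret} directly, with the ccp retraction supplied by Lemma \ref{subgp rem}. Let $\Lambda\leq\Gamma$ be a subgroup and suppose $\Gamma$ has the LLP (respectively, the LP).

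First, by Lemma \ref{subgp rem}, the inclusion $\Lambda\to\Gamma$ induces an injective $*$-homomorphism $\iota\colon C^*\Lambda\to C^*\Gamma$. An injective $*$-homomorphism between unital $C^*$-algebras that sends unit to unit is ucp, and hence ccp by Remark \ref{ucp vs ccp 0}. Next, the second part of Lemma \ref{subgp rem} provides a ucp map $\sigma\colon C^*\Gamma\to C^*\Lambda$ extending the coefficient-truncation map $\C[\Gamma]\to\C[\Lambda]$; again this is ccp.

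It remains to check that $\sigma\circ\iota=\mathrm{id}_{C^*\Lambda}$. On the dense subalgebra $\C[\Lambda]$ this is clear, since truncation does nothing to elements already supported on $\Lambda$. Both maps are continuous, so the identity holds on all of $C^*\Lambda$; in any case, "splits the inclusion" is exactly what Lemma \ref{subgp rem} asserts. Lemma \ref{cp ret} now gives that $C^*\Lambda$ has the LLP (respectively, the LP), which by definition means $\Lambda$ does.

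There is no real obstacle here. All the content lies in the existence of the ucp conditional expectation $\sigma$, which we are taking from Lemma \ref{subgp rem}, and in the unitization bookkeeping, which is already handled inside Lemma \ref{cp ret}.
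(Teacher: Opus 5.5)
Your proposal is correct and is exactly the paper's argument: the corollary is deduced immediately by feeding the inclusion $C^*\Lambda\to C^*\Gamma$ and its ucp splitting from Lemma \ref{subgp rem} into the retract lemma, Lemma \ref{cp ret}.
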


\subsection{Increasing unions}

The following result is due to Kirchberg \cite[Corollary 2.6 (vi)]{Kirchberg:1993aa} (in a more general form).

\begin{lemma}\label{dir lim}
Let $A=\overline{\bigcup_{i\in I} A_i}$ be the closure of a directed union of $C^*$-subalgebras, each of which has the LLP.  Then $A$ has the LLP.

Let $A=\overline{\bigcup_{n=1}^\infty A_n}$ be the closure of a increasing sequence $C^*$-subalgebras, each of which is separable and has the LP.  Assume moreover that for each $n$, the inclusion $A_n\to A_{n+1}$ admits a ccp splitting.  Then $A$ has the LP. \qed
\end{lemma}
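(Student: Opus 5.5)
For the LLP part I will use Kirchberg's criterion (Theorem \ref{kir the}). Write $B=\mathcal{B}(H)$. The goal is to show that the quotient map $A\otimes_{\max}B\to A\otimes B$ is isometric on the algebraic tensor product $A\odot B$; since $A\odot B$ is dense and both are completions of it, this suffices. First I will reduce to the unital case by passing to unitizations; the $A_i^+$ then form a directed family of unital subalgebras with dense union in $A^+$.

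Next I take an element $x\in \bigcup_i A_i\odot B$, so $x\in A_i\odot B$ for some $i$. Since $A_i$ has the LLP, we have $\|x\|_{A_i\otimes_{\max}B}=\|x\|_{A_i\otimes B}$. The minimal norm is injective, so $\|x\|_{A_i\otimes B}=\|x\|_{A\otimes B}$. The maximal norm is only decreasing under inclusions, so $\|x\|_{A\otimes_{\max}B}\le \|x\|_{A_i\otimes_{\max}B}$. Combining these gives $\|x\|_{A\otimes_{\max}B}\le\|x\|_{A\otimes B}$, and the reverse inequality always holds, so the two norms agree on $\bigcup_i A_i\odot B$.

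Finally I pass to all of $A\odot B$ by density. Both norms are cross norms, and for $a_k\in A$ approximated by elements of $\bigcup_i A_i$, the elementary tensors satisfy $\|a\otimes b\|_{\max}=\|a\|\|b\|$. It follows that $\bigcup_i A_i\odot B$ is dense in $A\odot B$ for both norms, so the norms agree everywhere and $(A,B)$ is a nuclear pair.

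For the LP part, $A$ is separable, and I will use the ccp splittings $\sigma_n\colon A_{n+1}\to A_n$ to build ccp lifts directly. Given a ccp map $\phi\colon A\to C/J$, I first lift $\phi|_{A_1}$ to a ccp map $\psi_1$. The key tool is an extension step. Suppose $\psi_n$ is a ccp lift of $\phi|_{A_n}$. I choose some ccp lift $\theta$ of $\phi|_{A_{n+1}}$. Then $\theta$ and $\psi_n\circ\sigma_n$ agree with $\phi$ modulo $J$ on $A_n$, although not on $A_{n+1}$. To correct this I will use a quasicentral approximate unit $(e_\lambda)$ of $J$ and the usual convex combination trick,
$$
\psi_{n+1}=(1-e)^{1/2}\theta(1-e)^{1/2}+e^{1/2}\,\psi_n\sigma_n\,e^{1/2},
$$
with $e$ taken far out in the approximate unit. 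This is ccp and lifts $\phi$ on $A_n$ up to elements of $J$.

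Running this construction, I would use the Arveson-style argument to get lifts $\psi_n$ on $A_n$ with $\|\psi_{n+1}|_{A_n}-\psi_n\|$ small on a dense sequence, so that they converge pointwise to a ccp lift on $\bigcup A_n$, which then extends to $A$ by continuity. The main obstacle is exactly this compatibility step, namely arranging that the successive lifts agree approximately and that the errors are summable on a countable dense set. I would follow Arveson's proof that the set of liftable ccp maps is closed in the point-norm topology for separable domains. Phrased that way, it is enough to show that $\phi$ is a point-norm limit of liftable maps. The maps $\phi\circ\iota_n\circ\Sigma_n$, with $\Sigma_n=\sigma_n\circ\cdots$ composed down to $A_n$, are liftable; the difficulty is that these compositions converge to $\phi$ pointwise only if the splittings are compatible. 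If they are not, I would instead apply the extension step above inductively to obtain the convergence.
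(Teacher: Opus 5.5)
The paper gives no proof of this lemma (it quotes Kirchberg), so the comparison is with the standard argument. Your LLP part is that argument and is complete as written. Its ingredients are Theorem \ref{kir the} for each $A_i$, injectivity of the minimal norm, contractivity of $A_i\otimes_{\max}\mathcal{B}(H)\to A\otimes_{\max}\mathcal{B}(H)$, and max-norm density of $\bigcup_i A_i\odot\mathcal{B}(H)$. The passage to unitizations is not needed, since Theorem \ref{kir the} is stated for arbitrary $C^*$-algebras.

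For the LP part, both of your routes work, but the obstacle you single out is not real. Each $\sigma_k$ is the identity on $A_k$, so for $n\le m\le m'$ the composite $\sigma_m\circ\cdots\circ\sigma_{m'-1}$ is the identity on $A_m$. Hence the maps $\sigma_n\circ\cdots\circ\sigma_{m-1}$ on the various $A_m$ are automatically compatible. They define a ccp map on $\bigcup_m A_m$, which extends by continuity to a ccp map $\Sigma_n\colon A\to A_n$ with $\Sigma_n|_{A_n}=\mathrm{id}$.

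Let $\psi_n$ be a ccp lift of $\phi|_{A_n}$, which exists because $A_n$ has the LP. Then $\psi_n\circ\Sigma_n$ lifts $\phi\circ\Sigma_n$. Moreover $\phi\circ\Sigma_n\to\phi$ in point-norm, because $\phi\Sigma_n(x)=\phi(x)$ for $x\in A_m$ and $n\ge m$, and all maps are contractive. Since $A$ is separable, Arveson's theorem \cite{Arveson:1977aa} that liftable ccp maps form a point-norm closed set finishes the proof, with no inductive correction needed.

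If you run the inductive construction instead, your convex combination gives more than you state. Since $e\in J$, $\psi_{n+1}$ is an \emph{exact} lift of $\phi$ on all of $A_{n+1}$, not just on $A_n$ up to $J$. Quasicentrality, together with $\theta(x)-\psi_n(x)\in J$ for $x\in A_n$, is what makes $\|\psi_{n+1}(x)-\psi_n(x)\|<2^{-n}$ on a prescribed finite set $F_n\subseteq A_n$. Choosing increasing $F_n$ with dense union then gives a pointwise convergent sequence of lifts, which is just Arveson's closure argument written out.
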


\begin{corollary}\label{dir un}
Let $\Gamma=(\bigcup_{i\in I}\Gamma_i)$ be the increasing union of a directed net of subgroups.  If all the $\Gamma_i$ have the LLP, then so does $\Gamma$.

If moreover $\Gamma$ is countable and all the $\Gamma_i$ have the LP, then $\Gamma$ has the LP.
\end{corollary}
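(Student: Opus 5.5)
The plan is to reduce Corollary \ref{dir un} to Lemma \ref{dir lim} by identifying $C^*\Gamma$ with the closure of the directed union of the subalgebras $C^*\Gamma_i$.

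\textbf{Step 1: the union is dense.} By Lemma \ref{subgp rem}, each inclusion $\Gamma_i\le\Gamma$ induces an injective $*$-homomorphism $C^*\Gamma_i\to C^*\Gamma$. We may therefore regard each $C^*\Gamma_i$ as a $C^*$-subalgebra of $C^*\Gamma$. If $\Gamma_i\le\Gamma_j$, the induced inclusions are compatible, so these subalgebras form a directed family. Every element of $\C[\Gamma]$ is a finite sum, and since the net is directed its support lies in a single $\Gamma_i$. Hence $\bigcup_i C^*\Gamma_i$ contains $\C[\Gamma]$, which is dense in $C^*\Gamma$, and so $C^*\Gamma=\overline{\bigcup_i C^*\Gamma_i}$. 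The first half of Lemma \ref{dir lim} then gives the LLP for $\Gamma$.

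\textbf{Step 2: reduce to a sequence.} For the LP, assume $\Gamma$ is countable and every $\Gamma_i$ has the LP. Enumerate $\Gamma=\{g_1,g_2,\dots\}$ and use directedness to choose indices inductively: pick $i_n$ so that $\Gamma_{i_n}$ contains $g_1,\dots,g_n$ and also contains $\Gamma_{i_{n-1}}$. This produces an increasing sequence $\Lambda_n:=\Gamma_{i_n}$ with $\bigcup_n\Lambda_n=\Gamma$. By the density argument of Step 1, $C^*\Gamma=\overline{\bigcup_n C^*\Lambda_n}$.

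\textbf{Step 3: check the hypotheses of the second half of Lemma \ref{dir lim}.}
\begin{enumerate}[(i)]
\item Each $\Lambda_n$ is a subgroup of the countable group $\Gamma$, so it is countable and $C^*\Lambda_n$ is separable.
\item Each $C^*\Lambda_n$ has the LP by assumption.
\item The inclusion $C^*\Lambda_n\to C^*\Lambda_{n+1}$ is induced by the subgroup inclusion $\Lambda_n\le\Lambda_{n+1}$. By the second part of Lemma \ref{subgp rem}, it admits a ucp, hence ccp, splitting (the map killing coefficients outside $\Lambda_n$).
\end{enumerate}
The second half of Lemma \ref{dir lim} then gives the LP for $\Gamma$.

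The only point needing care is Step 2: passing from an arbitrary directed net to a cofinal increasing sequence. This is routine given countability of $\Gamma$.
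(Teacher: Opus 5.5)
Your proposal is correct and follows the paper's own proof. Both arguments use Lemma \ref{subgp rem} to view $C^*\Gamma$ as the closure of the directed union of the $C^*\Gamma_i$ and then apply Lemma \ref{dir lim}; in the countable case both first extract a cofinal increasing sequence. Your version simply writes out the details the paper leaves implicit: density of $\C[\Gamma]$, separability of each $C^*\Lambda_n$, and the ccp splittings supplied by the second part of Lemma \ref{subgp rem}.
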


\begin{proof}
The first statement is immediate from Lemmas \ref{subgp rem} and \ref{dir lim}. For the second one, since $\Gamma$ is countable, we can extract a cofinal subsequence of $(i_j)_{j \in \mathbb{N}} \subset I$, that is $\Gamma$ is the directed union of the $\Gamma_{i_j}$ and $\Gamma_{i_j} < \Gamma_{i_{j+1}}$. So again we conclude from Lemmas \ref{subgp rem} and \ref{dir lim}.
\end{proof}

\subsection{Semidirect products by amenable groups}

The following result was shown in \cite[Theorems 7.2 and 7.4]{Buss:2022aa} (in more generality).  A different proof was subsequently given in \cite[Section 8.2]{Enders:2024aa}.

\begin{theorem}[Buss--Echterhoff--Willett]\label{llp cp}
Let $B$ be a $\Lambda$-$C^*$-algebra.  If $B$ has the LLP and $\Lambda$ is amenable, then $B\rtimes\Lambda$ has the LLP.

If moreover $B$ is separable and has the LP, and $\Lambda$ is countable and amenable, then $B\rtimes\Lambda$ has the LP. \qed
\end{theorem}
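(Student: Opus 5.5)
The plan is to reduce both statements to the characterizations of the (L)LP by tensor products, namely Theorem \ref{kir the} for the LLP and Theorem \ref{pis the} for the LP. We then use two standard facts about crossed products.

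The first fact: for any $C^*$-algebra $D$, with $\Lambda$ acting trivially on $D$, there is a canonical isomorphism
$$
(B\rtimes\Lambda)\otimes_{\max}D\cong (B\otimes_{\max}D)\rtimes\Lambda .
$$
Both sides have the same universal property: they classify commuting pairs consisting of a covariant pair for $(B,\Lambda)$ and a representation of $D$.

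The second fact concerns amenable $\Lambda$. By the crossed product version of Theorem \ref{hul the}, full and reduced crossed products agree. Reduced crossed products preserve injective equivariant $*$-homomorphisms, and they carry a faithful conditional expectation $E\colon A\rtimes_r\Lambda\to A$, given by taking the coefficient at the identity. Also, with trivial action on $D$ one has $(A\otimes D)\rtimes_r\Lambda\cong (A\rtimes_r\Lambda)\otimes D$ for the minimal tensor product, since both are spatially realized on $\ell^2(\Lambda)\otimes H_A\otimes H_D$.

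For the LLP, I would take $D=\mathcal{B}(H)$ and write the chain
$$
(B\rtimes\Lambda)\otimes_{\max}\mathcal{B}(H)\cong (B\otimes_{\max}\mathcal{B}(H))\rtimes\Lambda = (B\otimes\mathcal{B}(H))\rtimes\Lambda \cong (B\otimes \mathcal{B}(H))\rtimes_r\Lambda\cong (B\rtimes_r\Lambda)\otimes\mathcal{B}(H).
$$
The second step uses Theorem \ref{kir the} for $B$, the third and the last use amenability, and the final term equals $(B\rtimes\Lambda)\otimes\mathcal{B}(H)$. I would check that the composite is the canonical quotient map, so that Theorem \ref{kir the} applies to $B\rtimes\Lambda$. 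If $B$ is nonunital, I would first pass to unitizations; this is compatible since $B^+\rtimes\Lambda$ contains $B\rtimes\Lambda$ as an ideal, and the LLP passes to ideals in this setting.

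For the LP, $B\rtimes\Lambda$ is separable because $B$ is separable and $\Lambda$ is countable, so Theorem \ref{pis the} applies. Given a family $\{D_i\}$, the first fact rewrites the map to be shown injective as
$$
\Big(\big(\textstyle\prod D_i\big)\otimes_{\max}B\Big)\rtimes\Lambda\to \textstyle\prod_i\big((D_i\otimes_{\max}B)\rtimes\Lambda\big).
$$
I would factor this map in two steps. The first step is the crossed product by $\Lambda$ of the map $(\prod D_i)\otimes_{\max}B\to\prod(D_i\otimes_{\max}B)$. That map is injective by Theorem \ref{pis the} applied to $B$, and it is $\Lambda$-equivariant for the coordinatewise action. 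Its crossed product therefore stays injective by the second fact.

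The second step, which I expect to be the main point, is injectivity of $(\prod E_i)\rtimes\Lambda\to\prod(E_i\rtimes\Lambda)$, where $E_i=D_i\otimes_{\max}B$. Here I would use reduced crossed products together with the conditional expectations. Suppose $x$ maps to zero in every coordinate. Then $E(x^*x)\in\prod E_i$ has $i$-th coordinate $E_i(x_i^*x_i)=0$, because the coordinate maps intertwine the expectations; checking this on finite sums suffices. So $E(x^*x)=0$, and faithfulness of $E$ forces $x=0$.
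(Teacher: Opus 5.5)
Your proof is correct, and it takes a different route from the paper because the paper gives no argument of its own. It quotes the statement from \cite[Theorems 7.2 and 7.4]{Buss:2022aa}, where it is proved in greater generality, and points to a different proof in \cite[Section 8.2]{Enders:2024aa}.

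Your route is a self-contained argument for discrete amenable $\Lambda$. It runs everything through the tensor-product criteria of Theorems \ref{kir the} and \ref{pis the}, which is the same mechanism the paper uses for central extensions in Corollary \ref{c ext}. Beyond those criteria you need only a few standard facts:
\begin{itemize}
\item the identification $(B\rtimes\Lambda)\otimes_{\max}D\cong(B\otimes_{\max}D)\rtimes\Lambda$;
\item full and reduced crossed products agree for amenable $\Lambda$;
\item $\rtimes_r$ preserves injective equivariant maps;
\item the canonical expectation onto the coefficient algebra is faithful.
\end{itemize}
Your expectation argument in the last step actually shows that $(\prod_i E_i)\rtimes_r\Lambda\to\prod_i(E_i\rtimes_r\Lambda)$ is injective for every discrete group. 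So amenability is used there only to identify $(\prod_i E_i)\rtimes\Lambda$ with its reduced version.

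What this approach gives up is generality. It needs full $=$ reduced for the auxiliary coefficient algebras $(\prod_i D_i)\otimes_{\max}B$ and $\prod_i E_i$, and it uses the discrete conditional expectation. It is therefore tied to discrete amenable $\Lambda$ and does not obviously extend to the broader setting of the cited source.

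A minor point: the unitization detour in the LLP part is unnecessary. Theorem \ref{kir the} applies to non-unital algebras directly, although your ideal argument is also correct.
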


The following corollary is immediate from Theorem \ref{llp cp} and Remark \ref{sd rem}.

\begin{corollary}\label{sd prod}
Let $\Gamma=K\rtimes \Lambda$ be a discrete group that splits as a semidirect product.  If $C^*K$ satisfies the LLP and $\Lambda$ is amenable, then $C^*\Gamma$ satisfies the LLP.

If $\Gamma$ is moreover countable, $C^*K$ has the LP, and $\Lambda$ is amenable, then $C^*\Gamma$ has the LP. \qed
\end{corollary}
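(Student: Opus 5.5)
The plan is to identify $C^*\Gamma$ with a crossed product and then quote Theorem \ref{llp cp}. By Remark \ref{sd rem}, the conjugation action $\alpha$ of $\Lambda$ on $K$ induces an action of $\Lambda$ on $C^*K$ by $*$-automorphisms. Concretely, each $\lambda\in\Lambda$ gives a group automorphism of $K$, hence a $*$-automorphism of $\C[K]$. This automorphism is isometric for the maximal norm, because precomposing unitary representations of $K$ with a group automorphism permutes them. It therefore extends to $C^*K$, and functoriality gives a homomorphism $\Lambda\to\mathrm{Aut}(C^*K)$.

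Next I would check the identification $C^*\Gamma\cong C^*(K)\rtimes\Lambda$. The map on dense subalgebras sends $k\lambda\in\Gamma$ to $k\cdot\lambda$ in $C^*K\rtimes_{alg}\Lambda$. To see it is isometric for the maximal norms, compare representations on both sides.
\begin{itemize}
\item A unitary representation $u$ of $\Gamma$ restricts to a representation of $K$. This extends to a nondegenerate (indeed unital) representation $\pi$ of $C^*K$, and together with $u|_\Lambda$ it forms a covariant pair.
\item Conversely, a covariant pair $(\pi,v)$ gives unitaries $\pi(k)$. These are unitaries because a nondegenerate representation of the unital algebra $C^*K$ is unital. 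The formula $k\lambda\mapsto\pi(k)v_\lambda$ is then a unitary representation of $\Gamma$, by the covariance relation.
\end{itemize}
These two constructions are mutually inverse and compatible with the integrated forms. Hence the two suprema defining the norms agree, which is exactly what Remark \ref{sd rem} asserts.

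With this in hand, the LLP statement follows from the first part of Theorem \ref{llp cp} with $B=C^*K$. For the LP statement, countability of $\Gamma$ makes $K$ and $\Lambda$ countable. Then $C^*K$ is separable, since it is the closure of the span of the countable set $K$. The second part of Theorem \ref{llp cp} therefore applies.

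The only step needing care is the identification of the two maximal completions, specifically the correspondence between nondegenerate representations of $C^*K$ and unitary representations of $K$. Since this is already recorded in Remark \ref{sd rem}, I expect no real obstacle; the corollary is essentially formal.
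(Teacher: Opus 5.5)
Your proposal is correct and follows the paper's route exactly: identify $C^*\Gamma\cong C^*(K)\rtimes\Lambda$ via Remark \ref{sd rem} and apply Theorem \ref{llp cp}. For the LP part, countability of $\Gamma$ gives separability of $C^*K$ and countability of $\Lambda$. Your extra verification of the covariant-pair correspondence, which the paper leaves implicit, is sound.
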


\begin{remark}\label{llp products remark}
It is not true that the LLP (or LP) is preserved under semi-direct products.  The group $\Z^2\rtimes F_2$ from Example \ref{cex} does not have the LLP; on the other hand, $\Z^2$ and $F_2$ have the LP by Corollary \ref{amen cor} and Theorem \ref{free gp} respectively.

It seems to be open whether the (L)LP is preserved under direct products.  For example, it is open whether $F_2\times F_2$ has the (L)LP (Question \ref{q:ftimesf}); this problem is generally considered difficult, see for example the discussion at the end of \cite[Section 3]{Ozawa:2004ab}.
\end{remark}

\subsection{Free products amalgamated over finite subgroups}

Let $A$ and $B$ be unital $C^*$-algebras, each unitally containing a third $C^*$-algebra $C$.  Then there is a notion of \emph{(unital) free product $C^*$-algebra of $A$ and $B$, amalgamated over $C$}, denoted $A \Asterisk_C B$ (see for example \cite[II.8.3.5]{Blackadar:2006eq} or \cite[Section 5.1]{Loring:1997aa}).  This has the property that if $\Gamma$ and $\Lambda$ are discrete groups containing a common subgroup $\Delta$, then 
\begin{equation}\label{fpc*gp}
C^*\Gamma \Asterisk_{C^*\Delta} C^*\Lambda = C^*(\Gamma \Asterisk_\Delta \Lambda)
\end{equation}
(see for example \cite[Lemma 3.1]{Eilers:2018ab}).

\begin{theorem}[Boca, Pisier, Ozawa, Enders--Shulman]\label{fp the}
Let $A$ and $B$ be unital $C^*$-algebras, unitally containing a common finite-dimensional $C^*$-subalgebra $C$.  If $A$ and $B$ have the LLP, then so does $A \Asterisk_C B$.  If moreover $A$ and $B$ are separable and have the LP, then $A \Asterisk_C B$ has the LP.
\end{theorem}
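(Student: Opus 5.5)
The plan is to use Kirchberg's criterion (Theorem \ref{kir the}) for the LLP and Pisier's criterion (Theorem \ref{pis the}) for the LP. Both reduce the problem to comparing maximal tensor norms, so the question becomes how $\otimes_{\max}$ interacts with amalgamated free products over a finite-dimensional algebra.

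The key structural input is the identification
\[
(A \Asterisk_C B)\otimes_{\max} D \cong (A\otimes_{\max} D)\Asterisk_{C\otimes D}(B\otimes_{\max} D)
\]
for any unital $C^*$-algebra $D$. To prove it, I would compare universal properties. A representation of the left-hand side is a pair of commuting representations: one of $A\Asterisk_C B$ and one of $D$. This is the same as representations of $A$ and $B$ that agree on $C$, each commuting with a common representation of $D$. That in turn is exactly a pair of representations of $A\otimes_{\max}D$ and $B\otimes_{\max}D$ agreeing on $C\otimes D$. Since $C$ is finite-dimensional, it is nuclear, so $C\otimes_{\max}D = C\otimes D$ sits inside both $A\otimes_{\max}D$ and $B\otimes_{\max}D$. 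The same identification holds with $\otimes$ in place of $\otimes_{\max}$ on both sides, provided we know the minimal tensor product commutes with amalgamated free products in the appropriate sense.

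\textbf{LLP.} Take $D=\mathcal{B}(H)$. By hypothesis the maps $A\otimes_{\max}D\to A\otimes D$ and $B\otimes_{\max}D\to B\otimes D$ are isomorphisms. It then suffices to show that
\[
(A\otimes D)\Asterisk_{C\otimes D}(B\otimes D)\to (A\Asterisk_C B)\otimes D
\]
is injective. This is the main obstacle. I would handle it with conditional expectations. Because $C$ is finite-dimensional, there are faithful conditional expectations $A\to C$ and $B\to C$, for instance obtained by averaging over the unitary group of $C$ after choosing states. These give reduced amalgamated free products (Voiculescu/Boca). The plan is to use Boca's theorem that ucp maps $A\to E$ and $B\to E$ agreeing on $C$ combine into a ucp map on the full amalgamated free product. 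With this, the LLP can be checked directly via local liftings: lift on $A$ and $B$ separately, correct the lifts so they agree on $C$, and combine them by Boca's theorem.

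The correction step uses finite-dimensionality of $C$ in an essential way. Finite-dimensional $C^*$-algebras are projective enough that a ucp lift restricted to $C$ can be perturbed to an exact $*$-homomorphism lift of $C$. One then conjugates the lifts on $A$ and $B$ so that both restrict to this fixed lift on $C$. This follows the Ozawa / Enders--Shulman approach, and it handles the LP case directly as well.

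\textbf{LP.} In the separable case I would lift globally in the same way. Given ucp $\phi\colon A\Asterisk_C B\to E/J$, lift $\phi|_A$ and $\phi|_B$ to ucp maps $\psi_A$ and $\psi_B$. Using the semiprojectivity of $C$, lift $\phi|_C$ to a $*$-homomorphism $\rho\colon C\to E$. Next, adjust $\psi_A$ and $\psi_B$ so that their restrictions to $C$ equal $\rho$. This can be done via a Stinespring dilation plus a multiplicative-domain argument: if a ucp map restricts to a $*$-homomorphism on $C$, then it is a $C$-bimodule map. Finally, combine the adjusted maps into a ucp lift on $A\Asterisk_C B$ by Boca's theorem.

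The LLP version runs identically on finite-dimensional operator systems. Each such system is contained in the algebra generated by finite-dimensional subsystems of $A$ and $B$ together with $C$, so the same local lift–correct–combine procedure applies. The hardest step is the correction forcing the lifts to agree exactly on $C$, and this is where finite-dimensionality of $C$ is used.
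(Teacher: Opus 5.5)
Your proposal has a genuine gap, and it sits exactly at the step you call the hardest. Two things go wrong in the LP argument.

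First, a ucp map on $A\Asterisk_C B$ is not determined by its restrictions to $A$ and $B$. Combining lifts of $\phi|_A$ and $\phi|_B$ by Boca's theorem produces a lift of the Boca free product of $\phi|_A$ and $\phi|_B$. That equals $\phi$ only when $\phi$ is a $*$-homomorphism. For ucp $\phi$, moreover, $\phi|_C$ is not even multiplicative, so asking for a $*$-homomorphic lift $\rho$ makes no sense. This part is fixable: reduce to $*$-homomorphisms by writing the separable algebra $A\Asterisk_C B$ as $C^*F_\infty/I$ and lifting the identity, using Theorem \ref{free gp}.

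Second, and fatally when $C\neq\C$, even a unital $*$-homomorphism $\phi|_C\colon C\to E/J$ need not lift to a $*$-homomorphism $C\to E$.
\begin{itemize}
\item Semiprojectivity only gives lifts to $E/J_n$ when $J=\overline{\bigcup_n J_n}$ for an increasing sequence of ideals. Finite-dimensional $C^*$-algebras other than $\C$ are not projective.
\item Concretely, take $A=B=C=\C^2$, $E=C[0,1]$, $J=C_0((0,1))$, and let $\phi$ be the identity of $\C^2\cong E/J$. A $*$-homomorphic lift would need a projection $p\in C[0,1]$ with $p(0)=1$ and $p(1)=0$, which does not exist. Yet the theorem is trivial in this case.
\item Nor can a ucp lift of $\phi|_C$ be perturbed into a $*$-homomorphism. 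It is multiplicative only modulo $J$, not approximately multiplicative in norm.
\end{itemize}
Boca's amalgamated theorem needs $C$-bimodular maps, that is, a common $*$-homomorphic restriction to $C$. So your lift--correct--combine scheme only survives for $C=\C$. There, after the reduction above, it is the standard deduction of the unamalgamated LP from Boca's theorem. Your multiplicative-domain adjustment would be fine if such a $\rho$ existed; the problem is its existence.

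The LLP part inherits these problems and adds another. Local lifts are defined only on finite-dimensional operator systems of $A$ and $B$, while Boca's theorem needs ucp maps on all of $A$ and $B$. So the local version does not ``run identically''.

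Your identification $(A\Asterisk_C B)\otimes_{\max}D\cong(A\otimes_{\max}D)\Asterisk_{C\otimes D}(B\otimes_{\max}D)$ is correct; a conditional expectation onto the finite-dimensional $C$ shows that $C\otimes D$ embeds. The gap is the remaining injectivity of $(A\otimes\mathcal{B}(H))\Asterisk_{C\otimes\mathcal{B}(H)}(B\otimes\mathcal{B}(H))\to(A\Asterisk_C B)\otimes\mathcal{B}(H)$. This is not a formal property of the minimal tensor product. Via Theorem \ref{kir the} it is precisely the statement to be proved, and you give no argument for it.

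For comparison, the paper does not prove this theorem itself. It cites Pisier for the LLP with $C=\C$, Ozawa for the extension to finite-dimensional $C$, Boca for the LP with $C=\C$, and Enders--Shulman for the LP in general. The passage from $\C$ to a nontrivial finite-dimensional $C$ is exactly where those references supply an idea your proposal is missing.
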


\begin{proof}
For the LLP, the case where $C$ is just scalar multiples of the unit is due to Pisier \cite[Theorem 0.2]{Pisier:1996aa}; Ozawa \cite[discussion below Proposition 3.2.1]{Ozawa:2004ab} explains how to extend it to free products amalgamated over a finite-dimensional $C^*$-subalgebra.  For the LP, the result without amalgamation is a consequence of a result of Boca \cite[Theorem 3.1]{Boca:1991aa}, and is due to Enders--Shulman \cite[Corollary 4.3]{Enders:2024aa} in general (they also give a different proof in the LLP case, assuming $A$ and $B$ are separable).
\end{proof}

The following corollary is immediate from Theorem \ref{fp the} and the canonical isomorphism in line \eqref{fpc*gp}.

\begin{corollary}\label{fp gp}
Let $\Gamma$ and $\Lambda$ be groups, and $\Delta$ a common finite subgroup.  Then if $\Gamma$ and $\Lambda$ have the LLP, so does $\Gamma \Asterisk_\Delta \Lambda$.  If moreover $\Gamma$ and $\Lambda$ are countable and have the LP, then $\Gamma \Asterisk_\Delta \Lambda$ has the LP. \qed
\end{corollary}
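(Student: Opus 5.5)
The plan is to reduce Corollary \ref{fp gp} directly to Theorem \ref{fp the} via the identification \eqref{fpc*gp}. First set $A=C^*\Gamma$, $B=C^*\Lambda$ and $C=C^*\Delta$. Both $A$ and $B$ are unital, with unit the identity element of the group. By Lemma \ref{subgp rem}, the inclusions $\Delta\le\Gamma$ and $\Delta\le\Lambda$ induce injective unital $*$-homomorphisms $C^*\Delta\to C^*\Gamma$ and $C^*\Delta\to C^*\Lambda$. Hence $C$ sits unitally inside both $A$ and $B$ as a common $C^*$-subalgebra, which is exactly the setting in which $A\Asterisk_C B$ is defined.

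Next we check that $C$ is finite-dimensional. Since $\Delta$ is finite, $\C[\Delta]$ is a finite-dimensional $*$-algebra, so it is already complete in any norm; thus $C^*\Delta=\C[\Delta]$ has dimension $|\Delta|$. The LLP hypotheses on $\Gamma$ and $\Lambda$ are by definition the LLP for $A$ and $B$. Theorem \ref{fp the} then gives that $A\Asterisk_C B$ has the LLP, and the canonical isomorphism \eqref{fpc*gp} identifies this algebra with $C^*(\Gamma\Asterisk_\Delta\Lambda)$.

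For the LP statement, countability of $\Gamma$ and $\Lambda$ gives separability of $C^*\Gamma$ and $C^*\Lambda$. Indeed, the group elements span a dense subspace, and rational complex combinations of them form a countable dense set. The second half of Theorem \ref{fp the} then yields the LP for $A\Asterisk_C B\cong C^*(\Gamma\Asterisk_\Delta\Lambda)$.

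There is no real obstacle. The only point needing care is that the copy of $C$ in $A$ and in $B$ is the same abstract algebra, embedded compatibly, so that the amalgamated free product in \eqref{fpc*gp} is formed over the images of the subgroup inclusions. This compatibility is built into how \eqref{fpc*gp} is stated.
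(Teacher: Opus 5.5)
Your proposal is correct and is the same argument as the paper's, which states that the corollary is immediate from Theorem \ref{fp the} together with the identification \eqref{fpc*gp}. Your additional checks fill in routine details the paper leaves implicit: $C^*\Delta$ is finite-dimensional and sits unitally in both algebras via Lemma \ref{subgp rem}, and countability of the groups gives separability of their $C^*$-algebras.
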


\begin{remark}
Corollary \ref{fp gp} is false for an amalgamated product over an infinite subgroup, even an abelian one.  The group $\mathbb{Z}^2 \rtimes F_2$ from Example \ref{cex} does not have the LLP.  Write now $\alpha$, $\beta$ for the action of the two generators of $F_2$ on $\Z^2$, and let $\Gamma=\Z^2\rtimes_\alpha\Z$ and $\Lambda=\Z^2\rtimes_\beta \Z$.  Let $\Delta=\Z^2$, considered as a subgroup in both $\Gamma$ and $\Lambda$ in the canonical way.  Then $\Gamma$ and $\Lambda$ have the LP by Corollary \ref{amen cor}, but $\Gamma \Asterisk_\Delta \Lambda=\Z^2\rtimes F_2$ does not, as we have already noted.  On the other hand, free products amalgamated over an abelian \emph{retract} do preserve the (L)LP: see Corollary \ref{fp r llp} below.

Whether or not the (L)LP is preserved by free products amalgamated over $\Z$ seems to be open.
\end{remark}

\subsection{Graphs of groups with finite edge groups}
\label{ss:gog}

Corollary \ref{fp gp} extends to a larger class of fundamental groups of graphs of groups. This was essentially remarked already in \cite[Corollaries 4.6 and 4.7]{Enders:2024aa}, although the hypotheses are more restrictive. Moreover \cite[Corollaries 4.6 and 4.7]{Enders:2024aa} refer back to \cite{Eilers:2018ab}, whose approach also uses a notion of HNN extension of $C^*$-algebras. To avoid repeating a very similar argument, we give an alternative proof here that only uses the result for amalgamated products, and basic Bass--Serre theory. This gives us an opportunity to introduce fundamentals of Bass--Serre theory, which will be used again in the rest of the paper.

\begin{proposition}\label{gog finite edge}

Let $\Gamma$ be the fundamental group of a graph of groups with finite edge groups. If all vertex groups have the LLP, then so does $\Gamma$. If moreover all vertex groups have the LP and $\Gamma$ is countable, then $\Gamma$ has the LP.
\end{proposition}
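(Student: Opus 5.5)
Reduce first to finite graphs. A graph of groups $\mathcal{G}$ with underlying graph $X$ is the directed union of its finite connected subgraphs of groups $\mathcal{G}_Y$, where $Y\subseteq X$ ranges over finite connected subgraphs. By standard Bass--Serre theory, $\pi_1(\mathcal{G}_Y)$ embeds in $\pi_1(\mathcal{G})$, and $\pi_1(\mathcal{G})$ is the directed union of these subgroups. Corollary \ref{dir un} then handles both statements. For the LLP it applies directly. For the LP, countability of $\Gamma$ lets us pass to a cofinal sequence, and each $\pi_1(\mathcal{G}_Y)$ is countable as a subgroup of $\Gamma$. So it suffices to treat finite $X$, with all vertex groups countable in the LP case (again as subgroups of $\Gamma$).

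For finite $X$, argue by induction on the number of edges. Choose a maximal tree $T\subseteq X$. The fundamental group of the tree of groups over $T$ is an iterated amalgamated product of the vertex groups along the edges of $T$, each amalgamated over a finite edge group. Corollary \ref{fp gp} applied once per edge shows this group, call it $\Gamma_T$, has the LLP, and the LP in the countable case. What remains is the edges outside $T$. Each contributes an HNN extension
$$\Gamma' = \langle \Gamma_T, t \mid t\, \alpha(e)\, t^{-1}=\beta(e),\ e\in E\rangle,$$
with $E$ finite and $\alpha,\beta\colon E\to\Gamma_T$ injective. So it is enough to show that an HNN extension of a group with the (L)LP over a finite associated subgroup again has the (L)LP.

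The HNN step is where I expect the real work, because we want to avoid a separate $C^*$-algebraic HNN theory. The plan is to realize the HNN extension $H=\Gamma_T\ast_E$ as a finite-index subgroup, or more conveniently a subgroup, of an amalgamated product over finite groups. The standard trick is the following. Let $\theta\colon\Gamma_T\to F$ be the map killing everything, and consider the Bass--Serre tree of $H$. Alternatively, use the concrete presentation
$$H \hookrightarrow \Gamma_T \ast_{E} (E\times\Z)\ \text{-type constructions},$$
which is not literally right. The cleaner route is to observe that $H$ acts on its Bass--Serre tree with finite edge stabilizers. Then the subgroup $N=\ker(H\to\Z)$, where $t\mapsto1$ and $\Gamma_T\mapsto0$, is the fundamental group of a graph of groups on a line: its vertex groups are the conjugates $t^n\Gamma_T t^{-n}$ and its edge groups are finite. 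So $N$ is an increasing union of finite iterated amalgams over finite subgroups. By Corollaries \ref{fp gp} and \ref{dir un}, $N$ has the LLP, and the LP when countable. Finally $H=N\rtimes\Z$, with the splitting given by $t$, and $\Z$ is amenable, so Corollary \ref{sd prod} gives the LLP, or the LP in the countable case, for $H$.

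Iterating over the finitely many edges of $X\setminus T$ completes the finite case. Two points need checking in this induction. First, the kernel description of $N$ as an infinite line of amalgams is standard Bass--Serre theory (the quotient of the tree by $N$ is a line), and it should be recalled carefully. Second, countability is preserved at each step.
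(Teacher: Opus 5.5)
Your argument is correct and is essentially the paper's proof. You amalgamate along a spanning tree over finite edge groups via Corollary \ref{fp gp}, treat each remaining edge as an HNN extension $H=N\rtimes\Z$ where $N$ is the fundamental group of a line of copies of the base group over finite edge groups (hence a directed union of finite amalgams), apply Corollary \ref{sd prod}, and use Corollary \ref{dir un} for infinite graphs. Just delete the abandoned false starts in your HNN paragraph (the map $\theta$ and the ``$\Gamma_T\ast_E(E\times\Z)$-type'' embedding), which play no role in the final argument.
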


\begin{remark}
    Fundamental groups of graphs of groups with finite edge groups play an important role in geometric group theory. If $\Gamma$ is a finitely generated group, then there is a well-defined notion of \emph{number of ends} $e(\Gamma)$, which is the (graph-theoretic) number of ends in some (equivalently, every) Cayley graph of $\Gamma$. The following basic facts were proved in the 1940s by Freudenthal \cite{freudenthal:ends} and Hopf \cite{hopf:ends}: $e(\Gamma) \in \{0, 1, 2, \infty\}$, with $e(\Gamma) = 0$ if and only if $\Gamma$ is finite, and $e(\Gamma) = 2$ if and only if $\Gamma$ is virtually $\mathbb{Z}$. The case $e(\Gamma) = 1$ is in a sense the generic one, in which case we say $\Gamma$ is \emph{one-ended}.

    When $e(\Gamma) = \infty$, Stallings proved that $\Gamma$ is either an amalgamated product or an HNN extension over a finite subgroup \cite{stallings:tf, stallings:general}. One can iterate this process, and if it terminates, $\Gamma$ will be expressed as the fundamental group of a graph of groups with finite edge groups and vertex groups with at most one end: in this case we say that $\Gamma$ is \emph{accessible}. Not all finitely generated groups are accessible \cite{dunwoody:inaccessible}, but a fundamental theorem of Dunwoody says that the finitely presented ones are \cite{dunwoody:accessible}. Moreover, for a finitely presented group, the corresponding vertex groups are also finitely presented \cite{fp:vertexgroups}. Therefore Proposition \ref{gog finite edge} reduces the (L)LP for finitely presented groups to the one-ended case.
\end{remark}

Let us start by recalling some basic notions from Bass--Serre theory: see \cite{Serre:1980aa} or \cite[Chapter 2]{bogopolski} for more details. A \emph{graph of groups} $\mathcal{G}$ is the data of a simplicial graph $X = (V, E)$ with a collection of vertex groups $(\Gamma_v)_{v \in V}$, edge groups $(\Gamma_e)_{e \in E}$ and inclusions $\alpha_e, \omega_e \colon \Gamma_e \to \Gamma_{\alpha(e)},\Gamma_{\omega(e)}$ from an edge group into its initial and terminal vertex, respectively. We follow Serre's convention that every simplicial edge is a pair $\{e, \bar{e}\}$, once with each orientation, and that $\Gamma_e = \Gamma_{\bar{e}}$, with $\alpha_e = \omega_{\bar{e}}, \omega_e = \alpha_{\bar{e}}$. Graphs are allowed to be infinite, and to have loops and multiple edges.

The \emph{fundamental group} $\pi_1(\mathcal{G})$ is defined as follows. First, choose a spanning tree $T$ of $X$, and write $E_T$ and $E_X$ for their respective edge sets; note that their vertex sets coincide. For every edge $e$ outside of $T$, let $t_e$ be a fresh letter. Then
\begin{align*}
\pi_1(\mathcal{G})  \coloneqq  \langle &\Gamma_v : v \in V; t_e : e \in E_X \setminus E_T \\
&\mid t_{\bar{e}} = t_e^{-1} : e \in E_X \setminus E_T; \\
& \alpha_e(g) = \omega_e(g) : e \in E_T, g \in \Gamma_e; \\
& t_e^{-1} \alpha_e(g) t_e = \omega_e(g): e \in E_X \setminus E_T, g \in \Gamma_e \rangle.
\end{align*}
The isomorphism class of the fundamental group does not depend on the choice of the spanning tree. The elements $t_e \in \pi_1(\mathcal{G})$ are called \emph{stable letters}.

\begin{remark}
\label{gog basic operations}

Suppose that $X$ has $n$ vertices. Then $\pi_1(\mathcal{G})$ can be built by a sequence of amalgamated products and HNN extensions as follows. Start with a single vertex $v_1$, let $T_1 = \{ v_1 \}$ and let $\Gamma_1 = \Gamma_{v_1}$. Suppose by induction that a tree $T_k$ with vertices $\{v_1, \ldots, v_k\}$ has been constructed, with corresponding subgroup $\Gamma_k = \langle \Gamma_{v_1}, \ldots, \Gamma_{v_k} \rangle < \pi_1(\mathcal{G})$. Let $v_{k+1}$ be a new vertex connected to $T_k$ by an edge $e$ with other endpoint $v_j$, and let $T_{k+1}$ be the tree obtained by adding $e$ (hence $v_{k+1}$). Then we define
\[\Gamma_{k+1} = \Gamma_k \Asterisk_{\Gamma_e} \Gamma_{v_{k+1}},\]
where the inclusions are give by $\omega_e$ on the right, and $\alpha_e$, followed by the inclusion $\Gamma_{v_j} < \Gamma_k$, on the left. $T_n$ will then be a spanning tree and $\Gamma_n$ will be the subgroup of $\pi_1(\mathcal{G})$ generated by the vertex groups, satisfying the relations that do not involve the letters $t_e$.

To include these, now enumerate the remaining edges $e_1, \ldots, e_m$. We define
\[\Gamma_n^1 = \Gamma_n \Asterisk_{f} = \langle \Gamma_n,  t_{e_1} \mid t_{e_1}^{-1} g t_{e_1} = f(g) : g \in \alpha_{e_1}(\Gamma_{e_1}) \rangle,\]
where $f$ is the isomorphism between $\alpha_{e_1}(\Gamma_{e_1})$ and $\omega_{e_1}(\Gamma_{e_1})$, seen as subgroups of the corresponding vertex groups, included in $\Gamma_n$. We continue this way, with each new edge leading to an HNN extension $\Gamma_n^{k+1}$ of the previous group $\Gamma_n^k$ along the edge group $\Gamma_{e_{k+1}}$. The final group $\Gamma_n^m$ will be $\pi_1(\mathcal{G})$.
\end{remark}

The fundamental theorem of Bass--Serre theory is that $\pi_1(\mathcal{G})$ acts simplicially on a tree without edge inversions, with quotient graph $X$, and vertex and edge stabilizers conjugate to the corresponding vertex or edge groups of $\mathcal{G}$; conversely, if $\Gamma$ acts simplicially on a tree without edge inversions, then setting $X$ to be the quotient graph, and choosing the vertex and edge groups to be the vertex and edge stabilizers, and the natural inclusions of edge stabilizers into vertex stabilizers, defines a graph of groups $\mathcal{G}$ with $\Gamma = \pi_1(\mathcal{G})$ \cite[Section 5.4, Theorem 13]{Serre:1980aa}.

\begin{proof}[Proof of Proposition \ref{gog finite edge}]

We keep the notation from the discussion above, where $\mathcal{G}$ is a graph of groups with underlying graph $X$, (L)LP vertex groups $\Gamma_v$, finite edge groups $\Gamma_e$ and fundamental group $\Gamma$.

\emph{When $X$ is a finite tree}, by Remark \ref{gog basic operations}, $\Gamma$ is obtained by a finite sequence of amalgamated products with finite subgroups, so this follows by an inductive argument with Corollary \ref{fp gp}.

\emph{When $X$ is a tree} then $\Gamma$ is a directed union of groups as in the previous case, so this follows from Corollary \ref{dir un}.

\emph{When $X$ is a single loop} $\Gamma$ is an HNN extension over a finite group, more precisely
\[\Gamma = \langle \Gamma_v, t \mid t^{-1} g t = f(g) : g \in \Gamma_e \rangle,\]
where $\Gamma_e$ is a finite subgroup of $\Gamma_v$ and $f \colon \Gamma_e \to \Gamma_v$ is an embedding. Consider the group
\[\Lambda = \langle t^{-n} \Gamma_v t^n : n \in \mathbb{Z} \rangle < \Gamma.\]
Then $\Gamma = \Lambda \rtimes \langle t \rangle$, and $\Lambda$ is the fundamental group of a graph of groups whose underlying graph is a bi-infinite line, with vertex and edge groups isomorphic to $\Gamma_v$ and $\Gamma_e$ respectively \cite[Theorem 2.17.1]{bogopolski}. Therefore, by the previous case, $\Lambda$ has the (L)LP, and then by Corollary \ref{sd prod} we conclude that $\Gamma$ has the (L)LP.

\emph{When $X$ is finite}, Remark \ref{gog basic operations} shows that it can be obtained by a sequence of HNN extensions starting from the fundamental group of the subgraph defined on a spanning tree of $X$, so this follows by an inductive argument with the first and third cases above.

\emph{In general}, we conclude by Corollary \ref{dir un}.
\end{proof}

\begin{corollary}
\label{cd1}

If $\Gamma$ is a group of rational cohomological dimension $1$, then $\Gamma$ has the LLP. If $\Gamma$ is moreover countable, then it has the LP. In particular, virtually free groups have the LLP, and countable virtually free groups have the LP.
\end{corollary}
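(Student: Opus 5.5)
The plan is to reduce Corollary \ref{cd1} to Proposition \ref{gog finite edge} via the structure theory of groups of rational cohomological dimension at most $1$. The key input from geometric group theory is the theorem of Dunwoody (extending Stallings--Swan): a group $\Gamma$ has cohomological dimension at most $1$ over $\Q$ if and only if $\Gamma$ acts on a tree without edge inversions, with finite vertex stabilizers. This holds with no finite generation or countability hypothesis, although Dunwoody's original statement is for countable groups. If the uncountable case causes trouble, I would handle it separately for the LLP, as explained below. Via the Bass--Serre correspondence recalled above, such an action expresses $\Gamma$ as $\pi_1(\mathcal{G})$ for a graph of groups $\mathcal{G}$ whose vertex groups are finite, so that its edge groups, being subgroups of vertex groups, are finite as well.

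The steps are then as follows. First, every finite group has the LP: a finite group is amenable and countable, so Corollary \ref{amen cor} applies. Alternatively, $C^*$ of a finite group is finite-dimensional, hence separable and nuclear, and Theorem \ref{cet} applies. Second, apply Proposition \ref{gog finite edge} to $\mathcal{G}$. All vertex groups have the LLP, so $\Gamma$ has the LLP. If $\Gamma$ is countable, the vertex groups have the LP and Proposition \ref{gog finite edge} gives the LP for $\Gamma$.

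For the ``in particular'' clause, let $\Gamma$ be virtually free, containing a free subgroup $F$ of finite index. If $F$ is trivial, then $\Gamma$ is finite and the first step applies directly. Otherwise $F$ has cohomological dimension $1$, and since rational cohomological dimension is unchanged on passing to a finite-index subgroup (Serre's theorem, or more simply Shapiro's lemma together with a transfer argument over $\Q$), $\Gamma$ has rational cohomological dimension $1$. The first part then gives the LLP, and the LP when $\Gamma$ is countable. Alternatively, for finitely generated virtually free groups one can cite Karrass--Pietrowski--Solitar directly: such a group is the fundamental group of a finite graph of finite groups.

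The main obstacle is the correct citation of the Dunwoody-type structure theorem in the required generality. The deep content lies entirely there; the operator-algebraic part is immediate from Proposition \ref{gog finite edge}. If the structure theorem is only available for countable groups, I would get the LLP for uncountable $\Gamma$ by writing $\Gamma$ as the directed union of its countable subgroups. Rational cohomological dimension does not increase on passing to subgroups, so each countable subgroup has rational cohomological dimension at most $1$ and hence the LP by the countable case. Corollary \ref{dir un} then gives the LLP for $\Gamma$.
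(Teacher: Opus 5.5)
Your proposal is correct and is essentially the paper's proof: Dunwoody's theorem realizes $\Gamma$ as the fundamental group of a graph of groups with finite vertex and edge groups, and Proposition \ref{gog finite edge} together with Corollary \ref{amen cor} finishes the argument. The virtually free case is reduced to this via Serre's theorem on finite-index subgroups over $\Q$. Your directed-union fallback for uncountable $\Gamma$, via Corollary \ref{dir un}, is valid, though the paper does not need it because it cites Dunwoody's characterization with no countability restriction.
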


We will recover the result on virtually free groups later on via Cororollary \ref{fi cor}. Recall that if $\mathcal{P}$ is a group property, we say that $\Gamma$ is \emph{virtually $\mathcal{P}$} if it admits a finite-index subgroup with $\mathcal{P}$.

\begin{proof}
Dunwoody \cite{dunwoody} characterizes groups of rational cohomological dimension $1$ as fundamental groups of graphs of groups with finite vertex and edge groups, so we conclude by Proposition \ref{gog finite edge} and Corollary \ref{amen cor}. Virtually free groups are a special case, for instance thanks to Serre's Theorem \cite[Th{\'e}or{\`e}me 1.7.1]{Serre:cohomologie} (or rather its generalization to other rings \cite[Theorem II.5.11]{bieri}).
\end{proof}

In the finitely generated case, it is easier to see that virtually free groups are exactly the fundamental groups of finite graphs of groups with finite edge groups \cite{karrass:pietrowski:solitar}.

\subsection{More graphs of groups}

Let us present two more cases in which graphs of groups behave well with respect to the (L)LP.

Recall that if $f \colon \Gamma\to \Gamma$ is an injective homomorphism, then the \emph{(ascending) HNN extension} is defined by 
\[ \Gamma \Asterisk_f  \coloneqq  \langle \Gamma, t \mid t^{-1}gt = f(g) \text{ for all } g \in \Gamma \rangle.\]

\begin{corollary}
\label{ascending HNN}
Let $f \colon \Gamma \to \Gamma$ be an injective homomorphism, and let $\Gamma \Asterisk_f$ be the corresponding (ascending) HNN extension. If $\Gamma$ has the LLP, then so does $\Gamma \Asterisk_f$. Moreover, if $\Gamma$ is countable and has the LP, then so does $\Gamma \Asterisk_f$.
\end{corollary}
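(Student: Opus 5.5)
The plan is to mimic the single-loop case in the proof of Proposition \ref{gog finite edge}: write $\Gamma \Asterisk_f$ as a semidirect product $\Lambda\rtimes\langle t\rangle$ with $\Lambda$ an increasing union of copies of $\Gamma$, and then apply Corollaries \ref{dir un} and \ref{sd prod}.

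Set $G=\Gamma\Asterisk_f$ and let $\Lambda\trianglelefteq G$ be the normal closure of $\Gamma$, namely
\[
\Lambda=\bigcup_{n\geq 0} t^{n}\Gamma t^{-n}.
\]
The relation $t^{-1}gt=f(g)$ gives $f(\Gamma)=t^{-1}\Gamma t\subseteq\Gamma$. Conjugating by $t^{n+1}$ yields $t^{n}\Gamma t^{-n}\subseteq t^{n+1}\Gamma t^{-(n+1)}$, so these subgroups form an increasing chain. Their union is therefore a subgroup. It is stable under conjugation by $t$ and $t^{-1}$ (conjugation by $t^{-1}$ shifts the index down, and at index $0$ lands in $f(\Gamma)\subseteq \Gamma$), and it is stable under conjugation by $\Gamma$. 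Hence it is normal and equals the normal closure of $\Gamma$.

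The quotient $G/\Lambda$ is generated by the image of $t$. Killing $\Gamma$ in the presentation leaves $\langle t\rangle\cong\Z$, so the map $G\to\Z$ sending $t\mapsto 1$ and $\Gamma\mapsto 0$ is well defined with kernel $\Lambda$. Since $\Z$ is free, this surjection splits via $1\mapsto t$, and so $G=\Lambda\rtimes\langle t\rangle$.

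Each $t^n\Gamma t^{-n}$ is isomorphic to $\Gamma$, because $\Gamma$ embeds in $G$ by Britton's lemma (standard for HNN extensions). Each term therefore has the LLP, respectively the LP. By Corollary \ref{dir un}, $\Lambda$ has the LLP, and it has the LP when $\Gamma$ is countable, since then $\Lambda$ and $G$ are countable. Finally, Corollary \ref{sd prod} with amenable quotient $\Z$ gives the LLP, respectively the LP, for $G$.

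The only points needing care are the nesting direction of the chain and the normality of the union. Both are short computations with the defining relation, so I do not expect a real obstacle.
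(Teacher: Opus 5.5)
Your proposal is correct and follows essentially the same route as the paper. The paper also writes $\Gamma \Asterisk_f \cong \Lambda \rtimes \langle t \rangle$ with $\Lambda = \bigcup_{n} t^n \Gamma t^{-n}$ an increasing union of copies of $\Gamma$, and then concludes by Corollaries \ref{dir un} and \ref{sd prod}. You additionally verify the nesting, the normality, and the splitting explicitly, which the paper leaves implicit.
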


\begin{proof}
There is a natural semidirect product structure $\Gamma \Asterisk_f \cong \Lambda \rtimes \langle t \rangle$, where $\Lambda$ is the normal closure of $\Gamma$, namely the union of the following sequence:
\[\Gamma < t \Gamma t^{-1} < t^2 \Gamma t^{-2} < \cdots < \Lambda = \bigcup_{n \in \mathbb{N}} t^n \Gamma t^{-n}.\]
We conclude by Corollaries \ref{dir un} and \ref{sd prod}.
\end{proof}

Next, recall that a subgroup $\Lambda \to \Gamma$ is a \emph{retract} if there exists a homomorphism $\Gamma \to \Lambda$ that restricts to the identity on $\Lambda$. Equivalently, there exists a normal subgroup $N < \Gamma$ such that $\Gamma = N \rtimes \Lambda$.

\begin{corollary}\label{fp r llp}
    Let $\Gamma_1, \Gamma_2$ be groups with the LLP, with a common amenable retract $\Lambda$. Then $\Gamma_1 \Asterisk_{\Lambda} \Gamma_2$ has the LLP. If moreover $\Gamma_1, \Gamma_2$ are countable and have the LP, then $\Gamma_1 \Asterisk_{\Lambda} \Gamma_2$ has the LP.
\end{corollary}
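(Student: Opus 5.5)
The plan is to write $\Gamma_1 \Asterisk_\Lambda \Gamma_2$ as a semidirect product $M \rtimes \Lambda$ with $M$ a free product of groups with the (L)LP. Then Corollary \ref{fp gp} (with trivial amalgamating subgroup) gives the (L)LP for $M$, and Corollary \ref{sd prod} handles the amenable quotient $\Lambda$.

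\textbf{Step 1: the retraction on the amalgam.} Let $r_i \colon \Gamma_i \to \Lambda$ be the retractions and $N_i = \ker r_i$, so that $\Gamma_i = N_i \rtimes \Lambda$. Since $r_1$ and $r_2$ agree (as the identity) on the common subgroup $\Lambda$, the universal property of the amalgamated product gives a homomorphism
\[
r \colon \Gamma \coloneqq \Gamma_1 \Asterisk_\Lambda \Gamma_2 \to \Lambda .
\]
It restricts to the identity on $\Lambda$, so $\Gamma = N \rtimes \Lambda$ with $N = \ker r$.

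\textbf{Step 2: identify $N$.} I claim $N$ is the free product of the conjugates $\lambda N_i \lambda^{-1}$ for $\lambda \in \Lambda$ and $i = 1,2$. Since $\Lambda$ normalizes $N_i$, these conjugates are just $N_1$ and $N_2$, so $N \cong N_1 \Asterisk N_2$. To check this, note that $\Gamma_1 \Asterisk_\Lambda \Gamma_2$ has the presentation generated by $N_1$, $N_2$ and $\Lambda$, with the $\Lambda$-actions on each $N_i$ as relations. That is exactly the presentation of $(N_1 \Asterisk N_2) \rtimes \Lambda$, where $\Lambda$ acts diagonally on the free product. The verification is by comparing universal properties: homomorphisms out of either group correspond to a pair of homomorphisms from $\Gamma_i = N_i \rtimes \Lambda$ agreeing on $\Lambda$. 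Alternatively, Bass--Serre theory applies: $N$ acts on the Bass--Serre tree with trivial edge stabilizers ($N \cap \Lambda^g = 1$), so $N$ is a free product of vertex stabilizers (conjugates of the $N_i$) and a free group. Then Corollary \ref{fp gp} together with Theorem \ref{free gp} would still suffice. I expect this identification to be the only real obstacle, and the universal-property comparison is the cleanest route.

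\textbf{Step 3: conclude.} Each $N_i \le \Gamma_i$ has the LLP (respectively the LP) by Corollary \ref{subgp}. Hence $N_1 \Asterisk N_2$ has the LLP by Corollary \ref{fp gp} with trivial amalgam, and the LP if the $\Gamma_i$ are countable and LP, since the $N_i$ are then countable too. Corollary \ref{sd prod}, with $K = N_1 \Asterisk N_2$ and $\Lambda$ amenable, now gives the LLP for $\Gamma$. In the countable case $\Gamma$ is countable, so it gives the LP.
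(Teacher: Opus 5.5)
Your proposal is correct and is essentially the paper's own proof: the paper likewise writes $\Gamma_1 \Asterisk_\Lambda \Gamma_2 \cong (N_1 \Asterisk N_2)\rtimes \Lambda$ by comparing presentations, then applies Corollary \ref{subgp}, Corollary \ref{fp gp} and Corollary \ref{sd prod} in that order. Your universal-property check of the semidirect decomposition simply spells out the step the paper leaves as ``comparing presentations''; note that the paper's displayed isomorphism misprints $N_i$ as $\Gamma_i$, and your version is the correct one.
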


\begin{proof}
    Write $\Gamma_i = N_i \rtimes \Lambda$. By comparing presentations, we see that
    \[\Gamma_1 \Asterisk_{\Lambda} \Gamma_2 \cong (\Gamma_1 \Asterisk \Gamma_2) \rtimes \Lambda.\]
    Because $\Gamma_i$ has the LLP, so does $N_i$ by Corollary \ref{subgp}. So by Corollary \ref{fp gp}, the free product $N_1 \Asterisk N_2$ has the LLP. Finally, by Corollary \ref{sd prod}, the semidirect product $(N_1 \Asterisk N_2) \rtimes \Lambda$ has the LLP. The proof for the LP is similar.
\end{proof}

\begin{remark}
    We do not know if the amenability hypothesis can be dropped. If it could, this would cover $F_2 \times F_2$, which is the free product of two copies of $F_2 \times \mathbb{Z}$ amalgamated along a common $F_2$-retract (Question \ref{q:ftimesf}).
\end{remark}

\subsection{Central extensions}\label{subs:central}

The results we can give on central extensions are partial, but at least cover some interesting examples and explain the basic structure.

We will need to recall the notion of a group $C^*$-algebra twisted by an $S^1$-valued $2$-cocycle (see for example \cite[Section 14]{Echterhoff:2009jo} for a useful survey of this), and of a $C(X)$-algebra.

\begin{definition}
Let $\tau$ be a normalized $S^1$-valued $2$-cocycle on a group $\Lambda$ representing a class in $H^2(\Lambda;S^1)$: precisely, $\tau$ is a map $\Lambda\times \Lambda\to S^1$ satisfying 
\begin{equation}\label{cocyc}
\tau(1,g)=\tau(g,1)=1\quad \text{and}\quad \tau(g,h)\tau(gh,k)=\tau(g,hk)\tau(h,k)
\end{equation}
for all $g,h,k\in \Lambda$.  Let $\C[\Lambda;\tau]$ have the same underlying vector space as the group algebra $\C[\Lambda]$, but with multiplication and adjoint determined by\footnote{Associativity and the relation $(u_gu_h)^*=u_h^*u_g^*$ are consequences of the formulas in line \eqref{cocyc}.} 
$$
u_gu_h \coloneqq \tau(g,h)u_{gh}\quad \text{and}\quad u_g^* \coloneqq \overline{\tau(g,g^{-1})}u_{g^{-1}}.
$$ 
We then define $C^*(\Lambda;\tau)$ to be the completion of $\C[\Lambda;\tau]$ for the norm defined for $a\in \C[\Lambda;\tau]$ by
$$
\|a\|\coloneqq\sup\{\|\pi(a)\|_{\mathcal{B}(H)}\mid \pi \colon \C[\Lambda;\tau]\to \mathcal{B}(H) \text{ a $*$-representation}\}.
$$ 
\end{definition}
We note that $C^*(\Lambda;\tau)$ only depends on the class of $\tau$ in $H^2(\Lambda;S^1)$ up to isomorphism. In particular, if $\tau$ represents the trivial class, then $C^*(\Lambda; \tau) \cong C^*\Lambda$.

\begin{definition}
Let $X$ be a compact Hausdorff space.  A unital\footnote{There is also an analgue for non-unital $C^*$-algebras, but it needs the notion of a multiplier algebra from Definition \ref{ma} below.} \emph{$C(X)$-algebra} is a unital $C^*$-algebra $A$ equipped with a unital $*$-homomorphism $C(X)\to Z(A)$ from $X$ to the center of $A$; we typically abuse notation and identify $C(X)$ with its image in $Z(A)$ (even if the map is not injective).  
\end{definition}

If $A$ is a $C(X)$-algebra, for each $x\in X$, let $I_x$ be the closed linear span of $C_0(X\setminus \{x\})\cdot A$\footnote{For a locally compact Hausdorff space $Y$, $C_0(Y)$ means the collection of continuous functions $f \colon Y\to \C$ such that for all $\epsilon>0$ there is compact $K\subseteq Y$ such that $|f(y)|<\epsilon$ for all $y\not\in K$; in this case, $C_0(X\setminus \{x\})$ identifies with the continuous functions on $X$ that vanish at $x$.}, which is an ideal in $A$, and define $A_x \coloneqq A/I_x$ to be the corresponding quotient. The natural map 
\begin{equation}\label{cf incl}
A\to \prod_{x\in X} A_x
\end{equation}
defined by taking the product of the quotient maps can then be shown to be injective: see for example \cite[Theorem 7.47]{Douglas:1972vn} or \cite[Proposition 1.2]{Rieffel:1989aa}.  One usually thinks of $A$ as an `algebra of sections' of the `field' $\{A_x\}_{x\in X}$ of `fibre' $C^*$-algebras $A_x$.

We are now ready to discuss central extensions.  Let $1\to K\to \Gamma\to \Lambda\to 1$ be a central extension of a group $\Lambda$ by an abelian group $K$.  Choose a set-theoretic splitting $s \colon \Lambda\to \Gamma$ that preserves the identity, and define 
\begin{equation}\label{k l co}
\sigma \colon \Lambda\times \Lambda \to K,\quad \sigma(g,h) \coloneq s(g)s(h)s(gh)^{-1},
\end{equation}
so $\sigma$ is a normalized $2$-cocycle representing the class in $H^2(\Lambda;K)$ associated to the extension in the usual way (compare for example \cite[Section IV.3]{Brow:1982rt}).  For each character $\omega \colon K\to S^1$, define 
\begin{equation}\label{s l co}
\sigma_\omega \coloneqq \omega\circ \sigma,
\end{equation} 
a $2$-cocycle representing a class in $H^2(\Lambda;S^1)$.

The next result is a special case of \cite[Lemma 6.3]{Echterhoff:1998aa}, which contains both a more general result\footnote{Almost more general: the authors of \cite{Echterhoff:1998aa} work with locally compact groups, and make a second countability assumption to avoid complications with Borel structures.  We do not need a (second) countability assumption as we only work with discrete groups and there are no Borel complications.}, and a statement giving more detailed information.  The authors of \cite{Echterhoff:1998aa} attribute the result to Packer--Raeburn \cite{Packer:1990aa,Packer:1992aa}; the papers of Packer--Raeburn contain much more general results. 

For the statement, for an abelian group $K$, let $\widehat{K}$ denote the Pontrjagin dual, i.e.\ the collection of all homomorphisms $K\to S^1$, equipped with the (compact, Hausdorff) topology of pointwise convergence.  

\begin{proposition}[Packer--Raeburn]\label{c ext struct}
Let $1\to K\to \Gamma\to \Lambda\to 1$ be a central extension and let $\sigma\in Z^2(\Lambda;K)$ be an associated $2$-cocycle as in line \eqref{k l co}.  Then the inclusion $C^*K\subseteq C^*\Gamma$ and the Fourier isomorphism $C^*K\cong C(\widehat{K})$ make $C^*\Gamma$ into a $C(\widehat{K})$-algebra.

Moreover, for each $\omega\in \widehat{K}$, the fibre $(C^*\Gamma)_\omega$ is canonically isomorphic to the twisted group $C^*$-algebra $C^*(\Lambda;\sigma_\omega)$, where $\sigma_\omega$ is as in line \eqref{s l co}. \qed
\end{proposition}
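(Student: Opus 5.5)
The plan is to establish the two assertions in turn: first the $C(\widehat{K})$-algebra structure, then the identification of the fibres.

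\emph{The $C(\widehat{K})$-algebra structure.} Since $K$ is central in $\Gamma$, each group element $k\in K$ commutes with every $g\in\Gamma$ inside $\C[\Gamma]$. By continuity, the image of $C^*K$ under the injective inclusion $C^*K\to C^*\Gamma$ of Lemma \ref{subgp rem} lies in the center $Z(C^*\Gamma)$. The inclusion is unital because it sends the identity of $K$ to the identity of $\Gamma$. Since $K$ is abelian, the Fourier (Gelfand) transform gives $C^*K\cong C(\widehat{K})$, with $k\mapsto(\omega\mapsto\omega(k))$. Composing these maps yields the required unital $*$-homomorphism $C(\widehat{K})\to Z(C^*\Gamma)$.

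\emph{Identifying the fibres.} Fix $\omega\in\widehat{K}$ and let $s\colon\Lambda\to\Gamma$ be the chosen normalized section, so that every $g\in\Gamma$ is uniquely $g=k\,s(\lambda)$ with $k\in K$ and $\lambda\in\Lambda$. The ideal $I_\omega$ is generated by the functions in $C(\widehat{K})$ vanishing at $\omega$. These are approximated by elements of $\C[K]$ whose Fourier transform vanishes at $\omega$, and such elements lie in the span of the elements $k-\omega(k)1$. So $I_\omega$ is the closed ideal generated by $\{k-\omega(k)1: k\in K\}$. It follows that $*$-representations of $(C^*\Gamma)_\omega$ are exactly the unitary representations $\pi$ of $\Gamma$ with $\pi(k)=\omega(k)$ for all $k\in K$.

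Given such a $\pi$, set $u_\lambda\coloneqq\pi(s(\lambda))$. Using $s(g)s(h)=\sigma(g,h)s(gh)$, we get
$$
u_gu_h=\omega(\sigma(g,h))\,u_{gh}=\sigma_\omega(g,h)\,u_{gh}.
$$
Hence $\lambda\mapsto u_\lambda$ is a $\sigma_\omega$-projective representation, which is the same thing as a $*$-representation of $\C[\Lambda;\sigma_\omega]$. The adjoint formula is checked with the normalization in \eqref{cocyc}. Conversely, a $*$-representation $u$ of $\C[\Lambda;\sigma_\omega]$ gives the map $\pi(k\,s(\lambda))\coloneqq\omega(k)u_\lambda$. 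The cocycle identity shows this is a unitary representation of $\Gamma$, and it is scalar equal to $\omega$ on $K$.

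These two constructions are mutually inverse. At the level of dense $*$-algebras, they are induced by the surjection $\C[\Gamma]\to\C[\Lambda;\sigma_\omega]$, $k\,s(\lambda)\mapsto\omega(k)u_\lambda$. Its kernel is precisely the ideal generated by the elements $k-\omega(k)$. Both $C^*$-algebras carry the universal norm over the same class of representations. So this map extends to a surjection $C^*\Gamma\to C^*(\Lambda;\sigma_\omega)$ with kernel $I_\omega$, and hence to the required isomorphism $(C^*\Gamma)_\omega\cong C^*(\Lambda;\sigma_\omega)$.

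\emph{Main obstacle.} The delicate point is checking that the kernel of $C^*\Gamma\to C^*(\Lambda;\sigma_\omega)$ is exactly $I_\omega$, rather than something larger. I would handle this through representations. Every representation of $C^*\Gamma/I_\omega$ restricts on $C^*K$ to a representation factoring through evaluation at $\omega$, so $\pi(k)=\omega(k)$. Any such representation factors through $C^*(\Lambda;\sigma_\omega)$ by the correspondence above. Therefore the norm on $C^*\Gamma/I_\omega$ agrees with the universal norm on the twisted algebra, and the kernel is exactly $I_\omega$.
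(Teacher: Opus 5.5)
Your proof is correct, and it differs from the paper mainly in that the paper gives no argument of its own. It quotes the statement as a special case of \cite[Lemma 6.3]{Echterhoff:1998aa}, going back to Packer--Raeburn. There the result is obtained for (second countable) locally compact groups, together with more detailed structural information.

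Your route is a self-contained universal-property argument tailored to discrete groups, in three steps:
\begin{enumerate}
\item You identify $I_\omega$ as the closed ideal generated by the central elements $k-\omega(k)1$. Your observation that $a-\hat{a}(\omega)1$ lies in the span of these elements, and still approximates any $f\in C(\widehat{K})$ vanishing at $\omega$, is exactly what is needed.
\item You match representations of $(C^*\Gamma)_\omega$ with $\sigma_\omega$-projective representations of $\Lambda$ through the section $s$. The multiplicativity and adjoint checks go through with the paper's convention $s(g)s(h)=\sigma(g,h)s(gh)$ and normalization.
\item You conclude from the fact that both sides are completions for universal norms over corresponding classes of representations. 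Your last paragraph, using a faithful representation of $C^*\Gamma/I_\omega$, correctly shows that the kernel of $C^*\Gamma\to C^*(\Lambda;\sigma_\omega)$ is exactly $I_\omega$ and not larger.
\end{enumerate}

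What your approach buys is transparency and independence from the twisted crossed product machinery and its countability hypotheses; as the paper's footnote notes, those hypotheses are irrelevant for discrete groups. What the cited approach buys is generality (locally compact groups) and finer information about the field of fibres.

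Two small points of hygiene. First, the representations in your correspondence should be taken nondegenerate (equivalently unital); degenerate ones are handled by restricting to the essential space, which does not change the universal norms. Second, the isomorphism you construct depends on the section $s$, as it must, since $\sigma$ does.
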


\begin{corollary}\label{c ext}
With notation as in Proposition \ref{c ext struct}, if all the twisted group $C^*$-algebras $C^*(\Lambda;\sigma_\omega)$ have the LLP for $\omega\in \widehat{K}$, then so does $C^*\Gamma$.

Moreover, if $\Gamma$ is countable, and all the $C^*$-algebras $C^*(\Lambda;\sigma_\omega)$ have the LP, then so does $C^*\Gamma$.
\end{corollary}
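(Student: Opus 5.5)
We will prove the more general statement: if $A$ is a unital $C(X)$-algebra all of whose fibres $A_x$ have the LLP, then $A$ has the LLP, and if in addition $A$ is separable and all fibres have the LP, then $A$ has the LP. Corollary \ref{c ext} then follows from Proposition \ref{c ext struct}, which identifies the fibres of $C^*\Gamma$ over $\widehat{K}$ with the algebras $C^*(\Lambda;\sigma_\omega)$. For the LP part we also use that $C^*\Gamma$ is separable when $\Gamma$ is countable. In both cases the strategy is the same: use the tensor product characterizations of Kirchberg (Theorem \ref{kir the}) and Pisier (Theorem \ref{pis the}), together with injectivity of the map \eqref{cf incl} into the product of fibres.

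\emph{Key structural step.} Let $D$ be any $C^*$-algebra. Then $A\otimes_{\max}D$ is a $C(X)$-algebra via $f\mapsto f\otimes 1$, which lands in the centre. We claim its fibre at $x$ is canonically $A_x\otimes_{\max}D$. The ideal $C_0(X\setminus\{x\})\cdot(A\otimes_{\max}D)$ has closed span equal to the closure of $I_x\odot D$, that is, the image of $I_x\otimes_{\max}D$. The maximal tensor product is exact, so $(A\otimes_{\max}D)/(I_x\otimes_{\max}D)\cong A_x\otimes_{\max}D$. Applying \eqref{cf incl} to $A\otimes_{\max}D$ then shows that the map
$$A\otimes_{\max}D\to \prod_{x\in X}A_x\otimes_{\max}D$$
is injective. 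This identification of the fibres is the step I expect to need the most care. The points to check are that the $C_0(X\setminus\{x\})$-ideal really is generated by $I_x\odot D$, and that max-exactness is used correctly.

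\emph{LLP.} Take $D=\mathcal{B}(H)$ and let $z$ lie in the kernel of $A\otimes_{\max}D\to A\otimes D$. For each $x$, the map $A\to A_x$ induces a $*$-homomorphism $A\otimes D\to A_x\otimes D$, by functoriality of the minimal tensor product for $*$-homomorphisms. The composite
$$A\otimes_{\max}D\to A_x\otimes_{\max}D\to A_x\otimes D$$
therefore factors through $A\otimes D$, so it kills $z$. Since $A_x$ has the LLP, Theorem \ref{kir the} says the second arrow is injective. Hence the image of $z$ in every $A_x\otimes_{\max}D$ vanishes, and the key step forces $z=0$. So $(A,\mathcal{B}(H))$ is a nuclear pair, and Theorem \ref{kir the} gives the LLP for $A$.

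\emph{LP.} Assume $A$ is separable; then each fibre $A_x$ is separable, being a quotient. Let $P=\prod_i D_i$, and let $z$ be in the kernel of $P\otimes_{\max}A\to\prod_i(D_i\otimes_{\max}A)$. For each $x$ consider the composite
$$P\otimes_{\max}A\to P\otimes_{\max}A_x\to\prod_i(D_i\otimes_{\max}A_x).$$
It equals the map $P\otimes_{\max}A\to\prod_i(D_i\otimes_{\max}A)$ followed by the product of the quotient maps $D_i\otimes_{\max}A\to D_i\otimes_{\max}A_x$, so it kills $z$. The second arrow in the displayed composite is injective by Theorem \ref{pis the} applied to $A_x$, which has the LP. Hence $z$ maps to zero in every fibre $P\otimes_{\max}A_x$. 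By the key step (with the tensor factors in the other order, which is harmless), $z=0$. Theorem \ref{pis the} then gives the LP for $A$.
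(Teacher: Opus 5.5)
Your argument is correct and is essentially the paper's proof: the same two diagram chases through Kirchberg's and Pisier's tensor-product characterizations, with injectivity of $A\otimes_{\max}D\to\prod_x A_x\otimes_{\max}D$ as the key input. The only difference is that you derive this injectivity yourself from exactness of $\otimes_{\max}$ and \eqref{cf incl}, where the paper cites it from the literature. One small point: when $D=\prod_i D_i$ is non-unital, the $C(X)$-structure on $A\otimes_{\max}D$ lives in the multiplier algebra rather than in the algebra itself. This is harmless; for example, pass to the unital $C(X)$-algebra $A\otimes_{\max}D^+$, which contains $A\otimes_{\max}D$ as an ideal.
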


The hypothesis ``$C^*(\Lambda;\sigma_\omega)$ have the (L)LP for $\omega\in \widehat{K}$'' seems difficult to check in general, unless it reduces to a hypothesis on $C^*\Lambda$ as in Example \ref{no twist} below.  Conceivably, it can always be deduced from the (L)LP for $C^*\Lambda$, but we were unable to do that (Question \ref{q:cocycles}).

\begin{proof}[Proof of Corollary \ref{c ext}]
We first look at the LLP.  Write $B=\mathcal{B}(\ell^2(\N))$.  Using Theorem \ref{kir the}, it suffices to show that $(C^*\Gamma,B)$ is a nuclear pair.  Consider the commutative diagram 
$$
\xymatrix{ C^*\Gamma\otimes_{\max} B \ar[r] \ar[d] & \prod_{\omega\in \widehat{K}} (C^*(\Lambda;\sigma_\omega)\otimes_{\max} B ) \ar[d] \\
C^*\Gamma\otimes B \ar[r]  & \prod_{\omega\in \widehat{K}} (C^*(\Lambda;\sigma_\omega)\otimes B ) }
$$
where the horizontal maps are induced by the embedding in line \eqref{cf incl} associated to the $C(\widehat{K})$-algebra structure from Proposition \ref{c ext struct}, and the vertical maps are induced by the canonical quotient maps from the maximal to minimal crossed products.  As each $C^*(\Lambda;\sigma_\omega)$ has the LLP, Theorem \ref{kir the} implies that the right hand vertical map is the identity.  On the other hand, using the discussion in point 2.\ from \cite[page 678]{Kirchberg:1995aa}, the top\footnote{The bottom one is too, but we do not need this.} horizontal map is injective.  Hence the left vertical map is injective, and we are done.

We now look at the LP.  Let $\{D_i\}_{i\in I}$ be an arbitrary family of $C^*$-algebras, and let $D \coloneqq \prod_{i\in I}D_i$ be their product.  Using Theorem \ref{pis the}, it suffices to show that the canonical map 
$$
D\otimes_{\max}C^*\Gamma\to \prod_{i\in I}(D_i\otimes_{\max}C^*\Gamma)
$$
is injective.  Consider the commutative diagram
$$
\xymatrix{D\otimes_{\max}C^*\Gamma \ar[r] \ar[d] & \prod_{i\in I}(D_i\otimes_{\max} C^*\Gamma) \ar[d] \\
\prod_{\omega\in \widehat{K}}(D\otimes_{\max}C^*(\Lambda;\sigma_\omega)) \ar[r] & \prod_{\omega\in \widehat{K}}\Big(\prod_{i\in I}(D_i\otimes_{\max}C^*(\Lambda;\sigma_\omega))\Big)}
$$
where the two vertical maps are induced by the $C(\widehat{K})$-structure, and also a reordering of the product on the right.  The left vertical map is injective by the discussion in point 2.\ from \cite[page 678]{Kirchberg:1995aa} again, while the bottom horizontal map is injective by the LP for each $C^*(\Lambda;\sigma_\omega)$ and Theorem \ref{pis the}.  Hence the top horizontal map is injective, and we are done.
\end{proof}

\begin{example}\label{no twist}
Let $1\to K\to \Gamma\to \Lambda\to 1$ be a central extension of a group $\Lambda$ with the (L)LP, and let $\sigma$  be a $2$-cocycle as in line \eqref{k l co}.  Assume that the extension is \emph{pointwise trivial} in the sense of \cite[Section 3]{Echterhoff:2002aa}: this means that for each $\omega\in \widehat{K}$, if $\sigma_\omega$ is as in line \eqref{s l co}, then $[\sigma_\omega]=0$ in $H^2(\Lambda,S^1)$.  Then Corollary \ref{c ext} implies that any (countable) central extension of $\Lambda$ has the LLP (LP).

This holds in particular if $H^2(\Lambda; S^1) = 0$. To slightly demystify this condition, note that by the universal coefficient theorem \cite[Section I.0]{Brow:1982rt}, we have an exact sequence
\[0 \to Ext^1_{\mathbb{Z}}(H_1(\Lambda); S^1) \to H^2(\Lambda; S^1) \to Hom_{\mathbb{Z}}(H_2(\Lambda); S^1) \to 0.\]
Since $S^1$ is a divisible abelian group, that is an injective $\mathbb{Z}$-module, the $Ext$ term vanishes. Moreover, since non-trivial abelian groups have non-trivial characters, we conclude that
\[H^2(\Lambda; S^1) = 0 \quad \Leftrightarrow \quad H_2(\Lambda) = 0.\]

This assumption is restrictive, but still covers some interesting cases.  Indeed, $H_2(\Lambda) = 0$ for a finite cyclic group \cite[Section II.3]{Brow:1982rt}, hence by the Mayer--Vietoris sequence \cite[Corollary II.7.7]{Brow:1982rt}, the same is true for free products of finite cyclic groups.  Thus Corollary \ref{fp gp} implies that any (countable) central extension of such a group has the LLP (LP). This covers for example the \emph{torus knot group} $\langle a,b \mid a^p=b^q\rangle$, which is a central extension of $(\Z/p) \Asterisk (\Z/q)$ by $\Z$ (compare for example \cite[Example 7.3.4]{Cherix:2001fk}).
\end{example}

The paper \cite{Echterhoff:2002aa}, particularly Section 3, contains a much more detailed discussion of the pointwise trivial central extensions arising in Example \ref{no twist}.

\begin{remark}
    We will see in Corollary \ref{fi cor} that virtually LLP groups are LLP. It follows that central extensions of all virtually free groups are LLP.  
    Indeed, suppose that $\Lambda$ is virtually free and $F$ is a free subgroup of finite index. Let
    \[1 \to K \to \Gamma \to \Lambda \to 1\]
    be a central extension.  The preimage of $F$ in $\Gamma$ is a central extension of $F$ by $K$, hence isomorphic to $F \times K$, which is LLP by combining Theorems \ref{free gp} and \ref{sd prod}. Hence $\Gamma$ is virtually LLP.  The discussion carries over to the LP if we assume all relevant groups are countable. 
    
     This  gives another approach to the central extensions of free products of finite cyclic groups discussed in Example \ref{no twist}.  The particular case of torus knot groups will be covered again in Examples \ref{or center} and \ref{GBS}.
\end{remark}

\subsection{Amenable extensions}

Our final goal in this section is to improve Corollary \ref{sd prod} to the case of a non-split exact sequence: this is our most important new permanence result on the (L)LP for groups.

\begin{theorem}\label{llp the}
Let $\Gamma$ be a discrete group that fits into a short exact sequence $1\to K\to \Gamma\to \Lambda \to 1$.   If $C^*K$ satisfies the LLP and $\Lambda$ is amenable, then $C^*\Gamma$ satisfies the LLP.  

If $\Gamma$ is moreover countable, $C^*K$ has the LP, and $\Lambda$ is amenable, then $C^*\Gamma$ has the LP.
\end{theorem}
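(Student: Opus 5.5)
Our plan is to reduce the non-split extension to the split case of Theorem \ref{llp cp} by writing $C^*\Gamma$ as a twisted crossed product, and then to pass from twisted to ordinary crossed products via the Packer--Raeburn stabilization trick. Concretely, $C^*\Gamma$ is canonically isomorphic to the Green (or Busby--Smith/Packer--Raeburn) twisted crossed product $C^*K\rtimes_{\alpha,u}\Lambda$. To see this, choose a normalized set-theoretic section $s\colon\Lambda\to\Gamma$. Let $\alpha_g=\mathrm{Ad}(s(g))$ act on $C^*K$, and let $u(g,h)=s(g)s(h)s(gh)^{-1}\in K$, viewed as unitaries in $C^*K$. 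Then unitary representations of $\Gamma$ correspond exactly to covariant representations of this twisted system, as in Remark \ref{sd rem}.

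The key input is the Packer--Raeburn stabilization theorem. It says that $(C^*K\otimes\mathcal{K})\rtimes_{\alpha\otimes\mathrm{Ad}\,\rho,\,u\otimes 1}\Lambda$ is isomorphic to $(C^*K\rtimes_{\alpha,u}\Lambda)\otimes\mathcal{K}$, where $\mathcal{K}=\mathcal{K}(\ell^2\Lambda)$. Moreover, the twisted system on $C^*K\otimes\mathcal{K}$ is exterior equivalent to an untwisted action $\beta$, so
$$C^*\Gamma\otimes\mathcal{K}\cong (C^*K\otimes\mathcal{K})\rtimes_\beta\Lambda.$$
This holds in the maximal setting for discrete $\Lambda$ without countability issues, since the stabilization is purely algebraic once we work with the regular representation on $\ell^2\Lambda$.

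We then argue as follows. First, $C^*K\otimes\mathcal{K}$ has the LLP: $\mathcal{K}$ is nuclear, so $\otimes$ equals $\otimes_{\max}$ with anything, and nuclearity of the pair $(C^*K\otimes\mathcal{K},\mathcal{B}(H))$ follows from that of $(C^*K,\mathcal{B}(H))$ by Theorem \ref{kir the}. Second, Theorem \ref{llp cp} gives the LLP for $(C^*K\otimes\mathcal{K})\rtimes_\beta\Lambda\cong C^*\Gamma\otimes\mathcal{K}$. Third, the LLP descends from $C^*\Gamma\otimes\mathcal{K}$ to $C^*\Gamma$. For this we use Lemma \ref{cp ret}, with $\iota(a)=a\otimes e_{11}$ and $\sigma$ the compression by $1\otimes e_{11}$; both maps are ccp and $\sigma\circ\iota=\mathrm{id}$.

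For the LP, assume $\Gamma$ is countable, so that $K$ and $\Lambda$ are countable and $\mathcal{K}$ is separable. We must check that $C^*K\otimes\mathcal{K}$ is separable with the LP. One route is Theorem \ref{pis the}. Minimal and maximal tensor products with $\mathcal{K}$ agree, and $\mathcal{K}$ is an inductive limit of the $M_n$. Since $(\prod D_i)\otimes_{\max}M_n=\prod(D_i\otimes_{\max}M_n)$, we reduce to the statement that $M_n(C^*K)$ has the LP for all $n$ and that an inductive limit argument applies; Lemma \ref{dir lim} handles this, since the inclusions $M_n\to M_{n+1}$ split by ccp corner compressions. The remaining steps, namely the second part of Theorem \ref{llp cp} and Lemma \ref{cp ret}, then go through as before.

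The main obstacle is the stabilization step: establishing the isomorphism $C^*\Gamma\otimes\mathcal{K}\cong(C^*K\otimes\mathcal{K})\rtimes_\beta\Lambda$ for maximal crossed products. If quoting Packer--Raeburn is inconvenient, an alternative route avoids twists altogether. Consider $\Gamma$ acting diagonally and set $C_0(\Gamma/K)=c_0(\Lambda)$. Green's imprimitivity theorem gives that $c_0(\Lambda)\rtimes\Gamma$ is Morita equivalent, hence stably isomorphic, to $C^*K$. Then the quotient action of $\Lambda$ appears through the dual-type decomposition $C^*\Gamma\otimes\mathcal{K}(\ell^2\Lambda)\cong (c_0(\Lambda)\rtimes\Gamma)\rtimes\Lambda$, which is a version of the same stabilization. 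We would try the Packer--Raeburn route first.
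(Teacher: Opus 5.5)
Your argument is correct, but it reaches an untwisted crossed product by a different mechanism from the paper.

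The paper's route is as follows.
\begin{itemize}
\item It views $C^*\Gamma$ as Green's twisted crossed product $C^*K\rtimes(\Gamma,K)$.
\item It uses Echterhoff's equivariant Morita theory (Theorem \ref{twist the}) to produce a $\Lambda$-algebra $B$ that is Morita equivalent to $C^*K$, with $B\rtimes\Lambda$ Morita equivalent to $C^*\Gamma$.
\item It then proves separately that the LLP is Morita invariant (Proposition \ref{kir cor}, via nuclear pairs).
\item It also proves that the LP is Morita invariant among separable algebras (Corollary \ref{lp mor}, via Brown--Green--Rieffel, Kirchberg's Lemma \ref{to tens} and Lemma \ref{from tens}).
\end{itemize}

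You instead use the Busby--Smith picture $C^*\Gamma\cong C^*K\rtimes_{\alpha,u}\Lambda$ and the Packer--Raeburn stabilization trick. This gives an honest isomorphism $C^*\Gamma\otimes\mathcal{K}\cong(C^*K\otimes\mathcal{K})\rtimes_\beta\Lambda$. Consequently you only need the (L)LP to pass from $A$ to $A\otimes\mathcal{K}$ and back:
\begin{itemize}
\item Upward, nuclearity of $\mathcal{K}$ handles the LLP. For the LP you use Pisier's criterion for $M_n(A)$ plus Lemma \ref{dir lim}; Lemma \ref{to tens} would do this in one line.
\item Downward, your corner retraction is exactly the paper's Lemma \ref{from tens}.
\end{itemize}
This is more concrete and avoids Morita invariance altogether. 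You are also right that for discrete $\Lambda$ the trick needs no countability or measurability. The implementing unitaries are just bounded $UM(C^*K)$-valued functions on $\Lambda$, acting diagonally on $\ell^2(\Lambda)\otimes C^*K$. The paper's more abstract route is chosen for its conceptual framing and because it extends directly to locally compact groups and twisted actions. The paper also notes that Theorem \ref{struct} can be obtained from Packer--Raeburn's work, which is your route.

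Your fallback is essentially the paper's construction. The algebra $B=C_0(\Lambda,C^*K)\rtimes(\Gamma,K)$ of line \eqref{b def} unwinds to $c_0(\Lambda)\rtimes\Gamma$, because twisted covariant pairs are just covariant pairs for $(c_0(\Lambda),\Gamma)$. Its Morita equivalence with $C^*K$ is then an instance of Green's imprimitivity theorem, which is the kind of direct argument the paper's closing remark on Theorem \ref{struct} anticipates.
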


\begin{remark}
There is something of a disanalogy between extensions of groups and of $C^*$-algebras.  Indeed, the LLP for groups is not in general preserved by extensions: see Remark \ref{llp products remark} above.  On the other hand, if $0\to I\to A\to Q\to 0$ is an extension of $C^*$-algebras such that $I$ and $Q$ have the LLP, then $A$ also has the LLP\footnote{Thanks to Kristin Courtney for pointing this out.}.  Indeed, let $\mathcal{B}=\mathcal{B}(\ell^2(\N))$ and consider the commutative diagram 
$$
\xymatrix{ 0 \ar[r] & I\otimes_{\max}\mathcal{B} \ar[r] \ar[d] & A\otimes_{\max}\mathcal{B} \ar[r] \ar[d] & Q\otimes_{\max}\mathcal{B} \ar[r] \ar[d] & 0 \\
0 \ar[r] & I\otimes\mathcal{B} \ar[r] & A\otimes\mathcal{B} \ar[r] & Q\otimes\mathcal{B} \ar[r] & 0}
$$
where the vertical maps are the canonical quotients.  The top row is exact by general properties of the maximal tensor product (see for example \cite[Proposition 3.7.1]{Brown:2008qy}), and the bottom row is exact by Theorem \ref{kir the}, the LLP for $Q$ and \cite[Corollary 3.7.3]{Brown:2008qy} (or the easier half of \cite[Theorem 3.2]{Effros:1985aa}).  Hence using Theorem \ref{kir the} and the five lemma, $A$ has the LLP.

Unfortunately, the fact that the LLP behaves well under $C^*$-algebra extensions does not seem to help at all with group extensions. 
\end{remark}

The proof of Theorem \ref{llp the} will be based on the theory of Morita equivalence of $C^*$-algebras, which we will use largely as a black box.   The idea of Morita equivalence is due to Rieffel \cite{Rieffel:1982aa}\footnote{Rieffel originally called it \emph{strong Morita equivalence}, but that terminology is rarely used now.}: one treats the collection of all $C^*$-algebras as a category, where the morphism sets are not functions, but rather appropriate bimodules, and composition is defined by tensor product.  Two $C^*$-algebras are Morita equivalent if they are isomorphic in this category.  See for example \cite[Section 5]{Echterhoff:2009jo} for a good overview.

The following structure theorem is the key ingredient we need: one can deduce it as a simple special case of the work of Packer--Raeburn \cite{Packer:1990aa}, or of Echterhoff \cite{Echterhoff:1994aa}.  We will treat it as a black box for now, and explain how to deduce it from the literature later on.

\begin{theorem}[Packer--Raeburn, Echterhoff]\label{struct}
Let $1\to K\to \Gamma\to \Lambda \to 1$ be an extension of discrete groups.  Then there is a $\Lambda$-$C^*$-algebra $B$ such that $B$ is Morita equivalent to $C^*(K)$, and $B\rtimes \Lambda$ is Morita equivalent to $C^*(\Gamma)$.  
If moreover $\Gamma$ is countable, we may assume that $B$ is separable. \qed
\end{theorem}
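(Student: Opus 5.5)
The plan is to take for $B$ an explicit crossed product built from $C_0(\Lambda)$, and to read off both Morita equivalences from classical imprimitivity results (Green's imprimitivity theorem and the Stone--von Neumann theorem), rather than going through twisted actions. Write $q \colon \Gamma\to\Lambda$ for the quotient map.

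\emph{Construction of $B$.} Let $\Gamma$ act on $\Lambda=\Gamma/K$ by left translation through $q$, and hence on $C_0(\Lambda)$. Set
$$
B \coloneqq C_0(\Lambda)\rtimes\Gamma .
$$
Green's imprimitivity theorem for the homogeneous space $\Gamma/K$ (e.g.\ \cite[Section 5]{Echterhoff:2009jo}) gives a Morita equivalence between $C_0(\Gamma/K)\rtimes\Gamma$ and $C^*K$, with the full crossed product on both sides. Hence $B$ is Morita equivalent to $C^*K$.

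The group $\Lambda$ also acts on $C_0(\Lambda)$, by right translation. This action commutes with the left $\Gamma$-action, so it induces an action of $\Lambda$ on $B$: it acts on $C_0(\Lambda)$ as above and trivially on the copy of $\Gamma$. This makes $B$ a $\Lambda$-$C^*$-algebra. If $\Gamma$ is countable, then $\Lambda$ is countable, $C_0(\Lambda)$ is separable, and so $B$ is separable.

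\emph{Identifying $B\rtimes\Lambda$.} Because the two actions commute, iterated full crossed products can be taken in either order. Comparing covariant representations, as in Remark \ref{sd rem}, gives
$$
(C_0(\Lambda)\rtimes\Gamma)\rtimes\Lambda\;\cong\; C_0(\Lambda)\rtimes(\Gamma\times\Lambda)\;\cong\;(C_0(\Lambda)\rtimes\Lambda)\rtimes\Gamma .
$$
By the Stone--von Neumann theorem (for a discrete group acting on itself by translation, the full and reduced crossed products agree), $C_0(\Lambda)\rtimes\Lambda\cong\mathcal{K}(\ell^2\Lambda)$. Under this isomorphism the residual $\Gamma$-action becomes $\mathrm{Ad}\,\lambda_{q(g)}$, where $\lambda$ is the left regular representation of $\Lambda$. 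This action is implemented by the unitary representation $g\mapsto\lambda_{q(g)}$, so it is exterior equivalent to the trivial action. Therefore
$$
\mathcal{K}(\ell^2\Lambda)\rtimes\Gamma\;\cong\;\mathcal{K}(\ell^2\Lambda)\otimes C^*\Gamma ,
$$
using the explicit isomorphism $a\,g\mapsto a\lambda_{q(g)}^*\otimes g$. The right-hand side is Morita equivalent to $C^*\Gamma$, which proves the theorem.

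\emph{Main obstacle.} The step requiring the most care is the bookkeeping of the actions: checking that the $\Lambda$-action on $C_0(\Lambda)\rtimes\Gamma$ is well defined, and that each isomorphism in the chain is compatible with the remaining group action, so that the residual action on $\mathcal{K}(\ell^2\Lambda)$ really is the inner action described above. I would verify this on the dense algebraic crossed products, by writing down the covariant pairs explicitly. A secondary point is to make sure that Green's theorem is quoted in its form for \emph{full} crossed products; for discrete groups this is standard.
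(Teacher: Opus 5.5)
Your proof is correct, but it takes a different route from the paper.

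\textbf{The paper's route.} The paper never uses an imprimitivity theorem directly. It regards $C^*K$ as a Green-twisted $(\Gamma,K)$-algebra (Example \ref{gp ex}) and applies Echterhoff's theorem (Theorem \ref{twist the}) as a black box. That theorem says the twisted action is equivariantly Morita equivalent to an untwisted $\Lambda$-action on $C_0(\Lambda,C^*K)\rtimes_{\beta,\sigma}(\Gamma,K)$, and that equivariant Morita equivalence passes to crossed products.

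\textbf{Relation between the two.} Your $B=C_0(\Lambda)\rtimes\Gamma$ is in fact the same algebra as the paper's. In the twisted crossed product of line \eqref{b def}, the condition $\pi^M(\sigma_k)=u_k$ forces the $C^*K$-coordinate to be implemented by $u|_K$. So that factor drops out, and what remains is $C_0(\Lambda)\rtimes\Gamma$ with the right-translation $\Lambda$-action.

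\textbf{What you do differently.} You prove the two Morita equivalences by hand. Green's imprimitivity theorem gives $B\sim C^*K$. For $B\rtimes\Lambda$ you commute the two actions, apply Stone--von Neumann, and untwist the inner action $\mathrm{Ad}\,\lambda_{q(g)}$. This is exactly the ``more elementary and direct'' argument anticipated in the remark after the paper's proof.

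\textbf{Trade-offs.} Your route uses only classical, well-documented ingredients and treats the separability clause explicitly; the paper leaves it implicit. It also gives the sharper statement $B\rtimes\Lambda\cong\mathcal{K}(\ell^2\Lambda)\otimes C^*\Gamma$. The paper's route is less explicit, but it avoids the bookkeeping you flag as the main obstacle. It also generalizes immediately to twisted actions on other coefficient algebras and to locally compact groups.

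\textbf{One small slip.} Use the paper's convention $gag^*=\alpha_g(a)=\lambda_{q(g)}a\lambda_{q(g)}^*$. Then the isomorphism $\mathcal{K}(\ell^2\Lambda)\rtimes\Gamma\to\mathcal{K}(\ell^2\Lambda)\otimes C^*\Gamma$ is $a\,g\mapsto a\lambda_{q(g)}\otimes g$. Your formula with $\lambda_{q(g)}^*$ is not multiplicative. It is the inverse map $a\otimes g\mapsto a\lambda_{q(g)}^*\,g$ read in the wrong direction. This does not affect the argument.
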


Given Theorems \ref{struct} and \ref{llp cp}, to prove Theorem \ref{llp the}, it suffices to show that the LLP is preserved by Morita equivalences of $C^*$-algebras, and that the LP is preserved by Morita equivalences of separable $C^*$-algebras.  These facts are no doubt known to experts, but we could not find them in the literature so give proofs for the reader's convenience.

\begin{proposition}\label{kir cor}
If $A$ and $B$ are Morita equivalent $C^*$-algebras and $A$ has the LLP, then $B$ does too.
\end{proposition}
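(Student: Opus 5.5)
We will use Kirchberg's criterion, Theorem \ref{kir the}: writing $\mathcal{B}=\mathcal{B}(H)$ for $H$ separable infinite-dimensional, it suffices to show that if $A\otimes_{\max}\mathcal{B}\to A\otimes\mathcal{B}$ is injective, then so is $B\otimes_{\max}\mathcal{B}\to B\otimes\mathcal{B}$. The strategy is to transport this injectivity across a linking algebra. Let $X$ be an $A$-$B$ imprimitivity bimodule, and let $L=\begin{pmatrix} A & X\\ \overline{X} & B\end{pmatrix}$ be the associated linking algebra. Here $A=pLp$ and $B=qLq$ are complementary full corners, with projections $p,q$ in the multiplier algebra $M(L)$ satisfying $p+q=1$.

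The key steps are as follows.

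First, we pass from $A$ to $L$. Since $A$ is a full corner of $L$, $L$ is the closed linear span of $LpL$, so $L$ is the ideal of $M(L)$ generated by $A$. It is cleaner, though, to argue with hereditary subalgebras and ideals. We use two standard facts about the maximal and minimal tensor products (see \cite[Chapter 3]{Brown:2008qy}).
\begin{enumerate}[(a)]
\item If $D\subseteq C$ is a hereditary $C^*$-subalgebra, then $D\otimes_{\max}\mathcal{B}\to C\otimes_{\max}\mathcal{B}$ is injective. For corners $D=pCp$ this is direct: compress by $p\otimes 1$, which gives a completely positive retraction.
\item For an ideal $J\trianglelefteq C$, $J\otimes_{\max}\mathcal{B}\to C\otimes_{\max}\mathcal{B}$ is injective, and the same holds for $\otimes$.
\end{enumerate}

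Second, we show that the canonical map $L\otimes_{\max}\mathcal{B}\to L\otimes\mathcal{B}$ is injective. Let $x$ be in its kernel. For any $a,a'$ in the $*$-subalgebra $\mathrm{span}(LpL)$, the element $(p a\otimes 1)\,x\,(a' p\otimes 1)$ lies in the kernel restricted to $pLp\otimes_{\max}\mathcal{B}=A\otimes_{\max}\mathcal{B}$, using (a) for both tensor products. That kernel is zero by the LLP for $A$. Fullness then lets us choose $e_\lambda$ in $\mathrm{span}(LpL)$ forming an approximate unit, and expanding $(e_\lambda\otimes 1)x^*x(e_\lambda\otimes 1)$ in terms of such corner compressions shows that $x^*x=0$. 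Alternatively, and more concretely, we can use the ideal structure: the kernel $\mathcal{K}$ of $L\otimes_{\max}\mathcal{B}\to L\otimes\mathcal{B}$ is an ideal. We have $(p\otimes1)\mathcal{K}(p\otimes1)=0$, so $\mathcal{K}(p\otimes 1)\mathcal{K}^*=0$ after multiplying; hence $\mathcal{K}\cdot(LpL\otimes\mathcal{B})=0$, and fullness forces $\mathcal{K}=0$.

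Third, we pass from $L$ to $B$. Since $B=qLq$ is a corner, (a) for $\otimes_{\max}$ together with injectivity of $qLq\otimes\mathcal{B}\to L\otimes\mathcal{B}$ gives a commutative square. In this square the map $B\otimes_{\max}\mathcal{B}\to L\otimes_{\max}\mathcal{B}$ is injective, and so is the map $L\otimes_{\max}\mathcal{B}\to L\otimes\mathcal{B}$ from the second step. Hence $B\otimes_{\max}\mathcal{B}\to B\otimes\mathcal{B}$ is injective, and $B$ has the LLP by Theorem \ref{kir the}.

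The main obstacle is the precise justification of fact (a) for $\otimes_{\max}$ with a corner defined by a multiplier projection, together with the fullness argument in the second step. For (a), our first attempt will be the following: any representation of $pLp\odot\mathcal{B}$ with commuting ranges extends to $L\odot\mathcal{B}$ by inducing through the bimodule $Lp$, so the max norm on $pLp\odot\mathcal{B}$ agrees with the one inherited from $L\otimes_{\max}\mathcal{B}$. Should this prove delicate, we will instead cite Kirchberg's result \cite[Corollary 2.6]{Kirchberg:1993aa} that the LLP passes to hereditary subalgebras and is stable under tensoring with compacts. We would combine this with the Brown--Green--Rieffel theorem: for $\sigma$-unital algebras $A\otimes\mathcal{K}\cong B\otimes\mathcal{K}$, and the general case reduces to $\sigma$-unital ones via directed unions and Lemma \ref{dir lim}.
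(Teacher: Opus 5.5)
Your proposal is correct and follows the paper's route. You reduce via Kirchberg's criterion (Theorem \ref{kir the}) to showing that nuclearity of the pair $(\,\cdot\,,\mathcal{B}(H))$ is invariant under Morita equivalence; the paper gets this by citing the argument of \cite[Proposition 5.6]{Echterhoff:2009jo}, while you prove it directly via the linking algebra and the full corner $pLp$, and your ideal argument giving $\mathcal{K}(p\otimes 1)=0$ and hence $\mathcal{K}=0$ is sound. For fact (a), the cleanest justification is that $l\mapsto plp$ is a ccp retraction of $L$ onto $pLp$ and $\otimes_{\max}$ is functorial for ccp maps, so you need neither the induction argument nor the Brown--Green--Rieffel fallback (whose reduction to the $\sigma$-unital case you did not justify).
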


\begin{proof}
Exactly the same proof as \cite[Proposition 5.6]{Echterhoff:2009jo} shows that for any $C^*$-algebras $A$, $B$, and $C$, if $A$ and $B$ are Morita equivalent and $(A,C)$ is a nuclear pair, then $(B,C)$ is a nuclear pair also.  Apply this with $C=\mathcal{B}(H)$ and use Theorem \ref{kir the}.
\end{proof}

We now turn to the LP.  We split the proof up into two lemmas.  The first is due to Kirchberg \cite[Corollary 2.6 (iv)]{Kirchberg:1993aa}.  

\begin{lemma}[Kirchberg]\label{to tens}
Let $A$ be a separable $C^*$-algebra with the LP, and let $B$ be a separable nuclear $C^*$-algebra.  Then $A\otimes B$ has the LP. \qed
\end{lemma}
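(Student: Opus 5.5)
The plan is to verify Pisier's criterion, Theorem \ref{pis the}, for $A\otimes B$; this is available because $A\otimes B$ is separable. Fix a family $\{D_i\}_{i\in I}$ and set $D=\prod_i D_i$. We must show that the canonical map
$$
D\otimes_{\max}(A\otimes B)\to \prod_{i\in I}\big(D_i\otimes_{\max}(A\otimes B)\big)
$$
is injective.

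First, since $B$ is nuclear, $A\otimes B=A\otimes_{\max}B$. Associativity of the maximal tensor product then lets us rewrite the left side as $(D\otimes_{\max}A)\otimes_{\max}B$, and each factor on the right as $(D_i\otimes_{\max}A)\otimes_{\max}B$.

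The map now factors as a composite of two maps:
$$
(D\otimes_{\max}A)\otimes_{\max}B\;\xrightarrow{\ \alpha\ }\;\Big(\prod_i (D_i\otimes_{\max}A)\Big)\otimes_{\max}B\;\xrightarrow{\ \beta\ }\;\prod_i\big((D_i\otimes_{\max}A)\otimes_{\max}B\big).
$$

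For $\alpha$: it is $j\otimes \mathrm{id}_B$, where $j\colon D\otimes_{\max}A\to \prod_i(D_i\otimes_{\max}A)$ is injective by Theorem \ref{pis the} applied to $A$ (after using commutativity of $\otimes_{\max}$ to match the order of factors in that theorem). Since $B$ is nuclear, both tensor products involving $B$ are minimal ones, and the minimal tensor product preserves injectivity. So $\alpha$ is injective.

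For $\beta$: we set $E_i=D_i\otimes_{\max}A$ and must show that $\big(\prod_i E_i\big)\otimes B\to\prod_i(E_i\otimes B)$ is injective for nuclear $B$. Again all tensor products with $B$ are minimal. The minimal tensor product embeds spatially, and $\prod_i E_i$ embeds in $\prod_i\mathcal B(H_i)\subseteq\mathcal B(\oplus_i H_i)$ via faithful representations. The minimal norm of an element of $\big(\prod_i E_i\big)\odot B$ is then the norm of its image acting on $(\oplus_i H_i)\otimes K$. That Hilbert space is $\oplus_i(H_i\otimes K)$, and the operator is block diagonal, so its norm is $\sup_i$ of the minimal norms in $E_i\otimes B$. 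Hence $\beta$ is isometric on the algebraic tensor product, and therefore on its completion.

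The main obstacle is the bookkeeping: the maps must genuinely be the canonical ones, and the identifications must commute. In particular, the identification via associativity and commutativity of $\otimes_{\max}$ has to be compatible with the coordinate maps into the products. I would check this on elementary tensors $d\otimes a\otimes b$, where all of the maps agree by construction, and then extend by continuity.
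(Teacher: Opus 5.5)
Your argument is correct. The paper gives no proof of this lemma: it quotes it from Kirchberg \cite[Corollary 2.6 (iv)]{Kirchberg:1993aa} and uses it as a black box. So yours is a genuinely different route, a derivation from Pisier's criterion (Theorem \ref{pis the}) in the same style as the paper's proof of the LP half of Corollary \ref{c ext}.

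The key point in your argument is that nuclearity of $B$ lets you treat $-\otimes B$ both as a maximal and as a minimal tensor product. As a maximal one, associativity of $\otimes_{\max}$ applies, and the injectivity of $D\otimes_{\max}A\to\prod_i(D_i\otimes_{\max}A)$ coming from the LP of $A$ can be fed in. As a minimal one, $j\otimes\mathrm{id}_B$ stays injective, and $(\prod_i E_i)\otimes B\to\prod_i(E_i\otimes B)$ is isometric by your block-diagonal computation. That computation in fact works for any $B$ with the minimal tensor product. Separability of $B$ enters only to make $A\otimes B$ separable, so that the ``if'' direction of Pisier's theorem is available. The factor order already matches Theorem \ref{pis the}, so the appeal to commutativity of $\otimes_{\max}$ is unnecessary.

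What your approach buys is a short derivation using only tools the paper already treats as black boxes, plus elementary spatial facts. What the citation buys is independence from Pisier's much more recent and deep characterization. Kirchberg's result predates that criterion and does not depend on it.
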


\begin{lemma}\label{from tens}
Let $A$ and $B$ be (non-zero) $C^*$-algebras, and assume that $A\otimes B$ has the LP.  Then $A$ has the LP.
\end{lemma}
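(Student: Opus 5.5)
The plan is to exhibit $A$ as a ccp retract of $A\otimes B$ in the sense of Lemma \ref{cp ret}, and then apply that lemma.

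First I construct the inclusion. Since $B\neq 0$, choose a state $\varphi$ on $B$. Choose also a positive contraction $b\in B$ with $\varphi(b)=1$; more carefully, choose $b$ with $\varphi(b)$ as close to $1$ as needed, or first pass to a situation where such $b$ exists exactly. The simplest route is to take $\varphi$ to be a pure state. Then there is a positive norm-one element $b$ with $\varphi(b)=1$: by Kadison transitivity in the GNS representation one can find $b\geq 0$, $\|b\|=1$, with $\pi_\varphi(b)\xi_\varphi=\xi_\varphi$. Define
$$
\iota \colon A\to A\otimes B,\quad a\mapsto a\otimes b .
$$
This is cp, being the tensor product of $\mathrm{id}_A$ with the cp map $\C\to B$, $\lambda\mapsto\lambda b$. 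It is contractive since $\|a\otimes b\|=\|a\|\|b\|$.

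Next I construct the retraction. Let $\sigma=\mathrm{id}_A\otimes\varphi \colon A\otimes B\to A\otimes\C=A$ be the slice map. This is ccp because minimal tensor products are functorial for ccp maps (Remark \ref{stinespring}). On elementary tensors it gives $\sigma(\iota(a))=\varphi(b)a=a$, so $\sigma\circ\iota=\mathrm{id}_A$. Lemma \ref{cp ret} then shows that $A$ has the LP, since $A\otimes B$ does.

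The main obstacle is producing $b$ with $\varphi(b)=1$ exactly. If Kadison transitivity feels like overkill, here are two alternatives. If $B$ is unital, take $b=1$. In general, one can replace $B$ by its unitization, but then $A\otimes B$ must be compared with $A\otimes B^+$, which is awkward.

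A cleaner trick works for any nonzero $B$. Pick any positive $b_0$ of norm one and a state $\varphi$ with $\varphi(b_0)=1$; this state exists by Hahn--Banach / the standard existence of norming states for positive elements. Then put $b=b_0$. This avoids purity entirely.

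Also, the slice map $\mathrm{id}\otimes\varphi$ need not be unital, but Lemma \ref{cp ret} was proved for ccp maps, with the nonunital case handled via unitizations. So no further adjustment is needed.
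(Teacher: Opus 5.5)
Your proposal is correct and is essentially the paper's proof: take a positive norm-one $b\in B$ and a state $\varphi$ with $\varphi(b)=1$, set $\iota(a)=a\otimes b$ and $\sigma=\mathrm{id}_A\otimes\varphi$, and apply Lemma \ref{cp ret}. The detour through pure states and Kadison transitivity is unnecessary. Your final choice, a norming state for an arbitrary positive norm-one $b_0$, is exactly what the paper uses.
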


\begin{proof}
Let $b\in B$ be a positive norm one contraction, and choose any state $\phi$ on $B$ such that $\phi(b)=1$.  Then the maps determined by
$$
\iota \colon A\to A\otimes B,\quad a\mapsto a\otimes b
$$
and 
$$
\sigma \colon A\otimes B\to A,\quad a\otimes b\mapsto \phi(b) a 
$$
satisfy the hypotheses of Lemma \ref{cp ret}.
\end{proof}

\begin{corollary}\label{lp mor}
If $A$ and $B$ are Morita equivalent separable $C^*$-algebras and $A$ has the LP, then $B$ does too.
\end{corollary}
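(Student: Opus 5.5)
The plan is to chain Lemmas \ref{to tens} and \ref{from tens} through the Brown--Green--Rieffel stable isomorphism theorem. Let $\mathcal{K}=\mathcal{K}(\ell^2(\N))$ be the compact operators. The theorem says that Morita equivalent $\sigma$-unital $C^*$-algebras are stably isomorphic, and separable $C^*$-algebras are $\sigma$-unital. So under our hypotheses there is a $*$-isomorphism $A\otimes\mathcal{K}\cong B\otimes\mathcal{K}$. This is the one place where separability of both $A$ and $B$ is used; it is the only real input, and I will cite it rather than prove it.

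With this in hand the argument is short. First, $\mathcal{K}$ is separable and nuclear: it is an inductive limit of the matrix algebras $M_n(\C)$, and nuclearity passes to such limits. Since $A$ is separable with the LP, Lemma \ref{to tens} gives that $A\otimes\mathcal{K}$ has the LP. The LP is defined intrinsically, passing to the unitization if necessary, so it is invariant under $*$-isomorphism. Hence $B\otimes\mathcal{K}$ has the LP.

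Finally, apply Lemma \ref{from tens} with the pair $(B,\mathcal{K})$; both algebras are nonzero, with the trivial case $B=0$ excluded. This gives that $B$ has the LP.

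For concreteness, the ingredients inside Lemma \ref{from tens} in this case are the following. Take $b$ to be a rank-one projection $e$ and $\phi$ the vector state with $\phi(e)=1$. Then $\iota\colon B\to B\otimes\mathcal{K}$, $x\mapsto x\otimes e$, is a corner embedding, and $\sigma=\mathrm{id}\otimes\phi$ is a slice map, which is ccp. Since $\sigma\circ\iota=\mathrm{id}_B$, Lemma \ref{cp ret} applies. The only point needing care is the citation to Brown--Green--Rieffel; the rest is formal.
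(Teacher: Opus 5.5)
Your proposal is correct and follows the paper's proof essentially verbatim. The paper uses Brown--Green--Rieffel to reduce to showing that a separable $C^*$-algebra has the LP if and only if its stabilization does, and gets this from Lemma \ref{to tens} (with $\mathcal{K}$ separable and nuclear) together with Lemma \ref{from tens}; your rank-one projection and vector state are just a particular valid choice of the $b$ and $\phi$ in Lemma \ref{from tens}.
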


\begin{proof}
If $A$ and $B$ are separable $C^*$-algebras, then the Brown--Green--Rieffel theorem \cite{Lawrence-G.-Brown:1977co} says that  $A$ is Morita equivalent to $B$ if and only if $A\otimes \mathcal{K}$ is isomorphic to $B\otimes \mathcal{K}$, where $\mathcal{K}$ is the $C^*$-algebra of compact operators on a separable infinite-dimensional Hilbert space.  It thus suffices to show that a separable $C^*$-algebra $A$ has the LP if and only if $A\otimes \mathcal{K}$ does.  This follows directly from Lemmas \ref{to tens} and \ref{from tens}, plus the fact that $\mathcal{K}$ is nuclear (see for example \cite[Proposition 2.4.1]{Brown:2008qy}).
\end{proof}

For the rest of the section, we will prove Theorem \ref{struct}, based on Echterhoff's work \cite{Echterhoff:1994aa}.

\subsection*{Twisted crossed products}

To state the definition of a twisted action, we need the notion of a multiplier algebra.  This would not be necessary if we only worked with unital $C^*$-algebras, but the theory of Morita equivalence typically takes one outside of the unital case.  The point of the multiplier algebra is that it gives one a setting to talk about things like inner automorphisms that do not directly make sense for non-unital algebras.

\begin{definition}\label{ma}
An ideal $I$ in a $C^*$-algebra $B$ is \emph{essential} if the only element $b$ of $B$ that satisfies $bI=\{0\}$ is $b=0$.

Let $A$ be a $C^*$-algebra.  The \emph{multiplier algebra} of $A$, denoted $M(A)$, is a $C^*$-algebra that contains $A$ as an essential ideal, and with the universal property that if $B$ is another $C^*$-algebra containing $A$ as an essential ideal, then there is a (unique injective)\footnote{As $B$ is essential in $A$, there can be at most one such extension, and if it exists, it must be injective.} $*$-homomorphism $B\to M(A)$ extending the identity on $A$.
\end{definition}

See for example \cite[Chapter 2]{Lance:1995ys} or \cite[Section 1.7]{Willett:2010ay} for more on multiplier algebras, including a proof that they exist (the definition implies they are unique, up to canonical isomorphism).  For example, if $\pi \colon A\to \mathcal{B}(H)$ is any faithful non-degenerate representation, then one may take 
$$
M(A)\coloneqq\{b\in \mathcal{B}(H)\mid b\pi(a),\pi(a)b\in A\text{ for all } a\in A\}
$$
to be the idealizer of $A$ in $\mathcal{B}(H)$.  Note that $A$ is unital if and only if $M(A)=A$.  Note also that if $\pi \colon A\to \mathcal{B}(H)$ is a nondegenerate representation, then there is a unique unital extension $\pi^M \colon M(A)\to \mathcal{B}(H)$.

We now define the twisted analogs of covariant pairs and crossed products.  The idea of a twisted action associated to a short exact sequence $1\to K\to \Gamma\to \Lambda \to 1$ is that it plays the role that `should' be played by a $\Lambda$-action in situations where the latter does not exist due to the sequence not splitting.  The definitions we use are due to Green \cite{Green:1978aa}.  See \cite[Chapter 12]{Echterhoff:2009jo} for a more recent exposition.

\begin{definition}\label{twist def}
Let $\Gamma$ be a discrete group that fits into a short exact sequence $1\to K\to \Gamma\to \Lambda \to 1$.  A \emph{$(\Gamma,K)$-action} on a $C^*$-algebra $A$ is an action $\alpha$ of $\Gamma$ in the usual sense together with a homomorphism 
$$
\tau \colon K\to U(M(A))
$$
from $K$ to the unitary group of the multiplier algebra $M(A)$ such that 
$$
\tau_ka\tau_k^*=\alpha_k(a)\quad \text{and}\quad \alpha_g(\tau_k)=\tau_{gkg^{-1}}
$$
for all $a\in A$, $k\in K$, and $g\in \Gamma$.   We also call the triple $(A,\alpha,\tau)$, or just $A$ if there is no risk of confusion, a \emph{$(\Gamma,K)$-$C^*$-algebra}.

A \emph{covariant pair} for a $(\Gamma,K)$-$C^*$-algebra $(A,\alpha,\tau)$ is a covariant pair $(\pi,u)$ for $(A,\alpha)$ in the usual sense and with the additional property that $\pi^M(\tau_k)=u_k$ for all $k\in K$.  Finally, the \emph{(maximal) twisted crossed product} of $A$ by $(\Gamma,K)$, denoted $A\rtimes_{\alpha,\tau}(\Gamma,K)$ or just $A\rtimes (\Gamma,K)$ if there is no risk of confusion, is defined to be the separated\footnote{This means one first takes the quotient by the subspace of $b\in A\rtimes_{alg}\Gamma$ satisfying $\|b\|=0$, then completes.} completion of $A\rtimes_{alg}\Gamma$ for the seminorm
$$
\|b\|\coloneqq\sup\{\|(\pi\rtimes u)(b)\|_{\mathcal{B}(H)}\mid (\pi,u)\text{ a covariant pair}\}.
$$
\end{definition}

\begin{example}\label{gp ex}
The basic, and most important, example of a twisted action is when $A=C^*(K)$, $\Gamma$ acts on $A$ via the conjugation action of $\Gamma$ on $K$, and $\tau \colon K\to U(A)$ is the canonical inclusion into the unitary group.   

In this case, the twisted crossed product $C^*(K)\rtimes (\Gamma,K)$ is canonically isomorphic to $C^*(\Gamma)$: this comes down to the fact that a covariant pair for $(C^*(K),\alpha,\tau)$ is essentially the same thing as a unitary representation of $\Gamma$.
\end{example}

\begin{example}\label{reduce}
Let $1\to K\to \Gamma\to \Lambda \to 1$ be a short exact sequence of groups.  We say a $(\Gamma,K)$ twisted action \emph{reduces to a $\Lambda$-action} if $\tau$ sends every element in $K$ to the identity, and $\alpha$ descends to an action $\overline{\alpha}$ of $\Lambda$.   In this case there is a canonical isomorphism 
$$
A\rtimes_{\alpha,\tau}(\Gamma,K)\cong A\rtimes_{\overline{\alpha}}\Lambda.
$$
This comes down to checking that for any covariant pair $(\pi,u)$ for $(A,\alpha,\tau)$, the integrated form $\pi\rtimes u$ factors through the canonical quotient map $A\rtimes_{alg,\alpha}\Gamma\to A\rtimes_{alg,\overline{\alpha}}\Lambda$, and that conversely any covariant pair for $(A,\overline{\alpha})$ arises from a twisted covariant pair for $(A,\alpha,\tau)$.
\end{example}

We are now ready to state the key facts we need about twisted actions, crossed products, and Morita equivalences.  We need some notation.  Let $1\to K\to \Gamma\to \Lambda\to 1$ be a short exact sequence of groups, and let $(A,\alpha,\tau)$ be a $(\Gamma,K)$-$C^*$-algebra.  Let $C_0(\Lambda,C^*(K))$ denote the $C^*$-algebra of bounded functions $f \colon \Lambda\to C^*(K))$ such that for all $\epsilon>0$ there is a finite subset $S$ of $\Lambda$ such that $\|f(g)\|<\epsilon$ for all $g\not\in S$.  The $C^*$-algebra $C_0(\Lambda,C^*(K))$ is equipped with pointwise operations, and with the $(\Gamma,K)$ action $(\beta,\sigma)$ defined for $f \colon \Lambda\to C^*(K)$ by 
$$
(\beta_gf)(h)\coloneqq\mathrm{ad}_g(f(g^{-1}h))
$$
(here $\mathrm{ad}$ is the action of $\Gamma$ on $C^*(K)$ induced by the conjugation $\Gamma$ action on $K$) and setting $\sigma_k$ to be the canonical unitary $k\in  C^*(K)$, considered as an element of the multiplier algebra $M(C_0(\Lambda,C^*(K)))$ via the formula 
$$
(\sigma_k f)(g)=k\cdot f(g)
$$
for $k\in K$, $g\in \Lambda$ and $f\in C_0(\Lambda,C^*(K))$.
We then define 
\begin{equation}\label{b def}
B \coloneqq C_0(\Lambda,C^*(K))\rtimes_{\beta,\sigma}(\Gamma,K),
\end{equation}
and note that the right action of $\Lambda$ on itself makes this into a $\Lambda$-$C^*$-algebra (i.e.\ to a $(\Gamma,K)$-$C^*$-algebra such that the right action reduces to a $\Lambda$ action in the sense of Example \ref{reduce}).

The following theorem is due to Echterhoff \cite{Echterhoff:1994aa} (based partly on ideas of Packer and Raeburn \cite{Packer:1990aa}).  See also \cite[Chapter 13]{Echterhoff:2009jo} for a more recent exposition.

\begin{theorem}[Echterhoff]\label{twist the}
Let $1\to K\to \Gamma\to \Lambda\to 1$ be a short exact sequence of groups.  Then there is a notion of equivariant Morita equivalence for $(\Gamma,K)$-$C^*$-algebras such that the following hold.
\begin{enumerate}[(i)]
\item \label{mor for} If $(A,\alpha,\tau)$ and $(B,\beta,\sigma)$ are equivariantly Morita equivalent $(\Gamma,K)$-$C^*$-algebras, then $A$ and $B$ are themselves Morita equivalent as $C^*$-algebras.
\item \label{mor cp} If $(A,\alpha,\tau)$ and $(B,\beta,\sigma)$ are equivariantly Morita equivalent $(\Gamma,K)$-$C^*$-algebras, then the crossed products $A\rtimes_{\alpha,\tau}(\Gamma,K)$ and $B\rtimes_{\beta,\sigma}(\Gamma,K)$ are Morita equivalent.
\item \label{mor red} Let $(A,\alpha,\tau)$ be a $(\Gamma,K)$-$C^*$-algebra, and let $B$ be the $C^*$-algebra constructed out of this data as in line \eqref{b def}.  Then $(A,\alpha,\tau)$ is equivariantly Morita equivalent to a $(B,\beta,\sigma)$ that reduces to a $\Lambda$-$C^*$-algebra in the sense of Example \ref{reduce}.
\end{enumerate}
\end{theorem}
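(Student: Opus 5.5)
The approach is to take the definition of equivariant Morita equivalence from Echterhoff \cite{Echterhoff:1994aa}. One could equally take it from \cite[Chapter 12--13]{Echterhoff:2009jo}. Each of the three items then reduces to a known construction for ordinary (untwisted) crossed products, plus a check that the twist is respected.

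\textbf{The definition.} A $(\Gamma,K)$-equivariant Morita equivalence between $(A,\alpha,\tau)$ and $(B,\beta,\sigma)$ consists of the following data.
\begin{itemize}
\item An $A$--$B$ imprimitivity bimodule $X$, in the sense of Rieffel.
\item An action $\gamma$ of $\Gamma$ on $X$ by linear bijections, compatible with $\alpha$, $\beta$ and both inner products. Concretely, $\gamma_g(a x b)=\alpha_g(a)\gamma_g(x)\beta_g(b)$, and similarly for ${}_A\langle\cdot,\cdot\rangle$ and $\langle\cdot,\cdot\rangle_B$.
\item The twist compatibility $\gamma_k(x)=\tau_k\, x\, \sigma_k^*$ for all $k\in K$. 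This uses the canonical actions of $M(A)$ and $M(B)$ on $X$.
\end{itemize}

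\textbf{Item \eqref{mor for}.} This is then tautological: forgetting $\gamma$ leaves an imprimitivity bimodule.

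\textbf{Item \eqref{mor cp}.} Complete $C_c(\Gamma,X)$ in the usual way (as for untwisted crossed products) to get an imprimitivity bimodule between $A\rtimes_\alpha\Gamma$ and $B\rtimes_\beta\Gamma$. Then pass to the twisted quotients. The twisted crossed product $A\rtimes_{\alpha,\tau}(\Gamma,K)$ is the quotient of $A\rtimes_\alpha\Gamma$ by the ideal $I_\tau$, which is the kernel of all covariant pairs satisfying $\pi^M(\tau_k)=u_k$. By the Rieffel correspondence, $X\rtimes\Gamma$ induces a bijection between ideals of the two crossed products. I will check that $I_\tau$ corresponds to $I_\sigma$, and will use the twist compatibility to do so. Concretely, this amounts to showing that inducing a covariant pair that respects $\sigma$ through $X$ gives one that respects $\tau$, and conversely. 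The quotient bimodule $X\rtimes\Gamma / X\rtimes\Gamma\cdot I_\sigma$ then implements the required equivalence.

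\textbf{Item \eqref{mor red}.} This is the main obstacle. I read line \eqref{b def} with $A$ in place of $C^*(K)$, since the group case is the one of interest. I will first use Green's imprimitivity theorem in its twisted form, which says that $(C_0(\Lambda)\otimes A)\rtimes(\Gamma,K)$ is Morita equivalent to $A$ (the crossed product by the trivial group $K/K$). I then show that the commuting right-translation action of $\Lambda$ on $C_0(\Lambda,A)$ passes to the crossed product, and that it is $(\Gamma,K)$-equivariantly Morita equivalent to the original twisted action. The bimodule for this is a completion of $C_c(\Gamma,A)$, or equivalently of $C_c(\Lambda,A)$ after using $\tau$ to absorb $K$. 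The key point is that inside the crossed product the twist unitaries $\tau_k$ become inner. So the residual twist on the new algebra is implemented by elements that can be untwisted, and the action then factors through $\Lambda$ as in Example \ref{reduce}.

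The difficult part is choosing the bimodule actions correctly and verifying the compatibility condition $\gamma_k(x)=\tau_kx\sigma_k^*$. The inner-product identities I expect to be routine.
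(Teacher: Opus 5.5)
Your proposal is correct and takes essentially the paper's route. The paper takes the notion of equivariant Morita equivalence from \cite[Definition 1]{Echterhoff:1994aa}, notes that (i) is then immediate, and cites Echterhoff's Remark (2) and Theorem 1 for (ii) and (iii); your sketches for those items (the bimodule $X\rtimes\Gamma$ cut down to the twisted quotients via the Rieffel correspondence, and the twisted Green imprimitivity bimodule with the right-translation action of $\Lambda$) are the standard arguments behind those citations.

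One clarification for (iii): nothing needs to be ``untwisted'', because right translation on $B$ is already a $\Lambda$-action with trivial twist. The real content is this: on the Green $B$--$A$ bimodule of functions $\xi\colon\Gamma\to A$ with $\xi(hk)=\tau_k^*\xi(h)$ (finitely supported modulo $K$), the action $(\gamma_g\xi)(h)=\alpha_g(\xi(hg))$ induces exactly right translation on $B$ and automatically satisfies $\gamma_k\xi=\xi\tau_k^*$, which is the compatibility condition you need.
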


\begin{proof}
We just point out where in \cite{Echterhoff:1994aa} to find the relevant material.  The notion of equivariant Morita equivalence is introduced in \cite[Definition 1, page 173]{Echterhoff:1994aa}; it is built on the standard notion of Morita equivalence by adding more structure whence part \eqref{mor for} is clear.  Part \eqref{mor cp} is part (2) of the Remark on \cite[pages 173-4]{Echterhoff:1994aa}.  Finally, part \eqref{mor red} is \cite[Theorem 1, page 177]{Echterhoff:1994aa}.
\end{proof}

\begin{proof}[Proof of Theorem \ref{struct}]
Let $A=C^*(K)$, considered as a $(\Gamma,K)$-$C^*$-algebra as in Example \ref{gp ex}.  Then $B$ as in Theorem \ref{twist the} part \eqref{mor red} is Morita equivalent to $C^*(K)$ (use part \eqref{mor for} of Theorem \ref{twist the}). Moreover, $B\rtimes \Lambda$ is Morita equivalent to $C^*(K)\rtimes_{\alpha,\tau}(\Gamma,K)$ by Theorem \ref{twist the} part \eqref{mor cp} and Example \ref{reduce}.  As $C^*(K)\rtimes_{\alpha,\tau}(\Gamma,K)=C^*(\Gamma)$ by Example \ref{gp ex} again, we are done.
\end{proof}

\begin{remark}
It seems likely that the proof of Theorem \ref{struct} can be made a bit more elementary and direct: indeed, the $C^*$-algebra $B$ appearing in the statement is given concretely by line \eqref{b def}, and one can probably establish the Morita equivalences $C^*K\sim B$ and $C^*\Gamma\sim B\rtimes \Lambda$ reasonably directly.  We wanted to go through the more general machinery, however, partly as we find it more conceptual, partly to advertise the theory of twisted crossed products, and partly as this exposition will make it clear to experts that the results generalize in various directions: for example, to locally compact groups, or to $C^*$-algebras equipped with appropriate (twisted) actions.
\end{remark}

\section{New examples of (L)LP groups}
\label{s:ex}

In this section, we show that the above results, especially Theorem \ref{llp the}, apply to several important classes of groups from topology, as well as geometric and combinatorial group theory.

\subsection{Consequences of Theorem \ref{llp the}}

\begin{corollary}\label{fi cor}
A virtually LLP group is LLP. A countable virtually LP group is LP.
\end{corollary}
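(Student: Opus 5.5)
The plan is to reduce to Theorem \ref{llp the} by passing to a normal finite-index subgroup. Let $\Gamma$ be a group with a finite-index subgroup $H$ that has the LLP (respectively, the LP, with $\Gamma$ countable). I take $K\coloneqq\bigcap_{g\in\Gamma} gHg^{-1}$, the normal core of $H$. Since $H$ has finite index, it has only finitely many conjugates, so $K$ is a finite intersection of finite-index subgroups. Hence $K$ is a normal subgroup of finite index in $\Gamma$, and it is contained in $H$.

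Next I need $K$ to inherit the lifting property from $H$. Since $K\leq H$, Corollary \ref{subgp} shows that $C^*K$ has the LLP, and it has the LP if $H$ does. In the LP case I also note that $K$ is countable, because $\Gamma$ is.

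This produces a short exact sequence $1\to K\to \Gamma\to \Gamma/K\to 1$ whose quotient $\Gamma/K$ is finite, and finite groups are amenable. Theorem \ref{llp the} then gives directly that $C^*\Gamma$ has the LLP. When $\Gamma$ is countable and $C^*K$ has the LP, the second part of Theorem \ref{llp the} gives that $C^*\Gamma$ has the LP.

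There is no real obstacle, since all the difficulty sits in Theorem \ref{llp the}, which is the Morita equivalence argument. The only point needing care is that Theorem \ref{llp the} requires a normal subgroup with the property, whereas the definition of "virtually" gives only some finite-index subgroup. Passing to the normal core and then using heredity to subgroups handles this.
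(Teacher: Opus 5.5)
Your proof is correct and is essentially the paper's argument: pass to a finite-index normal subgroup contained in the given one (the normal core), use Corollary \ref{subgp} to get the (L)LP for it, and apply Theorem \ref{llp the} with the finite, hence amenable, quotient. In the LP case you also correctly note that countability of $\Gamma$ is needed.
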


Note that, combined with Theorem \ref{free gp}, this gives an alternative (Bass--Serre-free) proof of the (L)LP for virtually free groups (Corollary \ref{cd1}).

\begin{proof}
Let $\Gamma$ be a group with a finite-index subgroup $\Lambda$ with the LLP (the proof for the LP is similar). There is a finite-index normal subgroup $\Delta$ of $\Gamma$ contained in $\Lambda$.  Since $\Lambda$ has the LLP, so does $\Delta$ by Corollary \ref{subgp}.  On the other hand, $\Gamma/\Delta$ is finite, so amenable, and we are done by Theorem \ref{llp the}.
\end{proof}

\begin{remark}
It would be interesting to give a more self-contained proof of Corollary \ref{fi cor}.  Certainly one can give a simpler proof by `restricting' the proof of Theorem \ref{llp the} to this case, but the basic idea would still be the same.
\end{remark}

Next, we consider a class that has received special attention because of its connection to the Atiyah conjecture \cite{linnell}.

\begin{definition}
\emph{Linnell's class $\mathcal{C}$} is the smallest class of groups that contains free groups, is preserved by directed unions, and such that if
\[1 \to K \to \Gamma \to \Lambda \to 1\]
is a short exact sequence of groups, where $K$ is in $\mathcal{C}$ and $\Lambda$ is elementary amenable, then $\Gamma$ is in $\mathcal{C}$.
\end{definition}

\begin{remark}
\label{linnell:atmenable}
    For future reference, we remark that groups in Linnell's Class $\mathcal{C}$ are a-T-menable (equivalently have the Haagerup property: see Footnote \ref{hp}). Indeed, free groups are a-T-menable, and the class of a-T-menable groups is closed under increasing unions and extensions by amenable groups: see \cite[proof of Lemma 1.2]{Haagerup:1979rq} and \cite[Propositions 6.11 and 6.15]{Cherix:2001fk} respectively.
\end{remark}

\begin{corollary}\label{linnell cor}

If $\Gamma$ is in Linnell's class $\mathcal{C}$, then $\Gamma$ is LLP. If moreover $\Gamma$ is countable, then it is LP.
\end{corollary}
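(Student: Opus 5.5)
The plan is to argue by ``induction'' over the construction of Linnell's class $\mathcal{C}$. Let $\mathcal{L}$ be the class of groups with the LLP. By minimality of $\mathcal{C}$, it suffices to show that $\mathcal{L}$ contains free groups, is closed under directed unions, and is closed under extensions with kernel in $\mathcal{L}$ and elementary amenable quotient. Then $\mathcal{C}\subseteq\mathcal{L}$.

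Each of these three closure properties is already available. Free groups have the LLP by Theorem \ref{free gp}, and directed unions preserve the LLP by Corollary \ref{dir un}. For extensions, elementary amenable groups are amenable, so Theorem \ref{llp the} applies directly. This proves the LLP statement.

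For the LP under a countability assumption, the same argument does not apply verbatim. The class of all groups with the LP is not obviously closed under uncountable directed unions, and Theorem \ref{llp the} needs $\Gamma$ countable. There are two routes.

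The first route is to run the minimality argument for the class $\mathcal{C}'$ of groups $\Gamma$ all of whose countable subgroups have the LP. This class contains free groups, since subgroups of free groups are free and countable free groups have the LP by Theorem \ref{free gp}. For directed unions, a countable subgroup $H$ of $\bigcup_i\Gamma_i$ is the directed union of the countable groups $H\cap\Gamma_i$. Each of these has the LP, and we can pass to a cofinal increasing sequence, so Corollary \ref{dir un} shows $H$ has the LP. For an extension $1\to K\to\Gamma\to\Lambda\to1$ and a countable $H\le\Gamma$, we have $1\to H\cap K\to H\to \bar H\to 1$. Here $H\cap K$ is a countable subgroup of $K$, hence has the LP, and $\bar H\le\Lambda$ is amenable. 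Theorem \ref{llp the} then gives the LP for $H$. So $\mathcal{C}\subseteq\mathcal{C}'$, and a countable $\Gamma\in\mathcal{C}$ has the LP, being a countable subgroup of itself.

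The second route is simpler: for countable $\Gamma$, the LLP already established implies the LP by Theorem \ref{llp to lp}. However, that theorem requires QWEP targets, so it does not work in general. I would therefore use the first route. The only mild obstacle is the countability bookkeeping in the directed-union step, namely extracting a cofinal sequence so that the increasing-union part of Corollary \ref{dir un} applies.
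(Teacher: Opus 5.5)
Your argument is correct and follows the paper's proof: minimality of $\mathcal{C}$ together with Theorem~\ref{free gp}, Corollary~\ref{dir un} and Theorem~\ref{llp the}. Your auxiliary class (groups all of whose countable subgroups have the LP) makes rigorous the countability bookkeeping that the paper compresses into ``Similarly for the LP'', and you are right to discard the route through Theorem~\ref{llp to lp}. A lighter auxiliary class would also work, namely groups that are uncountable or have the LP: the groups feeding into a directed union or an extension are subgroups of the resulting group, so they are countable whenever it is.
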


\begin{proof}
This follows from the definition of Linnell's class $\mathcal{C}$, together with the fact that free groups are LLP (Theorem \ref{free gp}), and that the LLP is preserved by directed unions (Corollary \ref{dir un}) and extensions with amenable quotient (Theorem \ref{llp the}). Similarly for the LP.
\end{proof}

A special case that will appear in our examples is the following. Recall that the commutator subgroup of a group is also called its \emph{first derived subgroup} $\Gamma' = [\Gamma, \Gamma]$, and we define the \emph{$n$-th derived subgroup} as $\Gamma^{(n)} = [\Gamma^{(n-1)}, \Gamma^{(n-1)}]$.

\begin{corollary}\label{derived cor}
Let $\Gamma$ be a group such that $\Gamma^{(n)}$ is free for some $n$. Then $\Gamma$ is LLP, and if it is countable, it is LP.
\end{corollary}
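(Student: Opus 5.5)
The plan is an induction along the derived series, using Theorem \ref{llp the} at each step. Set $\Gamma^{(0)} = \Gamma$. For each $k$ with $0 \le k < n$, the subgroup $\Gamma^{(k+1)} = [\Gamma^{(k)},\Gamma^{(k)}]$ is normal in $\Gamma^{(k)}$, since it is characteristic. So we have short exact sequences
\[
1 \to \Gamma^{(k+1)} \to \Gamma^{(k)} \to \Gamma^{(k)}/\Gamma^{(k+1)} \to 1 .
\]
Each quotient $\Gamma^{(k)}/\Gamma^{(k+1)}$ is the abelianization of $\Gamma^{(k)}$. It is therefore abelian, hence amenable.

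We induct downward on $k$ from $n$ to $0$, with the statement ``$\Gamma^{(k)}$ has the LLP''. The base case $k=n$ holds because $\Gamma^{(n)}$ is free and free groups have the LLP by Theorem \ref{free gp}. For the inductive step, suppose $\Gamma^{(k+1)}$ has the LLP. The displayed extension has amenable quotient, so the first part of Theorem \ref{llp the} gives that $\Gamma^{(k)}$ has the LLP. At $k=0$ this says $\Gamma$ has the LLP.

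For the LP, assume $\Gamma$ is countable. Then every $\Gamma^{(k)}$ is countable, and so is the free group $\Gamma^{(n)}$. The base case is the LP statement of Theorem \ref{free gp} for countable free groups. The inductive step is the countable LP version of Theorem \ref{llp the}; its countability hypothesis concerns the middle group $\Gamma^{(k)}$, which is countable.

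Equivalently, one could observe that $\Gamma$ lies in Linnell's class $\mathcal{C}$, since abelian groups are elementary amenable and $\mathcal{C}$ contains free groups and is closed under such extensions, and then apply Corollary \ref{linnell cor}. There is no real obstacle. The only points to verify are that each $\Gamma^{(k+1)}$ is normal in $\Gamma^{(k)}$ and that the countability hypotheses hold at every stage, and both are immediate.
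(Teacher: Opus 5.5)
Your proof is correct and uses the same ingredients as the paper, namely Theorem \ref{free gp} and Theorem \ref{llp the}. The only difference is that the paper applies Theorem \ref{llp the} once, to the extension $1\to\Gamma^{(n)}\to\Gamma\to\Gamma/\Gamma^{(n)}\to 1$ (whose quotient is $n$-step solvable and hence amenable), instead of iterating it $n$ times with abelian quotients.
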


\begin{proof}
The quotient $\Gamma/\Gamma^{(n)}$ is $n$-step solvable, thus amenable, so we conclude by Theorems \ref{free gp} and \ref{llp the}.
\end{proof}

Later on we will be interested in combining the (L)LP with finite separation properties (Section \ref{s:fd}) in view of applications to representation stability (Section \ref{s:stability}). However, groups in Linnell's class $\mathcal{C}$ are in general not residually finite. So let us isolate one particular subclass for which we will be able to show these additional separation properties later on. This class has received particular attention in the past few years, sparked by the breakthrough work of Kielak--Linton \cite{Kielak:2024aa}, which was later improved upon by Fisher \cite{Fisher:2024aa} (see \cite{Fisher:2025} for a recent generalization).

A \emph{free-by-cyclic}\footnote{This should perhaps more properly be called ``free-by-(infinite cyclic)'', but following a standard (and sensible) convention, we will not do that.} group is by definition a discrete group $\Gamma$ that fits into a short exact sequence $1\to F\to \Gamma\to \Z\to 1$ with $F$ free.  We will assume that $F$ is countable, but not necessarily finitely generated; thus our free-by-cyclic groups will always be countable.  As $\Z$ is free, the sequence splits, so $\Gamma\cong F\rtimes \Z$, and we will usually consider free-by-cyclic groups in this form.

\begin{corollary}\label{vfbc}
A (countable) virtually free-by-cyclic group is LP.
\end{corollary}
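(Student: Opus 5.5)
The plan is to reduce first to the free-by-cyclic case and then handle that case with the permanence results of Section \ref{s:plp}. Let $\Gamma$ be countable and virtually free-by-cyclic, so $\Gamma$ has a finite-index subgroup $\Gamma_0$ that is free-by-cyclic. By Corollary \ref{fi cor}, which says that a countable virtually LP group is LP, it suffices to show that $\Gamma_0$ has the LP. Here $\Gamma_0$ is countable, being a subgroup of the countable group $\Gamma$; this also fits the paper's convention that free-by-cyclic groups are countable.

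For the free-by-cyclic group itself, write $\Gamma_0\cong F\rtimes\Z$, where $F$ is a countable free group. Such a splitting exists because the quotient $\Z$ is free, as noted just before the statement.
\begin{enumerate}[(i)]
\item By Theorem \ref{free gp}, $C^*F$ has the LP, since $F$ is countable.
\item $\Z$ is amenable.
\item Corollary \ref{sd prod}, the semidirect product statement deduced from Theorem \ref{llp cp}, therefore gives that $C^*(F\rtimes \Z)$ has the LP, because $F\rtimes\Z$ is countable.
\end{enumerate}
Alternatively, one could apply Theorem \ref{llp the} directly to the extension $1\to F\to\Gamma_0\to\Z\to 1$, which avoids choosing a splitting.

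I expect no real obstacle here. The only care needed is bookkeeping of the countability hypotheses. Each invocation of the LP versions of Theorem \ref{free gp}, Corollary \ref{sd prod} (or Theorem \ref{llp the}) and Corollary \ref{fi cor} requires the relevant group to be countable. This holds because $F$ is assumed countable, $F\rtimes\Z$ is then countable, and $\Gamma$ is countable by hypothesis.

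The same argument without the countability assumptions shows that every virtually free-by-cyclic group is LLP. It uses the LLP halves of the same results.
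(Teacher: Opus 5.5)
Your proposal is correct and follows essentially the same route as the paper: the LP for $F\rtimes\Z$ via Theorem \ref{free gp} and Corollary \ref{sd prod}, then passage to finite-index overgroups via Corollary \ref{fi cor}. Your extra bookkeeping of countability is accurate, though it is automatic under the paper's convention that the free kernel is countable.
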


\begin{proof}
A free-by-cyclic group has the LP by Theorem \ref{free gp} and Corollary \ref{sd prod}.  Hence a virtually free-by-cyclic group has the LP by Corollary \ref{fi cor}.
\end{proof}

We end with an extension of Proposition \ref{gog finite edge} that covers more graphs of groups. We refer the reader to the discussion after that section for the basic notions from Bass--Serre theory.

\begin{corollary}\label{gog amenable cor}
Let $\Gamma$ be the fundamental group of a graph of groups $\mathcal{G}$. Suppose that for every edge $e$ there exists a homomorphism from $\Gamma$ to an amenable group whose restriction to the edge group $\Gamma_e$ has finite kernel.

If all vertex groups have the LLP, then so does $\Gamma$. If moreover all vertex groups have the LP and $\Gamma$ is countable, then $\Gamma$ has the LP.
\end{corollary}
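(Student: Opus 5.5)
Fix an edge $e$ of the underlying graph $X$ and a homomorphism $\phi_e\colon \Gamma\to A_e$ to an amenable group whose restriction to $\Gamma_e$ has finite kernel. The plan is to reduce to Proposition \ref{gog finite edge} by passing to a suitable normal subgroup with amenable quotient. When $X$ has finitely many edges, set $\phi=\prod_e \phi_e\colon \Gamma\to \prod_e A_e$; a finite product of amenable groups is amenable, and $\phi|_{\Gamma_e}$ still has finite kernel for every $e$. Put $N=\ker\phi$, a normal subgroup of $\Gamma$ with amenable quotient $\phi(\Gamma)$. By Theorem \ref{llp the} it then suffices to show that $N$ has the LLP (and the LP when $\Gamma$ is countable, since then $N$ is countable).

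To handle $N$, I would use the Bass--Serre tree $T$ of $\mathcal{G}$. The group $N$ acts on $T$ without edge inversions, so by the structure theorem recalled after Remark \ref{gog basic operations}, $N$ is the fundamental group of the quotient graph of groups $N\backslash T$. Its vertex groups are $N\cap g\Gamma_v g^{-1}$, which are subgroups of conjugates of vertex groups, so they have the (L)LP by Corollary \ref{subgp}. Its edge groups are $N\cap g\Gamma_e g^{-1}=g(\ker\phi|_{\Gamma_e})g^{-1}$, using normality of $N$; these are finite by hypothesis. Proposition \ref{gog finite edge} then gives the (L)LP for $N$, with the LP in the countable case.

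When $X$ has infinitely many edges, the product over all edges need not be amenable, since an infinite product of amenable groups can fail to be amenable. I would instead write $\Gamma$ as a directed union of the subgroups $\Gamma_Y$, where $Y$ ranges over connected finite subgraphs containing a fixed base vertex, and each $\Gamma_Y=\pi_1(\mathcal{G}|_Y)$ sits inside $\Gamma$ as in Remark \ref{gog basic operations}. This is exactly how the general case of Proposition \ref{gog finite edge} was handled. Restricting the homomorphisms $\phi_e$ for $e\in Y$ to $\Gamma_Y$ puts $\Gamma_Y$ in the finite-edge case above, and Corollary \ref{dir un} concludes, the LP version using countability of $\Gamma$.

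I expect the main obstacle to be the Bass--Serre bookkeeping. Specifically, one must check that edge stabilizers of the $N$-action on $T$ are precisely the conjugates $g\Gamma_e g^{-1}\cap N$, and hence finite. One must also check that the subgraph subgroups $\Gamma_Y$ embed in $\Gamma$ and exhaust it as a directed union.
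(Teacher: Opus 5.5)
Your proposal is correct and follows essentially the same route as the paper: reduce to a finite underlying graph via Corollary \ref{dir un}, pass to the kernel of the product map $\Gamma\to\prod_e A_e$ using Theorem \ref{llp the}, and use the Bass--Serre tree to see that this kernel is the fundamental group of a graph of groups with (L)LP vertex groups and finite edge groups, so that Proposition \ref{gog finite edge} applies. Your remark that infinite products of amenable groups need not be amenable is exactly why the paper reduces to finite $X$ before forming the product.
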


\begin{proof}
By Corollary \ref{dir un}, we may assume that the underlying graph $X$ is finite. For each edge $e$, let $A_e$ be the amenable group given in the statement. So we have a homomorphism
\[\Gamma \to \prod\limits_{e \in E} A_e,\]
whose restriction to every vertex group $\Gamma_e$ is finite. Let $K$ denote the kernel. By Theorem \ref{llp the}, it suffices to show that $K$ has the LLP (and the proof for the LP is the same).

Consider the action of $\Gamma$ on its Bass--Serre tree. Restrict this action to the normal subgroup $K$, this realizes $K$ as the fundamental group of a graph of groups whose vertex groups $K_v$ are vertex stabilizers, and whose edge groups $K_e$ are edge stabilizers. We claim that each $K_v$ has the LLP, and each $K_e$ is finite, so we conclude by Proposition \ref{gog finite edge}. Indeed, because $K$ is normal in $\Gamma$, it suffices to consider $v$ and $e$ from a fundamental domain for the action of $\Gamma$, which correspond to the vertices and edges of the original graph of groups. Now $K_v < \Gamma_v$, which has the LLP, so $K_v$ has the LLP by Corollary \ref{subgp}; and $K_e = K \cap \Gamma_e$ is finite by assumption.
\end{proof}

\begin{remark}\label{gog amenable rmk}
In practice, when we are in a position to apply Corollary \ref{gog amenable cor}, we can often say more. For example, suppose that $\Gamma$ is the fundamental group of a finite graph of groups such that for every vertex $v$, the vertex group $\Gamma_v$ embeds into $\Gamma/\Gamma^{(n)}$ for some $n$. Then $\Gamma^{(n)}$ is free for some $n$, and we are in the case of Corollary \ref{derived cor}.

Indeed, as in the proof of Corollary \ref{gog amenable cor}, we deduce that the action of $\Gamma^{(n)}$ on the Bass--Serre tree of $\Gamma$ has trivial vertex stabilizers, in other words it is a free action. Groups acting freely on trees are free: see for example \cite[3.3, Theorem 4]{Serre:1980aa}.
\end{remark}

\subsection{Low-dimensional manifolds}

\begin{example}\label{manifold lp}
Let $M$ be a connected (second countable) manifold of dimension at most $3$. Then $\pi_1(M)$ has the LP. Indeed, when $M$ is a $3$-manifold, $\pi_1(M)$ belongs to Linnell's class $\mathcal{C}$ \cite[Theorem 1.4]{kielak:linton:manifolds} hence has the LP by Corollary \ref{linnell cor}. The result for $1$- and $2$-manifolds follows by taking products.
\end{example}

\begin{remark}\label{surfaces rem}
In many cases, such groups are even virtually free-by-cyclic. This is the case for $3$-manifolds whose fundamental group is finitely generated and has rational cohomological dimension at most $2$, such as compact $3$-manifolds with non-empty boundary \cite[Theorem 1.1]{kielak:linton:manifolds}.

For surfaces this is easier to see. First, the fundamental group of a surface that is either non-compact or has non-empty boundary is free: see for example \cite[4.2.1 and 4.2.2]{Stillwell:1993aa}. For closed surfaces, the fundamental group is either finite in the case of the sphere and the projective plane, or maps to $\mathbb{Z}$. In the latter case, by covering theory, the kernel is the fundamental group of a non-compact surface, hence free.
\end{remark}

\begin{remark}
Example \ref{manifold lp} cannot be extended to higher dimensions. Indeed, every finitely presented group is the fundamental group of an orientable closed $4$-manifold, see for example \cite[Section 7]{delaharpe:manifolds}. This applies in particular to the groups from Example \ref{cex}, which do not have the LLP.
\end{remark}

\subsection{One-relator groups}
\label{ss:or}

A one-relator group is a group admitting a presentation of the form
$$
\Gamma=\langle x_1, \ldots, x_n\mid r\rangle,
$$ 
with finitely many generators and a single relation $r$.  We will assume that $r$ is freely and cyclically reduced\footnote{A word is \emph{freely reduced} if no consecutive pair of letters are mutual inverses, and \emph{cyclically reduced} if the first and last letters are not mutual inverses.}; this can always be arranged without changing the isomorphism class of $\Gamma$.

One-relator groups form a fundamental class in combinatorial and geometric group theory, with a long history and rich theory, both of which are surveyed in the book \cite{onerelator}. The past few years have seen significant advances: we single out important new structural understanding \cite{Louder:2022aa, Linton:2025ab}, landmark results on coherence \cite{coherence1, coherence2, coherence3}, and most relevantly for us, the proof that many one-relator groups are in fact virtually free-by-cyclic \cite{Kielak:2024aa}, which we will use in many of our corollaries.

\begin{remark}\label{one rel fg}
If $\Gamma=\langle X\mid r\rangle$ is an infinitely generated one-relator group, let $X_0$ be the (finite) set of letters in $X$ that appear in the word $r$.  Then $\Gamma$ splits as a free product 
$$
\Gamma=\langle X_0\mid r\rangle \Asterisk F_{X\setminus X_0}
$$
of a finitely generated one relator group and the free group on the set $X\setminus X_0$.  Using Corollary \ref{fp gp}, this implies that one can deduce the LP for $\Gamma$ from results about the LP for finitely generated one-relator groups. Hence the finite generation assumption we make on one-relator groups is not so important.
\end{remark}

\subsubsection*{Virtually free-by-cyclic examples}

Thanks to a recent work of Kielak--Linton \cite{Kielak:2024aa}, and some more classical results, in many cases one-relator groups are virtually free-by-cyclic, hence LP by Corollary \ref{vfbc}.

\begin{example}\label{or tor}
A one-relator group has torsion if and only if the relation $r$ is a proper power: see for example \cite[Propositions 5.17 and 5.18]{Lyndon:2001sh}.  Any one-relator group with torsion is virtually free-by-cyclic \cite[Corollary 1.2]{Kielak:2024aa}.
\end{example}

\begin{example}\label{or ni}
An important class is that of one-relator groups with \emph{negative immersions}. The easiest characterization to understand, due to Louder--Wilton \cite[Theorem 1.5 and Corollary 1.10]{Louder:2022aa}, is that a one-relator group has negative immersions if and only if every $2$-generated subgroup is free. Such groups are also virtually free-by-cyclic \cite[Corollary 1.4]{Kielak:2024aa}.
\end{example}

\begin{remark}\label{rem pr}
Let $r$ be a (freely and cyclically reduced) word in a finitely generated free group $F$.  Given a subgroup $\Lambda$ of $F$ (which is necessarily itself free), we write $\mathrm{rk}(\Lambda)$ for the rank of $\Lambda$, i.e.\ the minimal number of elements needed to generate $\Lambda$.  If moreover $\Lambda$ contains $r$, we say that $r$ is \emph{primitive} in $\Lambda$ if it is part of a free basis for $\Lambda$.  Following Puder \cite[Definition 1.7]{Puder:2014aa}, the \emph{primitivity rank} of $r$ is defined by 
$$
\pi(r)\coloneqq\min\{\mathrm{rk}(\Lambda)\mid ~r\text{ not primitive in }\Lambda\}
$$
(or infinite if $r$ is already primitive in $F$).

Note that if $\pi(r)<\infty$, then it is bounded above by the number of generators of $F$.  Note also that $\pi(r)=1$ if and only if $r$ is a proper power, so in that case we are in the torsion case of Example \ref{or tor}. Louder--Wilton \cite[Theorem 1.3]{Louder:2022aa} showed that $\pi(r) > 2$ if and only if the one-relator group has negative immersions. So we can summarize the previous two examples by saying that a one-relator group with $\pi(r) \neq 2$ is virtually-free-by-cyclic. Another way to characterize this class is as one-relator groups that are locally quasi-convex and hyperbolic \cite[Theorem 1.4]{linton:mappingtori}.
\end{remark}

\begin{example}\label{or sc}
A word $r$ in the free group satisfies the \emph{$C'(\lambda)$ small cancellation condition} if any subword of $r$ that appears in two distinct ways has length less than $\lambda$ times the length of $r$ \cite[Chapter V]{Lyndon:2001sh}. The typical example is the fundamental group of a closed orientable surface of genus at least $2$. If $r$ satisfies the $C'(1/6)$ small cancellation condition, then the corresponding one-relator group is virtually free-by-cyclic \cite[Corollary 1.5]{Kielak:2024aa}\footnote{The reference says ``small cancellation'' without specifying the parameter $\lambda$, but $1/6$ is implicit. For the skeptical reader, this is the condition that implies that the group is hyperbolic \cite[Section 4.7]{Gromov:987fv} and cocompactly cubulated \cite{Wise:2004aa} hence virtually compact special \cite{Wise:2021aa, Agol:2013aa}, which is needed to apply \cite[Theorem 1.1]{Kielak:2024aa}.}.
\end{example}

\begin{example}\label{or random}
A \emph{random} one-relator group is virtually free-by-cyclic. Indeed, such groups satisfy the $C'(1/6)$ small cancellation condition with overwhelming probability \cite[Section 9.B]{Gromov:1993tr}, so we are in the setting of Example \ref{or sc}.

More generally, Kielak--Kropholler--Wilkes \cite[Theorem B]{Kielak:2022aa} show that for $n \geq 3$, a random group with $n$ generators and at most $n-2$ relators is virtually free-by-cyclic\footnote{They state that the group embeds into a virtually (finitely generated free)-by-cyclic group, but this implies that the group itself is virtually free-by-cyclic.}, so LP. In the case of $n-1$ relators, it is free-by-cyclic with probability bounded away from $0$ \cite[Theorem A]{Kielak:2022aa} and $1$ \cite[Theorem D]{kudlinska} (see also \cite{Dunfield:2006aa} for the one-relator case).
\end{example}

\begin{remark}
Example \ref{or random} is about the \emph{few-relator} model of random groups, also known as the \emph{density $0$} model. An alternative widely used model is the \emph{density} model, which depends on a density parameter $d \in (0, 1)$. When $d > 1/2$, the random group has order at most $2$, while when $d < 1/2$, the random group is still hyperbolic \cite[Section 9.B]{Gromov:1993tr} but not small cancellation anymore. In fact, at densities $1/3 < d < 1/2$, the random group has property $(T)$\footnote{Small cancellation groups act properly on CAT(0) cube complexes \cite{Wise:2004aa}, hence they cannot have property $(T)$ \cite{niblo:reeves}.} \cite{Zuk:2003jf}, and therefore seems unlikely to have the LLP (see \cite[Proposition 1.7]{Dogon:2023aa} for a related result).
\end{remark}

\begin{example}\label{or center}
One-relator groups with non-trivial center are free-by-cyclic \cite{baumslagtaylor}.
\end{example}

\subsubsection*{Baumslag--Solitar groups}

In all of the previous examples, the one-relator groups were virtually free-by-cyclic, hence in particular residually finite \cite{vfbc:rf}.  We now look at an important class of one-relator groups that includes many non-residually finite examples. For $m, n \in \mathbb{Z} \setminus \{ 0 \}$, the \emph{Baumslag--Solitar group $BS(m, n)$} is
\[BS(m, n) \coloneqq \langle a, t \mid t^{-1} a^m t = a^{-n} \rangle.\]

\begin{remark}\label{bs conventions}
Clearly $BS(m, n) \cong BS(n, m) \cong BS(-m, -n)$. Moreover, $BS(m, n)$ and $BS(m, -n)$ have a common finite-index subgroup \cite[Lemma 6.1]{bs:commensurable}, so their (L)LP status is the same, by Corollaries \ref{subgp} and \ref{fi cor}. Hence for our purposes we may assume that $0 < m \leq n$.
\end{remark}

Baumslag--Solitar groups were introduced by Baumslag and Solitar in their landmark paper \cite{Baumslag:1962aa}, and they are still some of the most useful examples in combinatorial group theory. We split them into three classes, following a statement in the original paper \cite{Baumslag:1962aa} later corrected by Meskin \cite{Meskin:1972ab}.

\begin{example}\label{bs solvable}
There is an isomorphism
\begin{align*}
BS(1, n) &\to \mathbb{Z}[1/n] \rtimes_{n} \mathbb{Z} \\
a &\mapsto 1 \in \mathbb{Z}[1/n]; \\
t &\mapsto 1 \in \mathbb{Z}.
\end{align*}
The action of $\mathbb{Z}$ is induced by multiplication by $n$. In particular, $BS(1, n)$ is metabelian, hence amenable, hence has the LP by Corollary \ref{amen cor}.
\end{example}

\begin{example}\label{bs unimodular}
For $n \geq 1$, the group $BS(n, n)$ contains the non-trivial central elment $a^n$, hence it is free-by-cyclic by Example \ref{or center}.
\end{example}

The remaining Baumslag--Solitar groups are not residually finite, the main example being $BS(2, 3)$ which was the focus of the original paper \cite{Baumslag:1962aa}. Nevertheless, they are still LP. Indeed, Baumslag and Solitar claim without proof that $BS(m, n)$ has a free second derived subgroup. To see this, by Remark \ref{gog amenable rmk} it suffices to show that $BS(m, n)$ maps to a metabelian group into which $\langle a \rangle$ embeds. This map is analogous to the isomorphism in Example \ref{bs solvable}:
\begin{align*}
BS(m, n) &\to \mathbb{Q} \rtimes_{m/n} \mathbb{Z} \\
a &\mapsto 1 \in \mathbb{Q}; \\
t &\mapsto 1 \in \mathbb{Z}. \qedhere
\end{align*}

This argument works also for \emph{Generalized Baumslag--Solitar (GBS) groups}. These are fundamental groups of graphs of groups all of whose vertex and edge groups are infinite cyclic; Baumslag--Solitar groups correspond to the case where the graph is a single loop on one vertex. Kropholler \cite[Theorem C]{kropholler:GBS} characterizes GBS groups as the groups of cohomological dimension (at most) $2$ that have a commensurated\footnote{A subgroup $\Lambda$ of $\Gamma$ is \emph{commensurated} if $g\Lambda g^{-1}\cap \Lambda$ has finite index in both $\Lambda$ and $g\Lambda g^{-1}$ for all $g\in \Gamma$.} infinite cyclic subgroup; in particular all non-cyclic groups of cohomological dimension $2$ with non-trivial center such as the ones from Example \ref{or center}, are GBS groups.

\begin{example}\label{GBS}
GBS groups have free second derived subgroup \cite[Corollary 2]{kropholler:GBS} hence they are LP by Corollary \ref{derived cor}.
\end{example}

\subsubsection*{Residually solvable examples}

There are several more examples of one-relator groups with a free derived subgroups, all of these will be LP by Corollary \ref{derived cor}. If a group has a free derived subgroup, then it is residually solvable. A characterization is lacking, however Linton \cite{Linton:2025aa} characterized a slightly stronger property: that of being residually \emph{rationally} solvable; \cite[Corollary 1.2]{Linton:2025aa} implies that all such groups have a free derived subgroup, hence are LP. We single out the following special case.

\begin{example}
Let $\Gamma = \langle x_1, \ldots, x_n \mid r \rangle$ be a \emph{positive} one-relator group, namely no inverse of a generator appears in $r$. Then $\Gamma$ has a free derived subgroup \cite[Corollary 1.3]{Linton:2025aa}, hence it is LP by Corollary \ref{derived cor}.

Note that $BS(2, 3)$ can be written as a positive one-relator group, namely
\[\langle a,b\mid a^{-1}(ab)^2a(ab)^3\rangle,\]
upon reducing the initial $a^{-1} a$ \cite[Page 166]{Baumslag:1971aa}. This shows that positive one-relator groups need not be residually finite.
\end{example}

\subsubsection*{Baumslag--Gersten examples}

An interesting example of a one-relator group that is not residually solvable, nor residually finite (hence not virtually free-by-cyclic \cite{vfbc:rf}) is the \emph{Baumslag--Gersten group}
\[B \coloneqq \langle a,t\mid [a, tat^{-1}]=a\rangle.\]
This was introduced by Baumslag \cite{Baumslag:1969aa}, who proved that every finite quotient is cyclic, factoring through the retraction onto $\langle t \rangle$, with kernel (the normal closure of $a$) an infinitely generated perfect group. This example rose to prominence through the work of Gersten \cite{gersten}, who proved the very surprising fact that the Dehn function of $B$ grows faster than any tower of exponentials.

The construction was generalized by Baumslag, Miller and Troeger \cite{reflections} who defined for each $r, w \in \langle a, b \rangle$:
\[G_{r, w} \coloneqq \langle a, b \mid [r, wrw^{-1}] = r\rangle,\]
so $B = G_{a, b}$; they show that all finite quotients of $G_{r, w}$ factor through the one-relator group $\langle a, b \mid r \rangle$. We can rewrite the presentation as:
\[G_{r, w} = \langle a, b \mid (wrw^{-1})r(wrw^{-1})^{-1} = r^2\rangle,\]
which realizes $G_{r, w}$ as an HNN extension of $BS(1, 2)$ with cyclic associated subgroups. Berlai further generalized this recently \cite{berlai} by allowing other Baumslag--Solitar groups in the base:
\[G_{r, w}(l, k) \coloneqq \langle a, b \mid (wrw^{-1})r^l(wrw^{-1})^{-1} = r^k\rangle,\]
so $G_{r,w} = G_{r, w}(1, 2)$.

\begin{proposition}\label{prop:BG}
    For every $k, n \in \Z \setminus \{0\}$, the groups $G_{a, b^n}(1, k)$ are in Linnell's class $\mathcal{C}$, hence they are LP.
\end{proposition}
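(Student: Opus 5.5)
We need to show $G = G_{a,b^n}(1,k) = \langle a,b \mid (b^n a b^{-n}) a (b^n a b^{-n})^{-1} = a^k\rangle$ lies in Linnell's class $\mathcal{C}$. Once this is done, the LP follows from Corollary \ref{linnell cor}, since $G$ is countable.

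The plan is to exhibit $G$ as an extension of a group in $\mathcal{C}$ by the amenable (indeed abelian) group $\Z$. We map $G \to \Z$ by $a \mapsto 0$ and $b \mapsto 1$. This is well defined because the relator has exponent sum zero in $b$. Call the kernel $N$; it is the normal closure of $a$. By the Reidemeister--Schreier method (or Bass--Serre theory, as in the proof of Corollary \ref{ascending HNN}), write $a_i := b^i a b^{-i}$ for $i \in \Z$. Then
\[
N = \langle a_i \ (i\in\Z) \mid a_{i+n}\, a_i\, a_{i+n}^{-1} = a_i^{k} \ (i \in \Z)\rangle .
\]
Since $\mathcal{C}$ is closed under extensions with elementary amenable quotient, it suffices to show $N \in \mathcal{C}$.

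Next we split $N$ according to the residue of the index $i$ modulo $n$. The relations only link $a_i$ with $a_{i+n}$, so $N$ is a free product of $|n|$ copies of
\[
H = \langle c_j \ (j\in\Z) \mid c_{j+1} c_j c_{j+1}^{-1} = c_j^k \rangle .
\]
We then exhaust $H$ by the subgroups $H_{[p,q]}$ generated by $c_p,\dots,c_q$. Each $H_{[p,q]}$ is an iterated HNN/amalgam structure: $H_{[p,q+1]}$ is obtained from $H_{[p,q]}$ by amalgamating with $BS(1,k)=\langle c_q, c_{q+1}\rangle$ along $\langle c_q\rangle\cong\Z$. Hence $H$, and likewise a free product of copies of $H$, is a directed union of fundamental groups of finite trees of groups. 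In these trees every vertex group is $BS(1,k)$, with cyclic edge groups; by Britton's lemma the vertex groups embed and the $c_j$ have infinite order.

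To place these tree-of-groups fundamental groups (and free products of them) in $\mathcal{C}$, we use the same trick as Corollary \ref{gog amenable cor} and Remark \ref{gog amenable rmk}. We look for a homomorphism to a solvable (hence elementary amenable) group that is injective on every vertex group $BS(1,k)$. Then the kernel acts freely on the Bass--Serre tree, so it is free, and the group is in $\mathcal{C}$. The candidate map is $\Phi: H \to \Z[1/k]\rtimes \Z$, sending $c_j \mapsto (0,1)\cdot(\text{suitable element})$. More concretely, since $c_{j+1}$ conjugates $c_j$ to $c_j^k$, we try to send $c_j$ to an element of the affine group $\mathrm{Aff}(\Q)$, or of an iterated wreath-type solvable group, such that each consecutive pair generates a faithful copy of $BS(1,k)\cong \Z[1/k]\rtimes\Z$.

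The main obstacle is exactly this step. A single metabelian target may fail, because $c_{j+1}$ must act as a "translation" for $c_j$ and as a "dilation" for $c_{j+2}$. The first thing we would try is a chain of length $m = q-p+1$ inside an $m$-step solvable group, for example upper-triangular matrices over $\Q(x)$ or an iterated semidirect product $\Z[1/k]\rtimes(\Z[1/k]\rtimes(\cdots))$. There $c_j$ acts by a dilation on the level below and by translation on the level above. We would then check that each $\langle c_j,c_{j+1}\rangle$ maps injectively; by Example \ref{bs solvable} it suffices that the images have the right relation and that $c_j$ maps to an element of infinite order while $c_{j+1}$ maps to a non-torsion element modulo $\langle\!\langle c_j\rangle\!\rangle$.

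With the solvable quotient of bounded derived length in hand for each finite piece, Remark \ref{gog amenable rmk} gives that the finite pieces are free-by-solvable, hence in $\mathcal{C}$. Closure of $\mathcal{C}$ under directed unions then gives $N \in \mathcal{C}$, and hence $G \in \mathcal{C}$. As a fallback if a uniform solvable map is hard to write down, we would instead use solvable targets that are only injective on the edge groups $\langle c_j\rangle\cong\Z$. By the argument in the proof of Corollary \ref{gog amenable cor}, this again yields a kernel that is a graph of groups with trivial edge groups and vertex groups being subgroups of $BS(1,k)$, which is amenable. Such a kernel is a free product of amenable groups, and therefore lies in $\mathcal{C}$ as a free-by-elementary-amenable group.
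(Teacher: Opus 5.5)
\textbf{There is a genuine gap at the step you flag as the main obstacle.} Your reduction is sound and parallels the paper's. The kernel of $G\to\Z$ is a free product of $|n|$ copies of the bi-infinite chain $H$, so it suffices to treat the finite chains $H_{[p,q]}$. For those, a solvable quotient injective on the vertex groups would indeed make the kernel free.

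The gap is that you never produce such a quotient, and the candidates you name do not work.
\begin{itemize}
\item Metabelian targets fail. In a metabelian group $S$, the relations give $c_j^{k-1}=c_{j+1}c_jc_{j+1}^{-1}c_j^{-1}\in[S,S]$ for $p\le j<q$. Hence $c_j^{k-1}$ and $c_{j+1}^{k-1}$ commute whenever $j\le q-2$. Since $c_{j+1}^{r}c_jc_{j+1}^{-r}=c_j^{k^r}$ for $r\ge 0$, this forces $c_j^{(k-1)(k^{|k-1|}-1)}=1$. So $c_j$ has finite image once $|k|\ge 2$.
\item The literal tower $\Z[1/k]\rtimes(\Z[1/k]\rtimes\cdots)$, with one copy per level, fails because $\mathrm{Aut}(\Z[1/k])$ is abelian. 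The action of $c_{j+2}$ on the level containing $c_j$ would have to conjugate multiplication by $k$ (the action of $c_{j+1}$) to multiplication by $k^k$.
\item Your fallback is the same problem, not an easier one. For $|k|\ge 2$, every nontrivial normal subgroup of $BS(1,k)\cong\Z[1/k]\rtimes\Z$ contains a nontrivial power of $a$. So a homomorphism injective on the edge groups $\langle c_{p+1}\rangle,\dots,\langle c_{q-1}\rangle$ is automatically injective on every vertex group except $\langle c_p,c_{p+1}\rangle$.
\end{itemize}

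\textbf{The missing idea, as in the paper, is to induct on the length of the chain rather than seek one solvable quotient of the whole chain.} In the paper's notation, $\Gamma_m=\Gamma_{m-1}\Asterisk_{\langle a_m\rangle}\Lambda_m$ with $t_{m-1}=a_m$.
\begin{itemize}
\item Sending $a_0,\dots,a_{m-1}$ to $1$ gives a retraction $\Gamma_{m-1}\to\langle t_{m-1}\rangle$. Together with the identity on $\Lambda_m$, this defines a retraction $\rho\colon\Gamma_m\to\Lambda_m\cong BS(1,k)$.
\item $\rho$ is injective only on the last vertex group and the last edge group, but that is enough. $\ker\rho$ acts on the Bass--Serre tree of this single amalgam with trivial edge stabilizers, and with vertex stabilizers that are trivial or conjugate into $\Gamma_{m-1}$.
\item Hence $\ker\rho$ is a free product of a free group and subgroups of $\Gamma_{m-1}$. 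It lies in $\mathcal{C}$ by induction and the closure of $\mathcal{C}$ under subgroups and free products \cite[Proposition 1.3]{kielak:linton:manifolds}.
\item So $\Gamma_m$ is an extension of a group in $\mathcal{C}$ by $BS(1,k)$, and lies in $\mathcal{C}$.
\end{itemize}

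\textbf{Alternatively, your own strategy can be repaired by using induced modules instead of one copy of $\Z[1/k]$ per level.}
\begin{itemize}
\item Start from $P_1=\langle c_q\rangle\cong\Z$.
\item If $P_r$ is solvable and contains $c_j$ of infinite order, let $V=\Z[1/k]$ with $c_j$ acting by multiplication by $k$. Put $P_{r+1}=(\Z[P_r]\otimes_{\Z[\langle c_j\rangle]}V)\rtimes P_r$ and $c_{j-1}=1\otimes 1$.
\item Then $c_jc_{j-1}c_j^{-1}=c_{j-1}^k$. Since $1\otimes V\cong V$ is a $\langle c_j\rangle$-direct summand of the induced module, $\langle c_{j-1},c_j\rangle\cong BS(1,k)$.
\item After finitely many steps you obtain a solvable quotient of $H_{[p,q]}$ injective on all vertex groups, and your free-by-solvable argument then goes through.
\end{itemize}
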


\begin{proof}
    The ``hence'' follows from Corollary \ref{linnell cor}. Let us start with $n = 1$. As we mentioned above, $G_{a, b}(1, k)$ can be realized as the HNN extension of $BS(1, k) = \langle a, t \mid tat^{-1} = a^2\rangle$, with stable letter $b$ identifying $bab^{-1} = t$. We can write $G_{a, b}(1, k) = \Lambda \rtimes \langle b \rangle$, where by \cite[Theorem 2.17.1]{bogopolski} (see also the proof of \cite[Theorem A]{berlai}), there is a graph of groups decomposition for $\Lambda$ whose underlying graph is a bi-infinite line:
\begin{center}
\begin{tikzpicture}[scale=0.85, baseline]

\tikzset{
  Hlabel/.style={font=\normalsize},
  Alabel/.style={font=\footnotesize}
}

\draw (-6,0) -- (6,0);

\node at (-6.5,0) {$\cdots$};
\node at (6.5,0) {$\cdots$};

\foreach \x/\label in {
    -5/\Lambda_{-2},
    -2.5/\Lambda_{-1},
    0/\Lambda_{0},
    2.5/\Lambda_{1},
    5/\Lambda_{2}
}{
    \draw (\x,0) circle (0.12);
    \node[Hlabel, above=3pt] at (\x,0) {$\label$};
}

\node[Alabel, below=1pt] at (-3.75,0) {$t_{-2} = a_{-1}$};
\node[Alabel, below=1pt] at (-1.25,0) {$t_{-1} = a_0$};
\node[Alabel, below=1pt] at (1.25,0) {$t_0 = a_1$};
\node[Alabel, below=1pt] at (3.75,0) {$t_1 = a_2$};
\end{tikzpicture}
\end{center}
Here $\Lambda_i = \langle a_i, t_i \mid t_i a_i t_i^{-1} = a_i^k \rangle \cong BS(1, k)$. By stability under extensions by $\Z$, it suffices to show that $\Lambda$ is in $\mathcal{C}$. By stability under directed unions, it suffices to show that the following finite graph of groups has fundamental group in $\mathcal{C}$:
\begin{center}
\begin{tikzpicture}[scale=0.85, baseline]

\tikzset{
  Hlabel/.style={font=\normalsize},
  Alabel/.style={font=\footnotesize}
}

\draw (-5,0) -- (2,0);
\draw (3,0) -- (5,0);

\node at (2.5,0) {$\cdots$};

\foreach \x/\label in {
    -5/\Lambda_{0},
    -2.5/\Lambda_{1},
    0/\Lambda_{2},
    5/\Lambda_{m}
}{
    \draw (\x,0) circle (0.12);
    \node[Hlabel, above=3pt] at (\x,0) {$\label$};
}

\node[Alabel, below=1pt] at (-3.75,0) {$t_0 = a_1$};
\node[Alabel, below=1pt] at (-1.25,0) {$t_1 = a_2$};
\node[Alabel, below=1pt] at (1.25,0) {$t_2 = a_3$};
\node[Alabel, below=1pt] at (3.75,0) {$t_{m-1} = a_m$};
\end{tikzpicture}
\end{center}
Call this group $\Gamma_m$; we prove by induction on $m$ that $\Gamma_m$ is in $\mathcal{C}$. To start, $\Gamma_0 = \Lambda_0 \cong BS(1, k)$ is solvable\footnote{In Example \ref{bs solvable} we only considered positive $k$, but as explained in Remark \ref{bs conventions}, $BS(1, -k)$ contains $BS(1, k)$ as an index $2$ subgroup.}, hence in $\mathcal{C}$. Now suppose by induction that $\Gamma_{m-1}$ is in $\mathcal{C}$, we can then write $\Gamma_m = \Gamma_{m-1} \Asterisk_{t_{m-1} = a_m} \Lambda_m$, and let $T$ be the Bass--Serre tree for this amalgamated product decomposition. There is a well-defined retraction $\Gamma_{m-1} \to \langle t_{m-1}\rangle$, which extends to a retraction $\rho \colon \Gamma_m \to \Lambda_m \cong BS(1, k)$; since $BS(1, k)$ is solvable, it remains to show that $\ker(\rho)$ is in $\mathcal{C}$. The vertex group $\Lambda_m$, and in particular the edge group $\langle a_m\rangle$, maps injectively under $\rho$, hence $\ker(\rho)$ has trivial edge stabilizers for its action on $T$, and all of its vertex stabilizers are conjugate into $\Gamma_{m-1}$ (see the proof of Corollary \ref{gog amenable cor}). It follows from the fundamental theorem of Bass--Serre theory that $\ker(\rho)$ is a free product of free groups and subgroups of $\Gamma_{m-1}$, hence it is in $\mathcal{C}$ by \cite[Proposition 1.3]{kielak:linton:manifolds}.

This concludes the proof that $G_{a, b}(1, k)$ is in $\mathcal{C}$. For the general case, mapping $a \mapsto a, b \mapsto b^{-1}$ is an automorphism, hence we may assume that $n \geq 1$. Once again, it suffices to show that the kernel of the retraction $G_{a, b^n}(1, k) \to \langle b \rangle$ is in $\mathcal{C}$. The proof of \cite[Theorem A]{berlai} shows that this kernel is a free product of $n$ copies of $\ker(\rho)$ from the $n = 1$ case, which we proved is in $\mathcal{C}$, so we conclude again by \cite[Proposition 1.3]{kielak:linton:manifolds}.
\end{proof}

We do not know if all one-relator groups have the LP (Question \ref{q:or:llp}). A related open question due to Arzhantseva is whether all one-relator groups are residually amenable \cite[Problem 18.6]{kourovka}. The family $G_{r, w}(l, k)$ already contains some candidates that we are not able to tackle, for example whenever $BS(l, k)$ is non-amenable the argument above does not apply.

\begin{remark}
\label{LP:soficity}
We remark that all of our proofs of the LP so far imply soficity as well: this is because free groups are sofic, and soficity is preserved by taking subgroups, directed unions, free products and amenable extensions \cite{elek:szabo}. In fact the proof of Proposition \ref{prop:BG} is inspired from the proof of soficity of Berlai \cite{berlai}, which extends previous results for subfamilies of $G_{r, w}$ \cite{bannon1, bannon2}. As mentioned in the introduction of \cite{berlai}, for most choices of $r, w$, the soficity of $G_{r, w}$ is not known (see also \cite[Question 4.9]{pestov}), so proving the LP for them would need a very different approach.
\end{remark}

\subsection{Limit groups}

A finitely generated group is called a \emph{limit group} if it is fully residually free; that is, for every finite subset, there is a free quotient into which the finite subset embeds. Limit groups are a fundamental class in geometric group theory, especially because of their relation with the works of Sela \cite{tarski:sela} and Kharlampovich--Myasnikov \cite{tarski:KM} on Tarski's problem about the first-order theory of free groups\footnote{They have, however, been studied much earlier than this, see for example \cite{baumslag:residuallyfree}.}. A typical example of a limit group is the fundamental group of a closed orientable surface. We refer the reader to \cite{limitgroups} for an exposition of the theory.

\begin{example}\label{limit lp}
    Let $\Gamma$ be a limit group. Then there exists a free normal subgroup $\Lambda < \Gamma$ such that $\Gamma/\Lambda$ is torsion-free nilpotent \cite{kochloukova}. In particular, $\Gamma$ is LP by Theorem \ref{llp the}.
\end{example}

\begin{example}
\label{graphs of free groups}
    A closely related class of groups is that of fundamental groups of graphs of groups with free vertex groups and cyclic edge groups (see for example \cite{wilton:graphpairs}). If such a group does not contain a subgroup isomorphic to $BS(m, n)$ with $|m| \neq |n|$, then it is virtually free-by-cyclic \cite[Corollary C]{hagen:wise:elementary}, hence LP.
\end{example}

\subsection{Right-angled Artin groups}

A \emph{right-angled Artin group} (RAAG) is a group $\Gamma$ associated to a finite graph $X$ as follows: for each vertex one includes a generator, and for each edge connecting vertices $x$ and $y$ one imposes the relation $xy=yx$.

RAAGs play a fundamental role in group theory and geometry, thanks to the fact that they contain all virtually compact special groups \cite{Haglund:2008aa}; this is essential in the proof of the virtual fibering conjecture for hyperbolic $3$-manifolds \cite{Wise:2021aa, Agol:2013aa}. We refer the reader to \cite{charney:intro} for an introduction.

\begin{example}\label{raag lp}
A graph is \emph{chordal} if every cycle of length at least $4$ admits a chord. Then Servatius--Droms--Servatius \cite[Theorem 2]{Servatius:1989aa} show that the defining graph of a RAAG $\Gamma$ is chordal if and only if the derived subgroup $\Gamma'$ is free, in which case $\Gamma$ has the LP by Corollary \ref{derived cor}.
\end{example}

\begin{remark}\label{racg}
    A closely related class is that of \emph{right-angled Coxeter groups} (RACG). This is defined similarly to a RAAG, with the added relations that the vertex generators have order $2$. RACGs on chordal graphs are also LP, but for an easier reason: they are virtually free. Indeed, the argument for RAAGS \cite[Theorem 2]{Servatius:1989aa} goes through verbatim to show that a RACG on a chordal graph has free derived subgroup; in this case the abelianization is generated by elements of order $2$, hence the derived subgroup has finite index.
\end{remark}

It is not clear whether a general RAAG will have the (L)LP. Note that $F_2\times F_2$ is a basic example of a RAAG, and as we previously mentioned, the (L)LP for this is quite open (Question \ref{q:ftimesf}).

\section{Property FD}
\label{s:fd}

In Section \ref{s:stability}, we will discuss some of the main applications of the (L)LP in representation stability. These applications require additional separation properties for the group $C^*$-algebra, which we introduce in this section. We lay some background and recall some known facts, and then prove that property FD is preserved by free products (Corollary \ref{fd fp cor}) and more generally free products amalgamated along amenable retracts (Corollary \ref{fd am}).

\subsection{Background on representation theory}

We will start by discussing some background on representation theory of $C^*$-algebras, based based on \cite{Fell:1962aa, Exel:1992aa}.  We are only interested in this material for applications to group theory, but state the results in general, as they are no harder to prove.

Contrary to some parts of the literature, we will allow possibly degenerate representations of $C^*$-algebras (see Definition \ref{dg rep} above).  For any representation $(\pi,H)$ of a $C^*$-algebra $A$ there is a splitting $H=H_1\oplus H_0$ into invariant subspaces where $H_1$ is the closed span of $\pi(A)H$, and $H_0=H_1^\perp$.  The subspace $H_1$ is called the \emph{essential space} of $\pi$, and is the largest subspace of $H$ on which $A$ acts nondegenerately.  On the other hand, $A$ acts as the zero representation on the (possibly non-zero) Hilbert space $H_0$.

Note also that unitary representations of a group $\Gamma$ are in one-one correspondence with nondegenerate representations of $C^*\Gamma$.  Thus general representations of $C^*\Gamma$ are in one-one correspondence with the following data: a Hilbert space $H$ and a unitary representation of $\Gamma$ on a closed subspace $H_1$ of $H$, with $\Gamma$ thought of as `acting via the zero operator' on the orthogonal complement $H_1^\perp$.

The following definition is based on \cite[Section 1]{Fell:1962aa}. 

\begin{definition}\label{rep a h}
Let $H$ be a Hilbert space, and let $A$ be a $C^*$-algebra.  Write $\mathrm{Rep}(A,H)$ for the set of (possibly degenerate) representations of $A$ on $H$. An element of $\mathrm{Rep}(A,H)$ is called \emph{essentially cyclic} if its restriction to its essential space is cyclic.

Let $\pi\in \mathrm{Rep}(A,H)$ with essential space $H_\pi\leq H$, let $H_0$ be a finite subset of $H_\pi$, let $A_0$ be a finite subset of $A$, let $\epsilon>0$, and define
$$
U(\pi;H_0,A_0) \coloneqq \{\rho\in \mathrm{Rep}(A;H)\mid \|\pi(a)\xi-\rho(a)\xi\|_H<\epsilon \text{ for all } a\in A_0,\xi\in H_0\}.
$$
We equip $\mathrm{Rep}(A,H)$ with the topology generated by these sets.  Note that a net $(\pi_i)_i$ converges to $\pi$ in this topology if and only if 
$$
\|\pi_i(a)\xi-\pi(a)\xi\|_H \xrightarrow{i \to \infty} 0
$$
for all $a\in A$ and $\xi$ in the essential space of $\pi$. 
\end{definition}

There are several small (but sometimes important) variants of the topology above in the literature.  Let us mention two of these to help situate the reader.

\begin{remark}\label{fell vs el}
Much of what we do with representation theory is based on work of Exel--Loring \cite{Exel:1992aa}.  We warn the reader that the topology on $\mathrm{Rep}(A,H)$ from Definition \ref{rep a h} is not the same as that from \cite[Definition 2.1]{Exel:1992aa}: to define the topology of \cite[Definition 2.1]{Exel:1992aa} one allows $H_0$ to be any finite subset of $H$ (and not just of the essential subspace for $\pi$).
\end{remark}

\begin{remark}\label{fell top rem}
Let us mention the relationship of the topology from Definition \ref{rep a h} to the \emph{Fell topology}.  We will not define the latter here, but see for example \cite[Appendix F]{Bekka:2000kx} for a modern textbook discussion of the Fell topology for (locally compact) groups.

The Fell topology is defined on unitary equivalence classes of  representations of a $C^*$-algebra (one bounds the dimension of the representations considered by some cardinal $\aleph$ to avoid set-theoretic difficulties).  It is then defined on unitary equivalence classes of unitary representations of a group by restricting to the special case of group $C^*$-algebras.    The Fell topology was introduced (for $C^*$-algebras and groups) by Fell in \cite[Section 2]{Fell:1962aa}, where it is called the \emph{inner hull-kernel topology}.  

The topology on $\mathrm{Rep}(A,H)$ from Definition \ref{rep a h} comes from the same paper by Fell \cite[Section 1]{Fell:1962aa}.  Passing to equivalence classes, it also induces a topology on the set of unitary equivalence classes of unitary representations of dimension at most $\aleph:=\mathrm{dim}(H)$; Fell calls this induced topology the \emph{quotient topology}.  Fell's quotient topology and inner hull-kernel topology are not the same, but are very closely related.  For example they agree when restricted to (equivalence classes of) irreducible representations, and infinite multiplicity representations: see \cite[Lemmas 2.3, 2.4 and surrounding discussion]{Fell:1962aa}.
\end{remark}

\begin{definition}\label{s state}
Let $A$ be a $C^*$-algebra, and let $S$ be a collection of representations of $A$.  Let $\langle S\rangle$ be the collection of representations of $A$ that are unitarily equivalent to a subrepresentation of a finite direct sum of representations in $S$.

A state $\phi$ on $A$ is an \emph{$S$-state} if its GNS representation is in $\langle S\rangle$.  For a Hilbert space $H$, a representation $\pi\in \mathrm{Rep}(A,H)$ is an \emph{approximately-$S$ representation, relative to $H$} if there is a net $(\pi_i)_i$ in $\mathrm{Rep}(A,H)$ that converges to $\pi$, and with each $\pi_i$ in $\langle S\rangle$.
\end{definition}

\begin{remark}
    If $S$ is closed under finite direct sums, unitary equivalence and taking subrepresentations, then $\langle S \rangle = S$. In this case the notion of approximation above is really with respect to elements of $S$, and this will be used in Lemma \ref{s corner} below.
\end{remark}

The following is very closely related to \cite[Theorem 2.4]{Exel:1992aa}: we give a proof mainly to explain where it differs from that reference.

\begin{theorem}\label{rep approx}
Let $A$ be a $C^*$-algebra and let $S$ be a family of representations of $A$.  Let $\aleph$ be an infinite cardinal that is strictly larger than the dimension of all cyclic representations of $A$.  Then the following are equivalent:
\begin{enumerate}[(i)]
\item \label{rep state} the collection of $S$-states is weak-$*$ dense in the state space of $A$;
\item \label{rep cyclic} for any Hilbert space $H$ of dimension at least $\aleph$, any essentially cyclic representation $\pi\in \mathrm{Rep}(A,H)$ is an approximately-$S$ representation relative to $H$;
\item \label{rep gen} for any Hilbert space $H$ of dimension at least $\aleph$, any representation $\pi\in \mathrm{Rep}(A,H)$ is an approximately-$S$ representation relative to $H$;
\item \label{rep faith} for any $a\in A\setminus\{0\}$, there is a representation $\sigma\in S$ such that $\sigma(a)\neq 0$. 
\end{enumerate}
\end{theorem}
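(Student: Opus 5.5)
I would prove the cycle (iii)$\Rightarrow$(ii)$\Rightarrow$(i)$\Rightarrow$(iv)$\Rightarrow$(i)$\Rightarrow$(iii), following the strategy of \cite[Theorem 2.4]{Exel:1992aa}. The only real adjustment is the choice of topology: ours from Definition \ref{rep a h} only tests vectors in the essential space of the limit. This is what allows degenerate approximants and removes the need for nondegeneracy hypotheses.

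The implication (iii)$\Rightarrow$(ii) is trivial. For (ii)$\Rightarrow$(i), take a state $\phi$ with GNS triple $(\pi_\phi,H_\phi,\xi_\phi)$. Since $\dim H_\phi<\aleph$, embed $H_\phi$ isometrically into $H$ and let $A$ act by zero on the complement. This gives an essentially cyclic $\pi\in\mathrm{Rep}(A,H)$ with essential space $H_\phi$. By (ii) there is a net $\pi_i\in\langle S\rangle$ with $\pi_i\to\pi$, so $\langle\pi_i(a)\xi_\phi,\xi_\phi\rangle\to\phi(a)$. Each vector state of $\pi_i$ is a positive functional whose GNS representation is a subrepresentation of $\pi_i$, hence lies in $\langle S\rangle$. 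Its norm is $\|\pi_i(\cdot)\xi_\phi\|^2$, which tends to $1$; here I use an approximate unit $e_\lambda$ of $A$, since $\pi(e_\lambda)\xi_\phi\to\xi_\phi$. Normalizing gives $S$-states converging weak-$*$ to $\phi$.

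For (i)$\Rightarrow$(iv), given $a\neq0$, pick a state $\phi$ with $\phi(a^*a)>0$ and approximate it by an $S$-state $\psi$ with $\psi(a^*a)>0$. Then $\pi_\psi(a)\neq0$. Since $\pi_\psi$ sits inside a finite sum of members of $S$, some $\sigma\in S$ satisfies $\sigma(a)\neq0$. For (iv)$\Rightarrow$(i), the representation $\bigoplus_{\sigma\in S}\sigma$ is faithful. I would then invoke the standard fact (Fell; \cite[Theorem 3.4.10]{Dixmier} style; it can be deduced via Hahn--Banach and the bipolar theorem) that the vector states of a faithful representation, and of its finite subsums, are weak-$*$ dense in the states. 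Vectors supported on finitely many summands already suffice, and such vector states are $S$-states. The one delicate point is the non-unital case, where one needs density of states and not just of quasi-states. I would handle it by passing to the unitization, or by noting that the weak-$*$ closure of the $S$-states is convex as well, since direct sums stay in $\langle S\rangle$.

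The main step is (i)$\Rightarrow$(iii). Given $\pi\in\mathrm{Rep}(A,H)$, a finite set $A_0$, finitely many vectors $\xi_1,\dots,\xi_n$ in the essential space $H_\pi$, and $\epsilon>0$, I must produce $\rho\in\langle S\rangle$ on $H$ that is $\epsilon$-close on these data. First I reduce to a finite direct sum of cyclic subrepresentations: the essential space decomposes into cyclic pieces, and I approximate each $\xi_j$ by vectors lying in finitely many of them, using $\pi(A)$-cyclic vectors. For each cyclic piece with state $\phi_k$, approximate $\phi_k$ by an $S$-state $\psi_k$ (applying (i) to enough products $b^*c$ with $b,c$ drawn from a finite set built from $A_0$). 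Then map the GNS space of $\psi_k$ into $H$ so that $\pi_{\psi_k}(b)\xi_{\psi_k}$ is sent near $\pi(b)\xi_{\phi_k}$. This is the Exel--Loring construction: one defines an almost-isometry on finite spans and perturbs it to an isometry into $H$. The dimension hypothesis $\dim H\ge\aleph$ guarantees enough room in $H$ for the remaining part of the GNS space, orthogonal to the finitely many matched vectors. On the rest of $H$ the representation acts by zero, and this is allowed precisely because we permit degenerate $\rho$ and only test vectors in $H_\pi$.

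I expect this last transport step to be the main obstacle. It requires controlling $\|\rho(a)\xi-\pi(a)\xi\|$ through inner products and fixing the isometry after the perturbation.
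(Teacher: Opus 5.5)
Your proposal is correct but organized differently from the paper. The paper proves the cycle (i)$\Rightarrow$(ii)$\Rightarrow$(iii)$\Rightarrow$(iv)$\Rightarrow$(i), in four steps:
\begin{enumerate}
\item It treats a single essentially cyclic representation, using \cite[Lemma 2.5]{Exel:1992aa} to get an isometry from the span of the $\pi(a)\xi$ into the GNS space $H_i$, extending it to a coisometry $H\to H_i$, and compressing.
\item It passes to finite sums of essentially cyclic pieces by splitting $H$ into finitely many orthogonal summands, each of full dimension.
\item It obtains (iv) by approximating a faithful representation on a large space.
\item It closes with convexity of the $S$-states plus Hahn--Banach.
\end{enumerate}

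You prove (i)$\Leftrightarrow$(iv) directly; your (i)$\Rightarrow$(iv) via a state with $\phi(a^*a)>0$ is shorter than going through (iii). You also add a direct (ii)$\Rightarrow$(i), by embedding a GNS representation into $H$ and normalizing vector functionals of the approximants, which the paper never needs. Finally, you merge the paper's first two steps into a single (i)$\Rightarrow$(iii).

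Your reduction to finitely many cyclic pieces, by approximating the test vectors, is more concrete than the paper's limit over finite partial sums $\pi=\lim_F\bigoplus_{i\in F}\pi_i$. The price is that you transport several GNS spaces at once. The isometries $V_k\colon H_{\psi_k}\to H$ must then have mutually orthogonal ranges, or $\sum_k V_k\pi_{\psi_k}(\cdot)V_k^*$ is not a representation. This is easy to arrange, but you should say how:
\begin{itemize}
\item use one isometry on $\bigoplus_k H_{\psi_k}$, since the Gram data are block diagonal and isometries preserve orthogonality; or
\item split $H$ first, as the paper does.
\end{itemize}
Also, the assignment $\pi_{\psi_k}(b)\xi_{\psi_k}\mapsto\pi(b)\eta_k$ need not be well defined on spans. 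That is exactly why one works with Gram matrices, as in Exel--Loring's lemma.

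One claim needs correcting. In (iv)$\Rightarrow$(i), the vector states of a faithful representation are \emph{not} weak-$*$ dense in the state space in general. For the identity representation of $M_2(\C)$ on $\C^2$ they are exactly the pure states, a closed set that misses the normalized trace. Hahn--Banach only gives density of their convex hull, i.e.\ of normalized finite sums of vector functionals (vector states of finite multiples). So the convexity of the $S$-states, which you invoke only for the non-unital case, is needed in the unital case too. With it, your argument for this step is precisely the paper's.
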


\begin{proof}
We first show \eqref{rep state} $\Rightarrow$ \eqref{rep cyclic}.  Assume that the collection of $S$-states is weak-$*$ dense in the state space of $A$.  Let $H$ have dimension at least $\aleph$, let $\pi\in \mathrm{Rep}(A,H)$ be a cyclic representation with essential space $H_\pi$, and let $\xi\in H_\pi$ be a cyclic (unit) vector for the corestriction of $\pi$ to $H_\pi$ with associated vector state $\phi(a)=\langle \xi,\pi(a)\xi\rangle$.  If $A$ is not unital, extend $\pi$ to a unital representation of $A^+$ on $H_\pi$.  Let $\epsilon>0$ and finite subsets $A_0$ of $A$ and $H_0$ of $H_\pi$ be given, so we want to find a representation $\sigma$ in the set $U(\pi;A_0,H_0)$ of Definition \ref{rep a h} with $\sigma\in \langle S\rangle$.  Using that $\xi$ is cyclic to approximate vectors in $H_0$ by elements of the form $\pi(a)\xi$, and expanding $A_0$, we may assume that $H_0=\{\xi\}$.  

Now, let $(\phi_i)$ be a net of $S$-states that weak-$*$ converge to $\phi$, and for each $i$ let $(H_i,\pi_i,\xi_i)$ be the GNS representation of $\phi_i$; we also treat each $\pi_i$ as a unital representation of $A^+$ where convenient.  Then 
$$
\langle \pi_i(a)\xi_i,\pi_i(b)\xi_i\rangle =\phi_i(a^*b) \xrightarrow{i \to \infty} \phi(a^*b)= \langle \pi(a)\xi,\pi(b)\xi\rangle
$$
for all $a,b\in A^+$.  Let $H_{A_0}$ be the span of $\{\pi(a)\xi_i\mid a\in A_0\}\cup\{\xi_i\}$.  Arguing as in the proof of \cite[Theorem 2.4]{Exel:1992aa} using \cite[Lemma 2.5]{Exel:1992aa}, for each $i$ there is an isometric inclusion $u_i \colon H_{A_0}\to H_i$ such that 
$$
\|u_i\pi(a)\xi-\pi_i(a)\xi_i\| \xrightarrow{i \to \infty} 0
$$
for all $a\in A_0\cup\{1_{A^+}\}$.  Our assumption on $\aleph$ implies that the dimension of $H$ is at least that of each $H_i$, so we may extend each $u_i$ to a (surjective) coisometry\footnote{A \emph{coisometry} between Hilbert spaces is an operator $v$ such that $vv^*=1$; equivalently, its adjoint is an isometry.} $\widetilde{u_i} \colon H\to H_i$, and define $\sigma_i\in \mathrm{Rep}(A,H)$ by $\sigma_i(a) \coloneqq \widetilde{u}_i^*\pi_i(a)\widetilde{u}_i$.  Then for every $a\in A_0$ using that $\widetilde{u}_i^*\widetilde{u}_i\pi(a)\xi=\pi(a)\xi$ and $u_i\pi(a)\xi=\widetilde{u}_i\pi(a)\xi$ for all $a\in A_0\cup \{1_{A^+}\}$, we have 
\begin{align*}
\|\sigma_i(a)\xi-\pi(a)\xi\| & =\|\widetilde{u}_i^*\pi_i(a)\widetilde{u}_i\xi-\pi(a)\xi\| \\&  \leq \|\widetilde{u}_i^*\pi_i(a)(\widetilde{u}_i\xi-\xi_i)\|+\|\widetilde{u}_i^*\pi_i(a)\xi_i-\pi(a)\xi\| \\
& = \|\widetilde{u}_i^*\pi_i(a)(\widetilde{u}_i\xi-\xi_i)\| +\|\widetilde{u}_i^*\pi_i(a)\xi_i-\widetilde{u}_i^*\widetilde{u}_i\pi(a)\xi\| \\
& \leq \Big(\sup_{a\in A_0\cup\{1_{A^+}\}}\|a\|\|u_i\xi-\xi_i\|\Big)+\|\pi_i(a)\xi_i-u_i\pi(a)\xi\| 
\end{align*}
and this tends to zero as $i\to\infty$.  Our desired representation $\sigma$ can thus be taken to be $\sigma_i$ for $i$ suitably large.

We now show \eqref{rep cyclic} $\Rightarrow$ \eqref{rep gen}.  Let $\pi$ be an element of $\mathrm{Rep}(A,H)$.  We may write $\pi$ as a direct sum of $\bigoplus_{i\in I}\pi_i$ with $(\pi_i)_i$ a collection of mutually orthogonal essentially cyclic representations in $\mathrm{Rep}(A,H)$.  As $\pi=\lim_{F}\bigoplus_{i\in F}\pi_i$ where the limit is taken over finite subsets of $I$, we may assume that $\pi=\bigoplus_{i\in F}\pi_i$ is a sum of finitely many essentially cyclic representations.  As the dimension of $H$ is strictly larger than that of all cyclic representations of $A$, we may write $H$ as a direct sum $H=\bigoplus_{i\in F}H_i$, with each $H_i$ of the same (infinite) dimension as $H$, and each $\pi_i\in \mathrm{Rep}(A,H_i)$.  Using part \eqref{rep cyclic} applied to each $H_i$ and taking direct sums of the resulting approximants, we are done.

To see that \eqref{rep gen} $\Rightarrow$ \eqref{rep faith}, let $H$ be a Hilbert space of dimension at least $\aleph$ such that there is a faithful representation $\pi\in \mathrm{Rep}(A,H)$ (for example, a suitably large amplification of a given faithful representation will work).  Then part \eqref{rep gen} writes $\pi$ as a limit of a net $(\pi_i)_i$.  For each $a\in A\setminus \{0\}$, as $\pi(a)\neq 0$, there must exist $i$ with $\pi_i(a)\neq 0$.  As $\pi$ is (unitarily equivalent to) a subrepresentation of a direct sum of representations from $S$, there must exist a representation $\sigma\in S$ with $\sigma(a)\neq 0$.  

Finally, we show that \eqref{rep faith} $\Rightarrow$ \eqref{rep state}.  Indeed, note first that as $\langle S\rangle$ is closed under finite direct sums, and taking subrepresentations, the collection of $S$-states is convex.  The result now follows from a standard Hahn--Banach separation argument: see \cite[Theorem 2.4, proof of (e) $\Rightarrow$ (a)]{Exel:1992aa} for details. 
\end{proof}

\begin{definition}\label{fell top}
Let $A$ be a $C^*$-algebra, and $S$ be a collection of representations of $A$.  We say that $S$ is \emph{dense in the Fell topology} if it satisfies the equivalent conditions in Theorem \ref{rep approx}.

If $S$ is a collection of unitary representations of a group $\Gamma$, we say that $S$ is \emph{dense in the Fell topology} if it is dense in the Fell topology when considered as a collection of representations of $C^*\Gamma$.
\end{definition}

We will not need the Fell topology as such, just what it means to be dense in it, as in Definition \ref{fell top}: see also Remark \ref{fell top rem} for references on the Fell topology.

\subsection{Definition of property FD}

The next definition is due to Lubotzky--Shalom \cite{Lubotzky:2004xw}.

\begin{definition}\label{fd def}
A group $\Gamma$ has \emph{property FD}, or just \emph{is FD}, if the collection $S$ of representations of $\Gamma$ that factor through a finite quotient is dense in the Fell topology, in the sense of Definition \ref{fell top}.

More generally, if $\Lambda$ is a group acting via a homomorphism $\alpha \colon \Lambda\to \mathrm{Aut}(\Gamma)$ on $\Gamma$, we say that $\Gamma$ has \emph{FD relative to $\alpha$} (or relative to $\Lambda$, if the action $\alpha$ is clear from the context) if the set $S$ of representations $\pi$ of $\Gamma$ that factor through a finite quotient, and are such that the collection of representations $\{\pi\circ \alpha_l \mid l\in \Lambda\}$ is finite up to unitary equivalence, is dense in the Fell topology.
\end{definition}

\begin{remark}
\label{relative FD fg}
    If $\Gamma$ is finitely generated, then it has only finitely many homomorphisms to a given finite group, because there are only finitely many choices for the images of the generators. Therefore if $\pi$ is a representation of $\Gamma$ that factors through a finite quotient then $\{ \pi \circ \alpha \mid \alpha \in \mathrm{Aut}(\Gamma) \}$ is finite. Hence, if a finitely generated group is FD, then it is also FD relative to any action by automorphisms of another group $\Lambda$.
\end{remark}

Here is a related property that is a useful weakening of property FD.

\begin{definition}\label{rfd def}
    A group $\Gamma$ is \emph{residually finite-dimensional} (RFD)\footnote{One can analogously define RFD $C^*$-algebras: this is a well-studied class in abstract $C^*$-algebra theory.  There is no analog of property FD for general $C^*$-algebras.} if the collection of finite-dimensional representations is dense in the Fell topology.
\end{definition}

As any representation that factors through a finite quotient decomposes as a direct sum of finite-dimensional representations, this is a weaker property than FD.

\begin{remark}
\label{RFD:MAP:FD:RF}

Recall that a group is \emph{maximally almost periodic} (MAP) if it has a separating family of homomorphisms to finite-dimensional unitary groups\footnote{Using the Peter-Weyl theorem (see for example \cite[Corollary 15.1.6]{Dixmier:1977vl} for an appropriate statement), this is the same as having a separating family of homomorphisms to compact groups.}. 

In particular, if $\Gamma$ is RFD, then it is MAP; and if it is FD, then it is residually finite.  If $\Gamma$ is amenable, the converses to both of these statements hold (see \cite{bekka:louvet} for RFD, and \cite[Lemma 2.4]{Lubotzky:2004xw} for FD), but not in general: for example $SL(3, \mathbb{Z})$ is residually finite but not RFD \cite{bekka:congruence} (and so also not FD).

If $\Gamma$ is finitely generated and MAP, then it must be residually finite  \cite{malcev}. There are (non finitely generated) amenable groups that are MAP but have no finite quotients, such as $\Q$; such groups are therefore RFD but not FD.  We do not know any finitely generated groups that are RFD, but not FD (Question \ref{q:rfdnotfd}).
\end{remark}

\begin{remark}
\label{rem:fd:subgroups}
    Using for example Item \eqref{rep faith} in Theorem \ref{rep approx}, it is immediate that property (R)FD passes to subgroups.
\end{remark}

\subsection{Known results}

The fundamental results on property FD were proved by Lubotzky--Shalom in their original paper \cite{Lubotzky:2004xw}. The following is \cite[Theorem 2.2]{Lubotzky:2004xw}: note however that the original proof has a (small) gap \cite[Section 5]{Nowak:2011aa}.

\begin{theorem}[Lubotzky--Shalom]
\label{LS:free}

Free groups\footnote{The statement of \cite[Theorem 2.2]{Lubotzky:2004xw} is for finitely generated free groups.  However, because FD passes to subgroups (Remark \ref{rem:fd:subgroups}, it extends to countable free groups, as the authors of \cite{Lubotzky:2004xw} point out. From there it is straightforward to see that it is true regardless of cardinality.} have property FD. \qed
\end{theorem}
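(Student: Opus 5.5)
The plan is to verify condition \eqref{rep gen} of Theorem \ref{rep approx} (equivalently \eqref{rep faith}) for $S$ the representations of a free group $F$ that factor through finite quotients. It has two stages: first approximate an arbitrary unitary representation by finite-dimensional ones, then approximate finite-dimensional ones by representations with finite image. As in the footnote to the statement, it suffices to treat $F=F_k$ of finite rank $k$. Indeed, an element of $C^*F$ is approximated by elements of some $C^*F_k$ with $F_k\le F$ a free factor, and $F_k$ is a retract of $F$. So a representation of $F_k$ factoring through a finite quotient extends, by precomposing with the retraction, to one of $F$ with the same property.

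\emph{Stage 1 (RFD).} Let $\pi$ be a unitary representation on $H$ and let $u_1,\dots,u_k$ be the images of the generators. Fix a finite set $\Xi\subseteq H$ and a length $L$. Let $P\subseteq H$ be the finite-dimensional span of all $\pi(w)\xi$ with $\xi\in\Xi$ and $|w|\le L$. For each $i$, the subspace $P_i=\{\eta\in P\mid u_i\eta\in P\}$ is mapped by $u_i$ isometrically into $P$. By dimension count, $u_i|_{P_i}$ extends to a unitary $v_i$ of $P$, and we do the same with $u_i^{-1}$ (pass through $P_i$ and its image). The resulting representation $\rho$ of $F_k$ on $P$ then satisfies $\rho(w)\xi=\pi(w)\xi$ for all $|w|\le L$ and $\xi\in\Xi$; this is proved by induction on $|w|$, since each intermediate vector lies in the relevant $P_i$. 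Embedding $P$ into a Hilbert space of dimension at least $\aleph$ as in the proof of Theorem \ref{rep approx} shows that finite-dimensional representations are dense in the Fell topology.

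\emph{Stage 2 (finite image).} By Stage 1 and transitivity of Fell approximation, it is enough to approximate a fixed $\rho\colon F_k\to U(n)$, on finitely many words of length at most $L$ applied to finitely many vectors, by representations with finite image, possibly of larger dimension. On the group ring this is easy. If $a\in\C[F_k]$ is nonzero, residual finiteness of $F_k$ gives a finite quotient $Q$ that is injective on the support of $a$, and then $\lambda_Q(a)\delta_e\neq 0$. The real content is therefore the norm statement: for $a\in\C[F_k]$, $\sup\{\|\sigma(a)\|\mid \sigma\in S\}$ should equal $\|a\|_{C^*F_k}$.

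I would first try to perturb the generator images $v_i$ to unitaries $w_i$ generating a finite subgroup of some $U(N)$ with $N\ge n$, while keeping the compression to $\C^n$ close to $\rho$ on words of length at most $L$. Concretely, I would work inside $\C^n\otimes\ell^2(X)$ with $X$ a finite set, and combine finite-order approximations of the $v_i$ (torsion is dense in $U(n)$) with permutations of $X$ chosen so that the Schreier graph of $F_k$ on $X$ has no short cycles. The absence of short cycles should make the words of length at most $L$ behave as in the free group, and hence as in $\rho$.

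The main obstacle is exactly this step. A naive perturbation does not work, because torsion generators can generate an infinite group, and finite subgroups of a fixed $U(n)$ are far from dense in $U(n)^k$ by Jordan's theorem. So the dimension must be allowed to grow, and one must carefully control matrix coefficients of words rather than only the images of the generators. This is also the point where \cite{Lubotzky:2004xw} has the gap noted in \cite[Section 5]{Nowak:2011aa}, so I expect to need the corrected argument there.
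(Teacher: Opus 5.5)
Your reduction to finite rank (used in its norm form) is fine, and so is Stage 1, which is the standard partial-isometry proof of Choi's theorem that free groups are RFD.

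\textbf{The gap is Stage 2.} It is the entire content of the Lubotzky--Shalom theorem: the paper just cites it and only carries out the passage from finite rank to arbitrary rank. You do not prove Stage 2, and the mechanism you sketch cannot work as stated. Suppose the $\C^n$-part of each generator is a fixed finite-order approximant $v_i'$ of $v_i$, for example $w_i=v_i'\otimes\sigma_i$. Then $\langle w_1,\dots,w_k\rangle$ surjects onto $\langle v_1',\dots,v_k'\rangle\le U(n)$. That group is typically infinite even though each $v_i'$ has finite order: two reflections of $\C^2$ in real lines meeting at an angle $\theta$ with $\theta/\pi\notin\Q$ generate an infinite dihedral group. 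Large girth of the Schreier graph does not help. It only makes short words move every point of $X$, which is irrelevant for the vector $\xi\otimes\mathbf{1}_X$ you would have to use to see $\rho$.

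\textbf{The missing idea} is to let the $U(n)$-component vary along $X$. Suppose you have:
\begin{itemize}
\item a finite set $X$ with permutations $\sigma_1,\dots,\sigma_k$;
\item a map $h\colon X\to U(n)$ with $\|h(\sigma_ix)-v_ih(x)\|<\epsilon$ for all $x$ and $i$.
\end{itemize}
Put $w_i(\xi\otimes\delta_x)\coloneqq h(\sigma_ix)h(x)^{-1}\xi\otimes\delta_{\sigma_ix}$. With $D(\xi\otimes\delta_x)\coloneqq h(x)\xi\otimes\delta_x$ one has $w_i=D(1\otimes\sigma_i)D^*$. Freeness of $F_k$ is what lets you use arbitrary $\sigma_i$ here. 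Consequently:
\begin{itemize}
\item $x_i\mapsto w_i$ is a representation factoring through a finite quotient;
\item $\|w_i-v_i\otimes\sigma_i\|<\epsilon$;
\item since $v\otimes\sigma$ restricts to $\rho$ on $\C^n\otimes\C\mathbf{1}_X$, compressing to that subspace gives $\|V^*w(g)V-\rho(g)\|\le L\epsilon$ for all words $g$ of length at most $L$, which is exactly what Stage 2 needs.
\end{itemize}

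To produce $(X,\sigma_i,h)$:
\begin{enumerate}
\item Partition $U(n)$ into Borel cells of equal Haar measure $\mu$ and diameter less than $\epsilon/2$, and let $X$ be the set of cells.
\item Using left invariance of Haar measure and Hall's marriage theorem, find permutations $\sigma_i$ of the cells with $\mu(v_iC\cap\sigma_i(C))>0$ for every cell $C$.
\item Let $h(C)$ be any point of $C$.
\end{enumerate}

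This is in the spirit of Kechris's proof that free groups have property MD (Theorem \ref{MD free amenable}). Since MD implies FD, as recalled in the appendix, that theorem gives another way to close the gap.
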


Lubotzky--Shalom also prove an important permanence property. We need some terminology first.

\begin{definition}
\label{def:efficient}
    A subgroup $\Lambda$ of a (residually finite) group $\Gamma$ is \emph{separable} if it is an intersection of finite-index subgroups of $\Gamma$. It is \emph{efficient} if every finite-index subgroup of $\Lambda$ is separable.
\end{definition}

Recall also that $\Lambda < \Gamma$ is \emph{co-amenable} if $\ell^2(\Gamma/\Lambda)$ weakly contains the trivial representation, see for example \cite{monod:popa}.

\begin{theorem}[Lubotzky--Shalom]
\label{LS:coamenable}

    Let $\Lambda < \Gamma$ be a co-amenable subgroup. Suppose that $\Lambda$ can be written as a directed union of subgroups $(\Lambda_i)_{i \in I}$, each of which is FD and efficient in $\Gamma$. Then $\Gamma$ is FD.
\end{theorem}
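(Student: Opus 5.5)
We would prove Theorem \ref{LS:coamenable} through criterion \eqref{rep faith} of Theorem \ref{rep approx}. It suffices to show that every unitary representation $\pi$ of $\Gamma$ is weakly contained in the family $S$ of representations factoring through finite quotients of $\Gamma$. Concretely, for every $x \in C^*\Gamma$ we need $\|\pi(x)\|\le\sup_{\sigma\in S}\|\sigma(x)\|$. The plan has three steps: use co-amenability to reach induced representations, approximate the restriction to $\Lambda_i$ by finite-quotient representations, and use efficiency to make the induced representations factor through finite quotients of $\Gamma$.

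\textbf{Step 1 (co-amenability).} Let $\rho = \pi|_\Lambda$. Since $\ell^2(\Gamma/\Lambda)$ weakly contains $1_\Gamma$, tensoring with $\pi$ shows that $\pi \cong \pi\otimes 1_\Gamma$ is weakly contained in $\pi\otimes\ell^2(\Gamma/\Lambda)$. By Fell's absorption principle for induced representations, the latter is unitarily equivalent to $\mathrm{Ind}_\Lambda^\Gamma(\rho)$. So $\pi \prec \mathrm{Ind}_\Lambda^\Gamma(\pi|_\Lambda)$.

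\textbf{Step 2 (reduce to $\Lambda_i$).} Fix $x\in\C[\Gamma]$ and $\epsilon>0$. Here $\mathrm{Ind}_\Lambda^\Gamma(\rho)$ is the directed limit of the $\mathrm{Ind}_{\Lambda_i}^\Gamma(\rho|_{\Lambda_i})$, in the sense that matrix coefficients of vectors supported on finitely many cosets are approximated. Rather than use that, we argue as follows. $\mathrm{Ind}_\Lambda^\Gamma\rho$ is weakly contained in $\mathrm{Ind}_{\Lambda_i}^\Gamma(\rho|_{\Lambda_i})$ as soon as $\Lambda_i$ is large enough for the finitely many group elements involved; more precisely, a vector state of $\mathrm{Ind}_\Lambda^\Gamma\rho$ supported on finitely many cosets, evaluated on the finite support of $x$, only sees elements of $\Lambda$ lying in some $\Lambda_i$. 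Since $\Lambda_i$ is FD, $\rho|_{\Lambda_i}$ is weakly contained in a direct sum of representations $\tau$ of $\Lambda_i$ factoring through finite-index normal subgroups $N\trianglelefteq\Lambda_i$. Induction is continuous for weak containment, so $\mathrm{Ind}_{\Lambda_i}^\Gamma(\rho|_{\Lambda_i})\prec\bigoplus\mathrm{Ind}_{\Lambda_i}^\Gamma\tau$.

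\textbf{Step 3 (efficiency; main obstacle).} It remains to show that each $\mathrm{Ind}_{\Lambda_i}^\Gamma\tau$, with $\tau$ factoring through $\Lambda_i/N$, is weakly contained in $S$. Note that $\mathrm{Ind}_{\Lambda_i}^\Gamma\tau \le \mathrm{Ind}_N^\Gamma 1 = \ell^2(\Gamma/N)$. By efficiency, $N$ is separable in $\Gamma$, so $N=\bigcap_j H_j$ with $H_j$ of finite index in $\Gamma$, and we may take the $H_j$ decreasing. The key claim is that $\ell^2(\Gamma/N)$ is weakly contained in $\bigoplus_j\ell^2(\Gamma/H_j)$. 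To see this, take the coefficient $g\mapsto 1_{gN=N}$ of the vector $\delta_N$. The coefficients of $\delta_{H_j}$ are $1_{H_j}$, which converge pointwise to $1_N$; this gives weak containment. Each $\ell^2(\Gamma/H_j)$ factors through the finite quotient $\Gamma/\mathrm{core}(H_j)$, so it lies in $S$. Chaining the three steps gives $\|\pi(x)\|\le\sup_{\sigma\in S}\|\sigma(x)\|+\epsilon$, and hence $\Gamma$ is FD.

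The delicate point is Step 2. Interchanging the directed union with induction must be done carefully by working with states, which are positive definite functions on $\Gamma$. This is presumably where the gap noted in \cite{Nowak:2011aa} lives. We would handle it by approximating positive definite functions of $\mathrm{Ind}_\Lambda^\Gamma\rho$ on finite sets by those of $\mathrm{Ind}_{\Lambda_i}^\Gamma\rho|_{\Lambda_i}$.
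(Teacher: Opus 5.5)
Your argument is correct and is essentially the Lubotzky--Shalom proof of their Corollary 2.5, which the paper cites rather than reproves: co-amenability gives $\pi\prec\mathrm{Ind}_\Lambda^\Gamma(\pi|_\Lambda)$, the directed exhaustion together with FD of the $\Lambda_i$ reduces you to $\ell^2(\Gamma/N)$, and efficiency makes $N$ separable in $\Gamma$. Your version uses neither normality of $\Lambda$ nor anything beyond the given exhaustion, which are exactly the two modifications the paper points out.

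Two small corrections. First, the gap noted in \cite{Nowak:2011aa} concerns Lubotzky--Shalom's proof that free groups are FD (Theorem \ref{LS:free}), not the induction step you flag as delicate. Second, $\mathrm{Ind}_{\Lambda_i}^\Gamma\tau$ is contained in a multiple of $\ell^2(\Gamma/N)$ rather than in $\ell^2(\Gamma/N)$ itself, which is harmless for weak containment.
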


\begin{proof}
    This is essentially \cite[Corollary 2.5]{Lubotzky:2004xw}, with two small differences. First, we drop the assumption that $\Lambda$ is normal in $\Gamma$, which does not change the proof as mentioned in \cite[Remark 2.6]{Lubotzky:2004xw}. Second, the authors of \cite{Lubotzky:2004xw} require the property for every finitely generated subgroup of $\Lambda$, however in the proof this is applied only to a directed set of subgroups exhausting $\Lambda$, so our assumption is sufficient.
\end{proof}

We immediately deduce:

\begin{corollary}
\label{virtually fd}
    Virtually FD groups are FD. \qed
\end{corollary}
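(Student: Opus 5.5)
The plan is to apply Theorem \ref{LS:coamenable} with $\Lambda$ a finite-index subgroup of $\Gamma$ that has property FD, using the trivial directed system consisting of the single subgroup $\Lambda_i=\Lambda$. Two hypotheses must be checked: that $\Lambda$ is co-amenable in $\Gamma$, and that $\Lambda$ is efficient in $\Gamma$.

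For co-amenability, $\Gamma/\Lambda$ is a finite set. Hence the constant function on $\Gamma/\Lambda$ lies in $\ell^2(\Gamma/\Lambda)$ and is a nonzero $\Gamma$-invariant vector. So $\ell^2(\Gamma/\Lambda)$ actually contains the trivial representation, and in particular weakly contains it.

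For efficiency, let $\Lambda'$ be a finite-index subgroup of $\Lambda$; we must show it is an intersection of finite-index subgroups of $\Gamma$. Since $[\Gamma:\Lambda']=[\Gamma:\Lambda][\Lambda:\Lambda']<\infty$, the subgroup $\Lambda'$ itself has finite index in $\Gamma$. It is therefore trivially the intersection of the one-element family $\{\Lambda'\}$, so it is separable.

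With these two checks, Theorem \ref{LS:coamenable} gives that $\Gamma$ is FD. There is no real obstacle; the only point worth noting is that residual finiteness of $\Gamma$, implicit in Definition \ref{def:efficient}, is not needed for this trivial separability argument. If one wants it anyway, it follows because $\Lambda$ is residually finite (being FD, by Remark \ref{RFD:MAP:FD:RF}) and of finite index in $\Gamma$.
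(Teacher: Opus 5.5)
Your proposal is correct and matches the paper, which states the corollary as an immediate consequence of Theorem \ref{LS:coamenable} applied to the finite-index FD subgroup. Your checks of the hypotheses are accurate: the subgroup is co-amenable because $\Gamma/\Lambda$ is finite, and it is efficient because its finite-index subgroups already have finite index in $\Gamma$. Your side remark on residual finiteness is also correct.
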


\begin{remark}
\label{virtually rfd}
    The analogous result for RFD also holds \cite[Lemma 1]{bekka:congruence}.
\end{remark}

From Theorem \ref{LS:coamenable}, Lubotzky--Shalom also deduce \cite[Theorem 2.8]{Lubotzky:2004xw}:

\begin{corollary}[Lubotzky--Shalom]
\label{LS:surface}

    Fundamental groups of compact surfaces are FD. \qed
\end{corollary}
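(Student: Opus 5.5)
The plan is to deduce the statement from Theorem \ref{LS:coamenable}, taking as co-amenable subgroup a free subgroup $\Lambda$ of a surface group $\Gamma=\pi_1(\Sigma)$. We then use Theorem \ref{LS:free} for property FD of $\Lambda$, and classical separability results for efficiency.

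\emph{Easy cases.} If $\Sigma$ has non-empty boundary, then $\Gamma$ is free (Remark \ref{surfaces rem}), so it is FD by Theorem \ref{LS:free}. If $\Sigma$ is the sphere or the projective plane, then $\Gamma$ is finite, hence trivially FD. If $\Sigma$ is non-orientable and closed, its orientation double cover is a closed orientable surface, so by Corollary \ref{virtually fd} it suffices to treat closed orientable surfaces of genus $g\geq 1$. Genus one gives $\Z^2$, which is amenable and residually finite, hence FD by \cite[Lemma 2.4]{Lubotzky:2004xw} (or directly: finite-order characters are dense in $\widehat{\Z^2}$).

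\emph{Main case: $g\geq 2$.} Choose a surjection $\Gamma\to\Z$ sending one standard generator $a_1$ to $1$ and the others to $0$. Let $\Lambda$ be its kernel. By covering theory, $\Lambda$ is the fundamental group of a non-compact infinite cyclic cover, hence free (Remark \ref{surfaces rem}). Since $\Gamma/\Lambda\cong\Z$ is amenable and $\Lambda$ is normal, $\Lambda$ is co-amenable. Write $\Lambda$ as the directed union of its finitely generated subgroups $\Lambda_i$. Each $\Lambda_i$ is free, hence FD by Theorem \ref{LS:free}.

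It remains to show that each $\Lambda_i$ is efficient in $\Gamma$, i.e.\ that every finite-index subgroup of $\Lambda_i$ is separable in $\Gamma$. A finite-index subgroup of a finitely generated subgroup is again finitely generated, so it suffices that every finitely generated subgroup of $\Gamma$ is separable. This is Scott's theorem that surface groups are LERF, which I will quote as a black box.

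The main obstacle is precisely this efficiency step. The LERF input is deep and is not proved in the paper, so I would cite Scott's theorem directly. I would also double-check that the definition of efficiency only needs separability in $\Gamma$ and not some stronger profinite condition. Everything else is formal bookkeeping with the cited results.
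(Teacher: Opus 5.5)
Your proposal is correct and takes the same route as the paper. The paper gives no argument of its own; it simply cites Lubotzky--Shalom's deduction from Theorem \ref{LS:coamenable}. That deduction uses the free kernel of a surjection onto $\Z$ as the co-amenable subgroup and Scott's LERF theorem for efficiency of its finitely generated subgroups. Your worry about efficiency is unfounded: the paper's definition only asks that finite-index subgroups be separable in $\Gamma$, which is exactly what LERF provides. Your reductions for the boundary, sphere, projective plane, non-orientable and torus cases are also fine.
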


A more recent permanence result is due to Shulman--Skalski \cite[Proposition 4.5 and Theorem 5.2]{Shulman:2023aa}.

\begin{theorem}[Shulman--Skalski]
\label{SS:semidirect}
    Let $\Gamma$ be be a group and let $\Lambda$ be an amenable group acting on $\Gamma$. Then $\Gamma \rtimes \Lambda$ is FD if and only if $\Gamma$ is FD relative to $\Lambda$ and $\Lambda$ is residually finite.
    
    In particular, if $\Gamma$ is moreover finitely generated, then $\Gamma \rtimes \Lambda$ is FD if and only if $\Lambda$ is residually finite. \qed
\end{theorem}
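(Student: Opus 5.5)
The plan is to use the characterization of Fell density by condition \eqref{rep faith} of Theorem \ref{rep approx}: a family $S$ is dense if and only if every nonzero element of the $C^*$-algebra is detected by some member of $S$. Throughout, identify $C^*(\Gamma\rtimes\Lambda)=C^*\Gamma\rtimes\Lambda$ as in Remark \ref{sd rem}.

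\emph{Forward direction.} Assume $\Gamma\rtimes\Lambda$ is FD.
\begin{enumerate}[(1)]
\item Since $\Lambda$ is a subgroup, it is FD by Remark \ref{rem:fd:subgroups}, and hence residually finite (Remark \ref{RFD:MAP:FD:RF}).
\item For relative FD, take any representation $\rho$ of $\Gamma\rtimes\Lambda$ factoring through a finite quotient, and restrict it to $\Gamma$. The restriction factors through a finite quotient of $\Gamma$.
\item The identity $\rho|_\Gamma\circ\alpha_l=\mathrm{Ad}\,\rho(l)\circ\rho|_\Gamma$ shows that every twist $\rho|_\Gamma\circ\alpha_l$ is unitarily equivalent to $\rho|_\Gamma$. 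So these restrictions lie in the relative class $S$.
\item By Lemma \ref{subgp rem}, the inclusion $C^*\Gamma\to C^*(\Gamma\rtimes\Lambda)$ is injective. So a nonzero $a\in C^*\Gamma$ is detected by some such $\rho$, hence by $\rho|_\Gamma$. This is condition \eqref{rep faith}.
\end{enumerate}

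\emph{Reverse direction.} Assume $\Gamma$ is FD relative to $\Lambda$ and $\Lambda$ is residually finite.
\begin{enumerate}[(1)]
\item Fix $\pi$ in the relative class $S$. Only finitely many kernels $\ker(\pi\circ\alpha_l)$ occur, since equivalent representations have equal kernels. Hence $N:=\bigcap_l\ker(\pi\circ\alpha_l)$ is a $\Lambda$-invariant finite-index normal subgroup of $\Gamma$.
\item The group $Q_N:=(\Gamma/N)\rtimes\Lambda$ is a finite quotient of $\Gamma$ extended by $\Lambda$. The action on the finite group $\Gamma/N$ factors through a finite quotient of $\Lambda$, with kernel $\Lambda_0$.
\item Then $Q_N$ contains $(\Gamma/N)\times\Lambda_0$ with finite index. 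That group is amenable and residually finite, so it is FD by \cite[Lemma 2.4]{Lubotzky:2004xw}. Corollary \ref{virtually fd} then gives that $Q_N$ is FD.
\item Every finite quotient of $Q_N$ is a finite quotient of $\Gamma\rtimes\Lambda$. So every representation of $Q_N$ lies in the Fell closure of representations of $\Gamma\rtimes\Lambda$ factoring through finite quotients.
\end{enumerate}

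\emph{Detecting elements.} This is the main obstacle: showing these representations separate points of $C^*\Gamma\rtimes\Lambda$.
\begin{enumerate}[(1)]
\item Since $\Lambda$ is amenable, the full and reduced crossed products agree (the crossed-product analogue of Theorem \ref{hul the}).
\item Therefore, if $\sigma=\bigoplus_{\pi\in S}\pi$ is faithful on $C^*\Gamma$ (true by relative FD and condition \eqref{rep faith}), the induced regular representation $\mathrm{Ind}\,\sigma=\bigoplus_\pi\mathrm{Ind}\,\pi$ is faithful on $C^*\Gamma\rtimes\Lambda$.
\item Each $\mathrm{Ind}\,\pi$ restricts on $\Gamma$ to a multiple of $\bigoplus_l\pi\circ\alpha_l$, so $N$ acts trivially. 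Hence $\mathrm{Ind}\,\pi$ factors through $Q_N$.
\item So a nonzero $a$ has $\mathrm{Ind}\,\pi(a)\neq0$ for some $\pi$, and by FD of $Q_N$ some finite-quotient representation of $Q_N$ detects $a$. This gives condition \eqref{rep faith} for $\Gamma\rtimes\Lambda$.
\end{enumerate}
The care needed is in checking the induced-representation and reduced-crossed-product identifications precisely. The key point is that $\mathrm{Ind}$ of a faithful representation is faithful on the reduced crossed product.

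\emph{Finitely generated case.} If $\Gamma$ is finitely generated, Remark \ref{relative FD fg} shows that FD relative to $\Lambda$ is the same as FD. The statement therefore reduces to the main equivalence, reading ``$\Gamma$ is FD'' (which holds for subgroups of FD groups) in place of relative FD.
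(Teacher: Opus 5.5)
Your proof is correct. The paper does not prove this statement; it quotes it (with \qed) from \cite[Proposition 4.5 and Theorem 5.2]{Shulman:2023aa}. Your argument is therefore a genuinely self-contained alternative. It uses only tools already in the paper: condition \eqref{rep faith} of Theorem \ref{rep approx}, Lemma \ref{subgp rem}, \cite[Lemma 2.4]{Lubotzky:2004xw} and Corollary \ref{virtually fd}. On top of these it needs two standard crossed-product facts. First, full and reduced crossed products by an amenable group coincide. Second, the regular representation induced from a faithful representation is faithful on the reduced crossed product.

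What your route buys is transparency about where each hypothesis enters:
\begin{itemize}
\item the finite-orbit condition in relative FD is exactly what makes $N_\pi=\bigcap_l \alpha_l(\ker\pi)$ a finite-index, $\Lambda$-invariant normal subgroup;
\item residual finiteness of $\Lambda$ is exactly what makes the amenable quotient $(\Gamma/N_\pi)\rtimes\Lambda$ FD;
\item amenability of $\Lambda$ is what makes the family $\{\mathrm{Ind}\,\pi\}$ jointly faithful on $C^*(\Gamma\rtimes\Lambda)$, even though each member factors through one of these quotients.
\end{itemize}
One minor imprecision: $\mathrm{Ind}\,\pi|_\Gamma$ is $\bigoplus_{l}\pi\circ\alpha_{l^{-1}}$, not literally a multiple of $\bigoplus_l\pi\circ\alpha_l$. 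You only use that its kernel is $N_\pi$, so nothing breaks.

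On the final clause, you were right to read the finitely generated case as ``$\Gamma\rtimes\Lambda$ is FD if and only if $\Gamma$ is FD and $\Lambda$ is residually finite.'' As literally written, the clause omits the hypothesis that $\Gamma$ is FD, and it is false without it. For example, take $\Lambda$ trivial and $\Gamma$ finitely generated but not residually finite, such as $BS(2,3)$. The paper's subsequent applications (free and surface groups) show this hypothesis is intended. Your reduction via Remark \ref{relative FD fg} handles exactly that corrected version, using that relative FD trivially implies FD.
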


In particular, a group of the form $\Gamma \rtimes \Z$ is FD, if $\Gamma$ is either a free group (Theorem \ref{LS:free}) or a surface group (Corollary \ref{LS:surface}), which recovers \cite[Theorem 2.8]{Lubotzky:2004xw}.

\subsection{Free products}

An important permanence property for property RFD is that it passes to free products \cite[Theorem 3.2]{Exel:1992aa}. In this subsection, we prove the same fact for property FD, inspired by the work of Exel--Loring in \cite{Exel:1992aa}. In fact, we prove a relative version of the result.

\begin{theorem}\label{fd fp}
Let $\Gamma_1$ and $\Gamma_2$ be groups, and let $\Lambda$ be a group acting on both $\Gamma_1$ and $\Gamma_2$ by automorphisms.  Let $\Lambda$ act on the free product $\Gamma_1\Asterisk\Gamma_2$ via the induced action.  Then if $\Gamma_1$ and $\Gamma_2$ both have FD relative to $\Lambda$, so does $\Gamma_1\Asterisk\Gamma_2$.  
\end{theorem}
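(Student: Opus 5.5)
We follow the Exel--Loring strategy for RFD free products, using the characterization of density from Theorem \ref{rep approx}. Let $S$ be the class from Definition \ref{fd def} for $\Gamma_1\Asterisk\Gamma_2$ relative to $\Lambda$, and $S_1,S_2$ the analogous classes for $\Gamma_1,\Gamma_2$. Each of these classes is closed under finite direct sums, unitary equivalence and subrepresentations. For sums, note that if $\pi,\pi'$ factor through finite quotients with kernels $N,N'$, then $\pi\oplus\pi'$ factors through $\Gamma/(N\cap N')$, and its $\Lambda$-orbit up to equivalence lies in the product of two finite sets. Hence $\langle S\rangle=S$ and similarly for $S_i$. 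By item \eqref{rep gen} of Theorem \ref{rep approx}, it suffices to show the following. Fix a large Hilbert space $H$ of dimension at least $\aleph$, chosen to work for $C^*\Gamma_1$, $C^*\Gamma_2$ and $C^*(\Gamma_1\Asterisk\Gamma_2)$ simultaneously. Then every (nondegenerate) unitary representation $\pi$ of $\Gamma_1\Asterisk\Gamma_2$ on $H$ is a limit in $\mathrm{Rep}(C^*(\Gamma_1\Asterisk\Gamma_2),H)$ of representations in $S$.

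Write $\pi_i=\pi|_{\Gamma_i}$. By hypothesis and Theorem \ref{rep approx}, each $\pi_i$ is a limit of a net $\rho_i^{(j)}\in S_i$ on $H$, convergent pointwise on $H$, where the essential space of $\pi_i$ is all of $H$. A priori the $\rho_i^{(j)}$ may be degenerate. Here I would use the trick from \cite[proof of Theorem 3.2]{Exel:1992aa}: on the orthogonal complement of the essential space, replace the zero action by the trivial representation. This is again in $S_i$, since the trivial representation factors through the trivial quotient and is $\Lambda$-invariant, and the net still converges on $H$. Given nondegenerate (unitary) representations $\rho_1,\rho_2$ of $\Gamma_1,\Gamma_2$ on the same $H$, the universal property of the free product gives $\rho_1\Asterisk\rho_2$ on $H$. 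Moreover, convergence of $\rho_i^{(j)}\to\pi_i$ in the strong topology implies convergence of $\rho_1^{(j)}\Asterisk\rho_2^{(j)}\to\pi$ on words: a word $g_1h_1\cdots g_nh_n$ acts by a product of unitaries, and strong convergence is jointly continuous on bounded sets for products. Hence $\rho_1\Asterisk\rho_2\to\pi$ on $\C[\Gamma_1\Asterisk\Gamma_2]$, and by boundedness on all of $C^*$.

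The main obstacle is that $\rho_1\Asterisk\rho_2$ need not factor through a finite quotient. Even if $\rho_1(\Gamma_1)$ and $\rho_2(\Gamma_2)$ are finite groups, the group they generate in $U(H)$ can be infinite. The fix is to decompose further, using that these are now representations of finite groups. Let $F_i=\rho_i(\Gamma_i)$ be finite. Then $\rho_i$ is a direct sum of irreducibles of $F_i$ with multiplicities, and everything factors through the free product $F_1\Asterisk F_2$, which is virtually free. Thus I would pass to finite-dimensional pieces: compress to a finite-dimensional subspace $K\subseteq H$ that is invariant under both $F_1$ and $F_2$ and that almost contains the finitely many vectors being tested. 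Such $K$ exist: take $F_1$-saturate, then $F_2$-saturate, and iterate finitely many steps controlled by the word length. The difficulty is that one cannot make $K$ exactly invariant under both.

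Instead I would use Theorem \ref{LS:free} together with Corollary \ref{virtually fd}: $F_1\Asterisk F_2$ is virtually free, hence FD. So the representation $\rho_1\Asterisk\rho_2$, viewed as a representation of $F_1\Asterisk F_2$, is approximable by representations factoring through finite quotients $Q$ of $F_1\Asterisk F_2$. These compose to representations of $\Gamma_1\Asterisk\Gamma_2$ that factor through finite quotients. It remains to check $\Lambda$-finiteness of the resulting representation $\sigma$ of $\Gamma_1\Asterisk\Gamma_2$. Its kernel contains $\ker(\rho_1)\Asterisk\ker(\rho_2)$ normally generated, and the possible $\sigma\circ\alpha_l$ all factor through $(\Gamma_1/N_1)\Asterisk(\Gamma_2/N_2)\to Q$, where $N_i=\bigcap_l\alpha_l(\ker\rho_i)$. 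This intersection has finite index by the $\Lambda$-finiteness of $\rho_i$: only finitely many kernels arise, since equivalent representations share kernels. Thus the $\sigma\circ\alpha_l$ are representations of the finitely generated virtually free group $(\Gamma_1/N_1)\Asterisk(\Gamma_2/N_2)$ composed with $\Lambda$-permuted quotient maps. I would arrange the approximation at that level: that group is FD and finitely generated, so by Remark \ref{relative FD fg} it is FD relative to the induced $\Lambda$-action. Choosing the finite-quotient approximants there yields $\Lambda$-finite orbits, completing the argument.
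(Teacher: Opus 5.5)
Your proposal is correct and follows essentially the same route as the paper. Both arguments approximate the restrictions to $\Gamma_1,\Gamma_2$, assemble them with the universal property of the free product, and factor through the virtually free (hence FD) group $F_1\Asterisk F_2$. Both then control the $\Lambda$-orbit via the finitely many $\Lambda$-translates of the kernels together with finite generation. The differences are only in packaging: you use item (iii) of Theorem \ref{rep approx} with a diagonal approximation, pad with the trivial representation, and pass to the $\Lambda$-invariant subgroups $N_i$. The paper instead uses item (iv), puts multiples of the approximants on a common finite-dimensional subspace, and counts the kernels $\alpha_l(K)$ directly.
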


We record the following immediate corollary (it is the case $\Lambda$ is trivial) for ease of reference.

\begin{corollary}\label{fd fp cor}
Property FD is preserved under free products.  \qed
\end{corollary}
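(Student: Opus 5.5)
We use criterion \eqref{rep faith} of Theorem \ref{rep approx}, as in Exel--Loring's proof that RFD passes to free products \cite[Theorem 3.2]{Exel:1992aa}. Let $S_j$ be the representations of $\Gamma_j$ that factor through a finite quotient and have finite $\Lambda$-orbit up to unitary equivalence, and let $S$ be the analogous class for $\Gamma=\Gamma_1\Asterisk\Gamma_2$. Given a nonzero $a\in C^*\Gamma$, we must find $\sigma\in S$ with $\sigma(a)\neq 0$. By density of $\C[\Gamma]$ and a norm estimate, it suffices to show that for each $a\in\C[\Gamma]$ we have $\sup_{\sigma\in S}\|\sigma(a)\|=\|a\|_{C^*\Gamma}$.

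\textbf{Step 1 (reduce to $S$-states).} Fix a unitary representation $\pi$ of $\Gamma$ on $H$ and a finite set of vectors. The restrictions $\pi|_{\Gamma_j}$ are representations of $\Gamma_j$. By Theorem \ref{rep approx}\eqref{rep gen}, applied to $C^*\Gamma_j$ with $H$ enlarged to have large dimension (amplify $\pi$ if needed), $\pi|_{\Gamma_j}$ is a limit of representations $\rho_j^{(i)}$ in $\langle S_j\rangle$ on the \emph{same} space $H$. Since $\Gamma_1$ and $\Gamma_2$ have no relations in $\Gamma$, the pair $(\rho_1^{(i)},\rho_2^{(i)})$ defines a representation $\rho^{(i)}$ of $\Gamma$ on $H$, and $\rho^{(i)}\to\pi$ pointwise strongly on words. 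The catch is that $\rho^{(i)}$ is a direct sum of finite-quotient representations of each factor separately. Each such sum may be infinite, with no common finite-dimensional invariant subspace, so $\rho^{(i)}$ need not factor through a finite quotient of $\Gamma$.

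\textbf{Step 2 (finite-dimensional compression, Exel--Loring style).} Fix a finite set $F\subseteq\Gamma$ of words and a unit vector $\xi$. We want a finite-dimensional subspace $K\subseteq H$ that contains the relevant vectors $\rho(w)\xi$ (up to $\epsilon$) and is invariant under both $\rho_1(\Gamma_1)$ and $\rho_2(\Gamma_2)$. Here the finite-quotient structure helps. Each $\rho_j$ is a direct sum of representations factoring through finite quotients, so every vector lies, up to $\epsilon$, in a finite-dimensional $\rho_j(\Gamma_j)$-invariant subspace. These subspaces can be chosen inside finitely many summands, each of which is finite-dimensional on cyclic vectors because the quotient is finite. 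The difficulty is making a single subspace invariant under both factors at once. Alternating closures need not stabilize, so instead we follow Exel--Loring: pass to a finite-dimensional space $K$ and \emph{modify} the $\Gamma_2$-representation. We take $K_1\supseteq$ the span of the partial words applied to $\xi$, made $\rho_1$-invariant. Then we choose a finite-dimensional $\rho_2$-invariant $K_2$ that nearly contains $K_1$, and transport the $\rho_2$-action through a near-identity unitary or isometry so that it preserves a common finite-dimensional space. We expect this to be the main obstacle: the modified representation of $\Gamma_2$ on $K$ must still factor through a finite quotient and have finite $\Lambda$-orbit. Conjugating by a unitary preserves both properties. Restricting to invariant subspaces and taking finite direct sums also preserves them, because the classes $\langle S_j\rangle$ are closed under these operations. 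Finiteness of the $\Lambda$-orbit is stable under finite direct sums and subrepresentations because $\pi\circ\alpha_l$ commutes with these operations.

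\textbf{Step 3 (conclude).} On the finite-dimensional $K$ we obtain representations $\sigma_j$ of $\Gamma_j$ that factor through finite groups $Q_j$ and have finite $\Lambda$-orbits. The representation $\sigma=\sigma_1\Asterisk\sigma_2$ of $\Gamma$ has image inside $U(K)$ generated by two finite subgroups, which need not be finite. To fix this, we replace $\sigma$ by the induced or permutation-type representation through $Q_1\Asterisk Q_2$ modulo a finite-index normal subgroup: $Q_1\Asterisk Q_2$ is virtually free, hence residually finite and in fact FD by Theorem \ref{LS:free} and Corollary \ref{virtually fd}. So the representation $\sigma$ of $Q_1\Asterisk Q_2$ is a limit of finite-quotient representations of $Q_1\Asterisk Q_2$. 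Composing with $\Gamma\to Q_1\Asterisk Q_2$ yields finite-quotient representations of $\Gamma$ approximating $\pi$ on $F$ and $\xi$.

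For the relative statement, the $\Lambda$-action must preserve these choices. The kernel $N_1\Asterisk N_2\to Q_1\Asterisk Q_2$ has only finitely many $\Lambda$-translates up to the relevant equivalence, because each $\sigma_j$ has finite $\Lambda$-orbit. We therefore replace $N_j$ by the intersection of its finitely many $\Lambda$-translates, making the map to $Q_1\Asterisk Q_2$ $\Lambda$-equivariant. A finite-quotient representation of $Q_1\Asterisk Q_2$, for a finitely generated group, then has finite orbit under automorphisms by Remark \ref{relative FD fg}.
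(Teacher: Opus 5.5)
Your overall strategy matches the paper's proof of Theorem \ref{fd fp}. You approximate each $\pi|_{\Gamma_j}$ by finite-quotient representations and glue them via the universal property of the free product. You then note that the glued representation factors through $Q_1\Asterisk Q_2$, which is virtually free and hence FD by Theorem \ref{LS:free} and Corollary \ref{virtually fd}. Finally you control $\Lambda$-orbits through kernels and finite generation. Your equivariant replacement of $N_j$ by the intersection of its finitely many $\Lambda$-translates is a fine substitute for the paper's kernel count via Lemma \ref{fd fo}. One small correction: the kernel of $\Gamma_1\Asterisk\Gamma_2\to Q_1\Asterisk Q_2$ is the normal closure of $N_1\cup N_2$, not $N_1\Asterisk N_2$.

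There is, however, a gap at the gluing step, and Step 2 does not close it. The approximants from Theorem \ref{rep approx}\eqref{rep gen} lie in $\mathrm{Rep}(C^*\Gamma_j,H)$ and are typically \emph{degenerate}, with essential spaces that differ for $j=1,2$. Since $C^*(\Gamma_1\Asterisk\Gamma_2)$ is the \emph{unital} free product, such a pair does not define a representation of $\Gamma$ on $H$, contrary to your claim in Step 1. Step 2 then chases something that is not needed: a common invariant finite-dimensional subspace, built via near-identity conjugations. You leave this unexecuted ("the main obstacle"). Finite-dimensionality is not the point. As you yourself observe in Step 3, even a finite-dimensional glued representation need not factor through a finite quotient of $\Gamma$; what handles that is FD of $Q_1\Asterisk Q_2$.

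The paper's fix is as follows. First take the approximants $\pi_i^{(j)}$ with finite-dimensional essential spaces $H_i^{(j)}$. This is possible because a finite-quotient representation is a direct sum of finite-dimensional ones, so criterion \eqref{rep faith} holds with finite-dimensional members. Next choose a finite-dimensional $H_i\supseteq H_i^{(1)}+H_i^{(2)}$ whose dimension is a common multiple of $\dim H_i^{(1)}$ and $\dim H_i^{(2)}$. Let $\rho_i^{(j)}$ be a representation with essential space $H_i$ that is unitarily equivalent to a multiple of $\pi_i^{(j)}$ and agrees with it on $H_i^{(j)}$. This leaves the kernel unchanged, so the finite quotient and the finite $\Lambda$-orbit are unchanged. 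Lemma \ref{up con} gives $\rho_i^{(j)}\to\pi|_{\Gamma_j}$. Now $\rho_i^{(1)}$ and $\rho_i^{(2)}$ are unital on the common space $H_i$, so the universal property yields $\rho_i$ on $H_i$. A telescoping estimate along words gives $\rho_i\to\pi$, and faithfulness of $\pi$ gives $\rho_i(a)\neq0$ for some $i$. From there your Step 3 runs exactly as in the paper. Padding $\pi_i^{(j)}$ with the trivial representation on the complement of its essential space would work equally well, since only factoring through $Q_1\Asterisk Q_2$ is used afterwards. With either substitution, Step 2 can be deleted.
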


Let us also see how the relative version, combined with the work of Shulman--Skalski \cite{Shulman:2023aa}, gives a stronger permanence property, analogous to Corollary \ref{fp r llp} for the LLP.

\begin{corollary}\label{fd am}
Let $\Gamma_1$ and $\Gamma_2$ be FD groups, with a common amenable retract $\Lambda$.  Then $\Gamma_1\Asterisk_\Lambda \Gamma_2$ is FD.
\end{corollary}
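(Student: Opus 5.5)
The plan mirrors the proof of Corollary \ref{fp r llp}, with Theorem \ref{fd fp} playing the role of Corollary \ref{fp gp} and Theorem \ref{SS:semidirect} playing the role of Corollary \ref{sd prod}.

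\textbf{Step 1: decompose as a semidirect product.} Write $\Gamma_i = N_i \rtimes \Lambda$, where $N_i$ is the kernel of the retraction $\Gamma_i\to\Lambda$ and $\Lambda$ acts on $N_i$ by conjugation. Comparing presentations gives
\[
\Gamma_1 \Asterisk_\Lambda \Gamma_2 \cong (N_1 \Asterisk N_2)\rtimes \Lambda ,
\]
with $\Lambda$ acting on $N_1\Asterisk N_2$ by the action induced from its actions on $N_1$ and $N_2$. This is exactly the situation of Theorem \ref{fd fp}.

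\textbf{Step 2: check the hypotheses of Theorem \ref{SS:semidirect}.} The group $\Lambda$ is amenable by assumption. It is also residually finite: it is a subgroup of the FD group $\Gamma_1$, so it is FD by Remark \ref{rem:fd:subgroups}, and FD groups are residually finite by Remark \ref{RFD:MAP:FD:RF}. So, by the easy direction of Theorem \ref{SS:semidirect}, it suffices to show that $N_1\Asterisk N_2$ is FD relative to $\Lambda$.

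\textbf{Step 3: relative FD for the factors.} Apply Theorem \ref{SS:semidirect} in the other direction to the FD groups $\Gamma_i = N_i\rtimes\Lambda$. Since $\Lambda$ is amenable, this gives that each $N_i$ is FD relative to $\Lambda$.

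\textbf{Step 4: conclude.} Theorem \ref{fd fp} now shows that $N_1 \Asterisk N_2$ is FD relative to the induced action of $\Lambda$. By Step 2, the semidirect product $(N_1\Asterisk N_2)\rtimes\Lambda \cong \Gamma_1\Asterisk_\Lambda\Gamma_2$ is FD.

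\textbf{Main obstacle.} The only nontrivial point is the isomorphism in Step 1. I would verify it by defining mutually inverse homomorphisms from the two presentations. In one direction, the inclusions $\Gamma_i = N_i\rtimes\Lambda \to (N_1\Asterisk N_2)\rtimes \Lambda$ agree on $\Lambda$, so they induce a map out of the amalgam. In the other direction, the inclusions of $N_1$, $N_2$ and $\Lambda$ into the amalgam respect the conjugation relations, so they induce a map out of the semidirect product.

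Conceptually, $N_1\Asterisk N_2$ is the kernel of the combined retraction $\Gamma_1\Asterisk_\Lambda\Gamma_2\to\Lambda$. One can also see that it is generated freely by the $N_i$ via Bass--Serre theory: the kernel meets each conjugate of the edge group $\Lambda$ trivially, so its induced graph of groups decomposition has trivial edge groups.
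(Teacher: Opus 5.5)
Your proposal is correct and follows the paper's proof step for step. Both write $\Gamma_1\Asterisk_\Lambda\Gamma_2\cong(N_1\Asterisk N_2)\rtimes\Lambda$, use one direction of Theorem \ref{SS:semidirect} to get that each $N_i$ is FD relative to $\Lambda$, pass to $N_1\Asterisk N_2$ with Theorem \ref{fd fp}, and conclude with the other direction of Theorem \ref{SS:semidirect}; your explicit check that $\Lambda$ is residually finite, which the converse direction needs, is a detail the paper leaves implicit.
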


\begin{proof}
    Write $\Gamma_i = N_i \rtimes \Lambda$. By comparing presentations, we see that
    \[\Gamma_1 \Asterisk_{\Lambda} \Gamma_2 \cong (N_1 \Asterisk N_2) \rtimes \Lambda.\]
    Because $\Gamma_i$ is FD, one direction of Theorem \ref{SS:semidirect} implies that $N_i$ is FD relative to $\Lambda$. Using Theorem \ref{fd fp}, the same is true for $N_1\Asterisk N_2$, hence the other direction of Theorem \ref{SS:semidirect} implies that $(N_1 \Asterisk N_2) \rtimes \Lambda$ is FD.
\end{proof}

\begin{remark}\label{f2 f2 not rfd}
The product $F_2\times F_2$ is a free product of $F_2\times \Z$ with itself, amalgamated along a common copy of $F_2$, which is a retract.  Due to the negative solution of the Connes embedding problem \cite{Salle:2023aa}, $F_2\times F_2$ is not RFD \cite{Ozawa:2004ab} (and so also not FD).  This shows that the amenability assumption is necessary in Corollary \ref{fd am}.
\end{remark}

\begin{remark}
Corollary \ref{fd am} also holds with RFD in place of FD.  Indeed, one can define RFD relative to the action of a group of automorphisms in the same way as for FD.  The analog of Theorem \ref{SS:semidirect} (without the finitely generated part) holds using \cite[Corollary 4.1]{Shulman:2023aa} in place of \cite[Proposition 4.5]{Shulman:2023aa} (that is, Theorem \ref{SS:semidirect}).  The proof of \cite[Theorem 3.2]{Exel:1992aa} then goes through almost  verbatim for RFD relative to a group of automorphisms giving the analog of Theorem \ref{fd fp} for RFD relative to a group of automorphisms, and we proceed as before.
\end{remark}

The rest of this subsection is devoted to the proof of Theorem \ref{fd fp}. We need two lemmas.

\begin{lemma}\label{fd fo}
Let $\Lambda$ be a group acting via a homomorphism $\alpha \colon \Lambda\to \mathrm{Aut}(\Gamma)$ on a group $\Gamma$.  Let $\pi \colon \Gamma\to \mathcal{U}(H)$ be a representation of $\Gamma$ with finite-index kernel $K\leq \Gamma$.  Then the set of representations $\{\pi\circ \alpha_l\mid l\in \Lambda\}$ is finite up to unitary equivalence if and only if the set $\{\alpha_l(K)\mid l\in \Lambda\}$ of finite-index subgroups of $\Gamma$ is finite.
\end{lemma}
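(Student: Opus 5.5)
The plan is to prove the two implications separately, using two facts: kernels are invariant under unitary equivalence, and a finite group has only finitely many representations of a given dimension up to equivalence. First note that $\pi\circ\alpha_l$ has kernel $\alpha_l^{-1}(K)$. As $l$ ranges over $\Lambda$, so does $l^{-1}$. Hence the set $\{\ker(\pi\circ\alpha_l)\mid l\in\Lambda\}$ coincides with $\{\alpha_l(K)\mid l\in \Lambda\}$.

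For the forward implication, suppose $\{\pi\circ\alpha_l\}$ is finite up to unitary equivalence. Unitarily equivalent representations have the same kernel. So the set of kernels $\{\alpha_l^{-1}(K)\}$ is finite, and by the observation above so is $\{\alpha_l(K)\}$. Each $\alpha_l(K)$ has finite index since $\alpha_l$ is an automorphism.

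For the converse, suppose the set $\{\alpha_l(K)\}$ is finite, say equal to $\{K_1,\dots,K_m\}$. Partition $\Lambda$ according to which $K_j$ equals $\ker(\pi\circ\alpha_l)$. For each $j$, every $\pi\circ\alpha_l$ in that class factors through the finite group $\Gamma/K_j$. Each such representation lives on the same Hilbert space $H$, so it has the same dimension $\dim H$ (possibly infinite). It therefore suffices to show the following: for a finite group $Q$ and a fixed Hilbert space $H$, there are only finitely many unitary representations of $Q$ on $H$ up to equivalence.

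This is the step needing care, since $H$ need not be finite-dimensional. I would use that any representation of $Q$ decomposes as a direct sum $\bigoplus_{\rho\in\widehat{Q}} \rho^{\oplus n_\rho}$ over the finitely many irreducibles. The representation is determined up to equivalence by the multiplicities $n_\rho$, which are cardinals. If $\dim H$ is finite, the $n_\rho$ satisfy $\sum n_\rho \dim\rho=\dim H$, giving finitely many possibilities. If $\dim H$ is infinite, each $n_\rho$ is a cardinal at most $\dim H$. In that case finiteness fails in general: infinitely many multiplicity vectors are possible, for example $n_{\mathrm{triv}}=k$ for each finite $k$ with another multiplicity infinite. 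So the naive count does not close the argument.

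The fix I would use is to recognise that all the $\pi\circ\alpha_l$ with the same kernel are related by automorphisms, and to exploit this. The group $\Lambda_j=\{l : \alpha_l(K)=K\}$ (for the representative class) acts on the finite group $\Gamma/K$. Every $\pi\circ\alpha_l$ with $l$ in a fixed coset $l_0\Lambda_j$ equals $(\pi\circ\alpha_{l_0})$ twisted by an automorphism of $\Gamma/K$ induced by $\alpha$. Since $\mathrm{Aut}(\Gamma/K)$ is finite, only finitely many distinct twists $\pi\circ\alpha_l$ arise from each coset. There are only finitely many cosets, because $l\mapsto\alpha_l(K)$ identifies $\Lambda/\Lambda_K$ with the finite orbit of $K$. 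So the set $\{\pi\circ\alpha_l\}$ is finite on the nose, and in particular finite up to unitary equivalence.

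I expect the main obstacle to be this last bookkeeping: checking that $\pi\circ\alpha_l$ depends only on the image of $l$ in the finite set of pairs (coset, induced automorphism of $\Gamma/K$). This avoids the infinite-dimensional counting issue entirely.
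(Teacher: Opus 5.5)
Your strategy is the paper's. The forward direction uses that unitary equivalence preserves kernels. For the converse, you show that each fibre of the kernel map $[\pi\circ\alpha_l]\mapsto\alpha_l^{-1}(K)$ is finite using the finite group $\mathrm{Aut}(\Gamma/K)$. The paper proves this only for the fibre over $K$ and transports it to the other fibres by $\Lambda$-equivariance of the kernel map, rather than choosing coset representatives. You were right to abandon the multiplicity count, which does fail for infinite-dimensional $H$.

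However, the coset step you flagged is done on the wrong side, and as written it is false. For $l=l_0m$ with $m\in\Lambda_K$ you get $\pi\circ\alpha_{l_0m}=(\pi\circ\alpha_{l_0})\circ\alpha_m$. But $\pi\circ\alpha_{l_0}$ has kernel $\alpha_{l_0}^{-1}(K)$, so it does not factor through $\Gamma/K$ unless $l_0\in\Lambda_K$, and $\alpha_m$ need not preserve $\alpha_{l_0}^{-1}(K)$.

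In fact kernels can vary along a left coset. Take $\Gamma=(\Z/2)^2$, $\Lambda=\mathrm{Aut}(\Gamma)\cong S_3$ and $K$ of order $2$. Then $\Lambda_K$ is a non-normal subgroup of order $2$, and for $l_0\notin\Lambda_K$ the two elements of $l_0\Lambda_K$ give representations with different kernels. So they cannot both be twists of one representation through $\Gamma/K$.

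This also conflicts with your own partition of $\Lambda$ by kernel, whose classes are $\{l : \alpha_l^{-1}(K)=\alpha_{l_0}^{-1}(K)\}=\Lambda_K l_0$, that is, right cosets. The repair is to use these. For $l=ml_0$ with $m\in\Lambda_K$, write $\pi=\bar\pi\circ q$ with $q\colon\Gamma\to\Gamma/K$ the quotient map, and let $\bar\alpha_m\in\mathrm{Aut}(\Gamma/K)$ be induced by $\alpha_m$. Then $\pi\circ\alpha_{ml_0}=\bar\pi\circ\bar\alpha_m\circ q\circ\alpha_{l_0}$.

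Hence $\pi\circ\alpha_l$ is determined by the right coset $\Lambda_K l$ together with an element of the finite group $\mathrm{Aut}(\Gamma/K)$. The right cosets are in bijection with the finite orbit $\{\alpha_l(K)\}$ via $\Lambda_K l\mapsto\alpha_l^{-1}(K)$. With that change your argument is complete, and it does give finiteness of $\{\pi\circ\alpha_l\mid l\in\Lambda\}$ on the nose.
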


\begin{proof}
Let $\{\pi\circ \alpha_l\mid l\in \Lambda\}/\sim$ be the collection of unitary equivalence classes of representations in the given set.  As representations with different kernels are inequivalent, there is a well-defined $\Lambda$-equivariant surjection 
$$
p \colon \left( \{\pi\circ \alpha_l\mid l\in \Lambda\}/\sim \right) \to \{\alpha_l(K)\mid l\in \Lambda\}.
$$
Let $\Lambda_K \coloneqq \{l\in \Lambda\mid \alpha_l(K)=K\}$.  Then there is a well-defined homomorphism $\Lambda_K\to \mathrm{Aut}(\Gamma/K)$ to a finite group.  It follows that $p^{-1}(K)$ consists of $\{\pi\circ \alpha_{[l]}\mid [l]\in \mathrm{Aut}(\Gamma/K)\}/\sim$, and is in particular finite.  By equivariance, the preimage of any point under $p$ is finite, which completes the proof.
\end{proof}

The following lemma is straightforward, and left to the reader (compare \cite[Lemma 3.1]{Exel:1992aa}).  

\begin{lemma}\label{up con}
Let $A$ be a $C^*$-algebra, let $H$ be a Hilbert space, let $\pi\in \mathrm{Rep}(A,H)$ be a representation, and let $(\pi_i)_i$ be a net of representations in $\mathrm{Rep}(A,H)$ that converges to $\pi$.  

Assume that for each $i$, we have a representation $\rho_i\in \mathrm{Rep}(A,H)$ such that $\rho_i$ agrees with $\pi_i$ on the essential space of $\pi_i$.  Then $(\rho_i)_i$ also converges to $\pi$. \qed
\end{lemma}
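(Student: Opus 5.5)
We verify convergence directly against the description of convergence in Definition \ref{rep a h}. Let $H_\pi$ be the essential space of $\pi$, and fix $a\in A$ and $\xi\in H_\pi$. Then it suffices to show that
$$
\|\rho_i(a)\xi-\pi(a)\xi\|\xrightarrow{i\to\infty}0,
$$
knowing that $\|\pi_i(a)\xi-\pi(a)\xi\|\to 0$. The difficulty is that $\xi$ need not lie in the essential space $H_i$ of $\pi_i$, so we cannot simply replace $\rho_i(a)\xi$ by $\pi_i(a)\xi$. We therefore show that $\xi$ is asymptotically inside $H_i$, and then control the error.

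For the first step, let $P_i$ be the orthogonal projection onto $H_i$. Since $\xi\in H_\pi$ and $\pi$ is nondegenerate on $H_\pi$, an approximate unit $(e_\lambda)$ of $A$ gives $\pi(e_\lambda)\xi\to\xi$. Fix $\epsilon>0$ and choose $e\in A$ with $\|e\|\le 1$ and $\|\pi(e)\xi-\xi\|<\epsilon$. For large $i$ we have $\|\pi_i(e)\xi-\pi(e)\xi\|<\epsilon$, so $\|\pi_i(e)\xi-\xi\|<2\epsilon$. Since $\pi_i(e)\xi\in H_i$, this gives $\|(1-P_i)\xi\|<2\epsilon$ for large $i$.

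For the second step, write $\xi=P_i\xi+(1-P_i)\xi$. Because $H_i$ is $\rho_i$-invariant-compatible, in the sense that $\rho_i=\pi_i$ on $H_i$, we get
$$
\rho_i(a)\xi-\pi(a)\xi=\pi_i(a)P_i\xi+\rho_i(a)(1-P_i)\xi-\pi(a)\xi .
$$
Since $\pi_i(a)(1-P_i)\xi=0$, the first term equals $\pi_i(a)\xi$. Hence
$$
\|\rho_i(a)\xi-\pi(a)\xi\|\le\|\pi_i(a)\xi-\pi(a)\xi\|+\|a\|\,\|(1-P_i)\xi\|,
$$
using $\|\rho_i(a)\|\le\|a\|$, which holds because $\rho_i$ is a $*$-representation. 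Both terms are eventually at most a constant times $\epsilon$. Because $\epsilon$ was arbitrary, $\rho_i\to\pi$.

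The only real obstacle is the first step. There we use that vectors in the essential space of the limit are asymptotically captured by the essential spaces of the approximants. This is exactly the point where the topology of Definition \ref{rep a h} tests convergence only on $H_\pi$, and not on all of $H$ as in Remark \ref{fell vs el}.
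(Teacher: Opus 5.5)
Your proof is correct. The paper gives no argument for this lemma (it leaves it to the reader, pointing to Exel--Loring's Lemma 3.1), and your two steps supply what is needed: first $\|(1-P_i)\xi\|\to 0$ for $\xi$ in the essential space of $\pi$, via an approximate unit and the fact that $\pi_i(e)\xi$ lies in the essential space $H_i$ of $\pi_i$; then the splitting $\xi=P_i\xi+(1-P_i)\xi$, using $\pi_i(a)(1-P_i)\xi=0$ and $\|\rho_i(a)\|\le\|a\|$. One cosmetic point: the phrase ``$\rho_i$-invariant-compatible'' is nonstandard, and all you actually use is the hypothesis $\rho_i(a)\eta=\pi_i(a)\eta$ for $\eta\in H_i$, applied to $\eta=P_i\xi$.
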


\begin{proof}[Proof of Theorem \ref{fd fp}]
Fix $a\in C^*(\Gamma_1\Asterisk\Gamma_2)\setminus \{0\}$.  We aim to find a representation $\sigma$ of $C^*(\Gamma_1\Asterisk\Gamma_2)$ that factors through a finite quotient, with finite $\Lambda$-orbit, and with $\sigma(a)\neq 0$ (see Item \eqref{rep faith} of Theorem \ref{rep approx}).

Let $\aleph$ be a cardinal strictly larger than the dimensions of all cyclic representations of $C^*(\Gamma_1\Asterisk\Gamma_2)$, and let $\pi$ be a nondegenerate faithful representation of  $C^*(\Gamma_1\Asterisk\Gamma_2)$ on a Hilbert space $H$ of dimension at least $\aleph$.   Item \eqref{rep gen} of Theorem \ref{rep approx} and property FD relative to $\Lambda$ for each $\Gamma_j$ gives a net $(\pi_i^{(j)})_i$ of representations of $\Gamma_j$ on $H$ with the following properties:
\begin{enumerate}[(a)]
\item the essential space $H_i^{(j)}$ of each $\pi_i^{(j)}$ is finite-dimensional;
\item \label{flo} the $\Lambda$-orbit of each $\pi_i^{(j)}$ is finite, up to unitary equivalence;
\item each $\pi_i^{(j)}$ factors through a finite quotient of $\Gamma_j$;
\item the net $(\pi_i^{(j)})_i$ converges to $\pi|_{\Gamma_j}$.
\end{enumerate}
We may assume that the directed sets indexing the nets $(\pi_i^{(1)})_i$ and $(\pi_i^{(2)})_i$ are the same by, for example, taking a product.

Now, fix $i$.  We claim that there exists a finite-dimensional subspace $H_i$ of $H$ and representations $\rho_i^{(j)}\in \mathrm{Rep}(C^*\Gamma_j,H)$ with the following properties:
\begin{enumerate}[(i)]
\item $\rho_i^{(1)}$ and $\rho_i^{(2)}$ both have $H_i$ as essential space;
\item \label{cont hi} both $H_{i}^{(1)}$ and $H_i^{(2)}$ are contained in $H_i$, and $\rho_i^{(j)}$ restricts to $\pi_i^{(j)}$ on $H_i^{(j)}$;
\item \label{f mult} $\rho_i^{(j)}$ is unitarily equivalent to a finite multiple of $\pi_i^{(j)}$.
\end{enumerate}

Indeed, take $H_i$ to be any subspace of $H$ containing $H_i^{(1)}$ and $H_i^{(2)}$ and with dimension a multiple of both the dimensions of $H_{i}^{(1)}$ and $H_{i}^{(2)}$.  Then define $\rho_i^{(j)}$ to be any representation on $H_i$ that is unitarily equivalent to a multiple of $\pi_i^{(j)}$, and that agrees with $\pi_i^{(j)}$ on $H_{i}^{(j)}$.  Note that by Lemma \ref{up con} and point \eqref{cont hi} above, we have that $(\rho_i^{(j)})$ converges to $\pi|_{\Gamma_j}$ in $\mathrm{Rep}(C^*\Gamma_j,H)$.

Now, by the universal property of the free product, there exists a unique representation $\rho_i\in \mathrm{Rep}(C^*(\Gamma_1\Asterisk\Gamma_2),H_i)$ that restricts to $\rho_i^{(j)}$ on $C^*\Gamma_j$.  As $\rho_i^{(j)} \xrightarrow{i \to \infty} \pi|_{\Gamma_j}$, one checks directly that $\rho_i \xrightarrow{i \to \infty} \pi$.  As $\pi$ is faithful, for our original element $a\in C^*(\Gamma_1\Asterisk\Gamma_2)\setminus \{0\}$, there exists $i$ such that $\rho_i(a)\neq 0$.  Fix this $i$ for the rest of the proof.

For $j\in \{1,2\}$, write $K_j$ for the kernel of the representation $\pi_i^{(j)}$ (equivalently, by point \eqref{f mult}, of $\rho_i^{(j)}$), a finite-index subgroup of $\Gamma_j$.  Define $F_j \coloneqq \Gamma_j/K_j$, a finite group, and note that $\rho_i$ factors through $C^*(F_1\Asterisk F_2)$.  Hence if $\phi \colon C^*(\Gamma_1\Asterisk\Gamma_2)\to C^*(F_1\Asterisk F_2)$ is the canonical quotient map, then $\phi(a)\neq 0$.  As $F_1\Asterisk F_2$ is virtually free, it has property FD by Theorem \ref{LS:free} and Corollary \ref{virtually fd}.  Hence, by Item \eqref{rep faith} of Theorem \ref{rep approx}, there exists a representation $\tau$ of $C^*(F_1\Asterisk F_2)$ that factors through a finite quotient $Q$, and with $\tau(\phi(a))\neq 0$.

It remains to show that if we consider $\sigma \coloneqq \tau\circ \phi$ as a representation of $C^*(\Gamma_1\Asterisk \Gamma_2)$, then the $\Lambda$-orbit of $\sigma$ is finite. Let $K \coloneqq \ker(\sigma)$, which is a finite-index subgroup of $\Gamma_1 \Asterisk \Gamma_2$; by Lemma \ref{fd fo}, we need to show that the set
\[\{\alpha_l(K) \mid l \in \Lambda\}\]
of finite-index subgroups of $\Gamma_1 \Asterisk \Gamma_2$ is finite. Now $\alpha_l(K)$ is the kernel of a homomorphism $\Gamma_1 \Asterisk \Gamma_2 \to Q$ factoring through $\Gamma_1/\alpha_l^{(1)}(K_1) \Asterisk \Gamma_2/\alpha_l^{(2)}(K_2)$. Using point \eqref{flo} above, and the other direction of Lemma \ref{fd fo}, the orbits
\[\{\alpha^{(j)}_l(K_j)\mid l\in \Lambda\}\]
are finite. Since moreover each of the groups $\Gamma_1/\alpha_l^{(1)}(K_1) \Asterisk \Gamma_2/\alpha_l^{(2)}(K_2)$ is finitely generated, hence admits only finitely many homomorphisms to $Q$ by Remark \ref{relative FD fg}, we conclude.
\end{proof}

\section{New examples of FD groups}\label{s:ex fd}

In this section, we show that many of the examples from Section \ref{s:ex} also satisfy property FD.

\subsection{Low-dimensional manifolds}

Our most significant new result about property FD is the following.

\begin{theorem}\label{manifold fd}
    Let $M$ be a connected manifold of dimension at most $3$. If $\pi_1(M)$ is finitely generated, then it is FD.
\end{theorem}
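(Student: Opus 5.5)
The proof will reduce, via the standard decompositions of $3$-manifold groups, to pieces where the permanence results of Section~\ref{s:fd} apply.

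\textbf{Reduction to the compact, prime, orientable case.} Dimensions $1$ and $2$ give trivial groups, $\Z$, surface groups or free groups. These are FD by Theorem~\ref{LS:free} and Corollary~\ref{LS:surface}, noting that non-compact surfaces have free fundamental group by Remark~\ref{surfaces rem}. For a $3$-manifold with finitely generated $\pi_1(M)$, Scott's core theorem gives a compact $3$-manifold with the same fundamental group, so we may assume $M$ is compact. Passing to the orientation double cover is allowed by Corollary~\ref{virtually fd}. Next, the Kneser--Milnor prime decomposition, together with capping off spherical boundary components, writes $\pi_1(M)$ as a free product of fundamental groups of prime manifolds, and Corollary~\ref{fd fp cor} reduces us to prime $M$. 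Prime manifolds with $\pi_1=\Z$ are fine.

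\textbf{Case analysis for prime $M$.} I will split into cases according to geometrization.
\begin{enumerate}[(1)]
\item If $M$ has non-empty boundary and is not a ball, then $\pi_1(M)$ is virtually free-by-cyclic by Remark~\ref{surfaces rem} (\cite{kielak:linton:manifolds}). If the fibre is finitely generated, we conclude by Theorem~\ref{SS:semidirect} (using Remark~\ref{relative FD fg}) and Corollary~\ref{virtually fd}. For infinitely generated fibres, we need Theorem~\ref{LS:coamenable}: exhaust the free normal subgroup by finitely generated free subgroups, which are efficient because $3$-manifold groups are LERF-like in this setting (virtually special, by Wise--Agol--Przytycki--Wise).
\item If $M$ is closed with a geometric structure other than $\mathrm{Sol}$ and graph-type, it is virtually a surface bundle over $S^1$ or virtually $\Z\times$surface, or it is finite or virtually nilpotent. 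Finite and virtually polycyclic groups are FD because they are amenable and residually finite (Remark~\ref{RFD:MAP:FD:RF}); this also handles $\mathrm{Sol}$. Surface-by-$\Z$ groups are handled by Theorem~\ref{SS:semidirect}.
\item If $M$ is closed hyperbolic, or closed with a non-trivial JSJ decomposition containing a hyperbolic piece (mixed case), Agol and Przytycki--Wise give virtual fibering. So $\pi_1(M)$ is virtually surface-by-$\Z$ and is FD by Theorem~\ref{SS:semidirect} and Corollary~\ref{virtually fd}.
\end{enumerate}

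\textbf{Main obstacle: closed graph manifolds.} The remaining case is closed graph manifolds, which need not virtually fiber. Here I plan to use the JSJ decomposition: $\pi_1(M)$ is a graph of groups with vertex groups virtually $\Z\times F$ (Seifert pieces with boundary) and edge groups $\Z^2$. Passing to a finite cover, I will arrange that the pieces are products $F_v\times S^1$. I will then try to apply Theorem~\ref{LS:coamenable} with $\Lambda$ the kernel of a map onto an amenable group. Concretely, I would map $\pi_1(M)$ onto a virtually abelian quotient that is injective on the fibre circles $\langle h_v\rangle$. The kernel then acts on the Bass--Serre tree with edge stabilizers meeting the $\Z^2$ edge groups in cyclic subgroups transverse to the fibres. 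Alternatively, I would look for a normal subgroup that is a graph of free groups with cyclic edge groups, which is virtually free-by-cyclic by Example~\ref{graphs of free groups} and hence FD.

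The remaining step is to prove that this normal subgroup is co-amenable and exhausted by efficient FD subgroups. Efficiency should follow from separability of subgroups of graph manifold groups (virtual specialness of graph manifolds with boundary, and Wilton--Zalesskii / Hamilton-type separability results). I expect this verification, and the choice of the amenable quotient when the gluing matrices mix fibres, to be the genuinely new and hardest part.
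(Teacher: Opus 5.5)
You have a genuine gap: the closed graph manifold case is not proved, and the route you sketch for it cannot work. Your reductions (Scott core, orientation cover, prime decomposition with Corollary~\ref{fd fp cor}) and the virtually fibred cases are fine and close to the paper's. The graph manifold case is the one you rightly identify as the main obstacle.

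\emph{Why the proposed quotient does not exist.} Suppose a finite-index subgroup of $\pi_1(M)$ maps to an abelian group injectively on every Seifert fibre subgroup $\langle h_v\rangle$. Composing with a suitable map to $\Z$ gives a class that is non-zero on every $h_v$. Its kernel is then a finite graph of finitely generated groups, hence finitely generated, and Stallings' fibration theorem shows that $M$ virtually fibres. So the virtually abelian quotient you want exists only when $M$ already virtually fibres, which is exactly when you do not need it.

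\emph{Why efficiency does not come for free.} Efficiency cannot be extracted from virtual specialness. Graph manifold groups are in general not LERF (\cite{niblo:wise}), so there is no reason an arbitrary exhaustion by finitely generated FD subgroups consists of efficient subgroups.

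\emph{The missing idea.} The paper chooses the normal subgroup and the exhaustion differently. After disposing of Seifert fibred manifolds and virtual torus bundles (Lemma~\ref{graph fd easy}), pass to a finite cover with a non-separating JSJ torus $T$ (Lemma~\ref{graph fd separating}). Use the map $\pi_1(M)\to\Z$ dual to $T$, which vanishes on every vertex group (the opposite of what you look for), to write $\pi_1(M)=K\rtimes\langle t\rangle$. Then exhaust $K$ by conjugates of the groups $K_p$. These are fundamental groups of connected subgraphs of the JSJ graph of groups of the cyclic cover $\widetilde M_p$. They are efficient by the Wilton--Zalesskii theorem (Theorem~\ref{jsj efficient}) together with Wilkes' characterization (Lemma~\ref{efficient subgraph}). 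They are FD because they are graph manifold groups with non-empty boundary. Theorem~\ref{LS:coamenable} then applies.

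Two further steps also fail as written.

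\emph{Case (2).} Closed $\widetilde{\mathrm{SL}_2(\R)}$-manifolds are neither virtually fibred nor virtually products. After a finite cover, $\pi_1$ is a central extension of a closed surface group with non-zero Euler class. So the fibre is torsion in $H_1$ of every further cover, every map to $\Z$ kills it, and every such kernel is $\Z\times F_\infty$, which is not finitely generated. Your graph-manifold plan does not reach them either, since they have no JSJ tori and cannot be made products. The paper treats all closed Seifert fibred manifolds in Lemma~\ref{graph fd easy} using exactly such a non-fibred map. Its kernel $Z\times F$ is exhausted by the groups $Z\times F_i$ with $F_i$ finitely generated, and these are efficient because Seifert fibred manifold groups are LERF (\cite{seifert:lerf}).

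\emph{Case (1).} For infinitely generated fibres, your efficiency claim again rests on an unjustified LERF property. Such fibres are unavoidable when $\partial M$ has a component of genus at least $2$, since a compact fibre forces toroidal boundary. Virtual specialness does not imply LERF, and a compact $3$-manifold with boundary containing a non-LERF graph manifold piece is not LERF. The paper sidesteps this with Linton's theorem \cite{Linton:2025ac}: a finitely generated $F\rtimes\Z$ embeds in some $F_0\rtimes\Z$ with $F_0$ finitely generated free. It then concludes by Theorem~\ref{SS:semidirect} and passage to subgroups (Proposition~\ref{vfbc fd}).
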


Note that by Scott's core theorem \cite{scott:core}, finitely generated fundamental groups of $3$-manifolds are equivalently fundamental groups of compact $3$-manifolds.

The proof of Theorem \ref{manifold fd} will require some deeper theory of $3$-manifold groups, we refer to \cite{3manifold} for a general overview. As in Example \ref{manifold lp}, we may assume that $M$ is a $3$-manifold. Our proof follows Kielak--Linton's proof that $3$-manifold groups are in Linnel's class $\mathcal{C}$ \cite{kielak:linton:manifolds} (which we used in Example \ref{manifold lp}) to reduce to the case of a closed aspherical manifold, and then the arguments of Friedl--L{\"u}ck \cite[Theorem 3.2(3)]{friedl:luck} for the closed aspherical case. This almost entirely goes through, thanks in particular to our Corollary \ref{fd fp cor}, with the exception of \emph{closed graph manifolds}, which will need a more involved argument.

\begin{proposition}\label{vfbc fd}
    Finitely generated virtually free-by-cyclic groups are FD.
\end{proposition}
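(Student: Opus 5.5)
The plan is to reduce to the free-by-cyclic case using Corollary \ref{virtually fd}, and then apply Shulman--Skalski (Theorem \ref{SS:semidirect}) to the semidirect product $F\rtimes\Z$. Let $\Gamma$ be finitely generated and virtually free-by-cyclic. Pick a finite-index subgroup $\Gamma_0\leq \Gamma$ with $\Gamma_0\cong F\rtimes \Z$ for some free group $F$. Since $\Gamma_0$ has finite index in a finitely generated group, it is itself finitely generated. By Corollary \ref{virtually fd}, it suffices to show that $\Gamma_0$ is FD.

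So assume $\Gamma=F\rtimes_\phi\Z$ is finitely generated, with $\phi$ an automorphism of $F$. Here $\Z$ is amenable and residually finite. Theorem \ref{SS:semidirect} therefore reduces the problem to showing that $F$ is FD relative to the $\Z$-action. If $F$ is finitely generated, Theorem \ref{LS:free} says $F$ is FD, and Remark \ref{relative FD fg} upgrades this to FD relative to any action, so we are done.

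The main obstacle is that $F$ need not be finitely generated: for example, $F_2\rtimes\Z$ via the map to $\Z$ killing one generator has infinitely generated fibre. In that case Remark \ref{relative FD fg} does not apply.

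\begin{itemize}
\item The first thing I would try is to change the fibration. A finitely generated free-by-cyclic group is a mapping torus, and its BNS invariant is open. I would therefore look for an epimorphism $\Gamma\to\Z$ with finitely generated (hence free, as a subgroup of a free-by-cyclic group with the relevant cohomological dimension) kernel. This is the kind of result available from Kielak--Linton, or from Feighn--Handel's theorem that mapping tori of free group endomorphisms are coherent and fibre in many ways. Such a kernel gives a new splitting $F'\rtimes\Z$ with $F'$ finitely generated free, and the previous paragraph applies.
\item As a fallback, I would use Theorem \ref{LS:coamenable} directly. The subgroup $F$ is normal with quotient $\Z$, hence co-amenable. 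Write $F$ as a directed union of finitely generated free factors $F_i$; each is FD by Theorem \ref{LS:free}. It then remains to check that each $F_i$ is efficient in $\Gamma$, i.e.\ that every finite-index subgroup of $F_i$ is separable in $\Gamma$. I would deduce this from virtual specialness of free-by-cyclic groups (Hagen--Wise), using separability of quasiconvex subgroups, or from subgroup separability results for such groups.
\end{itemize}

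I expect this step to be the delicate one and would cite the appropriate geometric group theory input. The first route, via a finitely generated fibre, seems cleanest.
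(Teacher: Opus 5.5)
Your reduction to a finitely generated $\Gamma=F\rtimes\Z$ via Corollary \ref{virtually fd} is correct, and so is your treatment of the case where $F$ is finitely generated (Theorems \ref{LS:free} and \ref{SS:semidirect} with Remark \ref{relative FD fg}). The case of an infinitely generated fibre, however, is not resolved, and it is the entire difficulty.

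Your preferred route fails in general. A finitely generated free-by-cyclic group need not admit any epimorphism to $\Z$ with finitely generated kernel, even after passing to finite index. Already $F_2=\langle a,t\rangle$ is free-by-cyclic, with fibre the normal closure of $a$. Yet every epimorphism from a non-abelian free group of finite rank onto $\Z$ has infinitely generated kernel, because a non-trivial finitely generated normal subgroup of a free group has finite index; in particular the BNS invariant of $F_2$ is empty. More systematically, $F_n\rtimes\Z$ with $F_n$ of finite rank has a finite classifying space (a mapping torus of a graph) of Euler characteristic $0$. Euler characteristic is multiplicative under passage to finite index. Hence closed surface groups of genus at least $2$ (Remark \ref{surfaces rem}) and one-relator groups with negative immersions on at least three generators (Example \ref{or ni}) are virtually free-by-cyclic, but have no finite-index subgroup that is (finitely generated free)-by-cyclic, since their Euler characteristic is negative. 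The same count shows these groups are not mapping tori of endomorphisms of finitely generated free groups, so the premise of your first bullet fails as well.

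Your fallback through Theorem \ref{LS:coamenable} is only a sketch. Its key input is that every finite-index subgroup of each finitely generated $F_i\le F$ is separable in $\Gamma$, and that is exactly the hard part. It does not follow from the results you name: Hagen--Wise's cubulation, and the separability of quasiconvex subgroups it yields, concern hyperbolic groups $F_n\rtimes\Z$ with finitely generated fibre, not arbitrary finitely generated free-by-cyclic groups, and free-by-cyclic groups need not be LERF.

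The missing idea, which is how the paper argues, is to enlarge $\Gamma$ rather than refibre it. By Linton \cite[Theorem 1.2]{Linton:2025ac}, every finitely generated group $F\rtimes\Z$ with $F$ countable and free embeds in some $F_0\rtimes\Z$ with $F_0$ finitely generated free. That group is FD by your finitely generated case, and FD passes to subgroups (Remark \ref{rem:fd:subgroups}). Corollary \ref{virtually fd} then finishes exactly as in your reduction.
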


\begin{proof}
    Linton shows in \cite[Theorem 1.2]{Linton:2025ac} that a finitely generated group of the form $F\rtimes \Z$ with $F$ countable and free embeds in a group of the form $F_0\rtimes \Z$, where now $F_0$ is a finitely generated free group.  The group $F_0\rtimes \Z$ is FD by Theorem \ref{SS:semidirect} (or \cite[Theorem 2.8]{Lubotzky:2004xw}).  Hence $F\rtimes \Z$ is also FD, by Remark \ref{rem:fd:subgroups}.  Hence a virtually free-by-cyclic group is FD by Corollary \ref{virtually fd}.
\end{proof}

As we mentioned after Theorem \ref{SS:semidirect}, Lubotzky--Shalom proved that a group of the form $\pi_1(\Sigma) \rtimes \Z$ is FD, when $\Sigma$ is a compact surface \cite[Theorem 2.8]{Lubotzky:2004xw}. Moreover, by Corollary \ref{virtually fd}, property FD passes to finite-index overgroups. Together with the deep \emph{virtual fibering theorems} \cite{agol:criteria, Agol:2013aa, liu:graph, PW:graph, PW:mixed, Wise:2021aa} this implies property FD in many cases.

\begin{theorem}[{\cite[(G.25)]{3manifold}}]\label{nongraph fd}
    If $M$ is a compact orientable irreducible $3$-manifold with empty or toroidal boundary, and $M$ is \emph{not} a closed graph manifold, then $\pi_1(M)$ is FD. \qed
\end{theorem}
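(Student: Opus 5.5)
The plan is to combine the virtual fibering theorems with Theorem \ref{SS:semidirect} and Corollary \ref{virtually fd}. Let $M$ be as in the statement and $\Gamma=\pi_1(M)$. I would first dispose of the degenerate cases. If $\Gamma$ is finite it is trivially FD. If $M$ is a closed Seifert fibered manifold that is not a graph manifold in the relevant sense (spherical, $S^2\times S^1$-type, Euclidean, Nil, Sol geometries), then $\Gamma$ is virtually polycyclic. It is therefore amenable and residually finite, hence FD by \cite[Lemma 2.4]{Lubotzky:2004xw} (see Remark \ref{RFD:MAP:FD:RF}).

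The main case splits according to whether the boundary is empty. If $\partial M\neq\emptyset$ is a nonempty union of tori, or $M$ is a non-graph manifold with nonempty toroidal boundary, I would appeal to the virtual fibering results. For mixed manifolds I would use Przytycki--Wise \cite{PW:mixed}, for graph manifolds with boundary Liu and Przytycki--Wise \cite{liu:graph, PW:graph}, and for hyperbolic pieces Agol and Wise \cite{Agol:2013aa, Wise:2021aa}. Together these give that $M$ is virtually fibered: there is a finite cover $\widetilde M$ that fibers over $S^1$ with fibre a compact surface $\Sigma$. This gives
$$\pi_1(\widetilde M)\cong \pi_1(\Sigma)\rtimes\Z.$$
Since $\pi_1(\Sigma)$ is finitely generated and FD by Corollary \ref{LS:surface}, and $\Z$ is amenable and residually finite, the second part of Theorem \ref{SS:semidirect} shows that $\pi_1(\widetilde M)$ is FD. Corollary \ref{virtually fd} then passes FD up to $\Gamma$. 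In the closed non-graph case, $M$ is either hyperbolic or has a nontrivial JSJ decomposition with a hyperbolic piece. Here Agol's theorem, combined with Wise and Przytycki--Wise for the mixed case, again gives virtual fibering, and the same argument applies.

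The main obstacle is bookkeeping: making sure the cited virtual fibering theorems cover every non-closed-graph case, with the right orientability and boundary hypotheses. The genuinely exceptional case is closed non-geometric graph manifolds, which can fail to virtually fiber; that case is excluded by hypothesis here, which is exactly why the statement omits it. Beyond that, I expect the argument to be a direct assembly of \cite[(G.25)]{3manifold}.
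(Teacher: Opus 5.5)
Your proposal is correct and is essentially the paper's argument. The paper cites \cite[(G.25)]{3manifold}, which rests on exactly three ingredients: the virtual fibering theorems, FD for $\pi_1(\Sigma)\rtimes\Z$ (Lubotzky--Shalom, or equivalently Corollary \ref{LS:surface} plus Theorem \ref{SS:semidirect} as you use it), and passage to finite-index overgroups via Corollary \ref{virtually fd}. Your separate treatment of the virtually polycyclic closed cases is harmless but unnecessary: closed Seifert fibered manifolds, and Sol manifolds as the paper treats them in Lemma \ref{graph fd easy}, are closed graph manifolds, so they are excluded by hypothesis rather than being ``not graph manifolds''.
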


Our new input is Corollary \ref{fd fp}, and the case of closed graph manifolds.

\begin{proposition}\label{graph fd}
    Let $M$ be a closed graph manifold. Then $\pi_1(M)$ is FD.
\end{proposition}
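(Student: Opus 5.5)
The plan is to use the JSJ decomposition of $M$. After passing to a finite cover (allowed by Corollary \ref{virtually fd}), we may assume $M$ is orientable and that every Seifert fibered piece $M_v$ is a circle bundle over an orientable surface $\Sigma_v$ with non-empty boundary. Hence $\pi_1(M_v)\cong \pi_1(\Sigma_v)\times\Z\cong F_v\times \Z$ with $F_v$ free. We may also assume the decomposition graph is bipartite, and that the fibres of adjacent pieces do not match up along the gluing tori. This is standard for closed graph manifolds that are not themselves Seifert fibered. If $M$ is Seifert fibered, or a Sol manifold, then $\pi_1(M)$ is virtually a central extension of a surface group, or virtually polycyclic, and FD follows from Corollary \ref{LS:surface}, Theorem \ref{SS:semidirect}, or residual finiteness of amenable groups. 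So we treat only genuine graph manifolds. Then $\pi_1(M)$ is the fundamental group of a finite graph of groups with vertex groups $F_v\times\Z$ and edge groups $\Z^2$.

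The key step is to find a finite cover of $M$ in which the pieces can be cut apart along retracts, so that Corollary \ref{fd am} and Theorem \ref{fd fp} apply. The edge groups $\Z^2$ are amenable, but they are not retracts of the vertex groups in general: the boundary torus of $\Sigma_v\times S^1$ is $c\times S^1$, where $c$ is a boundary curve of $\Sigma_v$. When $\Sigma_v$ has at least two boundary components, each boundary curve $c$ is a free basis element of $F_v$. So $\langle c\rangle\times\Z$ is a retract of $F_v\times\Z$, with $F_v\to\langle c\rangle$ killing the other basis elements. Thus each single edge group is a retract of each adjacent vertex group.

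Next we build the group up along the graph. First take a maximal tree. Amalgamating over one edge at a time along a torus that is a retract of the new vertex group, the retraction of the vertex group extends to a retraction of the whole amalgam $\Gamma_k\Asterisk_{\Z^2}(F_v\times\Z)$ onto $\Z^2\subset\Gamma_k$. So Corollary \ref{fd am} applies, provided the torus is also a retract of the previously built $\Gamma_k$, and we need both sides. To arrange this, we pass to a cover where every piece has a boundary torus not used in the tree. The cleanest route is a cover whose underlying graph has at least two edges at every vertex after choosing the tree, which a standard large finite cover provides (Hempel/Luecke–Wu style covers of graph manifolds). An inductive argument then shows the retraction of the new edge torus onto $\Gamma_k$ can be obtained by killing the complementary free basis elements in $F_v$, provided $\Sigma_v$ has an extra boundary component.

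The remaining HNN edges are the main obstacle. For these we plan to use Theorem \ref{SS:semidirect}: map $\pi_1(M)$ onto $\Z$ and use a semidirect product decomposition with a kernel that is an infinite graph of groups of the above tree type. That kernel is handled by directed unions, but FD is not preserved by directed unions in general. So we would instead work relative to $\Lambda=\Z$, using the relative version of Theorem \ref{fd fp}, and check finiteness of orbits via Lemma \ref{fd fo}. Alternatively, we could choose a finite cover whose graph is a tree, but closed graph manifolds with a tree as graph still need the retract argument above. I expect the HNN loops and the bookkeeping of retractions in the cover to be the hardest part. If the retract approach fails there, I would fall back on showing that some finite cover of $M$ fibers after removing a torus, and reduce to the surface-by-cyclic pieces.
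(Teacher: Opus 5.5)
There is a genuine gap, and it is the step you flag yourself: the non-tree (HNN) edges. They cannot be avoided in your scheme. Corollary \ref{fd am} never applies across a separating JSJ torus $T$ of a closed orientable $M$. Each side $N$ has $\partial N=T$, so by ``half lives, half dies'' the map $H_1(T;\Q)\to H_1(N;\Q)$ has one-dimensional kernel. A retraction $\pi_1(N)\to\pi_1(T)\cong\Z^2$ would factor through $H_1(N)$ and force that map to be injective. This is why you need every piece to keep an unused torus, which means non-tree edges everywhere. The tree part is in any case unnecessary: $\pi_1$ of $M$ cut along one non-separating JSJ torus is already FD by Theorem \ref{nongraph fd}.

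For the loops, your tools do not apply. Theorem \ref{fd fp} requires $\Lambda$ to act on each free factor. But the kernel $K$ of $\Gamma\to\Z$ is an infinite chain $\cdots\Asterisk_{\Z^2}\Lambda_0\Asterisk_{\Z^2}\Lambda_1\Asterisk_{\Z^2}\cdots$, amalgamated over $\Z^2$ rather than free, and $t$ shifts the vertex groups. Since $K$ is infinitely generated, Remark \ref{relative FD fg} is also unavailable. To get FD of $K$ relative to $\langle t\rangle$ from FD of the finite subchains $K_p$, you must approximate by finite quotients of $K_p$ that extend to finite-orbit finite quotients of $K$. That requires the profinite topology of $\Gamma$ to induce the full profinite topology on $K_p$, i.e.\ efficiency. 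This is the missing idea. Your fallback (fibering after cutting a torus) gives FD of the $K_p$, not this.

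The paper proceeds as follows. Pass to a finite cover with a non-separating JSJ torus and write $\Gamma=K\rtimes\langle t\rangle$, with $K$ the directed union of conjugates of the $K_p$.
\begin{itemize}
\item Each $K_p$ is FD, being the fundamental group of a compact graph manifold with toroidal boundary.
\item Each $K_p$ is efficient in $\Gamma$. This holds because the JSJ graph of groups of the $p$-fold cyclic cover $\widetilde M_p$ is efficient (Theorem \ref{jsj efficient}), and efficiency passes to connected subgraphs (Lemma \ref{efficient subgraph}).
\item Theorem \ref{LS:coamenable} then applies, since $\Gamma/K\cong\Z$ is amenable.
\end{itemize}

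Separately, your Seifert fibered case is incomplete. For nonzero Euler number over a hyperbolic base ($\widetilde{SL_2}$ geometry), the group is a non-split central extension. It is not virtually a semidirect product with finitely generated kernel, so neither Corollary \ref{LS:surface} nor Theorem \ref{SS:semidirect} gives FD. The paper instead uses a map to $\Z$ with kernel $\Z\times F$, subgroup separability of Seifert fibered groups, and again Theorem \ref{LS:coamenable}.
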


Let us assume this for now, and see how these ingredients come together to prove Theorem \ref{manifold fd}.

\begin{proof}[Proof of Theorem \ref{manifold fd}]
    We may reduce to the case of $3$-manifolds by taking products with the circle and using that FD passes to subgroups.
    Let $M$ be a connected $3$-manifold such that $\pi_1(M)$ is finitely generated. By \cite[Proposition 2.2]{kielak:linton:manifolds}, there exists a finitely generated free group $F$ and compact, aspherical $3$-manifolds $M_1, \ldots, M_n$, each with a (possibly trivial) incompressible boundary, such that a finite-index subgroup of $\pi_1(M)$ is isomorphic to
    \[F \Asterisk \left( \Asterisk_{i = 1}^n \pi_1(M_i) \right).\]
    By Theorem \ref{LS:free}, Corollary \ref{virtually fd} and Corollary \ref{fd fp}, it remains to show that each $\pi_1(M_i)$ is FD. So from now on we may assume that $M$ is compact, aspherical, and has incompressible boundary.

    If $M$ is closed, then it is FD by Theorem \ref{nongraph fd} or Proposition \ref{graph fd}. If $M$ has non-empty boundary, then $M$ is homotopy equivalent to its spine (see for example \cite[page 560 and Section IV]{Casler:1965aa}), which is an aspherical $2$-complex, hence $cd_{\mathbb{Q}}(\pi_1(M)) < 3$. \cite[Theorem 1.1]{kielak:linton:manifolds} implies that $\pi_1(M)$ is virtually free-by-cyclic, hence FD by Proposition \ref{vfbc fd}.
\end{proof}

It remains to prove Proposition \ref{graph fd}. This will require some more involved machinery from Bass--Serre theory and $3$-manifold topology, so we will leave it to the next subsection, which is independent of the rest of the paper.

\subsection*{Closed graph manifolds}

\subsubsection*{First easy cases}

We refer the reader to \cite[Section 1.5]{3manifold} for a precise definition of \emph{Seifert fibered $3$-manifold}. For our purposes, it suffices to know that if $M$ is a compact Seifert fibered manifold, then there is a short exact sequence
\[1 \to Z \to \pi_1(M) \to Q \to 1,\]
where $Z$ is cyclic and $Q$ has a finite-index subgroup isomorphic to the fundamental group of a compact surface, see for example \cite[Theorem 2.2]{martino:seifert}.

\begin{lemma}\label{graph fd easy}
    If $M$ is a compact Seifert fibered manifold, or if $M$ is finitely covered by a torus bundle, then $\pi_1(M)$ is FD.
\end{lemma}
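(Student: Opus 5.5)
For the torus bundle case, I would argue directly. If $M$ is finitely covered by a torus bundle $N$, then $\pi_1(N)\cong \Z^2\rtimes \Z$. This is finitely generated and polycyclic, hence amenable and residually finite. Amenable residually finite groups are FD by \cite[Lemma 2.4]{Lubotzky:2004xw}; alternatively, Theorem \ref{SS:semidirect} applies directly, since $\Z^2$ is finitely generated and FD and $\Z$ is residually finite. Then $\pi_1(M)$ is virtually FD, so it is FD by Corollary \ref{virtually fd}.

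For the Seifert fibered case, I would use the short exact sequence $1\to Z\to \pi_1(M)\to Q\to 1$ with $Z$ cyclic and $Q$ virtually a compact surface group. The aim is to find a finite-index subgroup of $\pi_1(M)$ of the form $\pi_1(\Sigma)\times \Z$, $\pi_1(\Sigma)\rtimes\Z$, or a surface group. Let $Q_0\le Q$ be a finite-index subgroup isomorphic to $\pi_1(\Sigma)$, and let $\Gamma_0$ be its preimage, which has finite index in $\pi_1(M)$. Then $1\to Z\to \Gamma_0\to \pi_1(\Sigma)\to 1$.

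If $Z$ is finite, then $\Gamma_0$ is virtually (a finite-index subgroup of $\Gamma_0$ meeting $Z$ trivially), using residual finiteness of $\Gamma_0$. Concretely, $\pi_1(M)$ is then virtually a surface group, and it is FD by Corollary \ref{LS:surface} and Corollary \ref{virtually fd}. Residual finiteness of $3$-manifold groups is classical, so I would simply cite it.

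If $Z\cong\Z$, I would pass to a further finite-index subgroup on which the extension becomes central. The conjugation action of $\Gamma_0$ on $Z$ is through $\{\pm1\}$, and its kernel has index at most $2$. There are two subcases.
\begin{enumerate}[(a)]
\item If $\Sigma$ is non-closed (or the Euler class vanishes), then $\pi_1(\Sigma)$ is free and the sequence splits. This gives $\pi_1(\Sigma)\times\Z$, or $F\rtimes\Z$ in the non-central form, which is FD by Theorem \ref{SS:semidirect} together with Theorem \ref{LS:free}.
\item If $\Sigma$ is closed, a central extension of a closed surface group by $\Z$ is determined by an Euler number $e\in H^2(\pi_1(\Sigma);\Z)\cong\Z$. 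The standard trick is to pass to a degree-$|e|$-type cover of $\Sigma$, or to the finite-index subgroup obtained by mapping $\Gamma_0\to \Z/|e|$, which kills the obstruction.
\end{enumerate}

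I expect subcase (b) to be the main obstacle. The cleanest approach I would try is different, though. A central extension $\Gamma_0$ of $\pi_1(\Sigma)$ (with $\Sigma$ closed, of genus at least $1$) by $\Z$ surjects onto $\Z$ via $H_1$, with kernel the corresponding extension over the infinite cyclic cover of $\Sigma$, which is a free group. An extension of a free group by $\Z$ splits, so the kernel is $F\times\Z$. This realizes $\Gamma_0$ as $(F\times \Z)\rtimes\Z$, which is a finitely generated virtually free-by-cyclic group? No: more directly, $\Gamma_0$ is the fundamental group of a circle bundle over a surface. It fibers over the circle whenever $e=0$, and when $e\ne0$ its finite-index subgroups are still lattice-like.

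Given the uncertainty, the fallback for (b) is Theorem \ref{LS:coamenable}. Take $\Lambda=Z\cong \Z$ as the co-amenable normal subgroup if $Q$ is amenable. Otherwise take $\Lambda$ to be the preimage of a surface-subgroup free factor. For the cases $e\neq 0$ this is the $\widetilde{SL_2}$/Nil geometry. The efficient subgroups there are the central $\Z$ (which is separable in Seifert fibered groups by residual finiteness of the quotient) together with the LERF property of Seifert fibered groups (Scott). That makes every finitely generated subgroup, and in particular every finite-index subgroup of $Z$, separable, and then Theorem \ref{LS:coamenable} applies where $Q$ is amenable. In the non-amenable case, I would instead use that the preimage of a free subgroup $F\le\pi_1(\Sigma)$ with amenable index is a split central extension $F\times\Z$. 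Such a subgroup is FD by Theorem \ref{SS:semidirect}, and efficient by LERF.
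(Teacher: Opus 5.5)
Your torus-bundle argument is correct; going through the cover and Corollary \ref{virtually fd} even avoids citing residual finiteness of $\pi_1(M)$ itself. Your reductions for finite $Z$, and to a central extension $1\to Z\to\Gamma_0\to\pi_1(\Sigma)\to 1$ with $Z\cong\Z$, are also fine. The trouble is subcase (b). The ``standard trick'' is false: a non-zero Euler number stays non-zero in every finite cover. Base covers multiply it by their degree and fibre covers divide it, so whichever map $\Gamma_0\to\Z/|e|$ you take, the kernel still has $e\neq 0$. Consequently $\widetilde{SL_2}$-manifolds are neither virtually products nor virtually fibred. Your stated aim, a finite-index subgroup of the form $\pi_1(\Sigma)\times\Z$ or $\pi_1(\Sigma)\rtimes\Z$ with $\Sigma$ compact, therefore cannot be achieved in general. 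This is exactly why Theorem \ref{SS:semidirect} alone does not suffice here and Theorem \ref{LS:coamenable} is needed.

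Your ``cleanest approach'' paragraph contains precisely the paper's idea, but you abandon it, and the final fallback does not close. Pulling back an epimorphism $\pi_1(\Sigma)\to\Z$ with free kernel $F$ gives a normal subgroup $Z\times F$ of $\Gamma_0$ (a split central extension) with quotient $\Z$, hence co-amenable. When $\Sigma$ has genus at least $2$, however, $F$ is infinitely generated. LERF concerns finitely generated subgroups, and every finite-index subgroup of $Z\times F$ is infinitely generated, so your claim ``efficient by LERF'' is unjustified. The ``in particular'' clause of Theorem \ref{SS:semidirect} also assumes finite generation. The fix, which is what the paper does, is to use the directed-union clause of Theorem \ref{LS:coamenable}. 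Write $F=\bigcup_i F_i$ as a directed union of finitely generated free subgroups. Each $Z\times F_i$ is finitely generated, hence FD by Theorem \ref{SS:semidirect}. Its finite-index subgroups are finitely generated, hence separable in $\Gamma_0$ by Scott's LERF theorem for Seifert fibred groups, so $Z\times F_i$ is efficient. This argument is uniform: no case split is needed on the Euler number, on whether $\Sigma$ is closed, or on amenability of $Q$. Only the trivial case of finite $\pi_1(\Sigma)$, where $\Gamma_0$ is virtually cyclic, must be set aside.
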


\begin{proof}
    Suppose that $M$ is Seifert fibered. By Corollary \ref{virtually fd}, using that the automorphism group of a cyclic group is finite, we may pass to a finite-index subgroup of $Q$ that is isomorphic to the fundamental group of a compact surface, and acts trivially on $Z$ by conjugacy. We then obtain a group $\Gamma$ that fits into a central extension
    \[1 \to Z \to \Gamma \to \pi_1(\Sigma) \to 1,\]
    where $\Sigma$ is a compact surface.
    
    Let $\pi_1(\Sigma) \to \Z$ be an epimorphism with kernel $F$, which is free by Remark \ref{surfaces rem}. The pullback defines a homomorphism $\Gamma \to \Z$ whose kernel is isomorphic to $Z \times F$. Now $Z \times F$ is FD, by Theorems \ref{LS:free} and \ref{SS:semidirect}. Moreover, every finitely generated subgroup of $\pi_1(M)$, hence of $\Gamma$, is separable \cite[Corollary 5.1]{seifert:lerf}. Theorem \ref{LS:coamenable} applies and shows that $\Gamma$, hence $\pi_1(M)$ is FD.

    If $M$ is finitely covered by a torus bundle, then $\pi_1(M)$ is amenable. Moreover $\pi_1(M)$ is residually finite \cite{hempel} hence FD by Remark \ref{RFD:MAP:FD:RF}.
\end{proof}

A crucial property of Seifert fibered manifolds that we used here is that every finitely generated subgroup of the fundamental group is separable. This does not hold for general graph manifolds (see for example \cite{niblo:wise}), so we need to work harder to be in a position to apply Theorem \ref{LS:coamenable}.

\subsubsection*{Efficient graphs of groups}

General graph manifolds are fundamental groups of graphs of groups whose vertex groups are Seifert fibered. Before moving on to the general case, we need to establish some general profinite properties of graphs of groups.

Recall that the \emph{profinite topology} on a group $\Gamma$ is the unique topology for which finite-index subgroups form a basis of neighborhoods of the identity. Therefore a family of finite-index subgroups $\{N_i\}_{i \in I}$ of $\Gamma$ forms a basis of neighborhoods for the profinite topology if every finite-index subgroup of $\Gamma$ contains some $N_i$.

Rephrasing Definition \ref{def:efficient}, we see that $\Lambda < \Gamma$ is separable if and only if it is closed in the profinite topology, and it is efficient if and only if, moreover, the profinite topology on $\Gamma$ induces the profinite topology on $\Lambda$.
Our notion of efficiency for subgroups is not standard, but it is inspired by an established notion of efficiency for graphs of groups. We use Wilkes' definition \cite[Definition 8.2.1]{wilkes}. We identify a graph $X$ with vertex set $V$ and edge set $E$ with the set $V \sqcup E$, so whenever we write $\Gamma_x : x \in X$, we are talking at once about vertex groups and edge groups. Besides that, we use the notation established in Subsection \ref{ss:gog}.

\begin{definition}\label{def:efficient gog}
    Let $\mathcal{G}$ be a finite graph of groups with underlying graph $X$, and vertex and edge groups $\Gamma_x : x \in X$. We say that $\mathcal{G}$ is \emph{(fully) efficient} if there is an inverse system of finite-index normal subgroups $N_{x, i} \lhd \Gamma_x : x \in X, i \in I$, such that:
    \begin{enumerate}[(i)]
        \item\label{eff profinite} $\{ N_{x, i} \}_{i \in I}$ forms a basis of neighborhoods of the identity for the profinite topology on $\Gamma_x$, for all $x \in X$;
        \item\label{eff inclusion} $\omega_e^{-1}(N_{\omega(e), i}) = N_{e, i}$ for all $e \in E, i \in I$;
        \item\label{eff separate} $\bigcap_{i \in I} N_{\omega(e), i} \omega_e(\Gamma_e) = \omega_e(\Gamma_e)$.
    \end{enumerate}
\end{definition}

Recall that we follow Serre's convention \cite{Serre:1980aa}, so every simplicial edge appears twice with both possible orientations: this is why in Items \eqref{eff inclusion} and \eqref{eff separate} we only give a condition on $\omega_e$, the corresponding condition on $\alpha_e$ follows by considering the opposite edge.

A more global characterization of efficiency (which is used as the definition in \cite{graphmanifolds:profinite}) is \cite[Theorem 8.2.6]{wilkes}.

\begin{theorem}[Wilkes]\label{thm:efficient gog}
    A graph of groups $\mathcal{G}$ is efficient if and only if its fundamental group $\Gamma$ is residually finite and the subgroups $\Gamma_x : x \in X$ are efficient.
\end{theorem}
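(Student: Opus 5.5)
We prove the two implications separately. Throughout, $\Gamma=\pi_1(\mathcal{G})$, and we identify each $\Gamma_x$ with its image in $\Gamma$ under a fixed choice of spanning tree as in Subsection \ref{ss:gog}.

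\emph{Efficiency of the subgroups implies efficiency of $\mathcal{G}$.} This is the easy direction. Assume $\Gamma$ is residually finite and every $\Gamma_x$ is efficient in $\Gamma$. Take $I$ to be the set of finite-index normal subgroups $N\lhd\Gamma$, directed by reverse inclusion, and put $N_{x,N}\coloneqq N\cap\Gamma_x$. We then check the three conditions of Definition \ref{def:efficient gog}.
\begin{enumerate}[(i)]
\item Every finite-index subgroup of $\Gamma_x$ is open in the topology induced from the profinite topology of $\Gamma$, by efficiency. Hence it contains some $N\cap \Gamma_x$.
\item Condition \eqref{eff inclusion} is automatic, because both sides equal $N\cap\Gamma_e$ computed inside $\Gamma$.
\item Separability of $\Gamma_e$ in $\Gamma$ gives $\bigcap_N N\Gamma_e=\Gamma_e$. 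The set $(N\cap\Gamma_v)\Gamma_e$ is contained in $N\Gamma_e\cap \Gamma_v$, so intersecting over $N$ yields \eqref{eff separate}.
\end{enumerate}

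\emph{Efficiency of $\mathcal{G}$ implies residual finiteness and efficiency of the subgroups.} Fix the inverse system $N_{x,i}$. For each $i$, form the finite graph of finite groups $\mathcal{G}_i$ with the same graph $X$ and groups $\Gamma_x/N_{x,i}$. Condition \eqref{eff inclusion} says exactly that the induced edge maps $\Gamma_e/N_{e,i}\to\Gamma_{\omega(e)}/N_{\omega(e),i}$ are injective, so $\mathcal{G}_i$ is a genuine graph of groups. Its fundamental group $\Pi_i$ is virtually free \cite{karrass:pietrowski:solitar}, and there is a natural surjection $q_i\colon\Gamma\to\Pi_i$ under which $\Gamma_x$ maps onto the vertex or edge group $\Gamma_x/N_{x,i}$.

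The key technical step, which I expect to be the main obstacle, is a normal form argument. Let $g\in\Gamma$ be given by a reduced word $g_0t_1g_1\cdots t_ng_n$ in the sense of Bass--Serre normal forms, so each $g_k$ that must avoid an edge subgroup $\omega_e(\Gamma_e)$ does avoid it. By \eqref{eff separate}, there is an $i$ for which each such $g_k$ also avoids $N_{\omega(e),i}\,\omega_e(\Gamma_e)$; since there are finitely many letters, one $i$ works for all of them, using that the system is directed. Then the image word is still reduced in $\mathcal{G}_i$. Britton's lemma and the normal form theorem therefore give:
\begin{enumerate}[(a)]
\item if $g\neq1$, then $q_i(g)\neq 1$;
\item more generally, if $g\notin\Gamma_x$, then $q_i(g)\notin q_i(\Gamma_x)$.
\end{enumerate}
The delicate case is $n=0$, where everything happens in a single vertex group; here one uses \eqref{eff profinite} and the fact that $\bigcap_i N_{v,i}=1$ (respectively $\bigcap_i N_{v,i}\Gamma_e=\Gamma_e$) to separate.

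Now use that virtually free groups are residually finite and that their finitely generated subgroups, in particular the finite subgroups $q_i(\Gamma_x)$, are separable, since free groups are LERF and this passes to finite-index overgroups.
\begin{itemize}
\item Combining (a) with residual finiteness of $\Pi_i$ shows that $\Gamma$ is residually finite.
\item Combining (b) with separability of $q_i(\Gamma_x)$ in $\Pi_i$ and pulling back shows that $\Gamma_x$ is separable in $\Gamma$.
\end{itemize}

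Finally, we show that $\Gamma$ induces the full profinite topology on $\Gamma_x$. Any finite-index subgroup of $\Gamma_x$ contains some $N_{x,i}$ by \eqref{eff profinite}. Choose a torsion-free (hence free) finite-index normal subgroup $F_i\lhd\Pi_i$. Then $F_i$ meets the finite group $q_i(\Gamma_x)$ trivially, so $q_i^{-1}(F_i)\cap\Gamma_x\subseteq N_{x,i}$. The same argument applies to every finite-index subgroup of $\Gamma_x$, which gives efficiency of $\Gamma_x$ and completes the proof.
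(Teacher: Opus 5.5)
Your proof is correct and is essentially Wilkes' argument, which the paper cites rather than reproves. The quotient graphs of finite groups $\mathcal{G}_i$ with virtually free fundamental groups, the surjections $q_i$, and the separation of elements and of cosets of $\Gamma_x$ via reduced words are the same ingredients the paper recalls in its proof of Theorem \ref{thm:efficient gog precise}.

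The one input you use tacitly is $\bigcap_i N_{v,i}=1$ in the case $n=0$, which amounts to residual finiteness of the vertex groups. This has to be read as part of the definition of efficiency: it is not forced by conditions \eqref{eff profinite}--\eqref{eff separate} as the paper transcribes them, since a single non-residually-finite vertex group with no edges satisfies them. So this is a feature of the statement, not a gap in your argument.
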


Our goal is to prove the following technical result.

\begin{lemma}
\label{efficient subgraph}
    Let $\mathcal{G}$ be an efficient graph of groups with underlying graph $X$. Let $Y$ be a connected subgraph of $X$, and let $\mathcal{H}$ be the subgraph of groups defined on $Y$. Then $\pi_1(\mathcal{H})$ is an efficient subgroup of $\pi_1(\mathcal{G})$.
\end{lemma}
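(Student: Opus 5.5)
The plan is to prove two things: that $\mathcal{H}$ is itself an efficient graph of groups, and that the profinite topology of $\Gamma=\pi_1(\mathcal{G})$ restricts to the full profinite topology on $H=\pi_1(\mathcal{H})$ and makes $H$ closed. The first is immediate from Definition \ref{def:efficient gog}. Take the same inverse system $N_{x,i}$ and restrict it to $x\in Y$; conditions \eqref{eff profinite}, \eqref{eff inclusion}, \eqref{eff separate} only involve a vertex or edge group and its adjacent vertex groups, all of which lie in $Y$. Hence $\mathcal{H}$ is efficient, and by Theorem \ref{thm:efficient gog} $H$ is residually finite. The work is to compare the topologies.

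For the comparison I would use the standard construction behind Theorem \ref{thm:efficient gog}. For each $i$, form the finite graph of finite groups $\mathcal{G}_i$ with vertex and edge groups $\Gamma_x/N_{x,i}$. Condition \eqref{eff inclusion} guarantees that the edge maps are injective, and $\pi_1(\mathcal{G}_i)$ is virtually free. There is a natural surjection $\Gamma\to\pi_1(\mathcal{G}_i)$, and likewise $H\to\pi_1(\mathcal{H}_i)$, where $\mathcal{H}_i$ is the restriction of $\mathcal{G}_i$ to $Y$. The key observation is that $\pi_1(\mathcal{H}_i)$ is a subgroup of $\pi_1(\mathcal{G}_i)$ (a subgraph of groups with the same edge inclusions embeds), and the diagram commutes. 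Moreover, a finitely generated subgroup of a virtually free group is separable and efficient, because virtually free groups are LERF and subgroups of finitely generated virtually free groups are virtually retracts (M.~Hall).

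With this in hand, the argument runs as follows. Let $U\le H$ be of finite index. Since $\mathcal{H}$ is efficient, the characterization in \cite{wilkes} (the analogue of the $(\Leftarrow)$ direction of Theorem \ref{thm:efficient gog}) shows that the kernels of $H\to\pi_1(\mathcal{H}_i)$, intersected with finite-index normal subgroups of the virtually free groups $\pi_1(\mathcal{H}_i)$, form a basis of the profinite topology on $H$. So $U$ contains the preimage of some finite-index $V\le\pi_1(\mathcal{H}_i)$. By efficiency of $\pi_1(\mathcal{H}_i)$ in the virtually free group $\pi_1(\mathcal{G}_i)$, there is a finite-index $W\le\pi_1(\mathcal{G}_i)$ with $W\cap\pi_1(\mathcal{H}_i)\le V$. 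Pulling $W$ back to $\Gamma$ gives a finite-index subgroup whose intersection with $H$ lies in $U$. This proves that the induced topology is the full profinite topology.

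For separability, take $g\in\Gamma\setminus H$. I would find $i$ such that the image of $g$ in $\pi_1(\mathcal{G}_i)$ avoids $\pi_1(\mathcal{H}_i)$, and then separate using LERF of $\pi_1(\mathcal{G}_i)$. To do this, write $g$ in reduced form with respect to the graph of groups and consider its path in the Bass--Serre tree. Since $g\notin H$, the path leaves the subtree corresponding to $H$ by crossing an edge, or it uses a vertex element not in the relevant edge-group cosets. Condition \eqref{eff separate}, applied to finitely many edge groups, together with \eqref{eff profinite}, lets one choose $i$ so that the reduced form stays reduced in $\mathcal{G}_i$. Normal forms then show the image is not in $\pi_1(\mathcal{H}_i)$.

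I expect this last step to be the main obstacle: checking that reducedness survives passage to the quotient and that the image really is not in the subgraph-of-groups subgroup. This is precisely where \eqref{eff separate} is needed. Alternatively, one might hope to shortcut it with the result of \cite{wilkes} that efficient graphs of groups have $\widehat{\Gamma}$ acting on a profinite tree with the vertex stabilizers as closed subgroups, and identify $\overline{H}$ with $\pi_1$ of the profinite subgraph of groups.
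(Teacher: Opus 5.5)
Your argument is correct, but it takes a different route from the paper.

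\textbf{The paper's route.} It never works with the finite quotients $\pi_1(\mathcal{G}_i)$ directly.
\begin{enumerate}
\item It collapses $Y$ to a single vertex $y$, obtaining a graph of groups $\mathcal{K}$ with the same fundamental group $\Gamma$ and with vertex group $\Gamma_y=\pi_1(\mathcal{H})$.
\item It applies the refined form of Wilkes' theorem (Theorem \ref{thm:efficient gog precise}) to $\mathcal{H}$. This produces an inverse system $M_j\lhd\pi_1(\mathcal{H})$ with $M_j\cap\Gamma_x=N_{x,\kappa(j)}$ and $\bigcap_j M_j\Gamma_x=\Gamma_x$.
\item It checks that these $M_j$, together with the $N_{x,\kappa(j)}$ at the other vertices, witness efficiency of $\mathcal{K}$.
\item It then uses the ``only if'' direction of Theorem \ref{thm:efficient gog} as a black box to conclude that the vertex group $\Gamma_y$ is efficient in $\Gamma$.
\end{enumerate}

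\textbf{Your route.} You essentially reprove that ``only if'' direction in this special case. Choosing a spanning tree of $X$ that restricts to one of $Y$ makes $H\to\pi_1(\mathcal{H}_i)\hookrightarrow\pi_1(\mathcal{G}_i)$ agree with $H\le\Gamma\to\pi_1(\mathcal{G}_i)$. This reduces both halves of efficiency to the finitely generated subgroup $\pi_1(\mathcal{H}_i)$ of the virtually free group $\pi_1(\mathcal{G}_i)$, where LERF does the work.

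\textbf{Comparison.} The paper's proof is shorter, given Wilkes' results. Yours is more self-contained, and it shows exactly where each efficiency axiom enters. The induced-topology half uses only conditions (i) and (ii) of Definition \ref{def:efficient gog}. Condition (iii) is needed only for separability.

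\textbf{The step you flagged as the main obstacle.} It does go through, and more cleanly than you describe.
\begin{itemize}
\item The edge path of a reduced word is an invariant of the element: it is the projection of the geodesic from $\widetilde{v}_0$ to $g\widetilde{v}_0$ in the Bass--Serre tree. Hence $g\in H$ if and only if that edge path lies in $Y$. The second alternative in your dichotomy (``a vertex element not in the relevant edge-group cosets'') is not needed.
\item Reducedness survives in $\mathcal{G}_i$ once $g_k\notin N_{\omega(e_k),i}\,\omega_{e_k}(\Gamma_{e_k})$ at each of the finitely many backtrackings $e_{k+1}=\bar e_k$. Condition (iii) together with directedness of $I$ provides such an $i$; condition (i) plays no role there.
\end{itemize}

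\textbf{Minor correction.} You do not need virtual retractions, and Hall's theorem concerns free groups rather than virtually free ones. LERF alone makes every finitely generated subgroup $H'$ efficient. A finite-index $U\le H'$ is again finitely generated, hence separable. So $U$ can be cut out of $H'$ by finitely many finite-index subgroups that contain $U$ and avoid the finitely many coset representatives of $U$ in $H'$ lying outside $U$.
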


We will need the following refinement of Theorem \ref{thm:efficient gog}. This is essentially \cite[Theorem 8.2.4]{wilkes}, with some additional properties that are not contained in the statement but follow from its proof.

\begin{theorem}[Wilkes]\label{thm:efficient gog precise}
    Let $\mathcal{G}$ be an efficient graph of groups with underlying graph $X$, vertex and edge groups $\Gamma_x : x \in X$ and fundamental group $\Gamma$. Let $\{N_{x, i} : x \in X, i \in I \}$ witness the efficiency of $\mathcal{G}$. Then there exists an inverse system  of finite-index normal subgroups $\{ M_j \lhd \Gamma\}_{j \in J}$ of $\Gamma$ with the following properties.
    \begin{enumerate}[(a)]
        \item\label{Eff profinite} $\{M_j\}_{j \in J}$ forms a basis of neighborhoods of the identity for the profinite topology on $\Gamma$;
        \item\label{Eff index} There is an order-preserving surjection $\kappa \colon J \to I$ such that $\Gamma_x \cap M_j = N_{x, \kappa(j)}$ for all $x \in X, j \in J$;
        \item\label{Eff separate} $\bigcap_{j \in J} M_j \Gamma_x = \Gamma_x$ for all $x \in X$.
    \end{enumerate}
\end{theorem}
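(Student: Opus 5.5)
The plan is to build the $M_j$ as preimages of finite-index subgroups of virtually free quotients of $\Gamma$, one quotient for each level $i$ of the witness system.

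\emph{Finite quotient graphs of groups.} For each $i\in I$, let $\mathcal{G}_i$ be the graph of groups on the same graph $X$ with vertex and edge groups $\Gamma_x/N_{x,i}$. The edge maps are induced by $\alpha_e,\omega_e$. By Item \eqref{eff inclusion} of Definition \ref{def:efficient gog} these induced maps are injective, so $\mathcal{G}_i$ is a finite graph of finite groups. Its fundamental group $\Pi_i$ is therefore virtually free, hence residually finite (Theorem \ref{LS:free} and Remark \ref{RFD:MAP:FD:RF}). There is a canonical surjection $q_i\colon\Gamma\to\Pi_i$. Since vertex and edge groups inject into the fundamental group of a graph of groups, we get $\Gamma_x\cap\ker q_i=N_{x,i}$. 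Because $\Pi_i$ is residually finite and has only finitely many finite vertex groups, it has a finite-index normal subgroup $L$ meeting every vertex group trivially. Such an $L$ will be called \emph{good}. For a good $L$, the preimage $P=q_i^{-1}(L)$ satisfies $\Gamma_x\cap P=N_{x,i}$ for all $x$.

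\emph{The index set.} Let $J$ be the set of pairs $(i,M)$, where $M\lhd\Gamma$ has finite index and $\Gamma_x\cap M=N_{x,i}$ for all $x\in X$. Order $J$ by $(i,M)\le(i',M')$ if $i\le i'$ and $M'\subseteq M$, set $M_{(i,M)}=M$, and let $\kappa(i,M)=i$. Then $\kappa$ is order-preserving, and it is surjective by the previous paragraph. Item \eqref{Eff index} holds by construction. The set $J$ is directed: given $(i,M)$ and $(i',M')$, choose $i''\ge i,i'$ and a good $L$ in $\Pi_{i''}$ with preimage $P$, and set $M''=M\cap M'\cap P$. Then
$$\Gamma_x\cap M''=N_{x,i''},$$
since $N_{x,i''}\subseteq N_{x,i}\cap N_{x,i'}\subseteq M\cap M'$. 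Item \eqref{Eff profinite} follows by the same trick. Given a finite-index normal $U\lhd\Gamma$, use Item \eqref{eff profinite} together with the finiteness of $X$ and directedness of $I$ to find a single $i$ with $N_{x,i}\subseteq U$ for all $x$. Then $U\cap P\in J$ lies inside $U$.

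\emph{Separability, Item \eqref{Eff separate}.} This is the main step. Fix $g\in\Gamma\setminus\Gamma_x$ and write $g$ in a reduced Bass--Serre normal form. I would use Item \eqref{eff separate} of Definition \ref{def:efficient gog}, applied to the finitely many syllables of this form, to choose $i$ large enough that each syllable lying outside its adjacent edge group $\omega_e(\Gamma_e)$ stays outside the image of that edge group modulo $N_{\omega(e),i}$. Then $q_i(g)$ is still reduced, of the same length and type, so $q_i(g)\notin q_i(\Gamma_x)$ by the normal form theorem in $\Pi_i$.

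Now $q_i(\Gamma_x)$ is a finite subgroup of the residually finite group $\Pi_i$, hence separable. So there is a finite-index normal $L'\lhd\Pi_i$ with $q_i(g)\notin L'q_i(\Gamma_x)$. Intersecting $L'$ with a good $L$, we may take $L'$ to be good. Its preimage gives some $M_j$ with $g\notin M_j\Gamma_x$, which proves Item \eqref{Eff separate}.

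The delicate point I expect to have to check carefully is the normal-form bookkeeping: reducedness must survive the quotient, including the stable letters and the case where $g$ has length $0$ but lies in a different vertex group.
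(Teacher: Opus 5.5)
Your proposal is correct. It shares the paper's skeleton: pass to the graphs of finite groups with vertex groups $\Gamma_x/N_{x,i}$, whose fundamental groups (your $\Pi_i$, the paper's $\Gamma^i$) are virtually free, and take preimages of their finite-index normal subgroups. The differences are in the bookkeeping, and they favour your version.

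\textbf{Index set.} The paper indexes by pairs $(i,\phi)$ with $\phi$ an \emph{arbitrary} finite quotient of $\Gamma^i$, and asserts $M_{i,\phi}\cap\Gamma_x=N_{x,i}$. As written this fails (take $\phi$ trivial). Moreover, its argument for (a) produces $M_{i,\phi}=K$ for an arbitrary finite-index normal $K$, which is exactly such a bad $\phi$. You restrict to subgroups with $\Gamma_x\cap M=N_{x,i}$ exactly, and intersect with the preimage of a subgroup meeting the finite vertex groups trivially. This is what makes (a) and (b) hold simultaneously. Your product order also makes $\kappa$ order-preserving by construction.

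\textbf{Item (c).} The paper simply cites Wilkes. You prove it from (iii) of Definition~\ref{def:efficient gog} together with separability of finite subgroups in the residually finite group $\Pi_i$. So your route is self-contained; the paper's is shorter only because it outsources directedness and (c).

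\textbf{The point you flag.} The normal-form bookkeeping is painless if you work in $\pi_1(\mathcal{G},v)$ based at $v=x$.
\begin{enumerate}
\item Then $g\notin\Gamma_v$ exactly when its reduced form has length $n\geq 1$.
\item For large $i$ the image word is still reduced, since (iii) is only needed at the finitely many backtrack syllables.
\item Right multiplication by $h^{-1}$, for any $h\in q_i(\Gamma_v)$, only alters the last syllable, while reducedness constrains only interior syllables. So $q_i(g)h^{-1}$ is a reduced word of length $n\geq 1$, hence nontrivial.
\end{enumerate}
The case of an edge $x=e$ then follows from the vertex case at $\omega(e)$ combined with (b) and (iii).
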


\begin{proof}
    For each $i \in I$, \cite[Lemma 8.2.3]{wilkes} defines a graph of groups $\mathcal{G}^i$, with the same underlying graph $X$, and groups $\Gamma^i_x \coloneqq \Gamma_x/N_{x, i}$. The definition of efficiency implies that the edge inclusions from $\mathcal{G}$ descend to edge inclusions on $\mathcal{G}^i$, so that this is indeed a well-defined graph of groups. We denote by $\Gamma^i \coloneqq \pi_1(\mathcal{G}^i)$, which is the fundamental group of a finite graph of finite groups, hence it is virtually free and in particular residually finite. 
    
    There is an associated quotient map $\pi^i \colon \Gamma \to \Gamma^i$. For each finite quotient $\phi \colon \Gamma^i \twoheadrightarrow Q$, we define $M_{i, \phi} \coloneqq \ker(\phi \circ \pi^i)$. Therefore the index set is
    \[J \coloneqq \{ (i, \phi) : i \in I, \phi \colon \Gamma^i \twoheadrightarrow Q, |Q|< \infty \},\]
    whose order is induced by inclusion of subgroups; in the proof of \cite[Theorem 8.2.4]{wilkes} it is shown that $J$ is indeed directed. The map $\kappa \colon J \to I$ defined by $\kappa(i, \phi) = i$ is then an order-preserving surjection, and $M_{i, \phi} \cap \Gamma_x = N_{x, i}$, this establishes Item \eqref{Eff index}. Item \eqref{Eff separate} is \cite[Theorem 8.2.4, Item (iii)]{wilkes}. Moreover, \cite[Theorem 8.2.4, Item (i)]{wilkes} gives that the $M_j$ intersect trivially, but here we need the stronger fact that $\{ M_j \}_{j \in J}$ forms a basis of neighborhoods of the identity for the profinite topology on $\Gamma$ (under the stronger assumption of (full) efficiency of $\mathcal{G}$).

    Let $K < \Gamma$ be a finite-index subgroup, we need to show that there exists $j \in J$ such that $M_j < K$; up to passing to a deeper finite-index normal subgroup, we may assume that $K$ is normal. Let $K_x \coloneqq \Gamma_x \cap K$, which is a finite-index normal subgroup of $\Gamma_x$. Because $\{ N_{x, i} \}_{i \in I}$ forms a basis of neighborhoods of the identity for the profinite topology on $\Gamma_x$, and $X$ is finite, there exists $i \in I$ such that $N_{x, i} < K_x$ for every $x \in X$. Then $\ker(\pi^i) < K$, which means that the quotient $\Gamma \to \Gamma/K$ factors through $\Gamma^i$. Let $\phi \colon \Gamma^i \to \Gamma/K$ denote the induced quotient, then $M_{i, \phi} < K$, as desired.
\end{proof}

\begin{proof}[Proof of Lemma \ref{efficient subgraph}]
    Let $\Gamma \coloneqq \pi_1(\mathcal{G})$. Because $Y$ is connected, we may choose a spanning tree for $X$ that restricts to a spanning tree for $Y$. This exhibits $\Gamma$ as the fundamental group of a graph of groups $\mathcal{K}$ described as follows. The underlying graph, denoted by $Z$, is obtained from $X$ by collapsing $Y$ to a vertex $y$. Therefore $V(Z) = V(X) \setminus V(Y) \cup \{ y \}$ and $E(Z) = E(X) \setminus E(Y)$, where every $e \in E(X)$ with $\omega(e) \in V(Y)$ now has $\omega'(e) = y$. The vertex and edge groups are the given $\Gamma_x$ for $x \in X' \setminus \{ y \}$ and $\Gamma_y \coloneqq \pi_1(\mathcal{H})$. For every edge $e$ with $\omega'(e) = y$, we set $\omega'_e \colon \Gamma_e \to \Gamma_y$ to be the composition of $\omega_e \colon \Gamma_e \to \Gamma_{\omega(e)}$ with the inclusion $\Gamma_{\omega(e)} \to \pi_1(\mathcal{H}) = \Gamma_y$. We need to show that $\Lambda$ is an efficient subgroup of $\Gamma$, so by Theorem \ref{thm:efficient gog}, it suffices to show that the graph of groups $\mathcal{K}$ is efficient.

    Let $\{N_{x, i} : i \in I, x \in X \}$ witness efficiency of $\mathcal{G}$. It follows directly from the definition that $\{ N_{x, i} : i \in I, x \in Y \}$ witness efficiency of $\mathcal{H}$. Hence we may apply Theorem \ref{thm:efficient gog precise}, to obtain an inverse system of finite-index normal subgroups $\{ M_j \lhd \Lambda\}_{j \in J}$. We claim that this inverse system of finite-index normal subgroups of $\Gamma_y$, together with the inverse systems $\{ N_{x, \kappa(j)}\}_{j \in J}$ of finite-index normal subgroups of $\Gamma_x : x \in Z \setminus \{ y \}$, witness efficiency of $\mathcal{K}$.

    \eqref{eff profinite}: That $\{M_j\}_{j \in J}$ is a basis of neighborhoods for $\Gamma_y$ is Item \eqref{Eff profinite}. Moreover, $\{ N_{x, i} \}_{i \in I}$ is a basis of neighborhoods for $\Gamma_x : x \in Z \setminus \{ y \}$ by assumption, and since $\kappa$ is surjective $\{ N_{x, \kappa(j)} \}_{j \in J}$ is the same basis of neighborhoods, just with repeated indices.

    \eqref{eff inclusion}: This follows from efficiency of $\mathcal{G}$ whenever $\omega'(e) \neq y$. Now suppose that $\omega'(e) = y$. Using both the fact that $\{N_{x, i} \}$ witness efficiency of $\mathcal{G}$, and the properties of $\{ M_j \}$ from Theorem \ref{thm:efficient gog precise}:
    \[
    (\omega'_e)^{-1}(M_j) = \omega_e^{-1}(M_j \cap \Gamma_{\omega(e)}) = \omega_e^{-1}(N_{\omega(e), \kappa(j)}) = N_{e, \kappa(j)}.
    \]

    \eqref{eff separate}: Again, this follows from efficiency of $\mathcal{G}$ whenever $\omega'(e) \neq y$, so suppose that $\omega'(e) = y$. Because $\omega'_e(\Gamma_e) \subset \Gamma_{\omega(e)}$, we have
    \[M_j \omega'_e(\Gamma_e) \cap \Gamma_{\omega(e)} = (M_j \cap \Gamma_{\omega(e)}) \omega'_e(\Gamma_e) = N_{\omega(e), \kappa(j)} \omega_e(\Gamma_e).\]
    By Item \eqref{Eff separate} we have
    \[\bigcap_{j \in J} M_j \omega'_e(\Gamma_e)  \subset \bigcap_{j \in J} M_j \Gamma_{\omega(e)} = \Gamma_{\omega(e)}.\]
    Finally, using this and efficiency of $\mathcal{G}$:
    \begin{align*}
    \bigcap_{j \in J} M_j \omega'_e(\Gamma_e) &= \bigcap_{j \in J} (M_j \omega'_e(\Gamma_e) \cap \Gamma_{\omega(e)}) \\ &= \bigcap_{j \in J} N_{\omega(e), \kappa(j)} \omega_e(\Gamma_e) = \omega_e(\Gamma_e) = \omega'_e(\Gamma_e).
    \end{align*}

We conclude that $\mathcal{K}$ is an efficient graph of groups, hence by Theorem \ref{thm:efficient gog} the vertex group $\Gamma_y = \pi_1(\mathcal{H})$ is an efficient subgroup of $\Gamma = \pi_1(\mathcal{K}) = \pi_1(\mathcal{G})$.
\end{proof}

\subsubsection*{JSJ decomposition}

We refer the reader to \cite[Section 1.6]{3manifold} for a precise definition of \emph{JSJ decomposition}. A \emph{graph manifold} is a manifold whose JSJ decomposition has only Seifert fibered pieces.

In practice, if $M$ is a closed graph manifold, then it contains a finite embedded collection of disjoint incompressible tori $T_1, \ldots, T_m$, such that if $M_1, \ldots, M_n$ are the components of $M$ cut along the union of the $T_i$, then every $M_i$ is a Seifert fibered manifold, with boundary equal to a union of the $T_i$. This defines a graph of spaces that gives rise to a graph of groups decomposition of $\pi_1(M)$, with vertex groups $\pi_1(M_i)$ and edge groups $\pi_1(T_j)$. The tori $T_1, \ldots, T_m$ are called \emph{JSJ tori} of $M$, and they are unique in a suitable sense \cite[Proposition 1.6.2]{3manifold}.

For us, the key result about this graph of groups is \cite[Theorem A]{graphmanifolds:profinite}.

\begin{theorem}[Wilton--Zalesskii]\label{jsj efficient}
    The graph of groups coming from the JSJ decomposition of a closed graph manifold is efficient.
\end{theorem}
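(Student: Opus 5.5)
The plan is to verify Definition \ref{def:efficient gog} directly for the JSJ graph of groups $\mathcal{G}$ of the closed graph manifold $M$; the alternative route through Theorem \ref{thm:efficient gog} would need the same input anyway. If there are no JSJ tori, $\mathcal{G}$ has a single vertex and no edges. Then we take $\{N_{v,i}\}$ to be all finite-index normal subgroups of $\pi_1(M)$, and conditions \eqref{eff inclusion} and \eqref{eff separate} are vacuous. Otherwise every vertex group $\Gamma_v=\pi_1(M_v)$ is the fundamental group of a Seifert fibered piece with non-empty toroidal boundary. Every edge group is $\Gamma_e=\pi_1(T_e)\cong\Z^2$, and it includes as a peripheral subgroup of the adjacent pieces. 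The index set will be $I=\N$, ordered by divisibility and refined as explained below. The model edge system is $N_{e,n}\coloneqq n\Z^2\lhd\Gamma_e$. These subgroups are characteristic, so any identification of the two sides of a torus respects them. They are also cofinal among finite-index subgroups of $\Z^2$.

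The key step, and the one I expect to be the main obstacle, is the following vertex lemma. For each Seifert piece $M_v$ and each $n$, there should be a finite-index normal subgroup $N_{v,n}\lhd\Gamma_v$ such that $N_{v,n}\cap P=nP$ simultaneously for every boundary (peripheral) subgroup $P\cong\Z^2$ of $\Gamma_v$. I would prove this in the spirit of Hempel's residual finiteness argument, using the structure sequence $1\to Z\to\Gamma_v\to Q\to1$ recalled before Lemma \ref{graph fd easy}. Here $Q$ is the fundamental group of a $2$-orbifold with boundary, hence a free product of finite cyclic groups and $\Z$'s. Each peripheral subgroup $P$ is generated by the fibre $z$ together with a lift of a boundary curve $c_P\in Q$. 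The idea is to choose a homomorphism from $Q$ to a finite group in which every $c_P$ has order exactly $n$ (or a multiple of $n$, then corrected). This is possible because $Q$ is a free product in which the boundary curves are, up to one relation, free generators. We then combine it with reduction of the fibre modulo $n$, after passing to a finite cover in which the extension becomes central. I expect this lemma to absorb most of the work, especially the bookkeeping needed to get the intersection to be exactly $nP$ rather than merely contained in it.

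Given the lemma, condition \eqref{eff inclusion} follows because $\omega_e^{-1}(N_{\omega(e),n})=\omega_e^{-1}(n\,\omega_e(\Gamma_e))=n\Z^2=N_{e,n}$. For condition \eqref{eff profinite} at a vertex, take any finite-index $K\lhd\Gamma_v$. It contains $mP$ for every peripheral $P$, for some $m$. I would then enlarge the index set to pairs $(n,K)$ with $m\mid n$ and set $N_{v,(n,K)}\coloneqq N_{v,n}\cap K$. Since $nP\subseteq K$, we still have $N_{v,(n,K)}\cap P=nP$. This makes the system cofinal at every vertex while keeping \eqref{eff inclusion}, once the index set is organised as a directed product over vertices. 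Finally, condition \eqref{eff separate} asks that $\bigcap_n N_{\omega(e),n}\,\omega_e(\Gamma_e)=\omega_e(\Gamma_e)$. Because the $N_{v,\cdot}$ are cofinal, this is exactly separability of peripheral subgroups in $\Gamma_v$. That in turn follows from the fact, cited in the proof of Lemma \ref{graph fd easy}, that every finitely generated subgroup of a Seifert fibered group is separable. This completes the verification that $\mathcal{G}$ is efficient.
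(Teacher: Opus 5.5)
The paper does not prove this statement; it quotes Wilton--Zalesskii \cite[Theorem A]{graphmanifolds:profinite}. Your reduction is the natural one: characteristic subgroups $n\Z^2$ on the edges, vertex subgroups meeting each peripheral subgroup in exactly $nP$, and Scott's separability for condition (iii). Given your vertex lemma, the deduction of conditions (i)--(iii) is fine up to bookkeeping. For instance, you should replace $N_{v,n}$ by $\bigcap_{d\mid n}N_{v,d}$ so that the system really decreases along divisibility; this still meets $P$ in $nP$. The genuine gap is the vertex lemma itself. It carries all the content, you only sketch it, and as stated (for every $n$) it is false.

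Take the trefoil complement. It is a JSJ piece of the closed graph manifold obtained by gluing two copies so that the fibres do not match. Here $\Gamma_v=\langle a,b\mid aba=bab\rangle$, the fibre is the central element $h=(ab)^3$, and $P=\langle a,h\rangle\cong\Z^2$ with $a$ a meridian. Suppose $N\lhd\Gamma_v$ contains $a^2$. Since $b$ is conjugate to $a$, the group $\Gamma_v/N$ is a quotient of $\langle a,b\mid a^2,b^2,(ab)^3\rangle\cong S_3$, so $h\in N$. Hence no $N$ satisfies $N\cap P=2P$.

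Your sketch cannot see this obstruction. After choosing $\bar\phi\colon Q\to\bar Q$ with the boundary curve $\bar d$ of order $n$, you must map $\Gamma_v$ to a central extension of $\bar Q$ by $\Z/n$ in which $h$ has order $n$ and the lift of $d$ still satisfies $d^n=1$. Changing a lift by a central element of $\Z/n$ does not change its $n$-th power, so $d^n\in\Z/n$ is dictated by the extension. The exceptional-fibre relations $c_j^{\alpha_j}h^{\beta_j}=1$ can force it to be non-zero. In the example, $\bar d$ is conjugate in $\bar Q=S_3$ to the image of the order-$2$ cone generator, whose lifts all square to $h$. The freeness of the boundary curves in $Q$ is irrelevant to this. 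The step ``pass to a finite cover where the extension is central'' also needs care, since it replaces $P$ by finite-index subgroups of $P$.

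What is true is the lemma for a cofinal set of $n$, for example all sufficiently large or all sufficiently divisible $n$. That suffices for your framework, since the corresponding $n\Z^2$ are still cofinal. Proving it needs an argument that actually controls the order of $h$. One route is orbifold Dehn filling: fill each boundary torus along $d_l$ with cone order $n$, and use that the fibre has infinite order once the capped base orbifold is Euclidean or hyperbolic. Then kill $h^n$ and invoke residual finiteness. As written, the proposal does not establish the theorem.
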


Next we reduce to graphs of groups that are not trees.

\begin{lemma}\label{graph fd separating}
    Let $M$ be a closed graph manifold that is not Seifert fibered and is not finitely covered by a torus bundle. Then there exists a finite cover of $M$ with a non-separating JSJ torus.
\end{lemma}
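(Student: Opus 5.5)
The plan is to pass to the JSJ graph of groups $\mathcal{G}$ of $M$, whose underlying graph $X$ has vertices the Seifert fibered pieces $M_1,\dots,M_n$ and edges the JSJ tori $T_1,\dots,T_m$. A JSJ torus $T_j$ is non-separating in $M$ exactly when the corresponding edge of $X$ is not a bridge, so it suffices to produce a finite cover $\widetilde{M}\to M$ whose induced graph $\widetilde{X}$ is not a tree. The JSJ decomposition of a finite cover is the preimage of the JSJ tori of $M$, by the uniqueness statement in \cite[Proposition 1.6.2]{3manifold}, so $\widetilde{X}$ is the quotient of the Bass--Serre tree $T$ of $\Gamma=\pi_1(M)$ by the subgroup $\widetilde{\Gamma}=\pi_1(\widetilde{M})$. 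Since $M$ is not Seifert fibered and is not finitely covered by a torus bundle, there is at least one JSJ torus, so $X$ has at least one edge. If $X$ is already not a tree we are done, so we may assume that $X$ is a tree.

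Assuming $X$ is a tree, fix an edge $e$ of $X$ joining pieces $M_1$ and $M_2$ with edge group $\Gamma_e\cong\Z^2$. I will use residual finiteness of $\Gamma$, which follows for instance from Theorem \ref{jsj efficient} and Theorem \ref{thm:efficient gog}, together with the efficiency of vertex and edge groups. The goal is a finite-index normal subgroup $N\lhd\Gamma$ such that $N\backslash T$ contains a cycle. Since $N$ is normal, $N\backslash T$ is a finite graph with an action of $\Gamma/N$ whose quotient is $X$. Its Euler characteristic is
\[
\chi(N\backslash T)=\sum_{v}[\Gamma:N\Gamma_v]-\sum_{e}[\Gamma:N\Gamma_e],
\]
summing over vertices and geometric edges of $X$. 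So I need the edge terms to outweigh the vertex terms. The edge term for $e$ is $[\Gamma:N\Gamma_e]$, while the adjacent vertex terms are $[\Gamma:N\Gamma_{v_i}]$. By efficiency, I can choose $N$ so that $[\Gamma_{v_i}: (N\cap\Gamma_{v_i})\Gamma_e]\ge 2$ for both endpoints $v_1,v_2$ of $e$. Then the fibre over $e$ has at least twice as many edges as the fibre over each endpoint. This is possible because $\Gamma_e$ is a proper separable subgroup of each $\Gamma_{v_i}$: the JSJ tori are incompressible and $M_i$ is not $T^2\times I$ when $M$ is not a torus bundle. Efficiency, via item \eqref{eff separate} of Definition \ref{def:efficient gog}, then gives a finite-index normal subgroup of $\Gamma_{v_i}$ whose product with $\Gamma_e$ is proper, and by Theorem \ref{thm:efficient gog precise} this comes from some $M_j$.

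The main obstacle is the global Euler characteristic count. Having many lifts over one edge does not force a cycle if other vertices in $X$ also have many lifts. To control this, I would aim first to make every vertex term equal to the corresponding edge term except at $e$. A cleaner alternative is to use a homomorphism $\Gamma\to\Gamma^i$ onto the virtually free group $\pi_1(\mathcal{G}^i)$ from the proof of Theorem \ref{thm:efficient gog precise}, where the vertex groups are $\Gamma_v/N_{v,i}$. A finite graph of finite groups with a tree as underlying graph and some edge group of index at least $2$ in both adjacent vertex groups is infinite virtually free and not virtually cyclic. I expect this to follow from the exclusion of torus-bundle covers, which rules out the case where $\mathcal{G}^i$ is two-ended. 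Such a group has a torsion-free finite-index normal subgroup $F$ that is free of rank at least $2$. Pulling $F$ back to $\Gamma$ gives a finite cover $\widetilde{M}$ whose graph has fundamental group $F\ne 1$, so it is not a tree. Hence $\widetilde{M}$ has a non-separating JSJ torus. The delicate point is verifying that $\mathcal{G}^i$ is not virtually cyclic, and here the hypothesis excluding torus-bundle covers enters.
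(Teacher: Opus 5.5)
Your second (``cleaner'') route is correct and differs from the paper's. However, one assertion in it is false, and the step you flag as delicate is not actually needed.

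\textbf{The false claim and why it does not matter.} A finite tree of finite groups with an edge group of index at least $2$ in both adjacent vertex groups can be virtually cyclic; $\Z/2*\Z/2$ is the basic example. Whether $\pi_1(\mathcal{G}^i)$ is virtually cyclic depends on the level $i$, not on any torus-bundle hypothesis. You only need $\pi_1(\mathcal{G}^i)$ to be infinite, and your choice of $i$ does give that:
\begin{itemize}
\item Splitting along $e$ writes $\pi_1(\mathcal{G}^i)$ as $G_1*_{\bar\Gamma_e}G_2$ with $\bar\Gamma_e$ proper in both factors. A single level $i$ works at both endpoints because the $N_{x,i}$ form an inverse system.
\item A torsion-free finite-index normal subgroup $F$ is therefore a non-trivial free group, possibly $\Z$, and it acts freely on the Bass--Serre tree $T^i$ of $\mathcal{G}^i$.
\item Since $\ker\phi\le\widetilde\Gamma=\phi^{-1}(F)$, one has $\phi^{-1}(F\Gamma^i_x)=\widetilde\Gamma\Gamma_x$ for every $x$. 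Hence $\widetilde\Gamma\backslash T\cong F\backslash T^i$, which has fundamental group $F\neq 1$ and so contains a cycle.
\end{itemize}
So drop ``rank at least $2$'' and ``not virtually cyclic''. The hypothesis excluding torus-bundle covers is then needed only to identify the preimages of the JSJ tori with the JSJ tori of the cover, which the paper also takes from uniqueness. It would also rule out $T^2\times I$ pieces, but that is automatic when the graph is a tree. The Euler characteristic attempt can simply be discarded.

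\textbf{Comparison with the paper.} The paper does not argue through the graph of groups. It quotes the general fact, from the largeness argument of Long--Reid and Aschenbrenner--Friedl--Wilton via Stallings' fibration theorem, that a separable incompressible non-fibre surface lifts to a non-separating one in a finite cover. It combines this with Hamilton's separability of JSJ tori and JSJ uniqueness. Your argument replaces these citations with the Wilton--Zalesskii efficiency theorem (Theorem \ref{jsj efficient}), which the paper needs anyway in Lemma \ref{graph fd final step}, together with elementary Bass--Serre theory. This makes your proof more self-contained within the paper. It also shows that this step is driven only by properness and separability of a single edge group in both adjacent vertex groups; the non-fibre hypothesis carried by the paper's cited statement plays no role in your argument.
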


The torus $T$ is non-separating if its complement is connected. Given the way the graph of groups is constructed, this is the same as saying that the edge supporting $T$ is non-separating in the underlying graph.

\begin{proof}
    This is a step in the argument for largeness of $\pi_1(M)$, see \cite[Theorem 3.2.4]{long:reid}, or \cite[(C.14) and (C.15)]{3manifold} (see also the proof of \cite[Theorem 3.2(3)]{friedl:luck}). In fact, it is more generally true that any separable non-fiber surface lifts to a non-separating one in a finite cover, this is a consequence of Stallings' fibration theorem \cite{stallings:fibering}; JSJ tori are separable \cite{emily:hamilton}, and they are not fibers because we are assuming that $M$ is not finitely covered by a torus bundle. The fact that the lift of a JSJ torus is still a JSJ torus follows from uniqueness \cite[Proposition 1.6.2]{3manifold}.
\end{proof}

From now on we assume that $M$ is a closed graph manifold whose JSJ decomposition includes a non-separating edge, corresponding to the torus $T$. Let $\mathcal{G}$ be the corresponding graph of groups, with underlying graph $X$; the fundamental group $\pi_1(\mathcal{G})$ is $\pi_1(M)$, which as usual we denote by $\Gamma$. Let $e$ be the edge supporting $\pi_1(T)$, which we denote by $\Delta$. As in the proof of \cite[Theorem 3.2(3)]{friedl:luck}, we consider the map $(\pi_1(M) \to \mathbb{Z}) \in H^1(M; \Z)$ that is Poincar{\'e} dual to the second homology class $[T] \in H_2(M; \Z)$, but we instead interpret it in terms of the graph of groups.

Let $N \coloneqq M \setminus T$; this is a compact connected graph manifold with two boundary components, which we denote by $T^-$ and $T^+$, each identified with $T$. The JSJ decomposition of $N$ is the one induced from the JSJ decomposition of $M$, so the corresponding graph of groups $\mathcal{H}$ is obtained from $\mathcal{G}$ by removing the edge $e$. We denote by $\Lambda \coloneqq \pi_1(N)$, and by $\Delta^{\pm}$ the image of $\pi_1(T^{\pm})$ in $\Lambda$. If $v^{\pm}$ are the vertices adjacent to $e$, then $\Delta^{\pm} \to \Lambda$ factors through the vertex group $\Gamma_{v^{\pm}}$.

Choose a spanning tree for $X$ that does not contain the edge $e$: this is possible because $e$ is non-separating. Consider the corresponding presentation for $\Gamma$, built inductively as explained in Remark \ref{gog basic operations}, and let $t \coloneqq t_e$ be the stable letter corresponding to the edge $e$. Therefore $\Gamma$ is an HNN extension
\[\Gamma = \langle \Lambda, t \mid t^{-1} g t = f(g) : g \in \Delta^- \rangle,\]
where $f \colon \Delta^- \to \Delta^+$ is the isomorphism induced by identification with $\pi_1(T)$.
Let
\[K \coloneqq \langle t^{-n} \Lambda t^n : n \in \Z \rangle < \Gamma,\]
which is the kernel of the retraction $\Gamma \to \langle t \rangle$. By \cite[Theorem 2.17.1]{bogopolski}, $K$ is the fundamental group of the following graph of groups:

\begin{center}

\begin{tikzpicture}[scale=0.85, baseline]

\tikzset{
  Hlabel/.style={font=\normalsize},
  Alabel/.style={font=\footnotesize}
}

\draw (-6,0) -- (6,0);

\node at (-6.5,0) {$\cdots$};
\node at (6.5,0) {$\cdots$};

\foreach \x/\label in {
    -5/\Lambda_{-2},
    -2.5/\Lambda_{-1},
    0/\Lambda_{0},
    2.5/\Lambda_{1},
    5/\Lambda_{2}
}{
    \draw (\x,0) circle (0.12);
    \node[Hlabel, above=3pt] at (\x,0) {$\label$};
}

\node[Alabel, below=1pt] at (-3.75,0) {$\Delta^+_{-2} = \Delta^-_{-1}$};
\node[Alabel, below=1pt] at (-1.25,0) {$\Delta^+_{-1} = \Delta^-_0$};
\node[Alabel, below=1pt] at (1.25,0) {$\Delta^+_0 = \Delta^-_1$};
\node[Alabel, below=1pt] at (3.75,0) {$\Delta^+_1 = \Delta^-_2$};
\end{tikzpicture}
\end{center}
where each $\Lambda_i$ is a copy of $\Lambda$ corresponding to the $t^i$-conjugate of $\Lambda = \Lambda_0$, and the amalgamation $\Delta_i^+ = \Delta_{i+1}^-$ is defined by the isomorphism $f$ above, induced by identification with $\pi_1(T)$.

We define $K_p \coloneqq \langle \Lambda_i : 0 \leq i < p \rangle < K$.

\begin{lemma}\label{graph fd final step}
    Each $K_p$ is FD, and an efficient subgroup of $\Gamma$.
\end{lemma}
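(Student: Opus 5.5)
The plan is to realize $K_p$ as the fundamental group of a connected subgraph of groups of a finite cover of $M$. Then efficiency follows from Lemma \ref{efficient subgraph}, and property FD follows from the structure of compact $3$-manifolds with boundary.

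Let $\Gamma_p$ be the kernel of the composition $\Gamma \to \langle t\rangle \to \Z/p$, where the first map is the retraction. This is a normal subgroup of index $p$, equal to $\pi_1(M_p)$ for the $p$-fold cyclic cover $M_p \to M$ dual to $T$. Restricting the action of $\Gamma$ on its Bass--Serre tree to $\Gamma_p$ exhibits $\Gamma_p$ as the fundamental group of a finite graph of groups $\mathcal{G}_p$. Its underlying graph $X_p$ is the $p$-fold cyclic cover of $X$ obtained by cutting $X$ along $e$ and gluing $p$ copies of $X\setminus e$ cyclically. Its vertex and edge groups are $\Gamma_p\cap g\Gamma_x g^{-1}$.

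Explicitly, $\Gamma_p = \langle K_p, t^p\rangle$ is an HNN extension of $K_p$ with stable letter $t^p$. The subgraph $Y_p \subseteq X_p$ obtained by deleting the single lift of $e$ joining the $(p-1)$-st copy to the $0$-th copy is connected. The fundamental group of the subgraph of groups on $Y_p$ is exactly $K_p$, the path of $\Lambda_0,\dots,\Lambda_{p-1}$ amalgamated along the $\Delta$'s. I will check this by comparing with the description of $K$ from \cite[Theorem 2.17.1]{bogopolski}.

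For efficiency, I first show that $\mathcal{G}_p$ is efficient using Theorem \ref{thm:efficient gog}. First, $\Gamma_p$ is residually finite, being a subgroup of the residually finite group $\Gamma$; this holds because $\Gamma$ is the fundamental group of an efficient graph of groups by Theorem \ref{jsj efficient} and Theorem \ref{thm:efficient gog}. Second, each vertex and edge group of $\mathcal{G}_p$ is a conjugate of some $\Gamma_x$, which is efficient in $\Gamma$, intersected with the finite-index subgroup $\Gamma_p$. I then need an elementary check: if $H$ is efficient in $\Gamma$ and $\Gamma_p$ has finite index, then $H\cap \Gamma_p$ is efficient in $\Gamma_p$. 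Here I use that the profinite topology of $\Gamma_p$ is the restriction of that of $\Gamma$, and that $H\cap\Gamma_p$ has finite index in $H$.

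This route deliberately avoids having to argue that the lifted decomposition is again the JSJ decomposition of $M_p$. Lemma \ref{efficient subgraph} then gives that $K_p$ is efficient in $\Gamma_p$. Since $\Gamma_p$ has finite index in $\Gamma$, the profinite topologies are compatible, and a subgroup closed in $\Gamma_p$ is closed in $\Gamma$. Hence $K_p$ is efficient in $\Gamma$.

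For property FD, note that $K_p = \pi_1(N_p)$, where $N_p$ is $M_p$ cut along one lift of $T$; equivalently, $N_p$ is $p$ copies of $N$ glued end to end along $T^+\sim T^-$. This is a compact, irreducible, aspherical $3$-manifold with nonempty incompressible toroidal boundary, because $T$ is an incompressible JSJ torus. As in the proof of Theorem \ref{manifold fd}, $N_p$ is homotopy equivalent to a $2$-dimensional spine, so $cd_{\Q}(K_p)\le 2$. Then \cite[Theorem 1.1]{kielak:linton:manifolds} makes $K_p$ virtually free-by-cyclic, and Proposition \ref{vfbc fd} gives FD.

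I expect the main obstacle to be the bookkeeping in identifying $K_p$ with the subgraph group inside $\mathcal{G}_p$, and confirming that the lifted graph of groups has the right edge inclusions. The transfer of efficiency across the finite-index subgroup $\Gamma_p$ is routine but must be stated carefully.
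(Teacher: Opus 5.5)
Your proposal is correct, and its skeleton is the paper's. You pass to the index-$p$ subgroup $\Gamma_p=K\rtimes\langle t^p\rangle$ and realize $K_p$ as the fundamental group of the connected subgraph of groups obtained by deleting the lift of $e$ joining the last copy of $X\setminus e$ to the first. You then apply Lemma \ref{efficient subgraph}, and transfer efficiency from $\Gamma_p$ to $\Gamma$ because finite-index subgroups of $\Gamma_p$ have finite index in $\Gamma$.

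The genuine difference is how you certify that the splitting of $\Gamma_p$ is efficient. The paper argues geometrically: it passes to the cover $\widetilde{M}_p$, uses uniqueness of JSJ decompositions to identify the lifted decomposition with the JSJ decomposition of $\widetilde{M}_p$, and applies Theorem \ref{jsj efficient} to the cover. You apply Theorem \ref{jsj efficient} only to $M$ and read off efficiency of the $\Gamma_x$ in $\Gamma$ from one direction of Theorem \ref{thm:efficient gog}. You then push this down to $\Gamma_p$ by the elementary transfer, which is right: a finite-index subgroup $L$ of $H\cap\Gamma_p$ has finite index in $H$, so $L=\bigcap_i U_i$ with $U_i$ of finite index in $\Gamma$, hence $L=\bigcap_i(U_i\cap\Gamma_p)$. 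Finally you use the other direction of Theorem \ref{thm:efficient gog}.

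Your route buys several things. It proves the more general fact that efficiency of a finite graph of groups passes to the induced splitting of any finite-index subgroup. It needs no $3$-manifold input beyond Wilton--Zalesskii for $M$ itself. It also lets you apply Lemma \ref{efficient subgraph} to a graph of groups with Seifert-fibered vertex groups, which is exactly what gets certified efficient, rather than to the coarser cycle with vertex groups $\Lambda_i$. In this situation all $\Gamma_x$ lie in $\Lambda\le\Gamma_p$, so your vertex groups $\Gamma_p\cap g\Gamma_xg^{-1}$ are just the conjugates $g\Gamma_xg^{-1}$, and $X_p$ is an honest cyclic $p$-fold cover of $X$, as you say.

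For FD you also diverge. The paper cites Theorem \ref{nongraph fd} for the bounded graph manifold, which rests on virtual fibering. You instead use the spine, $cd_{\Q}(K_p)\le 2$, Kielak--Linton and Proposition \ref{vfbc fd}, exactly as the paper handles manifolds with boundary in the proof of Theorem \ref{manifold fd}. Both are valid; the paper's version is shorter given the $3$-manifold machinery, while yours is more self-contained group-theoretically.
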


\begin{proof}
Let us start by describing geometrically the group $K$. Let $\widetilde{M}$ denote the $\Z$-cover of $M$ corresponding to $K$. This has a graph of spaces decomposition corresponding to the graph of groups decomposition of $K$:

\begin{center}
\begin{tikzpicture}[scale=0.85, baseline]

\tikzset{
  Hlabel/.style={font=\normalsize},
  Alabel/.style={font=\footnotesize}
}

\draw (-6,0) -- (6,0);

\node at (-6.5,0) {$\cdots$};
\node at (6.5,0) {$\cdots$};

\foreach \x/\label in {
    -5/N_{-2},
    -2.5/N_{-1},
    0/N_{0},
    2.5/N_{1},
    5/N_{2}
}{
    \draw (\x,0) circle (0.12);
    \node[Hlabel, above=3pt] at (\x,0) {$\label$};
}

\node[Alabel, below=1pt] at (-3.75,0) {$T^+_{-2} = T^-_{-1}$};
\node[Alabel, below=1pt] at (-1.25,0) {$T^+_{-1} = T^-_0$};
\node[Alabel, below=1pt] at (1.25,0) {$T^+_0 = T^-_1$};
\node[Alabel, below=1pt] at (3.75,0) {$T^+_1 = T^-_2$};
\end{tikzpicture}
\end{center}
where each $N_i$ is a copy of $N$ and the gluing $T_i^+ = T_{i+1}^-$ is induced by identification with $T$. The element $t \in \Gamma$ acts as a deck transformation shifting the graph of spaces to the right by one edge.

Now consider the space $\widetilde{M}_p \coloneqq \widetilde{M}/\langle t^p \rangle$. This is a $\Z/p\Z$-cover of $M$, corresponding to the index-$p$ subgroup $K \rtimes \langle t^p \rangle < K \rtimes \langle t \rangle = \Gamma$. Since $t$ acts by automorphisms on the graph of spaces for $\widetilde{M}$, it follows that $\widetilde{M}_p$ inherits a quotient graph of spaces decomposition:

\begin{center}
\begin{tikzpicture}[scale=0.85, baseline]

\tikzset{
  Hlabel/.style={font=\normalsize},
  Alabel/.style={font=\footnotesize}
}

\def\R{2.5} 

\draw (0,0) circle (\R);

\foreach \k/\angle in {
  {p-2}/180,
  {p-1}/120,
   0/60,
   1/0,
   2/-60
}{
  \draw ({\R*cos(\angle)}, {\R*sin(\angle)}) circle (0.12);

  \node[Hlabel] at ({0.75*\R*cos(\angle)}, {0.8*\R*sin(\angle)}) {$N_{\k}$};
}

\node[Hlabel] at ({0.9*\R*cos(-130)}, {0.9*\R*sin(-130)}) {$\ddots$};

  \node[Alabel] at ({1.30*\R*cos(180)}, {1.15*\R*sin(150)}) {$T^+_{p-2} = T_{p-1}^-$};
  \node[Alabel] at ({1.15*\R*cos(90)}, {1.15*\R*sin(90)}) {$T^+_{p-1} = T_0^-$};
  \node[Alabel] at ({1.20*\R*cos(0)}, {1.15*\R*sin(30)}) {$T^+_0 = T_1^-$};
  \node[Alabel] at ({1.20*\R*cos(0)}, {1.15*\R*sin(-30)}) {$T^+_1 = T_2^-$};

\end{tikzpicture}
\end{center}

Refining this further with the JSJ decomposition of each $N_i$ yields, by uniqueness again \cite[Proposition 1.6.2]{3manifold}, the JSJ decomposition of $\widetilde{M}_p$, which is also a closed graph manifold. Hence the graph of groups decomposition for $K \rtimes \langle t^p \rangle$ arising from the above graph of spaces:
\begin{center}
\begin{tikzpicture}[scale=0.85, baseline]

\tikzset{
  Hlabel/.style={font=\normalsize},
  Alabel/.style={font=\footnotesize}
}

\def\R{2.5} 

\draw (0,0) circle (\R);

\foreach \k/\angle in {
  {p-2}/180,
  {p-1}/120,
   0/60,
   1/0,
   2/-60
}{
  \draw ({\R*cos(\angle)}, {\R*sin(\angle)}) circle (0.12);

  \node[Hlabel] at ({0.75*\R*cos(\angle)}, {0.8*\R*sin(\angle)}) {$\Lambda_{\k}$};
}

\node[Hlabel] at ({0.9*\R*cos(-130)}, {0.9*\R*sin(-130)}) {$\ddots$};

  \node[Alabel] at ({1.30*\R*cos(180)}, {1.15*\R*sin(150)}) {$\Delta^+_{p-2} = \Delta_{p-1}^-$};
  \node[Alabel] at ({1.15*\R*cos(90)}, {1.15*\R*sin(90)}) {$\Delta^+_{p-1} = \Delta_0^-$};
  \node[Alabel] at ({1.20*\R*cos(0)}, {1.15*\R*sin(30)}) {$\Delta^+_0 = \Delta_1^-$};
  \node[Alabel] at ({1.20*\R*cos(0)}, {1.15*\R*sin(-30)}) {$\Delta^+_1 = \Delta_2^-$};

\end{tikzpicture}
\end{center}
is efficient by Theorem \ref{jsj efficient}. The group $K_p$ is the fundamental group defined on the subgraph of groups obtained by removing the non-separating edge connecting $\Lambda_{p-1}$ to $\Lambda_0$. Therefore it is an efficient subgroup of $K \rtimes \langle t^p \rangle$ by Lemma \ref{efficient subgraph}. But every finite-index subgroup of $K \rtimes \langle t^p \rangle$ is also a finite-index subgroup of $\Gamma$, so $K_p$ is an efficient subgroup of $\Gamma$ as well. Finally, $K_p$ is the fundamental group of the space obtained by cutting $\widetilde{M}_p$ along a JSJ torus, and so it is the fundamental group of a compact graph manifold with non-empty (toroidal) boundary, which, as we have seen in Theorem \ref{nongraph fd}, is FD.
\end{proof}

\subsubsection*{End of the proof}

We finally have everything in place to prove Proposition \ref{graph fd}, and thus complete the proof of Theorem \ref{manifold fd}.

\begin{proof}[Proof of Proposition \ref{graph fd}]
    Let $M$ be the fundamental group of a closed graph manifold. If $M$ is Seifert fibered, or finitely covered by a torus bundle, then $\pi_1(M)$ is FD by Lemma \ref{graph fd easy}. Otherwise, Corollary \ref{virtually fd} allows to pass to a finite cover of $M$, so by Lemma \ref{graph fd separating}, we may assume that $M$ has a non-separating JSJ torus. With the notation established in the previous paragraph, we have
    \[\Gamma \coloneqq \pi_1(M) = K \rtimes \langle t \rangle.\]
    By Theorem \ref{LS:coamenable}, it remains to show that $K$ is a directed union of FD, efficient subgroups of $\Gamma$. Now $K$ is the union of the groups $\bar{K}_p \coloneqq \langle \Lambda_i : -p \leq i < p \rangle$. Because $\bar{K}_p$ is conjugate to $K_{2p}$, it is an FD, efficient subgroup of $\Gamma$ by Lemma \ref{graph fd final step}, and we conclude.
\end{proof}

\subsection{One-relator groups}

As in Remark \ref{one rel fg}, an infinitely generated one-relator group is a free product of a finitely generated one-relator group, and a free group, so thanks to Theorem \ref{LS:free} and Corollary \ref{fd fp cor} the question of which one-relator groups have property FD reduces to the finitely generated case.

In Subsection \ref{ss:or}, we showed that many one-relator groups are LP by showing that they are virtually free-by-cyclic. This also implies FD, by Proposition \ref{vfbc fd}. Property RFD for some one-relator groups with non-trivial centre was proved with different methods in \cite[Theorem 11]{Hadwin:2018aa}.

Many of the remaining examples are not even residually finite, such as the Baumslag--Solitar groups $BS(m, n)$, when $1 \neq |m| \neq |n| \neq 1$, or the Baumslag--Gersten groups, hence they cannot be FD or even RFD. On the other hand, the group $BS(1, n)$ from Example \ref{bs solvable} is FD, being amenable and residually finite (Remark \ref{RFD:MAP:FD:RF}).

\subsection{Limit groups}

We saw in Example \ref{limit lp} that limit groups are LP.

\begin{proposition}
\label{limit fd}
    Limit groups are FD.
\end{proposition}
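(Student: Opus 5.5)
The plan is to mirror the structure of Example \ref{limit lp}: apply Theorem \ref{LS:coamenable} to the free normal subgroup given by Kochloukova's theorem. By \cite{kochloukova}, a limit group $\Gamma$ has a free normal subgroup $F \lhd \Gamma$ with $\Gamma/F$ torsion-free nilpotent. Since $\Gamma/F$ is amenable, $F$ is co-amenable in $\Gamma$: the quasi-regular representation $\ell^2(\Gamma/F)$ is the regular representation of the amenable group $\Gamma/F$ pulled back to $\Gamma$, so it weakly contains the trivial representation. Write $F$ as the directed union of its finitely generated subgroups $F_i$. Each $F_i$ is a finitely generated free group, hence FD by Theorem \ref{LS:free}.

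It then remains to show that each $F_i$ is efficient in $\Gamma$, meaning every finite-index subgroup of $F_i$ is separable in $\Gamma$. Since a finite-index subgroup of $F_i$ is again a finitely generated subgroup of $\Gamma$, it suffices to know that every finitely generated subgroup of $\Gamma$ is separable, i.e.\ that limit groups are LERF. This is a theorem of Wilton (limit groups are subgroup separable, building on Sela's and Kharlampovich--Myasnikov's structure theory). Efficiency then follows directly from the definition: a finite-index subgroup $H \le F_i$ is finitely generated, hence an intersection of finite-index subgroups of $\Gamma$.

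Assembling these, Theorem \ref{LS:coamenable} (with $\Lambda = F$ and $\Lambda_i = F_i$) shows that $\Gamma$ is FD.

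The only nontrivial input is LERF for limit groups, and I expect this to be the main obstacle if it has to be justified rather than cited. If citing Wilton's theorem were unavailable, I would instead try to use that limit groups are virtually special (Wise) and thus virtual retracts of RAAGs, so that finitely generated subgroups that are quasiconvex (limit groups are toral relatively hyperbolic) are separable. This route is more delicate, so I would rely on the citation.
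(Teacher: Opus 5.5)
Your proposal is correct and is essentially the paper's own proof: Kochloukova's free normal subgroup with torsion-free nilpotent quotient, written as a directed union of finitely generated free (hence FD) subgroups, with efficiency coming from Wilton's theorem that every finitely generated subgroup of a limit group is separable, all fed into Theorem \ref{LS:coamenable}. Your explicit checks of co-amenability and of the reduction from efficiency to LERF are points the paper leaves implicit, so your fallback route through virtual specialness is not needed.
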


This is not new, indeed limit groups were already known to have property MD \cite[Theorem 4.4]{treeable}, which is stronger than property FD. We discuss more on this in the appendix.

\begin{proof}
    Let $\Gamma$ be a limit group. Then there exists a free normal subgroup $\Lambda < \Gamma$ such that $\Gamma/\Lambda$ is torsion-free nilpotent \cite{kochloukova}. We claim that this satisfies the hypotheses of Theorem \ref{LS:coamenable}. Indeed, $\Lambda$ is a directed union of finitely generated free groups, which are FD by Theorem \ref{LS:free}. The separability condition follows from the fact that every finitely generated subgroup of a limit group is separable \cite{wilton:hall}.
\end{proof}

Also the related groups in Example \ref{graphs of free groups} are LP, being finitely generated and virtually free-by-cyclic, by Proposition \ref{vfbc fd}.

\subsection{Right-angled Artin groups}

We saw in Example \ref{raag lp} that a right-angled Artin group defined on a chordal graph is LP.

\begin{proposition}
\label{raag fd}
    A right-angled Artin group defined on a (finite) chordal graph is FD.
\end{proposition}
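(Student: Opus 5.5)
We will prove the statement by induction on the number of vertices of the chordal graph $X$, using the amalgamation result Corollary \ref{fd am}, with Corollary \ref{fd fp cor} handling disconnected graphs. The structural input is the standard fact about chordal graphs: a chordal graph that is not complete has a separating clique. Concretely, if $X$ is connected and not complete, one can write $X = X_1 \cup X_2$ with $X_1, X_2$ proper induced subgraphs such that $X_1 \cap X_2 = C$ is a complete subgraph (possibly empty) and there are no edges between $X_1 \setminus C$ and $X_2 \setminus C$. Induced subgraphs of chordal graphs are again chordal.

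In terms of groups, writing $A(Y)$ for the RAAG on $Y$, this decomposition gives
$$A(X) \cong A(X_1) \Asterisk_{A(C)} A(X_2),$$
by comparing presentations. The group $A(C) \cong \Z^{|C|}$ is free abelian, hence amenable. It is also a retract of each $A(X_i)$: the map killing the generators of $X_i \setminus C$ is a well-defined homomorphism $A(X_i) \to A(C)$. It is well defined because every relation is a commutator of two generators, which maps either to a commutator in the abelian group $A(C)$ or to a relation involving the identity. It restricts to the identity on $A(C)$, which embeds as the standard parabolic subgroup.

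The induction then runs as follows. In the base case $X$ is complete, so $A(X) \cong \Z^n$ is amenable and residually finite, hence FD by Remark \ref{RFD:MAP:FD:RF}. If $X$ is disconnected, $A(X)$ is the free product of the RAAGs on its components, each of which has fewer vertices, so we conclude by Corollary \ref{fd fp cor}. If $X$ is connected and not complete, we apply the decomposition above: $A(X_1)$ and $A(X_2)$ are FD by induction, since the $X_i$ are chordal with fewer vertices. They share the amenable retract $A(C)$, so Corollary \ref{fd am} gives that $A(X)$ is FD. Here $C$ is nonempty since $X$ is connected, though even the empty case is covered by Corollary \ref{fd fp cor}.

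The main obstacle is the graph-theoretic step: justifying the clique-separator decomposition of a connected non-complete chordal graph. We would cite Dirac's theorem, which states that every minimal vertex separator of a chordal graph is a clique. Given this, we take $C$ to be a minimal separator, $X_1$ to be one component of $X\setminus C$ together with $C$, and $X_2$ to be the remaining vertices together with $C$. The only remaining checks are routine: that the amalgam presentation matches, and that the retractions are compatible with the amalgamation. Compatibility is needed because Corollary \ref{fd am} assumes a common retract, and both retractions restrict to the identity on the same copy of $A(C)$.
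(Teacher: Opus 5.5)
Your proposal is correct and follows the paper's proof: induction on the number of vertices, Corollary \ref{fd fp cor} for disconnected graphs, and for connected graphs Dirac's clique separator together with Corollary \ref{fd am}, using the free abelian retract $A(C)$. Your separate base case for complete graphs ($\Z^n$, amenable and residually finite) is in fact needed. A connected complete graph on at least two vertices admits no decomposition into proper induced subgraphs with no edges between $X_1\setminus C$ and $X_2\setminus C$, and the paper's write-up passes over this case.
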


\begin{proof}
    Denote by $A(X)$ the right-angled Artin group defined on the graph $X$. We prove the statement by induction on the number of vertices of $X$. When $X$ has a single vertex, $A(X) \cong \Z$.

    Now suppose that the statement is true for all chordal graphs with less vertices than $X$. If $X$ is disconnected, say $X = X_1 \sqcup X_2$ with both $X_i$ proper induced subgraphs and no edge connecting $X_1$ to $X_2$, then $A(X) \cong A(X_1) \Asterisk A(X_2)$, so by induction and Corollary \ref{fd fp cor}, $A(X)$ is FD.

    Suppose instead that $X$ is connected. By a theorem of Dirac \cite{dirac:graphs} we can decompose $X$ as $X_1 \cup X_2$, where the $X_i$ are proper induced subgraphs, $C \coloneqq X_1 \cap X_2$ is a (non-empty) clique, and there is no edge connecting $X_1 \setminus C$ to $X_2 \setminus C$\footnote{This in fact characterizes chordal graphs.}. Now $A(X) \cong A(X_1) \Asterisk_{A(C)} A(X_2)$; moreover $A(C)$ is free abelian, hence amenable, and it is a retract of both $A(X_i)$: the retraction is given by setting all generators labelled by vertices in $X_i \setminus C$ to be equal to the identity. Hence by induction and Corollary \ref{fd am}, $A(X)$ is FD.
\end{proof}

\section{Applications to group stability}
\label{s:stability}

In this section we explore the connection between the (L)LP, property (R)FD, and a topic of increasing interest in group theory: the stability\footnote{Here we are only concerned with \emph{pointwise} stability, as opposed to more classical notions of \emph{uniform} (also known as \emph{Ulam}) stability, as in \cite{Kazhdan:1982aa}. We refer the reader to the introduction of \cite{ultrametric:stability} for a discussion and detailed comparison between the two notions.} of metric approximations of groups.

\subsection{Definitions}

\begin{definition}\label{uin}
Let $U_n=U(M_n(\C))$ be the $n$-dimensional unitary group.  A norm $\|\cdot\|_\nu$ on $M_n(\C)$ will be called \emph{unitarily invariant} if $\|uav\|_\nu=\|a\|_\nu$ for all $a\in M_n(\C)$ and all $u,v\in U_n$, and will be called \emph{normalized} if $\|1\|_\nu=1$.
\end{definition}

Throughout this section, we keep the notation $\| \cdot \|$ for the operator norm: all other norms will be decorated with a subscript for clarity.
In general, unitarily invariant norms on matrices can be characterized via appropriate functions of the singular values: see for example \cite[Section 7.4.7]{Horn:2013aa}.  Examples of unitarily invariant norms include the operator norm, and the Schatten $p$-norm defined by $\|a\|_p \coloneqq (\mathrm{trace}((a^*a)^{p/2}))^{1/p}$ for fixed $p\in [1,\infty)$.  Note that $\|1\|_p=n^{1/p}$, so the Schatten $p$-norm is not normalized; one may also consider a normalized version by just dividing by $n^{1/p}$, however.  In particular, the \emph{Hilbert--Schmidt norm} is defined to be the normalized Schatten $2$-norm.

\begin{remark}\label{com norm rem}
Let us make some observations about a unitarily invariant norm $\|\cdot \|_\nu$ on $M_n(\C)$.
\begin{enumerate}[(i)]
\item We have $\|u\|_\nu=\|1\|_\nu$ for all $u\in U_n$. In particular, a normalized unitarily invariant norm induces a bi-invariant metric on $U_n$ of diameter at most two.
\item \label{ideal part} The Russo--Dye theorem\footnote{Or the finite-dimensional Krein--Milman theorem coupled with the fact that the extreme points of the unit ball in $M_n(\C)$ for the operator norm are exactly the unitaries \cite[II.3.2.17]{Blackadar:2006eq}.} (see \cite{Gardner:1984aa} for a short proof) implies that the closed unit ball in $M_n(\C)$ for the operator norm $\|\cdot \|$ is the convex hull of $U_n$.  Let $a\in M_n(\C)$ satisfy $\|a\|\leq 1$, so we can write $a=\sum t_i u_i$ as a convex combination of unitaries.  Then for any $b\in M_n(\C)$ 
$$
\|ab\|_\nu\leq \sum t_i \|u_ib\|_\nu=\sum t_i \|b\|_\nu\leq \|b\|_\nu.
$$
Hence for any $a,b\in M_n(\C)$, $\|ab\|_\nu\leq \|a\|\|b\|_\nu$ and similarly $\|ab\|_\nu\leq \|a\|_\nu\|b\|$.  Thus while unitarily invariant norms need not be submultiplicative\footnote{Let $\|\cdot \|_2$ be the Hilbert--Schmidt norm on $M_2(\C)$, and let $p$ be a rank one projection.  Then $\|p^2\|_2=\|p\|_2=2^{-1/2}\not\leq 2^{-1}=\|p\|_2^2$. The failure of submultiplicativity for the Hilbert--Schmidt norm is the main obstacle to apply the cohomological methods from \cite{Chiffre:2018ds, oppenheim:lubotzky} to produce a non-hyperlinear group.}, they have a submultiplicative property `relative to the operator norm'. 
\item \label{op big} If $\|\cdot \|_\nu$ is in addition normalized, then using part \eqref{ideal part} (or the Russo--Dye theorem directly), we see that $\|a\|_\nu=\|a1\|_\nu\leq \|a\|$.  Hence any normalized unitarily invariant norm is dominated by the operator norm, which is thus the `largest' such norm.
\end{enumerate}
\end{remark}

\begin{definition}\label{stab def}
Let $\mathcal{U}$ denote a fixed sequence ($\|\cdot\|_n)_{n=1}^\infty$ of unitarily invariant norms on the unitary groups $U_n$. 
Let $\Gamma$ be a countable\footnote{We work only with countable groups as approaching stability problems through sequences is typical in the literature.  This is not really necessary, but (for the sake of simplicity and consistency with the established literature) we leave the appropriate `nettified' versions to the reader.} discrete group, and let $S$ be a fixed family of (not necessarily finite-dimensional) unitary representations of $\Gamma$.

An \emph{asymptotic representation (with respect to $\mathcal{U}$)} is a sequence  $\phi=(\phi_n \colon \Gamma \to U_{k_n})_n$ of maps such that
\[\| \phi_n(gh) - \phi_n(g)\phi_n(h) \|_{k_n} \xrightarrow{n \to \infty} 0 \text{ for all } g, h \in \Gamma.\]

Assume we have a sequence of Hilbert spaces $H_n$ and a sequence of (genuine) representations $\psi=(\psi_n \colon \Gamma\to U(H_n))$ from $S$.  We say that $\phi$ is \emph{close to a corner of $\psi$} if there exist isometric inclusions $v_n \colon \C^{k_n}\to H_n$ such that 
\[ \|\phi_n(g)-v_n^*\psi_n(g)v_n\|_{k_n} \xrightarrow{n \to \infty} 0.\]

With notation as above, we say that $\Gamma$ is:
\begin{enumerate}[(i)]
\item \emph{$\mathcal{U}$-$S$-stable} if every asymptotic representation is close to a (corner of a) sequence of representations from $S$, with $H_n=\C^{k_n}$;
\item \emph{flexibly $\mathcal{U}$-$S$-stable} if every asymptotic representation is close to a corner of a sequence of representations from $S$, with $\mathrm{dim}(H_n)/k_n\to 1$;
\item \emph{very flexibly $\mathcal{U}$-$S$-stable} if every asymptotic representation is close to a corner of a sequence of representations from $S$.
\end{enumerate}
We will also talk about \emph{((very) flexible) $\mathcal{U}$-stability} without specifying $S$; in that case, $S$ should be assumed to be the collection of all finite-dimensional representations.
\end{definition}

In this paper, we will almost exclusively consider very flexible stability, but we mention the other notions for completeness.  Let us make some brief comparisons to the literature.

\begin{remark}\label{stability other notions}
\begin{enumerate}[(i)]
\item The notion of ((very) flexible) $\mathcal{U}$-stability (so $S$ is the family of all finite-dimensional representations) is by now fairly well-established in the literature.  It was proposed by Becker--Lubotzky in the context of approximate permuation representations \cite[Section 4.4]{Becker:2020aa}, compare also with \cite[Section 6.4]{Enders:2024aa} (for $C^*$-algebras and both the operator and Hilbert--Schmidt norms) and \cite[Section 6]{eckhardt:shulman} (for groups and the Hilbert--Schmidt norm).
\item \label{weak stab} If $S$ is the collection of \emph{all} representations, and $\mathcal{U}$ consists of the Hilbert--Schmidt norms, then very flexible $\mathcal{U}$-$S$ stability could be called \emph{ucp stability}. Dogon defines a notion of \emph{weak ucp stability} in \cite[Definition 1.5]{Dogon:2023aa}.
The difference is that there one does not require the stability property to hold for all asymptotic representations, but only for hyperlinear approximations, that is those for which $\lim\limits_{n \to \infty} \| \phi_n(g) - 1 \|_{k_n}$ is as large as possible, for every $g \in \Gamma \setminus \{ 1 \}$. This softening of the notion of stability is called ``weak'' stability in other contexts: see for example \cite[Definition 7.1]{Arzhantseva:2015aa}.
\item The case where $S$ consists of all representations that factor through a finite quotient seems interesting, and it will be the subject of our strongest applications.  We are not aware of this having been studied in the literature before.
\end{enumerate}
\end{remark}

\subsection{Very flexible stability}

The following observation is fundamental to our operator-algebraic approach to stability questions. 

\begin{remark}\label{ass hom}
Let $\phi=(\phi_n \colon \Gamma\to U_{k_n})_n$ be an asymptotic representation with respect to a sequence $\mathcal{U}=(\|\cdot\|_n)_n$ of normalized unitarily invariant norms on the unitary groups $U_n$.  Define $M \coloneqq \prod_n M_{k_n}(\C)$ to be the $C^*$-algebra of (operator norm) bounded sequences $(a_n)_n$ with each $a_n$ in $M_{k_n}(\C)$, equipped with the supremum (operator) norm: $\|(a_n)_n\| \coloneqq \sup_n\|a_n\|$.  Define $J \coloneqq \{(a_n)_n\in M\mid \|a_n\|_{k_n} \xrightarrow{n \to \infty} 0\}$; using properties \eqref{ideal part} and \eqref{op big} from Remark \ref{com norm rem}, $J$ is an ideal in $M$, closed for the norm on $M$.

It follows from the definition of asymptotic representation that $\phi$ uniquely determines a representation of $\Gamma$ with values in the unitary group of $M/J$.  Moreover, by the universal property of $C^*\Gamma$, this homomorphism extends uniquely to a $*$-homomorphism 
$$
\phi \colon C^*\Gamma\to M/J.
$$
We call this the \emph{$*$-homomorphism associated to $\phi$}.
\end{remark}

We next record a corollary of Theorem \ref{llp to lp} above.  It is part of the folklore of the subject, appearing in closely related forms in \cite[Corollary 1.7]{Ioana:2020aa} and \cite[Proposition 2.7]{Willett:2024aa}, for example.

\begin{corollary}\label{mat lift}
With notation as in Remark \ref{ass hom}, let $\phi$ be an asymptotic representation of $\Gamma$, with associated $*$-homomorphism $\phi \colon C^*\Gamma\to M/J$.  If $\Gamma$ has the LLP, then $\phi$ admits a ucp lift.
\end{corollary}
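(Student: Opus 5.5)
The plan is to apply Kirchberg's Theorem \ref{llp to lp} to the $*$-homomorphism $\phi \colon C^*\Gamma\to M/J$. Three inputs are needed: $C^*\Gamma$ must be separable and unital with the LLP, $M$ must be unital and QWEP, and $\phi$ must be ucp.

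First, $\Gamma$ is countable by the standing assumption in Definition \ref{stab def}, so $C^*\Gamma$ is separable. It is unital with unit the identity element of $\Gamma$, and it has the LLP by hypothesis.

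Second, $\phi$ is a unital $*$-homomorphism. It is unital because $\phi_n(e)$ is asymptotically idempotent and unitary in $\|\cdot\|_{k_n}$: from $\phi_n(e)^2-\phi_n(e)\to 0$ we get $\phi_n(e)-1 = \phi_n(e)^*(\phi_n(e)^2-\phi_n(e)) \to 0$, using the relative submultiplicativity of Remark \ref{com norm rem}\eqref{ideal part}. A unital $*$-homomorphism is ucp.

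Third, $M=\prod_n M_{k_n}(\C)$ is QWEP. In fact it is injective: a product of injective von Neumann algebras (here finite-dimensional matrix algebras) is injective. Injective $C^*$-algebras have the WEP, hence are QWEP (compare the remark on $\ell^\infty(\N)$ in Remark \ref{sep rem}). Alternatively, $M$ is a von Neumann algebra of type I, hence injective. This is the only step that uses outside input, and citing a standard reference (e.g.\ \cite{Brown:2008qy}) for ``products of injective algebras are injective'' and ``injective implies WEP'' suffices. I also need $J$ to be a closed two-sided ideal, which was already noted in Remark \ref{ass hom}.

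Theorem \ref{llp to lp} then gives a ucp lift $\widetilde{\phi}\colon C^*\Gamma\to M$ with $\pi\circ\widetilde\phi=\phi$, where $\pi\colon M\to M/J$ is the quotient map. I do not expect any real obstacle: the argument is essentially bookkeeping, with the QWEP property of $M$ being the only point that needs a citation.
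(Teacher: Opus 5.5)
Your proposal is correct and follows the paper's proof: the paper also applies Kirchberg's Theorem \ref{llp to lp}, and the only point it addresses explicitly is that $M=\prod_n M_{k_n}(\C)$ is QWEP, which it deduces from injectivity of $M$. Your checks of separability of $C^*\Gamma$ (from countability of $\Gamma$) and of unitality of $\phi$ are valid details that the paper leaves implicit.
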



\begin{proof}
Using Theorem \ref{llp to lp}, it suffices to show that $M$ is QWEP.  Indeed, it is actually \emph{injective} in the category of unital $C^*$-algebras and ucp maps, which is much stronger: see for example \cite[Examples IV.2.1.2]{Blackadar:2006eq} and surrounding discussion.
\end{proof}

\begin{remark}\label{mat lift wucps}
Corollary \ref{mat lift} implies that if $\Gamma$ has the LLP, then it is ucp stable in the sense of Remark \ref{stability other notions}, part \eqref{weak stab}. In particular, it is weakly ucp stable in Dogon's sense \cite[Definition 1.5]{Dogon:2023aa}, as already observed by Dogon in that paper.
\end{remark}

\begin{lemma}\label{llp approx}
Let $\mathcal{U}$ denote a sequence ($\|\cdot\|_n)_n$ of normalized unitarily invariant norms on the unitary groups $U_n$. Let $\Gamma$ be a countable\footnote{One could prove an analogous (`nettified') statement for uncountable groups, but one would need to assume the LP rather than the LLP, as Theorem \ref{llp to lp} would no longer be available.  Such a statement does not seem to be of any interest currently: see Question \ref{q:lp:uncountable}.} group with the LLP.   Let $(\phi_n \colon \Gamma\to U_{k_n})_n$ be an asymptotic representation.  

Then there is a sequence $(\psi_n \colon C^*\Gamma\to M_{k_n}(\C))_n$ of ucp maps such that $\|\phi_n(g)-\psi_n(g)\|_{k_n} \xrightarrow{n \to \infty} 0$ for all $g\in \Gamma$.
\end{lemma}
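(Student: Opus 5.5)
The plan is to combine Remark \ref{ass hom} with Corollary \ref{mat lift}, and then read off the coordinates of the lift. First, form the $C^*$-algebra $M=\prod_n M_{k_n}(\C)$ and the ideal $J=\{(a_n)_n\in M\mid \|a_n\|_{k_n}\to 0\}$ as in Remark \ref{ass hom}. That remark gives the associated $*$-homomorphism $\phi \colon C^*\Gamma\to M/J$. This is a unital $*$-homomorphism, hence ucp, and it satisfies $\phi(g)=[(\phi_n(g))_n]$ for each $g\in\Gamma$. Here we use that each $\phi_n(g)$ is unitary, so the sequence is operator-norm bounded by $1$ and lies in $M$.

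Since $\Gamma$ is countable, $C^*\Gamma$ is separable, and $\Gamma$ has the LLP. Corollary \ref{mat lift} (which rests on Theorem \ref{llp to lp} and the injectivity of $M$) therefore provides a ucp lift $\widetilde{\phi}\colon C^*\Gamma\to M$ with $\pi\circ\widetilde{\phi}=\phi$, where $\pi\colon M\to M/J$ is the quotient map.

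Next, let $p_n\colon M\to M_{k_n}(\C)$ be the coordinate projection. This is a unital $*$-homomorphism, hence ucp. Define $\psi_n\coloneqq p_n\circ\widetilde{\phi}$; as a composition of ucp maps, each $\psi_n$ is ucp.

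Finally, for $g\in\Gamma$ we have
$$\pi\big((\psi_n(g))_n\big)=\pi(\widetilde{\phi}(g))=\phi(g)=\pi\big((\phi_n(g))_n\big),$$
so $(\psi_n(g)-\phi_n(g))_n\in J$. By the definition of $J$, this says exactly that $\|\psi_n(g)-\phi_n(g)\|_{k_n}\to 0$, which is the conclusion of the lemma.

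There is essentially no obstacle beyond what the earlier results already supply. The substantive input, the existence of a global ucp lift from mere local liftability, is the separability and QWEP argument packaged in Corollary \ref{mat lift}. The only points to check are the following.
\begin{itemize}
\item $J$ is a closed two-sided ideal. This is Remark \ref{com norm rem}, which gives $\|ab\|_\nu\le\|a\|\|b\|_\nu$ and $\|a\|_\nu\le\|a\|$ for normalized unitarily invariant norms.
\item The identification $\phi(g)=[(\phi_n(g))_n]$ holds by construction in Remark \ref{ass hom}.
\end{itemize}
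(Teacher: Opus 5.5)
Your proposal is correct and follows the paper's proof essentially verbatim. Both form the associated $*$-homomorphism $C^*\Gamma\to M/J$ from Remark~\ref{ass hom}, lift it to a ucp map $C^*\Gamma\to M$ via Corollary~\ref{mat lift}, and read off the coordinates $\psi_n$, whose differences with $\phi_n(g)$ lie in $J$; your composition with the coordinate projections just spells out the step the paper calls ``elementary to check.''
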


\begin{proof}
With notation as in Remark \ref{ass hom}, let $\phi \colon C^*\Gamma\to M/J$ be the associated $*$-homomorphism for $C^*\Gamma$.  Using the LLP and Corollary \ref{mat lift} there is a ucp lift $\psi \colon C^*\Gamma\to M$.  It is elementary to check that $\psi$ is necessarily of the form $(\psi_n)_n$ with each $\psi_n \colon C^*\Gamma\to M_{k_n}(\C)$ a ucp map.  The fact that $\psi$ lifts $\phi$ implies that $(\psi_n(g)-\phi_n(g))_n \in J$ for all $g\in \Gamma$, or in other words, that $\|\phi_n(g)-\psi_n(g)\|_{k_n} \xrightarrow{n \to \infty} 0$ for all $g\in \Gamma$.
\end{proof}

We need one more representation-theoretic lemma.

\begin{lemma}\label{s corner}
Let $A$ be a unital $C^*$-algebra, and let $S$ be a collection of $*$-representations of $A$ that is dense in the Fell topology, and that is closed under finite direct sums, unitary equivalence, and taking subrepresentations.  Then for any ucp map $\phi \colon A\to M_n(\C)$, any $\epsilon>0$, and any finite subset $A_0$ of $A$ there exists a unital $*$-representation $\pi \colon A\to \mathcal{B}(H)$ in $S$, and an isometry $v \colon \C^n\to H$ such that $\|v^*\pi(a)v-\phi(a)\|<\epsilon$ for all $a\in A_0$.
\end{lemma}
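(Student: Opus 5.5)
The plan is to dilate $\phi$ via Stinespring and then approximate the resulting representation inside $\mathrm{Rep}(A,H)$ by members of $S$, using Theorem \ref{rep approx}. Applied to the ucp map $\phi \colon A\to M_n(\C)$, Stinespring's theorem (Remark \ref{stinespring}) gives a Hilbert space $H'$, a unital $*$-representation $\rho \colon A\to\mathcal{B}(H')$ and an isometry $w \colon \C^n\to H'$ with $\phi(a)=w^*\rho(a)w$. Next we fix an infinite cardinal $\aleph$ as in Theorem \ref{rep approx} and embed $H'$ isometrically into a Hilbert space $H$ of dimension at least $\aleph$. We regard $\rho$ as an element of $\mathrm{Rep}(A,H)$ by letting $A$ act as zero on $H'^\perp$, so that its essential space is $H'$ (as $\rho$ is unital on $H'$). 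Since $S$ is dense in the Fell topology, item \eqref{rep gen} of Theorem \ref{rep approx} shows that $\rho$ is approximately-$S$ relative to $H$. As $S$ is closed under finite sums, unitary equivalence and subrepresentations, we have $\langle S\rangle=S$. Hence there is a net $(\pi_i)$ in $\mathrm{Rep}(A,H)$, each $\pi_i\in S$, with $\pi_i(a)\xi\to\rho(a)\xi$ for all $a\in A$ and $\xi\in H'$.

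Write $v_0 \colon \C^n\to H$ for $w$ followed by the inclusion $H'\subseteq H$. For $a\in A_0$ and each of the finitely many basis vectors $e_k$ of $\C^n$, we have $\pi_i(a)v_0e_k\to\rho(a)v_0e_k$. Since $v_0^*$ is contractive, $v_0^*\pi_i(a)v_0\to v_0^*\rho(a)v_0=\phi(a)$ entrywise, hence in norm because $M_n(\C)$ is finite-dimensional. So for $i$ large, $\|v_0^*\pi_i(a)v_0-\phi(a)\|<\epsilon/2$ on $A_0$.

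The main obstacle is that $\pi_i$ may be degenerate, and the lemma asks for a unital $\pi$. We handle this by passing to the essential space $H_i$ of $\pi_i$, which is a subrepresentation and so lies in $S$ (here closure under subrepresentations is used). Its restriction $\pi_i'$ to $H_i$ is unital, because a nondegenerate representation of a unital algebra is unital. Let $P_i$ be the projection onto $H_i$. Applying convergence to $a=1$ gives $\pi_i(1)v_0e_k=P_iv_0e_k\to v_0e_k$, so $P_iv_0$ is asymptotically isometric: $(P_iv_0)^*(P_iv_0)\to 1$. Put $v_i:=P_iv_0\,\bigl((P_iv_0)^*P_iv_0\bigr)^{-1/2}$ for $i$ large; this is an isometry $\C^n\to H_i$ with $\|v_i-P_iv_0\|\to0$. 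Since $\pi_i(a)=P_i\pi_i(a)P_i$, we get $v_0^*\pi_i(a)v_0=(P_iv_0)^*\pi_i'(a)(P_iv_0)$. Replacing $P_iv_0$ by $v_i$ then changes this by at most $2\|a\|\,\|v_i-P_iv_0\|$. Choosing $i$ large finishes the proof with $\pi=\pi_i'$ and $v=v_i$.
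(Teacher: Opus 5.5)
Your proof is correct and follows essentially the same route as the paper: Stinespring dilation, padding to a Hilbert space of dimension at least $\aleph$, approximation via item \eqref{rep gen} of Theorem \ref{rep approx}, and then dealing with degeneracy of $\pi_i$ by moving the Stinespring isometry into the essential space and passing to the unital corestriction there. The only difference is in how that isometry is moved. The paper uses a cited functional-calculus lemma to get unitaries $u_i$ with $\|u_i-1\|\to 0$ and $u_iww^*u_i^*\leq \pi_i(1)$. You instead normalize $P_iv_0$ directly by $\bigl((P_iv_0)^*P_iv_0\bigr)^{-1/2}$, which is self-contained and equally valid.
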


\begin{proof}
Using Stinespring's dilation theorem (see Remark \ref{stinespring}) there exists a unital representation $\sigma \colon A\to \mathcal{B}(H)$ and an isometry $w \colon \C^n\to H$ such that 
\begin{equation}\label{sst}
\phi(a)=w^*\sigma(a)w.
\end{equation}
Let $\aleph$ be a cardinal with the property in Theorem \ref{rep approx}.  Taking the direct sum of $H$ with a suitably large Hilbert space (and setting $\sigma$ to be zero on the other summand), we may assume $H$ has dimension at least $\aleph$; this loses the property that $\sigma$ is unital, but note that $w$ has image contained in the essential space $H_\sigma$ of $\sigma$.  Then Theorem \ref{rep approx} gives a net $(\pi_i)$ of representations on $H$ from $S$ such that 
\begin{equation}\label{pi i sot s}
\|\pi_i(a)\xi- \sigma(a)\xi\|_H\xrightarrow{i \to \infty}0 \quad \text{for all}\quad \xi\in H_\sigma \text{ and } a\in A.
\end{equation} 
Now, let $p=ww^*$, and note that $p$ is a finite-rank projection onto a subspace of $H_\sigma$.  It follows from this and line \eqref{pi i sot s} that 
\begin{equation}\label{comp p}
\|(\pi_i(a)-\sigma(a))p\|\xrightarrow{i \to \infty} 0.
\end{equation}
Applying this to $a=1$, and using that $\sigma(1)p=p$, we get that $\|\pi_i(1)p-p\|\xrightarrow{i \to \infty} 0$.  A standard application of the $C^*$-algebra functional calculus (see for example \cite[Lemma 2.3]{Bratteli:1998vg}) then gives a net $(u_i)_i$ of unitaries on $H$ such that $\|u_i-1\|\to 0$ and $u_ipu_i^*\leq \pi_i(1)$ for all $i$.  Let $H_i\leq H$ be the essential space of $\pi_i$; then the fact that $u_ipu_i^*\leq \pi_i(1)$ implies that $u_i$ restricted to $\mathrm{Range}(p)=\mathrm{Range}(w)$ takes image in $H_i$.  Let now $\overline{u}_i \colon \mathrm{Range}(w)\to H_i$ be the restricted and corestricted map coming from $u_i$, and let $\overline{\pi_i} \colon A\to \mathcal{B}(H_i)$ be the unital corestriction of $\pi_i$.  Define now $v_i \coloneqq \overline{u}_iw$.  We claim that for $i$ suitably large, $v \coloneqq v_i$ and $\pi \coloneqq \overline{\pi_i}$ have the properties needed in the lemma.  It suffices to show that $\|v_i^*\overline{\pi}_i(a)v_i^*-\phi(a)\|\xrightarrow{i \to \infty} 0$ for any $a\in A$. 

Indeed, we have 
\begin{align*}
\|v_i^*\overline{\pi}_i(a)v_i^*-\phi(a)\| & =\|w^*u_i^*\pi_i(a)u_iw -\phi(a)\| \\
& \leq 2\|u_i-1\|\|a\|+\|w^*\pi_i(a)w-\phi(a)\| \\
& = 2\|u_i-1\|\|a\|+\|w^*(\pi_i(a)-\sigma(a))w\| \\
& = 2\|u_i-1\|\|a\|+\|ww^*(\pi_i(a)-\sigma(a))ww^*\| \\
&\leq 2\|u_i-1\|\|a\|+\|(\pi_i(a)-\sigma(a))p\|
\end{align*}
where the third to last equality uses line \eqref{sst}, the second to last equality uses that $b\mapsto wbw^*$ is an isometry for the operator norm, and the last inequality uses that $ww^*=p$.  Using line \eqref{pi i sot s} and that $\|u_i-1\|\to 0$, we are done.
\end{proof}

\begin{remark}
If one assumes that $S$ is closed under general direct sums, then Lemma \ref{s corner} holds for ucp maps $\phi \colon A\to \mathcal{B}(H)$ for any Hilbert space $H$.  This can be deduced from Voiculescu's theorem \cite{Voiculescu:1976aa}\footnote{The proof of Voiculescu's theorem was subsequently simplified by Arveson \cite{Arveson:1977aa}, and this forms the basis for several modern textbook treatments such as \cite[Section 1.7]{Brown:2008qy}, \cite[Section II.5]{Davidson:1996jq} or \cite[Sections 3.4-6]{Higson:2000bs}.} for separable $A$, and from Hadwin's generalizations \cite[Section 3]{Hadwin:1981aa} of Voiculescu's theorem in general.
\end{remark}

The following result generalizes some results in the literature, for example \cite[Theorem 6.4]{eckhardt:shulman} (for the Hilbert--Schmidt norms and $S$ the family of finite-dimensional representations).  It follows directly from Lemmas \ref{llp approx} and \ref{s corner}.  

\begin{theorem}\label{stability thm}
Let $\Gamma$ be a countable discrete group with the LLP.  Let $S$ be a family of representations of $\Gamma$ that is dense in the Fell topology, and is closed under finite direct sums, unitary equivalence, and taking subrepresentations.  Let $\mathcal{U}$ be a sequence of unitarily invariant normalized norms on $M_n(\C)$.  

Then $\Gamma$ is very flexibly $\mathcal{U}$-$S$ stable. \qed
\end{theorem}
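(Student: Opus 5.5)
The plan is to combine Lemma \ref{llp approx} with Lemma \ref{s corner} through a diagonal argument. Fix an asymptotic representation $(\phi_n \colon \Gamma\to U_{k_n})_n$ with respect to $\mathcal{U}$. Since $\Gamma$ is countable and has the LLP, Lemma \ref{llp approx} supplies ucp maps $\psi_n \colon C^*\Gamma\to M_{k_n}(\C)$ with $\|\phi_n(g)-\psi_n(g)\|_{k_n}\to 0$ for every $g\in\Gamma$. By the triangle inequality, it then suffices to find, for each $n$, a representation $\pi_n$ in $S$ on a Hilbert space $H_n$ and an isometry $v_n \colon \C^{k_n}\to H_n$ such that $\|v_n^*\pi_n(g)v_n-\psi_n(g)\|_{k_n}\to 0$ for each $g$.

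To produce the $\pi_n$, enumerate $\Gamma=\{g_1,g_2,\dots\}$, which is possible by countability. For each $n$, apply Lemma \ref{s corner} to $A=C^*\Gamma$, the ucp map $\psi_n$, the finite set $A_0=\{g_1,\dots,g_n\}$ and $\epsilon=1/n$. The hypotheses of Lemma \ref{s corner} are exactly what the theorem assumes of $S$: density in the Fell topology (viewed as representations of $C^*\Gamma$, via Definition \ref{fell top}) and closure under finite direct sums, unitary equivalence and subrepresentations. The lemma yields a unital representation $\pi_n\in S$, which is therefore a genuine unitary representation of $\Gamma$, together with an isometry $v_n$ such that $\|v_n^*\pi_n(g_m)v_n-\psi_n(g_m)\|<1/n$ for all $m\le n$.

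The one step needing care is the passage from this operator-norm estimate to the norms $\|\cdot\|_{k_n}$. This is supplied by Remark \ref{com norm rem}\eqref{op big}: each norm in $\mathcal{U}$ is normalized and unitarily invariant, so it is dominated by the operator norm. Hence for each fixed $g=g_m$ and all $n\ge m$ we get $\|v_n^*\pi_n(g)v_n-\psi_n(g)\|_{k_n}\le 1/n\to 0$. Combining this with the first step shows that $\phi$ is close to a corner of $(\pi_n)_n$, which is the definition of very flexible $\mathcal{U}$-$S$ stability.
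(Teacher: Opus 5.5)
Your proof is correct and is essentially the paper's argument: the paper just says the theorem follows directly from Lemmas \ref{llp approx} and \ref{s corner}. Your diagonal choice of finite sets $\{g_1,\dots,g_n\}$ with tolerances $1/n$, together with the domination $\|\cdot\|_{k_n}\le\|\cdot\|$ from Remark \ref{com norm rem}, supplies exactly the details the paper leaves implicit.
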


The most interesting examples of $S$ where we can apply the above theorem seem to be the family of all finite-dimensional representations (in which case it applies to RFD $\Gamma$) and the family of all representations that factor through finite quotients (in which case it applies to FD $\Gamma$).

\begin{corollary}
\label{examples very flexibly}
    Let $\mathcal{U}$ be a sequence of unitarily invariant normalized norms on $M_n(\C)$. For a countable discrete group $\Gamma$, let $S$ be the family of representations of $\Gamma$ that factor through a finite quotient. If $\Gamma$ is finitely generated and satisfies one of the following properties, then it is very flexibly $\mathcal{U}$-$S$ stable, hence in particular very flexibly $\mathcal{U}$-stable.
    \begin{enumerate}[(i)]
        \item\label{examples very flexibly amenable} $\Gamma$ is amenable and residually finite;
        \item\label{examples very flexibly manifold} $\Gamma = \pi_1(M)$, where $M$ is a connected manifold of dimension at most $3$;
        \item\label{examples very flexibly vfbc} $\Gamma$ is virtually free-by-cyclic;
        \item\label{examples very flexibly 1rel} $\Gamma$ is a one-relator group, and $\Gamma$ has either torsion, negative immersions, non-trivial center or a small cancellation relation;
        \item\label{examples very flexibly limit} $\Gamma$ is a limit group;
        \item\label{examples very flexibly raag} $\Gamma$ is a right-angled Artin group on a chordal graph.
    \end{enumerate}
\end{corollary}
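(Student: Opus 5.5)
The plan is to apply Theorem \ref{stability thm} with $S$ the family of representations of $\Gamma$ that factor through a finite quotient. This requires three things: $\Gamma$ is countable with the LLP; $S$ is dense in the Fell topology; and $S$ is closed under finite direct sums, unitary equivalence and subrepresentations.

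Countability is automatic, since $\Gamma$ is finitely generated. Density of $S$ in the Fell topology is exactly property FD (Definition \ref{fd def}).

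For the closure properties, observe that a representation factoring through $\Gamma/N_1$ and one factoring through $\Gamma/N_2$ have a direct sum factoring through $\Gamma/(N_1\cap N_2)$, which is still finite. Unitary equivalence and passing to invariant subspaces do not enlarge the kernel, so a subrepresentation of a representation factoring through a finite quotient again factors through that quotient. Hence $\langle S\rangle = S$, as in the remark after Definition \ref{s state}.

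It remains to verify the LLP (indeed the LP) and FD case by case.
\begin{enumerate}[(i)]
\item Amenable residually finite groups have the LP by Corollary \ref{amen cor}, and are FD by Remark \ref{RFD:MAP:FD:RF}.
\item Manifold groups have the LP by Example \ref{manifold lp} and are FD by Theorem \ref{manifold fd}, which applies because $\pi_1(M)$ is assumed finitely generated.
\item Virtually free-by-cyclic groups are covered by Corollary \ref{vfbc} and Proposition \ref{vfbc fd}.
\item The one-relator groups in question are virtually free-by-cyclic by Examples \ref{or tor}, \ref{or ni}, \ref{or center} and \ref{or sc}, so this reduces to (iii).
\item Limit groups are covered by Example \ref{limit lp} and Proposition \ref{limit fd}.
\item Chordal RAAGs are covered by Example \ref{raag lp} and Proposition \ref{raag fd}.
\end{enumerate}

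Finally, very flexible $\mathcal{U}$-$S$ stability implies very flexible $\mathcal{U}$-stability. The approximating representations from $S$ factor through finite groups, so they decompose as direct sums of finite-dimensional representations. In particular, for finitely generated $\Gamma$ they can be taken finite-dimensional: only finitely many irreducibles appear for each finite quotient. More directly, Lemma \ref{s corner} applied within the proof of Theorem \ref{stability thm} yields a single representation through a finite quotient $Q$, which is a subrepresentation of a multiple of the regular representation of $Q$. Discarding the multiplicity beyond what is needed to contain the finite-dimensional range of $v$ gives a finite-dimensional representation.

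The main care point is this last reduction to finite-dimensional $H_n$. It is handled by compressing to the finite-dimensional $\pi(\Gamma)$-invariant subspace generated by the range of $v_n$. Because $\pi(\Gamma)$ is finite, this subspace is finite-dimensional, and the compression does not change $v_n^*\pi_n(g)v_n$.
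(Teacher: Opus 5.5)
Your proposal is correct and follows the paper's proof: it reduces to the LLP and FD via Theorem \ref{stability thm}, then cites the same results case by case. Your extra checks fill in steps the paper leaves implicit. These are that $S$ is closed under finite direct sums, unitary equivalence and subrepresentations, and that compressing to the finite-dimensional $\pi_n(\Gamma)$-invariant subspace spanned by the range of $v_n$ gives the ``hence in particular'' passage to finite-dimensional representations. Your earlier remark about finitely many irreducibles would not suffice on its own, but the compression argument does.
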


\begin{proof}
    By Theorem \ref{stability thm}, we need to show that under these conditions, $\Gamma$ is LLP and FD.
    \eqref{examples very flexibly amenable} is Corollary \ref{amen cor} and Remark \ref{RFD:MAP:FD:RF}.
    \eqref{examples very flexibly manifold} is Example \ref{manifold lp} and Theorem \ref{manifold fd}.
    \eqref{examples very flexibly vfbc} is Corollary \ref{vfbc} and Proposition \ref{vfbc fd}.
    \eqref{examples very flexibly 1rel} follows from \eqref{examples very flexibly vfbc} and Examples \ref{or tor}, \ref{or ni}, \ref{or center} and \ref{or sc}.
    \eqref{examples very flexibly limit} is Example \ref{limit lp} and Proposition \ref{limit fd}.
    \eqref{examples very flexibly raag} is Example \ref{raag lp} and Proposition \ref{raag fd}.
\end{proof}

\begin{remark}
\label{very flexibly amenable converse}
    Note that in Item \eqref{examples very flexibly amenable} we did not use the finite generation hypothesis; moreover, by Remark \ref{RFD:MAP:FD:RF}, to obtain very flexible $\mathcal{U}$-stability it suffices to assume that $\Gamma$ is MAP, hence RFD. For the Hilbert--Schmidt norm, this is \cite[Theorem 6.6]{eckhardt:shulman}. The statement also includes a converse: if an amenable group is very flexibly Hilbert--Schmidt stable, then it is MAP; this is a direct consequence of the hyperlinearity of amenable groups. We can obtain a similar converse for the operator norm, using that amenable groups are MF \cite{Tikuisis:2015kx}. With the same argument we also obtain an operator norm analog of \cite[Theorem 4.16]{local:stability}, which characterizes similarly very flexible local Hilbert--Schmidt stability.
\end{remark}

\begin{remark}
    Some one-relator groups with center were previously shown to be Hilbert--Schmidt stable \cite[Theorem 8]{Hadwin:2018aa}. Very recently, Spaas proved that a RAAG on a chordal graph is Hilbert--Schmidt stable \cite{pieter:chordal} (see \cite{atkinson} for a related result).
\end{remark}

\begin{example}
The group $\Gamma=F_2\times F_2$ is not very flexibly stable with respect to the operator norm.  Indeed, using \cite[Proposition 7.4.5]{Brown:2008qy}, $C^*\Gamma$ embeds into $(\prod_n M_n(\C))/(\bigoplus_n M_n(\C))$.  If it were very flexibly operator norm stable, we could conclude that $F_2\times F_2$ is RFD, which is false by the negative solution to the Connes embedding problem (see Remark \ref{f2 f2 not rfd} above). It follows that a right-angled Artin group on a graph that contains an induced square is not very flexibly stable: indeed, such a group contains a copy of $F_2 \times F_2$ as a retract, and it is easy to see that a retract of a very flexibly stable group must be very flexibly stable, see for example \cite[Lemma 4.2]{glebe:cup}. This motivates the question of whether a right-angled Artin group on a graph that does not contain induced squares, for example the pentagon, is very flexibly operator stable, see Question \ref{q:raag}.

Using other methods, Ioana has shown that $F_2\times F_2$ is not very flexibly stable in permutations \cite{ioana:tau}, nor flexibly stable with respect to the Hilbert--Schmidt norm \cite{Ioana:2024aa} .  Whether or not $F_2\times F_2$ is very flexibly stable with respect to the Hilbert--Schmidt norm seems to be open.

In permutations, the situation is even more restrictive, indeed $F_2 \times \Z$ is not very flexibly stable \cite{ioana:tau}. It is however Hilbert--Schmidt stable, in fact Hilbert--Schmidt stability is preserved by direct products with amenable groups \cite{ioana:spaas}. 
\end{example}

\subsection{Algebraic complements of asymptotic representations}

We now turn to another version of stability that is studied in \cite{Dadarlat:384aa} and \cite{Willett:2024aa} (for the operator norm).  To motivate the main definition, we start with a definition and a lemma.

\begin{definition}\label{comp}
A sequence $(\|\cdot\|_n)_n$ of norms on each $M_n(\C)$ is \emph{compatible} if for any $n\leq m$, any isometry $v \colon \C^n\to \C^m$, and any $a\in M_n(\C)$, we have $\|vav^*\|_m\leq \|a\|_n$.

Let $\mathcal{U}=(\|\cdot\|_n)_n$ be a compatible family of normalized unitarily invariant norms on $M_n(\C)$ as in Definition \ref{uin}.  Let $S$ be a family of unitary representations of $\Gamma$ (equivalently, of nondegenerate $*$-representations of $C^*\Gamma$).

Let $\phi = (\phi_n \colon \Gamma\to U_{k_n})_n$ be an asymptotic representation.  We say that $\phi$ is \emph{$S$-complementable} if there is an asymptotic representation $\psi = (\psi_n:\Gamma\to U_{l_n})_n$ such that if $\phi \oplus \psi  = (\phi_n\oplus\psi_n:\Gamma\to U_{k_n+l_n})_n$ is the associated block sum asymptotic representation, and 
$$
\phi\oplus \psi \colon C^*\Gamma\to M/J
$$
the associated $*$-homomorphism as in Remark \ref{ass hom}, then $\phi\oplus \psi$ lifts to a $*$-homomorphism $\theta \colon C^*\Gamma\to M$ such that each component $\theta_n \colon C^*\Gamma\to M_{k_n+l_n}(\C)$ is in $S$.

We say that $\phi$ is \emph{algebraically $S$-complementable} if $\psi$ as above can be chosen to be a sequence $(\psi_n \colon \Gamma\to U_{l_n})_n$ of genuine representations with each $\psi_n$ in $S$.

Finally, if $S$ is the family of all finite-dimensional representations, we just say \emph{complementable} and \emph{algebraically complementable}.
\end{definition}

Note that the operator norm, and also the normalized Schatten $p$-norms and Hilbert--Schmidt norm, are compatible normalized families in the sense of Definition \ref{comp}.  The following lemma is implicit in \cite[Section 6.4]{Enders:2024aa} for the Hilbert--Schmidt norms.  

\begin{lemma}
With notation as in Definition \ref{comp}, the following are equivalent.
\begin{enumerate}[(i)]
    \item \label{vfs} $\Gamma$ is very flexibly $\mathcal{U}$-$S$-stable; 
\item \label{scomp} every asymptotic representation of $\Gamma$ with respect to $\mathcal{U}$ is $S$-complementable. 
\end{enumerate}
\end{lemma}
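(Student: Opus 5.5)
I would prove the two implications separately, working with the block decomposition of $\C^{k_n+l_n}=\C^{k_n}\oplus\C^{l_n}$ and the compatibility of $\mathcal{U}$.

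For \eqref{scomp} $\Rightarrow$ \eqref{vfs}, take an asymptotic representation $\phi$. Choose a complement $\psi$ and a lift $\theta=(\theta_n)_n$ of $\phi\oplus\psi$ with each $\theta_n\in S$. Set $H_n:=\C^{k_n+l_n}$ and let $v_n\colon\C^{k_n}\to H_n$ be the inclusion of the first summand. Since $\theta$ lifts $\phi\oplus\psi$, we have $\|\theta_n(g)-(\phi_n(g)\oplus\psi_n(g))\|_{k_n+l_n}\to 0$. Compressing by $v_n$ gives $v_n^*\theta_n(g)v_n-\phi_n(g)=v_n^*\big(\theta_n(g)-(\phi_n\oplus\psi_n)(g)\big)v_n$. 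The remaining task is to bound $\|v_n^*bv_n\|_{k_n}$ by $\|b\|_{k_n+l_n}$ (or a constant multiple of it). By unitary invariance, $\|v_n^*bv_n\|_{k_n}$ equals the $\nu$-norm of the corner $pbp$ with $p=v_nv_n^*$, viewed inside $M_{k_n}$. Compatibility only gives the reverse inequality $\|v a v^*\|_m\le\|a\|_n$. So I would instead use $pbp=\tfrac12(b+ubu)$ with $u=2p-1$ unitary; unitary invariance then gives $\|pbp\|_{k_n+l_n}\le\|b\|_{k_n+l_n}$. I then still need to compare $\|a\|_{k_n}$ with $\|v a v^*\|_{k_n+l_n}$ in the other direction. \textbf{This norm comparison is the main obstacle.} My first attempt would be to check whether compatibility, combined with normalization, forces equality $\|vav^*\|_m=\|a\|_n$ for the families of interest. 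That fails for normalized Schatten norms, which scale by $(k_n/(k_n+l_n))^{1/p}$. So I expect the argument needs either a direct, weighted reformulation of ``lifts'' in Definition \ref{comp}, or the observation that convergence in $J$ for the big algebra restricts to the corner in the appropriate sense. I would look carefully at which direction of inequality the definition of $J$ actually requires, and I expect the intended reading makes this step formal.

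For \eqref{vfs} $\Rightarrow$ \eqref{scomp}, take $\phi$ and apply very flexible stability to get representations $\pi_n\in S$ on $H_n$ and isometries $v_n$ with $\|\phi_n(g)-v_n^*\pi_n(g)v_n\|_{k_n}\to0$. Since $S$ consists of finite-dimensional representations here, write $H_n=v_n\C^{k_n}\oplus\C^{l_n}$ and define $\psi_n(g)$ as the compression of $\pi_n(g)$ to the complement, corrected to a unitary by polar decomposition. I then need $\psi$ to be an asymptotic representation and $\pi_n$ to lift $\phi\oplus\psi$. This reduces to showing that the off-diagonal corners $(1-p_n)\pi_n(g)p_n$ tend to $0$ in $\|\cdot\|_{k_n+l_n}$. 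To get this, I would use $\|p_n\pi_n(g)p_n\|\approx$ unitary on the range together with the identity
$$(p\pi(g)^*(1-p)\pi(g)p)=p-p\pi(g)^*p\pi(g)p,$$
whose right side is small because $\phi_n$ is close to a unitary. Unitary invariance and Remark \ref{com norm rem} then let me pass from squares to the off-diagonal terms (with a square-root estimate). The same estimate handles $(1-p)\pi(g)(1-p)$ being asymptotically unitary and multiplicative. Finally, $\pi_n$ is itself in $S$ and is a genuine homomorphism lifting $\phi\oplus\psi$.
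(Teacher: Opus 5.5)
\textbf{The gap in (ii)$\Rightarrow$(i).} The step you flag is a genuine gap, and it cannot be closed as the lemma is stated: for normalized Schatten norms this implication is false. Take $\mathcal{U}$ to be the normalized Hilbert--Schmidt norms and $S$ all finite-dimensional representations. Given any asymptotic representation $\phi$, put $l_n=nk_n$, and let $\psi_n$ and $\theta_n$ be the trivial representations on $\C^{l_n}$ and on $\C^{k_n+l_n}$. Then
$$
\|\theta_n(g)-\phi_n(g)\oplus\psi_n(g)\|_{k_n+l_n}=\sqrt{\tfrac{k_n}{k_n+l_n}}\,\|1-\phi_n(g)\|_{k_n}\le\tfrac{2}{\sqrt{n+1}}\to 0 .
$$
So every asymptotic representation of every countable group is (even algebraically) complementable. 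On the other hand, (i) fails for any countable amenable group that is not MAP, such as the finitary alternating group $A_\infty$; see Remark~\ref{very flexibly amenable converse}.

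The lemma therefore needs an extra hypothesis, for example $\|vav^*\|_m=\|a\|_n$ for isometries $v$, as holds for the operator norm. In that setting your compression argument finishes immediately, since $\|v_n^*bv_n\|\le\|b\|$. The paper calls this direction ``straightforward'', but that is only true in such a setting, so you have found a defect in the statement itself. Separately, with $u=2p-1$ one gets $\tfrac12(b+ubu)=pbp+(1-p)b(1-p)$, not $pbp$. The bound $\|pbp\|_\nu\le\|b\|_\nu$ that you want follows instead from Remark~\ref{com norm rem}\eqref{ideal part}.

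\textbf{The direction (i)$\Rightarrow$(ii).} This is the paper's argument in matrix form. Your identity for $p\pi(g)^*(1-p)\pi(g)p$ is the $C^*$-identity computation showing that $p$ commutes with $\theta(C^*\Gamma)$ modulo $J$. Your polar-decomposition step also repairs an omission in the paper, which never makes $(1-p_n)\theta_n(g)(1-p_n)$ unitary.

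However, it has the same normalization problem in the other direction. Compatibility pushes smallness from $\|\cdot\|_{k_n}$ up to $\|\cdot\|_{k_n+l_n}$. That gives small off-diagonal blocks and hence the lifting condition. But $\psi$ must be an asymptotic representation for $\|\cdot\|_{l_n}$ on $M_{l_n}(\C)$, and ``the same estimate'' does not give this when $l_n/k_n\to0$.

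For an example, take $\Gamma=\Z$ and $\omega=e^{2\pi i/3}$. Let $\pi_n$ be the sum of the trivial representation on $\C^{m_n}$ (with $m_n\to\infty$) and the character $g\mapsto\omega^g$ on $\C f$. Let $e=(f_0+f)/\sqrt2$ with $f_0$ in the trivial summand, and let $p_n=1-ee^*$. The $m_n$-dimensional corner is within $m_n^{-1/2}$ of the trivial representation in normalized Hilbert--Schmidt norm, so this is a legitimate witness for (i). Yet your $\psi_n(g)$ is the phase of $\tfrac12(1+\omega^g)$, and $|\psi_n(2)-\psi_n(1)^2|=2$ for every $n$. For the operator norm, both of your directions go through and agree with the paper's.
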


\begin{proof}
Assume \eqref{vfs} holds and let $(\phi_n \colon \Gamma\to U_{k_n})_n$ be an asymptotic representation.  According to the definition of very flexible $\mathcal{U}$-$S$-stability, there are $m_n\geq k_n$, a sequence $(\theta_n \colon \Gamma\to U_{m_n})_n$ of representations in $S$ and a sequence $v_n \colon \C^{k_n}\to \C^{m_n}$ of isometries such that 
\begin{equation}\label{t p 0}
\|v_n^*\theta_n(g)v_n-\phi_n(g)\|_{k_n}\xrightarrow{n \to \infty} 0.
\end{equation}
Let $\mathrm{ad}_n \colon (M_{k_n}(\C),\|\cdot\|_{k_n})\to (M_{m_n}(\C),\|\cdot \|_{m_n})$ be the $*$-homomorphism $b\mapsto v_nbv_n^*$, and note that compatibility of the norms implies that $\mathrm{ad}_n$ is contractive.  Define $p_n \coloneqq v_nv_n^*$.  Then line \eqref{t p 0} implies that 
\begin{align}\label{ptp-p}
\|p_n\theta_n(g)p_n-v_n\phi_n(g)v_n^*\|_{m_n}& =\|\mathrm{ad}_n(v_n^*\theta_n(g)v_n-\phi_n(g))\|_{m_n} \nonumber \\ & \leq \|v_n^*\theta_n(g)v_n-\phi_n(g)\|_{k_n}\xrightarrow{n \to \infty} 0
\end{align}
for all $g\in \Gamma$.  

Consider now $M \coloneqq \prod_{n} M_{m_n}(\C)$
and $J$ the ideal $\{(a_n)_n \in M\mid \|a_n\|_{m_n} \xrightarrow{n \to \infty} 0\}$ as in Remark \ref{ass hom}.  Let $p\in M/J$ be the image of the sequence $(p_n)_n \in M$.  Then the sequences $(\theta_n)_n$ and $(\mathrm{ad}_n\circ\phi_n)_n$ induce $*$-homomorphisms 
$$
\theta,\phi \colon C^*\Gamma\to M/J
$$
with $\theta$ unital, and $\phi(1)=p$.  Using line \eqref{ptp-p}, $p\theta(g)p=\phi(g)$ in $M/J$ for all $g\in \Gamma$.  Hence for any $g\in \Gamma$ the $C^*$-identity in $M/J$ gives 
$$
\|p\theta(g)(1-p)\|^2=\|p\theta(g)(1-p)\theta(g)^*p\|=\|p\theta(1)p-p\phi(1)p\|=0.
$$
Replacing $g$ with $g^{-1}$ and taking adjoints, $(1-p)\theta(g)p=0$ also, whence 
$$
p\theta(g)-\theta(g)p=p\theta(g)(1-p)-(1-p)\theta(g)p=0
$$
and so $p$ commutes with $\theta(C^*\Gamma)$ in $M/J$.  It follows that if we define $\psi_n(g) \coloneqq (1-p_n)\theta_n(g)(1-p_n)$, $(\psi_n)_n$ will have the properties required by $S$-complementability.

The converse is straightforward.
\end{proof}

\begin{remark}
We can also understand flexible $\mathcal{U}$-stability in this language for some important normalized families.  

Indeed, assume first that $(\|\cdot\|_n)_n$ is a compatible family of normalized unitarily invariant norms such that $\|p_n\|_n \xrightarrow{n \to \infty} 0$ when $(p_n)_n$ is any sequence of projections with $\mathrm{rank}(p_n)/n\to 0$; the normalized Schatten $p$-norms satisfy this condition, but the operator norm does not.  Then flexible $\mathcal{U}$-stability just says that for any asymptotic representation $(\phi_n)_n$ the associated homomorphism $\phi \colon C^*\Gamma\to M/J$ of Remark \ref{ass hom} admits a lift to a (not necessarily unital) $*$-homomorphism $\phi \colon C^*\Gamma\to M$.  

If on the other hand $(\|\cdot \|_n)_n$ is the family of operator norms, this lifting property is exactly stability.
\end{remark}

Thus algebraic complementability is a natural strengthening of our earlier notion of very flexible stability.  Algebraic complementability was studied in \cite{Dadarlat:384aa} and \cite{Willett:2024aa} for the operator norm\footnote{Under different names.  In particular, in \cite{Dadarlat:384aa} Dadarlat  uses ``weak stability'' but we prefer to avoid that as it clashes with the usage of Remark \ref{stability other notions} part \eqref{weak stab} above.}.  This notion has quite a different flavour to complementability, as $K$-theoretic\footnote{In the literature, these $K$-theoretic conditions are often translated into more familiar (co)homological terms using the Chern character; however, it is $K$-theory that is more directly relevant.} conditions come into play. 
Historically, the connection of $K$-theory / cohomology to operator norm stability goes back to the original paper of Voiculescu \cite[page 431]{Voiculescu:1983km}.  Later, connections were made to higher index theory in \cite{Connes:1990vo}, and (at least implicitly) to the $K$-theoretic classification program for $C^*$-algebras in \cite{Gong:1998aa,Eilers:1998aa}.  There has recently been a lot of activity in the area, starting with work of Dadarlat \cite{Dadarlat:2011kx,Dadarlat:2011uq} and Eilers--Shulman--S\o{}rensen \cite{Eilers:2018ab}.

As an example theorem, in \cite[Theorem 1.1]{Dadarlat:384aa}, Dadarlat shows that if $\Gamma$ is a countable discrete MF group that is coarsely embeddable in Hilbert space, and if moreover $H^{2k}(\Gamma;\Q)\neq 0$ for some $k>0$, then $\Gamma$ admits an asymptotic representation with respect to the operator norm that is not algebraically complementable, and in particular $\Gamma$ is not stable. This applies in the classical case of $\mathbb{Z}^2$, essentially due to Voiculescu \cite{Voiculescu:1983km}; notice that $\mathbb{Z}^2$ is very flexibly stable by Corollary \ref{examples very flexibly}.

The results of \cite{Willett:2024aa} give a partial converse to this: they show that under appropriately strong $K$-theoretic assumptions, and assuming moreover that $\Gamma$ is LLP and RFD, then an asymptotic representation is algebraically complmentable if and only if an appropriate $K$-theoretic obstruction vanishes.

More precisely, let $\phi=(\phi_n \colon \Gamma\to U_{k_n})_n$ be an asymptotic representation and assume $\Gamma$ has the LLP. Then one can show that $\phi$ induces a map on $K$-theory 
$$
\phi_* \colon K_0(C^*\Gamma)\to \frac{\prod_n \Z}{\bigoplus_n\Z}
$$
(see for example \cite[Proposition 3.22]{Willett:2024aa}: this shows something rather more technical and precise, but the same ideas show the above).  Assuming for simplicity that $\Gamma$ admits a finite CW complex model for its classifying space $B\Gamma$, the Baum--Connes conjecture identifies $K_0(C^*\Gamma)$ rationally with the even rational group homology, i.e.\ there is an isomorphism 
$$
K_0(C^*\Gamma)\cong \bigoplus_{k=0}^\infty H_{2k}(\Gamma;\Q).
$$
The maps $\phi_*$ are then often computable in terms of index-theoretic or cohomological data: see for example \cite{Dadarlat:2022aa} for an elegant interpretation in terms of winding numbers.  Let $\widetilde{K}_0(C^*\Gamma)$ be the \emph{reduced $K$-theory of $C^*\Gamma$}, i.e.\ the kernel of the map $K_0(C^*\Gamma)\to \Z$ induced by the trivial representation.  Combining the main results of \cite{Willett:2024aa} with our results in this paper, we get the following; one could cover some somewhat more general classes, but we just discuss particularly interesting  illustrative special cases.

\begin{corollary}
\label{examples alg comp}
Let $\Gamma$ be a finitely generated torsion-free group belonging to one of the following classes.
    \begin{enumerate}[(i)]
        \item\label{examples very flexibly manifold 2} $\Gamma = \pi_1(M)$, where $M$ is a connected manifold of dimension at most $3$;
        \item\label{examples very flexibly vfbc 2} $\Gamma$ is virtually free-by-cyclic;
        \item\label{examples very flexibly 1rel 3} $\Gamma$ is a one-relator group, and $\Gamma$ has either negative immersions, non-trivial center or a small cancellation relation.
        \item\label{examples very flexibly limit 3} $\Gamma$ is a limit group that does not contain $\mathbb{Z}^4$.
        \item\label{examples very flexibly raag 3} $\Gamma$ is a right-angled Artin group whose defining graph is chordal and does not contain $4$-cliques.
    \end{enumerate}
Let $S$ be the family of all finite-dimensional representations of $\Gamma$ that factor through a finite quotient.  Then an asymptotic representation $\phi=(\phi_n \colon \Gamma\to U_{k_n})_n$ is algebraically $S$-complementable if and only if the induced map 
$$
\phi_* \colon \widetilde{K}_0(C^*\Gamma)\to \frac{\prod \Z}{\oplus \Z}
$$
vanishes. 
\end{corollary}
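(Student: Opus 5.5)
Since the corollary rests on the main results of \cite{Willett:2024aa}, I will reduce to checking their hypotheses for each class. I will take that paper's theorem in the following form. Let $\Gamma$ be countable, with the LLP, and with $S$ dense in the Fell topology and closed under finite direct sums, unitary equivalence and subrepresentations. Assume the $K$-theoretic hypotheses: $\Gamma$ has a finite model for $B\Gamma$ of dimension at most $3$, and it satisfies the (strong) Baum--Connes conjecture with coefficients, or at least has the Haagerup property. Then $\phi$ is algebraically $S$-complementable if and only if $\phi_*$ vanishes on $\widetilde{K}_0(C^*\Gamma)$. Necessity is the easy direction. A genuine representation in $S$, viewed as a map into $M$, induces a map on $K_0$ that factors through $\prod\Z$ via ranks, and on $\widetilde{K}_0$ this is determined by the dimension of the trivial part. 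Combined with additivity of $\phi_*$ under block sums, this shows $\phi_*$ must vanish on $\widetilde{K}_0$ whenever $\phi\oplus\psi$ lifts. Sufficiency is where the LLP enters: Lemma \ref{llp approx} gives ucp lifts, and Lemma \ref{s corner}, together with the vanishing $K$-obstruction, is used to upgrade the complement to a genuine representation.

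Step 1 is LLP and FD. For each class, both follow from Theorem \ref{intro fd llp}, exactly as in the proof of Corollary \ref{examples very flexibly}. Since $\Gamma$ is finitely generated, FD makes the family $S$ of representations factoring through finite quotients dense in the Fell topology, and $S$ is clearly closed under the required operations.

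Step 2 is the Haagerup property and Baum--Connes. By Remark \ref{linnell:atmenable}, the groups in items (i) and (ii) and the relevant one-relator groups lie in Linnell's class $\mathcal{C}$ (Example \ref{manifold lp}, Corollary \ref{vfbc}). Limit groups are free-by-nilpotent (Example \ref{limit lp}), and chordal RAAGs are free-by-abelian (Example \ref{raag lp}). All of these are therefore a-T-menable, so Higson--Kasparov gives the Baum--Connes conjecture with coefficients; in particular $C^*\Gamma$ is $KK$-equivalent to $C^*_r\Gamma$.

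Step 3 is the dimension hypothesis, which I expect to be the main obstacle, since it is why the torsion-freeness, $\Z^4$ and $4$-clique restrictions appear. The requirement is that $\Gamma$ has a finite $B\Gamma$, or at least cohomological dimension at most $3$, so that $K_0$ sees only $H_0$ and $H_2$. The cases go as follows.
\begin{itemize}
\item For $3$-manifold groups that are torsion-free and finitely generated, pass to prime decompositions and aspherical pieces to get cohomological dimension at most $3$.
\item Torsion-free virtually free-by-cyclic groups have cohomological dimension at most $2$, by Serre's theorem on torsion-free virtually free groups applied to the extension.
\item Torsion-free one-relator groups have cohomological dimension $2$ by Lyndon's theorem; this is why the torsion case is dropped.
\item Limit groups have cohomological dimension equal to the maximal rank of their free abelian subgroups, so excluding $\Z^4$ gives dimension at most $3$.
\item RAAGs have dimension equal to the maximal clique size, so excluding $4$-cliques suffices.
\end{itemize}
Once these inputs are verified, the cited theorem of \cite{Willett:2024aa} applies verbatim. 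The only remaining worry is matching its exact $K$-theoretic hypothesis, which might be phrased as torsion-freeness of $K_1$ or vanishing of odd-dimensional homology above degree $1$. I would first try to check it directly from the low-dimensional cohomology computed above, and then use Poincaré duality for the closed $3$-manifold case.
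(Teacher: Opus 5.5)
Your proposal is essentially the paper's proof. It cites \cite[Theorem 7.6]{Willett:2024aa}, takes the LLP and FD from Theorem \ref{intro fd llp}, gets the Baum--Connes/UCT input from a-T-menability via Linnell's class $\mathcal{C}$, and gets a $3$-dimensional model for $B\Gamma$ from $\mathrm{cd}(\Gamma)\le 3$ (Eilenberg--Ganea; a finite model is not needed), checked class by class. Using Lyndon's theorem in case (iii) is a harmless alternative to treating it as a special case of (ii).

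One correction: a non-free limit group has cohomological dimension $\max\{2,n\}$, where $n$ is the maximal rank of a free abelian subgroup, not $n$ itself (surface groups have $n=1$ but $\mathrm{cd}=2$). This still gives $\mathrm{cd}\le 3$ once $\Z^4$ is excluded, so your conclusion stands.
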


\begin{proof}
This follows from \cite[Theorem 7.6]{Willett:2024aa}, combined with the observations that our work here shows the groups above satisfy the hypotheses of that theorem.

More precisely, we need to check that the groups above are LLP, FD, admit a cellular classifying space $B\Gamma$ of dimension at most three, 
and are such that $\Gamma$ satisfies a strong enough version of the Baum--Connes conjecture\footnote{For experts: one needs that the assembly map $RK_*(B\Gamma)\to K_*(C^*\Gamma)$ is an isomorphism, and that $C^*\Gamma$ satisfies the UCT.}.  The LLP and FD follow as in Corollary \ref{examples very flexibly}.
 
For ease of citation, we recall that the existence of a $3$-dimensional classifying space is equivalent to the cohomological dimension being at most $3$ \cite{eilenberg:ganea}.

In case \eqref{examples very flexibly manifold 2}, by \cite[Proposition 2.2]{kielak:linton:manifolds} there is a finite-index subgroup $\Lambda < \Gamma$ that is a free product of a free group and finitely many fundamental groups of compact aspherical $3$-manifolds, hence $\Lambda$ has cohomological dimension at most $3$; it then follows from Serre's Theorem \cite[Th{\'e}or{\`e}me 1.7.1]{Serre:cohomologie} that $\Gamma$ also has cohomological dimension at most $3$. In case \eqref{examples very flexibly vfbc 2}, first we claim that free-by-cyclic groups have cohomological dimension at most $2$: indeed a classifying space can be taken to be the mapping cylinder of a map from a wedge of circles to itself, and again the general case follows from Serre's Theorem; case \eqref{examples very flexibly 1rel 3} is a special case, see Subsection \ref{ss:or}. In case \eqref{examples very flexibly limit 3}, a limit group is either free, hence of cohomological dimension $1$, or has cohomological dimension $\max\{2, n\}$, where $n$ is the maximal rank of a free abelian subgroup \cite[Proposition 1.1(8)]{limit:cat0}. Finally, in case \eqref{examples very flexibly raag 3}, a right-angled Artin group has cohomological dimension equal to size of the largest clique in the defining graph, this follows directly from the construction of the Salvetti complex \cite{salvetti}, which is a classifying space.

For the Baum--Connes assumptions, we note that it suffices to show that $\Gamma$ is a-T-menable: compare \cite[Remarks 5.6 and 6.2]{Willett:2024aa}.  All of the examples considered are in Linnell's class $\mathcal{C}$: for \eqref{examples very flexibly manifold 2} see Example \ref{manifold lp}; for \eqref{examples very flexibly vfbc 2} it follows from the definition; \eqref{examples very flexibly 1rel 3} is a special case of \eqref{examples very flexibly vfbc 2}; for \eqref{examples very flexibly limit 3} see Example \ref{limit lp}; and finally for \eqref{examples very flexibly raag 3} see Example \ref{raag lp}. Finally, groups in Linnell's class $\mathcal{C}$ are a-T-menable (Remark \ref{linnell:atmenable}).
\end{proof}

\begin{remark}
In parts \eqref{examples very flexibly limit 3} and \eqref{examples very flexibly raag 3} of Theorem \ref{examples alg comp}, we made assumptions on the limit groups and RAAGs involved in order to guarantee that the classifying space of $\Gamma$ is low-dimensional.  These assumptions are in order to deal with possible torsion obstructions from the odd $K$-homology group $RK_1(B\Gamma)$.  If $RK_1(B\Gamma)$ is known to be torsion-free, they could be avoided.  Alternatively, one could work `rationally', and then conclude that $\phi_*=0$ if and only if some finite multiple of $\phi$ is algebraically complemented: compare \cite[Theorem 7.1]{Willett:2024aa}.
\end{remark}

\appendix

\section{Property MD}
\label{appendix md}

In this appendix, which is independent of the rest of the paper, we show how our results on property FD can be strengthened to imply \emph{property MD}, a property introduced by Kechris in \cite{MD:kechris}, alongside the possibly stronger \emph{property EMD}. This also implies a stability property in the setting of permutations, namely \emph{stability in finite actions}, introduced by Gohla and Thom in \cite{gohla:thom}. As remarked by Kechris \cite{MD:kechris}, property MD implies property FD, however we chose to present these results separately to stay within the world of representations for the main body of the paper.

We refer to \cite{MD:burton:kechris} for a survey of this property and its applications in ergodic theory. Our main result is Theorem \ref{MD list} below, that proves property MD for all of the groups for which we proved property FD. The case of $3$-manifolds is especially interesting, indeed some special cases had previously been obtained in \cite[Proposition 3.10]{ifsv} and \cite[Proposition 5.12]{sisv}, with applications to the study of simplicial volume and its integral foliated and stable integral variants\footnote{These applications actually use \emph{property EMD$^*$}, also introduced by Kechris \cite{MD:kechris}, however this is equivalent to property MD \cite[Theorem 1.4]{MD:tuckerdrob}.}. We hope that property MD for general finitely generated $3$-manifold groups will be useful for applications to other gradient invariants of $3$-manifolds, for instance via \cite[Corollary 16.5]{cheapembedding}.

\subsection{Definitions}

We refer the reader to \cite{MD:kechris} for more details. Throughout, $\Gamma$ is a finitely generated residually finite group. A \emph{p.m.p.\ action} of $\Gamma$ will always mean a measure preserving action on a standard non-atomic\footnote{A standard measure space is one that is isomorphic to the Borel structure on a Polish space.  A non-atomic standard probability space is isomorphic to $[0,1]$ with Lebesgue measure.} probability space $(X,\mu)$.  Given such a space $(X,\mu)$, we denote by $A(\Gamma, X, \mu)$ the space of all p.m.p.\ actions of $\Gamma$ on $(X, \mu)$.  

For $a \in A(\Gamma, X, \mu)$, we denote by $g^a$ the $a$-action of the element $g \in \Gamma$. An action is \emph{finite} if it factors through a finite group. Let $F(\Gamma)$ denote the set of finite actions of $\Gamma$.

We say that $a \in A(\Gamma, X, \mu)$ is \emph{weakly contained} in $b \in A(\Gamma, Y, \nu)$, denoted $a \prec b$, if for every collection of Borel sets $A_1, \ldots, A_n \subset X$, elements $g_1, \ldots, g_m \in \Gamma$ and $\epsilon > 0$ there are Borel sets $B_1, \ldots, B_n \subset Y$ such that
\begin{equation}
\label{weak containment}
    |\mu(g_i^a(A_j) \cap A_k) - \nu(g_i^b(B_j) \cap B_k)| < \epsilon
\end{equation}
for all $i,j,k$.  Similarly, we say that a net $(a_\alpha)$ in $A(\Gamma, X_\alpha, \mu_\alpha)$ converges to $a \in A(\Gamma, X, \mu)$ in the \emph{weak topology} if for every collection of Borel sets $A_1, \ldots, A_n \subset X$, elements $g_1, \ldots, g_m \in \Gamma$ and $\epsilon > 0$ there exists $\alpha_0$ such that for all $\alpha\geq \alpha_0$ there are Borel sets $B_1, \ldots, B_n \subset X_\alpha$ satisfying \eqref{weak containment}. Hence $a \prec b$ if and only if $a$ is in the closure of the (isomorphism class of) $b$ for the weak topology.

\begin{definition}
    Let $\Gamma$ be a finitely generated residually finite group. We say that $\Gamma$ has \emph{property MD} if every p.m.p.\ action of $\Gamma$ is a weak limit of finite actions.
\end{definition}

In symbols, $\Gamma$ is MD if $a \in \overline{F(\Gamma)}$ for every $a \in A(\Gamma, X, \mu)$. The following equivalent characterization is used as the definition in some of the results that we will reference.

\begin{proposition}[{\cite[Proposition 4.8]{MD:kechris}}]
    Let $\Gamma$ be a finitely generated residually finite group. Let $i_\Gamma$ be the trivial action on some standard probability space, and let $p_\Gamma$ be the action on its profinite completion. Then $a \in \overline{F(\Gamma)}$ if and only if $a \prec i_\Gamma \times p_\Gamma$. \qed
\end{proposition}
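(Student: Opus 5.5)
The plan is to prove both inclusions between $\overline{F(\Gamma)}$ and $\{a \mid a\prec i_\Gamma\times p_\Gamma\}$. I will use two standard facts about weak containment. First, it is transitive. Second, if $a$ is a factor of $b$, then $a\prec b$: pull back the sets $A_j$ along the factor map to get sets $B_j$ satisfying \eqref{weak containment} with error $0$. Throughout, fix a decreasing chain $N_1\geq N_2\geq\cdots$ of finite-index normal subgroups of $\Gamma$ that is cofinal among all finite-index subgroups. Such a chain exists because $\Gamma$ is finitely generated, so it has only countably many finite-index subgroups. Write $\widehat\Gamma=\varprojlim \Gamma/N_n$ with its Haar measure.

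\emph{Step 1: $i_\Gamma\times p_\Gamma\in\overline{F(\Gamma)}$.} Let $a_n$ be the action of $\Gamma$ on $[0,1]\times\Gamma/N_n$, trivial on the first factor and by translation on the second, with the product of Lebesgue and uniform measure. This space is standard and non-atomic, and $a_n$ factors through the finite group $\Gamma/N_n$, so $a_n\in F(\Gamma)$. I will show $a_n\to i_\Gamma\times p_\Gamma$ in the weak topology. Fix Borel sets $A_1,\dots,A_k\subset[0,1]\times\widehat\Gamma$, elements $g_1,\ldots,g_m$ and $\epsilon>0$. Approximate each $A_j$ in measure by a finite union of rectangles $I\times C$, where $I$ is an interval and $C$ is a clopen set pulled back from some $\Gamma/N_n$. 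This is possible because these cylinder sets generate the Borel $\sigma$-algebra and are closed under finite intersections. Since the action preserves measure, the quantities in \eqref{weak containment} change by at most a multiple of the approximation error. For $n$ large enough, the approximating sets are exactly pullbacks under the $\Gamma$-equivariant, measure-preserving projection $[0,1]\times\widehat\Gamma\to[0,1]\times\Gamma/N_n$. Taking $B_j$ to be their images then gives \eqref{weak containment} exactly. This approximation step is the only genuinely measure-theoretic point, and I expect it to be the main (routine) obstacle.

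\emph{Step 2: every finite action is weakly contained in $i_\Gamma\times p_\Gamma$.} Let $a\in F(\Gamma)$ factor through a finite quotient $Q$. The kernel of $\Gamma\to Q$ contains some $N_n$, so $a$ factors through $\Gamma/N_n$. The space $X$ then partitions measurably into the sets $X_H$ of points whose stabiliser is $H$, for the finitely many subgroups $H\geq N_n$. By choosing a Borel fundamental domain, each $\Gamma$-invariant piece is isomorphic to the trivial action on some measure space $Y$ times the transitive action on $\Gamma/H$ with uniform measure. Now write $[0,1]$ as a disjoint union of intervals whose lengths are the masses of these pieces; each piece embeds as a factor over the corresponding interval. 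The transitive action on $\Gamma/H$ is a factor of $p_\Gamma$ via $\widehat\Gamma\to\Gamma/N_n\to\Gamma/H$, since Haar measure pushes forward to uniform measure. The trivial part embeds measure-theoretically into the interval. Hence $a$ is a factor of $i_\Gamma\times p_\Gamma$, and so $a\prec i_\Gamma\times p_\Gamma$.

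\emph{Conclusion.} The set $\{a\mid a\prec i_\Gamma\times p_\Gamma\}$ is closed in the weak topology, because it is a closure. By Step 2 it contains $F(\Gamma)$, so it contains $\overline{F(\Gamma)}$. Conversely, if $a\prec i_\Gamma\times p_\Gamma$, then $a$ lies in the closure of the isomorphism class of $i_\Gamma\times p_\Gamma$. By Step 1 this class lies in $\overline{F(\Gamma)}$, which is closed (using transitivity of weak containment, which is an $\epsilon/2$ argument on \eqref{weak containment}). Therefore $a\in\overline{F(\Gamma)}$.
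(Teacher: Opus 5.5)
Your proof is correct. The paper gives no proof of its own (it quotes the statement from Kechris), and your route is the standard one: every finite action is a factor of $i_\Gamma\times p_\Gamma$, while $i_\Gamma\times p_\Gamma$ is the inverse limit, hence a weak limit, of the finite actions on $[0,1]\times\Gamma/N_n$. The one point to tighten is in Step 2: the stabiliser sets $X_H$ are permuted by conjugation and are not themselves invariant. The invariant pieces should therefore be indexed by conjugacy classes of $H$, with $Y$ taken to be a Borel transversal inside $X_H$ for a single representative $H$.
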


\begin{remark}
\label{MD vs EMD}
    In \cite{MD:kechris}, the author also defines the related properties \emph{EMD} and \emph{EMD$^*$}. We will not define these here, suffice it to say that EMD implies MD by definition, and that Tucker-Drob proved that MD and EMD$^*$ are equivalent \cite[Theorem 1.4]{MD:tuckerdrob}, and that MD and EMD are equivalent for groups without property (T) \cite[Corollary 4.7]{MD:tuckerdrob}.
\end{remark}

\begin{remark}
\label{stability in finite actions}
A related property is that of \emph{stability in finite actions}, introduced by Gohla and Thom in \cite{gohla:thom} as a possible pathway to producing non-sofic groups (see also \cite{CDL:gohla:thom}). The definition is the same as property MD, except only for the p.m.p.\ actions arising from sofic representations, hence property MD implies stability in finite actions by definition.
\end{remark}

\subsection{First examples}

The first and main examples of groups with property MD are the usual ones \cite[Theorem 1.1 and p. 466]{MD:kechris}

\begin{theorem}[Kechris]
\label{MD free amenable}
    Free groups and residually finite amenable groups are MD.\qed
\end{theorem}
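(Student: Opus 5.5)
For free groups the plan is a direct approximation argument. For residually finite amenable groups I would use the characterization $a\in\overline{F(\Gamma)}$ if and only if $a\prec i_\Gamma\times p_\Gamma$ from the Proposition above, together with the fact that, for amenable groups, free actions weakly contain every action.

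\emph{Free groups.} Let $F$ be free on a finite set $s_1,\dots,s_r$, and let $a\in A(F,X,\mu)$. Identify $(X,\mu)$ with $[0,1]$ with Lebesgue measure. Fix Borel sets $A_1,\dots,A_n$, group elements $g_1,\dots,g_m$ and $\epsilon>0$. Each $g_i$ is a word in the $s_l^{\pm1}$, and composition and inversion are continuous in the weak topology on $\mathrm{Aut}(X,\mu)$. So it suffices to find, for each $l$, an automorphism $T_l$ close to $s_l^a$ in the weak topology such that the $T_l$ together generate a finite group. For this I would use the classical Halmos approximation theorem. Automorphisms that permute the intervals of a dyadic partition of level $k$ (acting by translation on each interval) are weakly dense in $\mathrm{Aut}(X,\mu)$. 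Choose such permutations $T_1,\dots,T_r$ at a common level $k$. Since $F$ is free, $s_l\mapsto T_l$ defines an action $b$ of $F$. It factors through the finite symmetric group on the $2^k$ dyadic intervals, so $b$ is a finite action. The quantities $\mu(g_i^b(A_j)\cap A_k)$ are close to $\mu(g_i^a(A_j)\cap A_k)$ for $k$ large, and hence $a\in\overline{F(F)}$. Infinitely generated free groups would reduce to this case because only finitely many generators are involved at each stage, but the statement is only needed for finitely generated groups.

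\emph{Amenable residually finite groups.} Let $\Gamma$ be amenable and residually finite. The profinite action $p_\Gamma$ is free: a nontrivial $g$ is nontrivial in some finite quotient $\Gamma/N$, and so it fixes a point of $\widehat{\Gamma}$ only on a null set. Consequently $i_\Gamma\times p_\Gamma$ is a free p.m.p.\ action. By the Proposition it then suffices to prove that for amenable $\Gamma$ every action $a$ is weakly contained in every free action $b$.

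The key tool for this step is the Ornstein--Weiss Rokhlin lemma. Given finitely many $g_i$ and $\epsilon$, choose Følner sets $F_1,\dots,F_N$ that $\epsilon$-quasi-tile $\Gamma$. Freeness of $b$ gives Borel sets $C_1,\dots,C_N\subset Y$ such that the translates $\{fC_t : f\in F_t,\ t\le N\}$ are pairwise disjoint and cover all of $Y$ except a set of measure at most $\epsilon$. On the $a$ side, I would approximate the joint distribution of $(1_{f^{-1}A_j})_{f\in F_t,\,j}$ by a finite-valued name. I would then partition each base $C_t$ into pieces whose measures are proportional to the probabilities of the possible names. Finally, I would define $B_j$ to be the union of the sets $fC_t^{\omega}$ over those names $\omega$ with $\omega(f,j)=1$. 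Because each $F_t$ is $\epsilon$-invariant under the $g_i$, the error in the quantities $\nu(g_i^bB_j\cap B_k)$ comes only from boundary effects and from the $\epsilon$-remainder, so these quantities match $\mu(g_i^aA_j\cap A_k)$ up to $O(\epsilon)$.

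I expect this last step, the transfer of statistics through the quasi-tiling, to be the main obstacle. The difficulty is that the $\epsilon$-disjointness of the quasi-tiles and the boundary terms must be controlled uniformly in the finitely many $g_i$ and sets $A_j$. This is where amenability is genuinely used. The residual finiteness is only used to make the target action $i_\Gamma\times p_\Gamma$ free.
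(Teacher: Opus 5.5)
Your argument is correct and matches the standard proof behind this result, which the paper does not prove but cites from Kechris: Halmos-type weak density of dyadic permutations at a common level handles free groups, and for residually finite amenable groups you show every action is weakly contained in any free action (via the Ornstein--Weiss Rokhlin lemma, as in Foreman--Weiss) and apply this to the free action $i_\Gamma\times p_\Gamma$. One cosmetic point: the names $(1_{f^{-1}A_j})_{f\in F_t,\,j}$ already take finitely many values, so no approximation is needed there. You only need to split each base $C_t$ in proportion to the name distribution.
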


As with property FD, our proofs of property MD will use this as a starting point, together with permanence properties. The next two sections are devoted to these.

\subsection{Co-amenability}

The following is part of a list of permanence properties for MD explained in \cite[p. 486]{MD:kechris}.

\begin{lemma}[Kechris]
\label{MD virtually}
    A subgroup of an MD group is MD. A virtually MD group is MD. \qed
\end{lemma}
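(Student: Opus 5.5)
Both claims of Lemma \ref{MD virtually} come down to two operations on p.m.p.\ actions: restriction to a subgroup and induction from a finite-index subgroup. Both are compatible with weak containment and with finiteness of actions. Throughout I will use the characterization $a\in\overline{F(\Gamma)}$ from the definition, together with the fact that the weak topology is determined by finitely many Borel sets and finitely many group elements at a time.

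For subgroups, let $\Lambda<\Gamma$ with $\Gamma$ MD, and let $b\in A(\Lambda,X,\mu)$. Since $\Lambda$ is residually finite and (in the finitely generated setting of the definition) we want $b\in\overline{F(\Lambda)}$, I will first induce. Concretely, $\mathrm{Ind}_\Lambda^\Gamma b$ is the action of $\Gamma$ on $\Gamma/\Lambda\times X$, or more precisely on $X^{\Gamma/\Lambda}$ with the co-induced product measure, which is a probability space even if the index is infinite. Its restriction to $\Lambda$ contains $b$ as a factor. By MD for $\Gamma$, $\mathrm{CInd}\,b$ is a weak limit of finite $\Gamma$-actions $a_\alpha$. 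Restriction to $\Lambda$ is continuous in the weak topology, since the defining condition \eqref{weak containment} only tests elements of $\Lambda\subset\Gamma$. Restrictions of finite actions are finite. Weak containment also passes to factors: the Borel sets $A_j$ pulled back from $X$ are Borel sets of the co-induced space. Hence $b\prec$ a weak limit of finite $\Lambda$-actions, so $b\in\overline{F(\Lambda)}$. The one point that needs care is that the approximating sets for the factor $b$ are obtained from the approximating sets for $\mathrm{CInd}\,b$ by taking preimages of the coordinate projection.

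For finite-index overgroups, let $\Lambda<\Gamma$ have finite index with $\Lambda$ MD. Passing to the normal core, which is MD by the first part, I may assume $\Lambda\lhd\Gamma$. Given $a\in A(\Gamma,X,\mu)$, I restrict it to $a|_\Lambda$ and approximate that by finite $\Lambda$-actions $b_\alpha\to a|_\Lambda$. Inducing gives $\mathrm{Ind}_\Lambda^\Gamma b_\alpha$ on $\Gamma/\Lambda\times Y_\alpha$ with normalized measure. These are finite actions: the kernel of $b_\alpha$ is a finite-index subgroup of $\Lambda$, and intersecting its $\Gamma$-conjugates, of which there are finitely many, gives a finite-index subgroup of $\Gamma$ acting trivially. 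The key step is that induction is weakly continuous, so $\mathrm{Ind}\,b_\alpha\to\mathrm{Ind}(a|_\Lambda)$. This is checked by choosing coset representatives $s_1,\dots,s_d$ and rewriting each $g s_i=s_{\sigma(i)}\lambda_i$. The quantities $\mu(g(A_j)\cap A_k)$ for sets $A_j=\bigsqcup_i\{s_i\Lambda\}\times A_j^i$ then become finite sums of $\Lambda$-quantities, to which the convergence $b_\alpha\to a|_\Lambda$ applies. Finally, $a$ is a factor of $\mathrm{Ind}(a|_\Lambda)$ via $(g\Lambda,x)\mapsto g\cdot x$, defined using representatives. So $a\prec\overline{F(\Gamma)}$, and since $\overline{F(\Gamma)}$ is closed under weak containment, $a\in\overline{F(\Gamma)}$.

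I expect the main technical obstacle to be verifying that weak convergence is preserved under induction and under passing to factors. Both are bookkeeping with finitely many sets and elements, and they are exactly the facts recorded by Kechris in the reference cited before the statement. Alternatively, one can prove the second part via the criterion $a\prec i_\Gamma\times p_\Gamma$ from the proposition preceding the remark on MD versus EMD. For that I would use $\mathrm{Ind}(i_\Lambda\times p_\Lambda)\cong i\times p_\Gamma$ up to weak equivalence, which holds since the profinite topology on $\Gamma$ induces that of the finite-index subgroup $\Lambda$.
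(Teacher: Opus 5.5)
Your proof is correct. The paper gives no argument of its own here; it only cites Kechris's list of permanence properties. So the useful comparison is with the tools the paper uses elsewhere.

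\textbf{Subgroups.} Your first half is exactly the co-induction move the paper makes in the forward direction of the proof of Theorem~\ref{MD amenableext iff}. There, an action is a factor of the restriction of its co-induction, that co-induction is a limit of finite actions of the big group, and restriction preserves both finiteness and weak convergence.

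\textbf{Finite-index overgroups.} Here you use ordinary induction instead of co-induction, and this is a genuinely convenient choice. Because $[\Gamma:\Lambda]<\infty$, $\mathrm{Ind}_\Lambda^\Gamma$ produces p.m.p.\ actions on $\Gamma/\Lambda\times Y$ with normalized measure. Your coset-representative computation shows it is weakly continuous, and it sends finite actions to finite actions. Moreover $a$ is literally a factor of $\mathrm{Ind}_\Lambda^\Gamma(a|_\Lambda)$ via $(s_i\Lambda,x)\mapsto s_i\cdot x$. This avoids needing any weak containment of the form $a\prec\cind_\Lambda^\Gamma(a|_\Lambda)$, the kind of input the co-induction route, e.g.\ Theorem~\ref{MD LS}, relies on. Your route is therefore more elementary and fully self-contained.

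\textbf{Cosmetic points.}
\begin{itemize}
\item In the first part you briefly write $\mathrm{Ind}$ and $\Gamma/\Lambda\times X$. That space has infinite measure when the index is infinite, so only the co-induced action on $X^{\Gamma/\Lambda}$ makes sense there, as you then say.
\item The reduction to normal $\Lambda$ is unnecessary. For any finite-index $\Lambda$, the normal core in $\Gamma$ of $\ker(b_\alpha)$ already acts trivially on $\mathrm{Ind}_\Lambda^\Gamma(b_\alpha)$. Indeed, for $g$ in that core, $s_i^{-1}gs_i\in\ker(b_\alpha)\subseteq\Lambda$, so $g$ fixes every coset and acts by an element of $\ker(b_\alpha)$ on each fibre.
\item The paper's definition of MD is stated only for finitely generated groups, so a subgroup may fall outside it. Your argument never uses finite generation, so it works for countable groups with Kechris's definition.
\end{itemize}
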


The main permanence property in the direction of co-amenability was proved in \cite[Theorem 1.4]{MD:bowen:tuckerdrob}, and we will revisit it in the next section (Theorem \ref{MD amenableext iff}). 
We also need an analog of Theorem \ref{LS:coamenable} (that is \cite[Corollary 2.5]{Lubotzky:2004xw}). We refer the reader to Definition \ref{def:efficient} for the notion of efficient subgroups.

\begin{theorem}
\label{MD LS}

    Let $\Lambda < \Gamma$ be a normal subgroup such that $\Gamma/\Lambda$ is infinite, amenable and residually finite. Suppose that $\Lambda$ can be written as a directed union of subgroups $(\Lambda_i)_{i \in I}$, each of which is MD and efficient in $\Gamma$. Then $\Gamma$ is MD.
\end{theorem}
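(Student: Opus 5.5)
We would transfer the strategy of \cite[Corollary 2.5]{Lubotzky:2004xw} (Theorem \ref{LS:coamenable}) to p.m.p.\ actions, using the characterization $\overline{F(\Gamma)}=\{a \mid a\prec i_\Gamma\times p_\Gamma\}$ from \cite[Proposition 4.8]{MD:kechris}. Fix a p.m.p.\ action $a$ of $\Gamma$ on $(X,\mu)$. The weak-containment condition only involves finitely many Borel sets $A_1,\dots,A_n$ and finitely many elements $g_1,\dots,g_m$. So it suffices to show that, for each such finite datum and each $\epsilon>0$, there is a finite action $b$ of $\Gamma$ satisfying \eqref{weak containment}.

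\textbf{Step 1 (approximating the action by an induced one).} Since $\Gamma/\Lambda$ is infinite amenable, we take a F{\o}lner set $F\subset \Gamma/\Lambda$ for the images of the $g_k$, and a set of lifts $\tilde F\subset\Gamma$. We aim to approximate $a$, on the relevant partition data, by an action built from $a|_\Lambda$ using the F{\o}lner set. Concretely, Ornstein--Weiss/Rokhlin-type tiling (or the argument of Bowen--Tucker-Drob \cite[Theorem 1.4]{MD:bowen:tuckerdrob}) should show that $a$ is weakly contained in actions of the form $\mathrm{Ind}_{\Lambda'}^{\Gamma}$-like constructions. Here $\Lambda'$ is the preimage of a finite-index subgroup of $\Gamma/\Lambda$, which exists by residual finiteness, and the F{\o}lner tiles become quasi-tiles of the finite quotient. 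The infiniteness of $\Gamma/\Lambda$ is used to make the F{\o}lner boundary negligible.

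\textbf{Step 2 (using MD of the $\Lambda_i$).} The restriction of $a$ to $\Lambda$ involves only finitely many elements $\tilde F^{-1}g_k\tilde F\cap\Lambda$. These lie in some $\Lambda_i$ because the union is directed. Since $\Lambda_i$ is MD, $a|_{\Lambda_i}$ is weakly approximated by a finite action of $\Lambda_i$, factoring through a finite-index subgroup $L\le\Lambda_i$. Efficiency of $\Lambda_i$ in $\Gamma$ provides a finite-index normal subgroup $N\lhd\Gamma$ with $N\cap\Lambda_i\le L$. We may also shrink $N$ so that $N\Lambda/\Lambda$ is contained in the finite-index subgroup of $\Gamma/\Lambda$ chosen in Step 1; this is possible since $\Lambda$ has the relevant separability, being a union of separable subgroups, and $\Gamma/\Lambda$ is residually finite.

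\textbf{Step 3 (assembling a finite action).} We then build a finite $\Gamma$-action on $\Gamma/N\times Y$ by inducing the finite $\Lambda_i$-action through $\Lambda_iN/N$, and check \eqref{weak containment} using the tiling from Step 1.

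The main obstacle is Step 3: arranging that the finite $\Lambda_i$-action extends compatibly to $\Lambda_i N$, and that the F{\o}lner tiling survives in $\Gamma/N$. This is exactly where efficiency is needed: the profinite topology on $\Gamma$ must induce that of $\Lambda_i$, so that the finite quotient of $\Lambda_i$ is seen inside a finite quotient of $\Gamma$. As a first approach, we would mimic the representation-theoretic proof of Lubotzky--Shalom, replacing unitary induction by measure-theoretic co-induction, and replacing the almost-invariant vectors in $\ell^2(\Gamma/\Lambda)$ by F{\o}lner sets.
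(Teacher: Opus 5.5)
There is a genuine gap. What you have is a roadmap whose two load-bearing steps are not carried out; you say so yourself for Step 3.

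\textbf{Step 1.} This step is only asserted (``should show'') and is never formulated as a usable statement. The input actually needed is Bowen--Tucker-Drob's theorem \cite[Theorem 1.1]{MD:bowen:tuckerdrob}, not their Theorem 1.4, which is a special case of what you are proving. That theorem says amenability of $\Gamma/\Lambda$ gives $a \prec \cind_\Lambda^\Gamma(a|_\Lambda)\times p_{\Gamma/\Lambda}$. The co-induction is from $\Lambda$ itself, not an ``Ind-like'' construction from a finite-index overgroup $\Lambda'$.

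This result replaces the identity $\pi\otimes\lambda_{\Gamma/\Lambda}\cong \mathrm{Ind}_\Lambda^\Gamma(\pi|_\Lambda)$ in Lubotzky--Shalom's argument. That identity has no direct analogue for p.m.p.\ actions, so the step cannot be skipped. Without it, a finite action assembled from a finite approximation of $a|_{\Lambda_i}$ (your Step 3) carries no information about the numbers $\mu(g_k^a(A_j)\cap A_l)$ for $g_k\notin\Lambda$. Those are exactly the numbers \eqref{weak containment} asks you to match.

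Residual finiteness of $\Gamma/\Lambda$ then enters not to choose $\Lambda'$ but to absorb the extra factor. One has $p_{\Gamma/\Lambda}\prec \iota_\Gamma\times p_\Gamma$ \cite[Lemma 2.3]{MD:bowen:tuckerdrob}. So by \cite[Lemma 2.2]{MD:bowen:tuckerdrob} it suffices to show $\cind_\Lambda^\Gamma(a|_\Lambda)\in\overline{F(\Gamma)}$.

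\textbf{Step 3.} After this reduction no F{\o}lner tiling of $\Gamma/N$ is needed, so the obstacle you identify in Step 3 disappears. The paper argues in two stages.
\begin{enumerate}[(i)]
\item Show $\cind_{\Lambda_i}^\Gamma(a|_{\Lambda_i})\to\cind_\Lambda^\Gamma(a|_\Lambda)$ by a cylinder-set computation (Lemma \ref{MD dirun}). This is the correct form of your ``finitely many elements lie in some $\Lambda_i$''. It has to be phrased in terms of the coset spaces $\Gamma/\Lambda_i$ versus $\Gamma/\Lambda$.
\item For fixed $i$ (Lemma \ref{MD efficient}), approximate $a|_{\Lambda_i}$ by finite actions $b$ and use continuity of co-induction. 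Each such $b$ satisfies $b\prec s_{\Lambda_i,\Lambda_i/L,X}$ for some finite-index normal $L\lhd\Lambda_i$ \cite[Lemma 4.15]{MD:kechris}. Moreover $\cind_{\Lambda_i}^\Gamma(s_{\Lambda_i,\Lambda_i/L,X})\cong s_{\Gamma,\Gamma/L,X}$ \cite[Proposition A.2]{MD:kechris}. Efficiency says $L$ is separable in $\Gamma$, so $L=\bigcap_j M_j$ for a decreasing sequence of finite-index subgroups $M_j$. Hence $s_{\Gamma,\Gamma/L,X}$ is a weak limit of the finite actions $s_{\Gamma,\Gamma/M_j,X}$.
\end{enumerate}
This uses that $L$ is closed in $\Gamma$, which needs both halves of efficiency, including separability of $\Lambda_i$ itself. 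Your Step 2 only extracts $N\lhd\Gamma$ with $N\cap\Lambda_i\le L$.

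\textbf{Smaller points.} Shrinking $N$ so that $N\Lambda/\Lambda$ lies in a given finite-index subgroup needs nothing beyond intersecting with that subgroup's preimage. Your justification is also wrong: a directed union of separable subgroups need not be separable. Finally, infiniteness of $\Gamma/\Lambda$ has nothing to do with F{\o}lner boundaries being small.
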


Theorem \ref{MD LS} had previously been obtained in \cite[Proposition 4.3]{treeable}, using the framework of existentially closed actions. In the spirit of keeping this paper as self-contained as possible, we give a different proof, that mirrors the one of Lubotzky and Shalom for property FD\footnote{The possibility of such an argument is also suggested in \cite[Remark 4.5]{treeable}.}.

We need to recall the coinduction technique from \cite[Appendix A]{MD:kechris}. Let $\Lambda < \Gamma$ and let $a \in A(\Lambda, X, \mu)$, then $\Gamma$ has a natural action on the product $X^{\Gamma/\Lambda}$ with the product measure $\mu^{\Gamma/\Lambda}$, which is denoted by $\cind_\Lambda^\Gamma(a) \in A(\Gamma, X^{\Gamma/\Lambda}, \mu^{\Gamma/\Lambda})$. We will only need to consider $\cind_\Lambda^\Gamma(a|_{\Lambda})$, where $a \in A(\Gamma, X, \mu)$. In this case, $\tilde{a} \coloneqq \cind_\Lambda^\Gamma(a|_{\Lambda}) \in A(\Gamma, X^{\Gamma/\Lambda}, \mu^{\Gamma/\Lambda})$ is given by
\[(g^{\tilde{a}} \cdot f)(h\Lambda) = g^a \cdot f(g^{-1}h\Lambda),\]
up to isomorphism \cite[Proposition A.3]{MD:kechris}.

\begin{lemma}
\label{MD dirun}
    Let $\Lambda < \Gamma$ and suppose that $\Lambda$ can be written as a directed union of subgroups $(\Lambda_i)_{i \in I}$. Then $\cind_{\Lambda_i}^\Gamma(a|_{\Lambda_i}) \to \cind_\Lambda^\Gamma(a|_\Lambda)$.
\end{lemma}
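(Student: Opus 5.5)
To prove that $\cind_{\Lambda_i}^\Gamma(a|_{\Lambda_i}) \to \cind_\Lambda^\Gamma(a|_\Lambda)$ in the weak topology, I would realize all the coinduced actions on a common product space and compare them through a natural factor map. Write $Y \coloneqq X^{\Gamma/\Lambda}$ and $Y_i \coloneqq X^{\Gamma/\Lambda_i}$, with actions $\tilde a$ and $\tilde a_i$ given by $(g\cdot f)(h\Lambda)=g^a f(g^{-1}h\Lambda)$ and its analogue for $\Lambda_i$. Since $\Lambda_i<\Lambda$, the projection $\Gamma/\Lambda_i\to\Gamma/\Lambda$ is surjective and $\Gamma$-equivariant.

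The first step is to use this projection to embed $Y$ into $Y_i$ as a $\Gamma$-invariant subset: send $f$ to $f\circ q$, where $q$ is the projection. This map is equivariant. However, it does not push $\mu^{\Gamma/\Lambda}$ to $\mu^{\Gamma/\Lambda_i}$, so I would instead run the argument in the opposite direction. Fix a section $s\colon\Gamma/\Lambda\to\Gamma/\Lambda_i$, for instance by choosing the coset $h\Lambda_i$ for a chosen representative $h$. Then define
\[
\Phi_i\colon Y_i\to Y,\qquad \Phi_i(f)(h\Lambda)=f(s(h\Lambda)).
\]
This is a coordinate projection, so it is measure preserving.

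The map $\Phi_i$ is not equivariant. Indeed, $g\cdot f$ evaluated at $s(h\Lambda)$ equals $g^a f(g^{-1}s(h\Lambda))$, while equivariance would require $g^a f(s(g^{-1}h\Lambda))$. These agree exactly when $g^{-1}s(h\Lambda)=s(g^{-1}h\Lambda)$. Writing $s(h\Lambda)=h\Lambda_i$, we have $g^{-1}h\Lambda_i=(g^{-1}h)\Lambda_i$. If $h'$ is the chosen representative of $g^{-1}h\Lambda$, then $g^{-1}h=h'\lambda$ for some $\lambda\in\Lambda$. The two cosets $g^{-1}h\Lambda_i$ and $h'\Lambda_i$ coincide whenever $\lambda\in\Lambda_i$.

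The second step is to verify weak convergence. Take Borel sets $A_1,\dots,A_n\subset Y$, elements $g_1,\dots,g_m$, and $\epsilon>0$. By a standard approximation, I may assume each $A_j$ is a cylinder set depending only on finitely many coordinates $h\Lambda$ with $h$ in a finite set $F$ of representatives. Set $B_j\coloneqq\Phi_i^{-1}(A_j)$, so that $\mu(B_j)=\mu(A_j)$. The quantities $\mu(g^{\tilde a_i}B_j\cap B_k)$ involve only the coordinates $g_l^{-1}h\Lambda_i$ and $h\Lambda_i$ for $h\in F$. There are finitely many pairs $(g_l,h)$, and each yields an element $\lambda=h'^{-1}g_l^{-1}h\in\Lambda$. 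Since the union of the $\Lambda_i$ is directed, for $i$ large all these finitely many $\lambda$ lie in $\Lambda_i$. Once that holds, $\Phi_i$ intertwines the two actions on the relevant coordinates, so $g^{\tilde a_i}B_j\cap B_k=\Phi_i^{-1}(g^{\tilde a}A_j\cap A_k)$ up to null sets, and the measures agree exactly.

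One further check is needed: the coordinates $s(h\Lambda)$ used for different $h\in F$ must be distinct cosets in $\Gamma/\Lambda_i$, so that the product measure makes them independent. This holds automatically, because they project to distinct cosets of $\Lambda$.

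The main obstacle is the bookkeeping of coordinates together with the reduction to cylinder sets. The reduction is justified because the weak containment inequalities are continuous in the sets $A_j$ with respect to the measure-algebra metric, and cylinder sets are dense.
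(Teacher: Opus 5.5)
Your proof is correct and is essentially the paper's argument: after reducing to cylinder sets, your $B_j=\Phi_i^{-1}(A_j)$ is exactly the paper's choice ($B_j^{hK}=A_j^{h\Lambda}$ for $h$ in a fixed finite section $H$, and $X$ elsewhere), and your requirement that the finitely many elements $h'^{-1}g_l^{-1}h$ lie in $\Lambda_i$, obtained from directedness, amounts to the paper's condition that $g_l^{-1}h\Lambda=h'\Lambda$ if and only if $g_l^{-1}hK=h'K$. Phrasing it through the measure-preserving coordinate map $\Phi_i$ is a tidy repackaging: it lets you handle arbitrary sets depending on finitely many coordinates rather than single product cylinders. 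One small fix: your coordinate bookkeeping mixes $g_l$ and $g_l^{-1}$, which is harmless once you take the finite set $\{g_l\}$ closed under inverses.
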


\begin{proof}
    Let $A_1, \ldots, A_n \subset X^{\Gamma/\Lambda}, g_1, \ldots, g_m \in \Gamma$ and $\epsilon > 0$. We claim that if $K$ is a sufficiently large subgroup of $\Lambda$, in the sense that it contains a prescribed finite set, then if we denote $\tilde{a} \coloneqq \cind_\Lambda^\Gamma(a|_\Lambda)$ and $\hat{a} \coloneqq \cind_K^\Gamma(a|_K)$, there exist $B_1, \ldots, B_n \subset X^{\Gamma/K}$ such that
    \eqref{weak containment} holds for all $i, j, k$. The lemma will follow since we can choose $K$ to be a term of the directed union giving $\Lambda$.

    The space $X^{\Gamma/\Lambda}$ is endowed with the product measure, so every Borel set $A$ can be approximated by a finite union of \emph{cylinders} $C$, that is, sets that decompose as products $\prod C^{h\Lambda}$, where $C^{h\Lambda}$ is a Borel subset of $X$, and $C^{h\Lambda} = X$ for all but finitely many $h \Lambda$. Since the $\Gamma$-action preserves cylinders, up to approximation, we may assume that each $A_j$ is a cylinder $A_j = \prod A_j^{h\Lambda}$. In this case, we claim that we can find the sets $B_1, \ldots, B_n$ so that \eqref{weak containment} is satisfied with $\epsilon = 0$.
    
    The action of $\Gamma$ on a cylinder takes the following form:
    \[g^{\tilde{a}}(A) = g^{\tilde{a}} \left( \prod_{h\Lambda \in \Gamma/\Lambda} A^{h\Lambda} \right) = \prod_{h\Lambda \in \Gamma/\Lambda} g^a (A^{g^{-1}h\Lambda}),\]
    hence
    \begin{equation}
    \label{mu cylinders}
        \mu^{\Gamma/\Lambda}(g_i^{\tilde{a}} (A_j) \cap A_k) = \prod_{h \Lambda \in \Gamma/\Lambda} \mu(g_i^a (A_j^{g_i^{-1}h\Lambda}) \cap A_k^{h \Lambda}).
    \end{equation}
    We let $H$ denote the set of all elements $h \Lambda \in \Gamma/\Lambda$ such that $A_j^{h \Lambda} \neq X$ for some $j$: this set is finite, and we identify it with a section $H \subset \Gamma$. Then the value of \eqref{mu cylinders} depends entirely on the action of $g_1, \ldots, g_n$ on $X$, and on $H\Lambda$.

    We define $B_j^{hK} = A_j^{h\Lambda}$ for $h \in H$, and $B_j^{hK} = X$ otherwise, and we choose $K \subset \Lambda$ to be large enough that for all $i$ and all $h, h' \in \Lambda$:
    \[g_i^{-1}h\Lambda = h'\Lambda \Leftrightarrow g_i^{-1}hK = h'K.\]
    Then the computation in \eqref{mu cylinders} gives the same value for $\hat{a}$ and the $B_j, B_k$, and we conclude.
\end{proof}

The following is an analog of \cite[Lemma 2.4]{Lubotzky:2004xw}.

\begin{lemma}
\label{MD efficient}
    Let $\Lambda < \Gamma$ be an efficient subgroup. Suppose that $\Lambda$ is MD. Then for every $a \in A(\Gamma, X, \mu)$ it holds that $\cind_\Lambda^\Gamma(a|_\Lambda) \in \overline{F(\Gamma)}$.
\end{lemma}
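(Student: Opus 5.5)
Since $\Lambda$ is MD, the restricted action $a|_\Lambda$ is a weak limit of finite actions $b_\alpha \in F(\Lambda)$, each factoring through a finite quotient $\Lambda/L_\alpha$ with $L_\alpha$ of finite index in $\Lambda$. The plan is to push this approximation through coinduction, and then use efficiency to replace each $L_\alpha$ by the trace on $\Lambda$ of a finite-index subgroup of $\Gamma$.

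The first step is continuity of coinduction for the weak topology: if $b_\alpha \to a|_\Lambda$ weakly, then $\cind_\Lambda^\Gamma(b_\alpha) \to \cind_\Lambda^\Gamma(a|_\Lambda)$. I would prove this by the cylinder argument of Lemma \ref{MD dirun}. Approximate the Borel sets by finite unions of cylinders supported on a finite set $H \subset \Gamma/\Lambda$. For $h\Lambda \in H$ and $g_i$ with $g_i^{-1}h\Lambda \in H$, write $g_i^{-1}h = h'\lambda$ with $\lambda \in \Lambda$. Then each measure in \eqref{mu cylinders} is a finite product of quantities $\mu(\lambda^{a}(A)\cap A')$, with finitely many $\lambda$ and finitely many coordinate sets. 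Weak convergence of $b_\alpha$ gives sets $B$ in the finite model approximating all of these simultaneously, so the products converge.

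Next, since $\Lambda$ is efficient, $L_\alpha$ is closed in the profinite topology of $\Gamma$. Its normal core in $\Lambda$ is also closed, and is induced by the profinite topology of $\Gamma$. So there is a finite-index normal $N \lhd \Gamma$ with $N\cap\Lambda \le L_\alpha$, and $b_\alpha$ factors through $\Lambda/(N\cap\Lambda) \cong \Lambda N/N$. The key claim is then that $\cind_\Lambda^\Gamma(b)$, for $b$ trivial on $N\cap\Lambda$, lies in $\overline{F(\Gamma)}$. Note that $\cind_\Lambda^\Gamma(b)$ lives on $X^{\Gamma/\Lambda}$ with $\Gamma/\Lambda$ possibly infinite, so it need not itself be finite.

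For the key claim, use $\Lambda N$, which has finite index in $\Gamma$. Let $b'$ be the action of $\Lambda N$ obtained by pulling back $b$ along $\Lambda N \to \Lambda N/N \cong \Lambda/(N\cap\Lambda)$; it is finite and restricts to $b$ on $\Lambda$. Then $\cind_{\Lambda N}^\Gamma(b')$ is a product of finitely many copies, permuted through $\Gamma/\mathrm{core}(N)$, so it is a finite action of $\Gamma$. I would then approximate $\cind_\Lambda^\Gamma(b)$ by such actions, letting $N$ shrink along a basis. Given a finite $H\subset\Gamma/\Lambda$ and $g_1,\dots,g_m$, choose $N$ (using separability of $\Lambda$) so that the relevant cosets $h\Lambda$ and $g_i^{-1}h\Lambda$ remain distinct modulo $\Lambda N$. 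The same cylinder computation as in Lemma \ref{MD dirun} then gives equal values with $\epsilon=0$. Here the identification $g_i^{-1}h\Lambda = h'\Lambda \Leftrightarrow g_i^{-1}h\Lambda N = h'\Lambda N$ replaces the one used there, and it is exactly what separability of $\Lambda$ provides.

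Finally, combine: $\overline{F(\Gamma)}$ is closed, and a diagonal argument over the two approximations (first $b_\alpha$, then $N$) finishes the proof. I expect the main obstacle to be the bookkeeping in the last step. One has to check that refining $N$ keeps the action of $b'$ on the chosen coordinates equal to that of $b$, via the elements $\lambda$ above, while also separating cosets. This uses both halves of efficiency: the induced topology to factor $b_\alpha$ through some $N$, and closedness of $\Lambda$ to separate cosets.
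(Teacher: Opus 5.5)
Your argument is correct, but it follows a different route from the paper's. The paper never extends the finite approximants. It uses \cite[Lemma 4.15]{MD:kechris} to dominate each finite action $a_i$ of $\Lambda$ by the generalized shift $s_{\Lambda,\Lambda/N_i,X}$. It then uses two facts from \cite[Propositions A.1 and A.2]{MD:kechris}: coinduction preserves weak containment, and it sends this shift to $s_{\Gamma,\Gamma/N_i,X}$. Finally it uses only that each finite-index $N_i\le\Lambda$ is separable in $\Gamma$, so that $s_{\Gamma,\Gamma/N_i,X}$ is a weak limit of finite shifts $s_{\Gamma,\Gamma/M_j,X}$ with $M_j\downarrow N_i$.

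You keep the finite action $b_\alpha$ itself. You use the induced-topology half of efficiency to extend it to the finite-index overgroup $\Lambda N$, and coinduce from there to get an honest finite $\Gamma$-action. You then use separability of $\Lambda$ to run the cylinder computation of Lemma \ref{MD dirun} in reverse: overgroups $\Lambda N$ decreasing to $\Lambda$, rather than subgroups increasing to $\Lambda$.

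The paper's route is shorter given Kechris's toolbox and invokes efficiency in a single place. Yours is self-contained (you also reprove continuity of coinduction, which the paper cites), avoids shifts and weak-containment lemmas, and exhibits the approximating finite actions explicitly.

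Two small bookkeeping points. First, $\cind_{\Lambda N}^\Gamma(b')$ is finite because the whole action factors through $\Gamma/N$, not merely because coordinates are permuted. Normality of $N$ puts the cocycle values of elements of $N$ inside $N$, where $b'$ is trivial. Second, choose the transversal of $\Gamma/\Lambda N$ to contain the representatives of the cosets in $H$. Then the cocycle values $\lambda\in\Lambda$ appearing in the nontrivial factors agree on both sides. Your separation condition, with $e$ among the $g_i$, is exactly what makes this possible.
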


Given $\Lambda < \Gamma$, we denote by $s_{\Gamma, \Gamma/\Lambda, X}$ the shift $\Gamma \curvearrowright X^{\Gamma/\Lambda}$.

\begin{proof}
    Because $\Lambda$ is MD, there is a sequence of finite actions $a_i \in A(\Lambda, X, \mu)$ such that $a_i \to a|_{\Lambda}$. Because $\cind$ is continuous \cite[p. 502]{MD:kechris}, we have $\cind_\Lambda^\Gamma(a_i) \to \cind_\Lambda^\Gamma(a|_\Lambda)$; it remains to show that $\cind_\Lambda^\Gamma(a_i) \in \overline{F(\Gamma)}$ for all $i$.
    
    By \cite[Lemma 4.15]{MD:kechris}, there exist finite-index normal subgroups $N_i < \Lambda$ such that $a_i \prec s_{\Lambda, \Lambda/N_i, X}$. Hence $\cind_\Lambda^\Gamma(a_i) \prec \cind_\Lambda^\Gamma(s_{\Lambda, \Lambda/N_i, X})$ by \cite[Proposition A.1]{MD:kechris}, and this latter is isomorphic to $s_{\Gamma, \Gamma/N_i, X}$ by \cite[Proposition A.2]{MD:kechris}. By assumption $N_i$ is separable in $\Gamma$, hence there exists a sequence of nested finite-index subgroups $M_j$ that intersect to $N_i$, and $s_{\Gamma, \Gamma/N_i, X}$ is the weak limit of the finite actions $s_{\Gamma, \Gamma/M_j, X}$.
\end{proof}

\begin{proof}[Proof of Theorem \ref{MD LS}]
    Let $a \in A(\Gamma, X, \mu)$. By \cite[Theorem 1.1]{MD:bowen:tuckerdrob}, we have that $a \prec \cind_{\Lambda}^\Gamma(a|_{\Lambda}) \times p_{\Gamma/\Lambda}$. By \cite[Lemma 2.3]{MD:bowen:tuckerdrob}, because $\Gamma/\Lambda$ is residually finite, $p_{\Gamma/\Lambda} \prec \iota_\Gamma \times p_\Gamma$. Hence by \cite[Lemma 2.2]{MD:bowen:tuckerdrob}, it remains to show that $\cind_{\Lambda}^\Gamma(a|_{\Lambda}) \in \overline{F(\Gamma)}$. Now $\cind_{\Lambda_i}^\Gamma(a|_{\Lambda_i}) \to \cind_{\Lambda}^\Gamma(a|_{\Lambda})$ by Lemma \ref{MD dirun}, so we conclude by Lemma \ref{MD efficient}.
\end{proof}

\subsection{Amalgamated products}

It is known that property MD passes to free products \cite[Theorem 4.8]{MD:tuckerdrob}.

\begin{theorem}[Tucker-Drob]
\label{MD freeprod}
    A free product of two MD groups is MD. \qed
\end{theorem}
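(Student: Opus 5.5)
The plan is to mirror the proof of Theorem \ref{fd fp} in the measurable setting, using the characterization of MD as ``every p.m.p.\ action is a weak limit of finite actions''. Let $\Gamma_1,\Gamma_2$ be MD and let $a\in A(\Gamma_1\Asterisk\Gamma_2,X,\mu)$. Fix finitely many Borel sets $A_1,\dots,A_n$, finitely many group elements $g_1,\dots,g_m\in\Gamma_1\Asterisk\Gamma_2$, and $\epsilon>0$. We must produce a finite action of $\Gamma_1\Asterisk\Gamma_2$ satisfying \eqref{weak containment}.

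\textbf{Step 1: approximate each restriction by a finite action on the same space.} By MD, the restriction $a|_{\Gamma_j}$ is a weak limit of finite actions of $\Gamma_j$. We upgrade this to a statement on the fixed space $(X,\mu)$: there are finite actions $b_j\in A(\Gamma_j,X,\mu)$ whose generators are close to those of $a|_{\Gamma_j}$ in the uniform (measure-algebra) topology on finitely many letters. This uses Kechris's equivalence \cite{MD:kechris} between weak closure and closure in the space $A(\Gamma_j,X,\mu)$ for conjugacy-invariant sets.

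\textbf{Step 2: combine and control the words.} Write each $g_i$ as a word in letters from $\Gamma_1\cup\Gamma_2$. Using $b_1,b_2$, define an action $b$ of the free product via its universal property. Since multiplication in $\mathrm{Aut}(X,\mu)$ is jointly continuous for the uniform metric, $g_i^b$ is close to $g_i^a$. Consequently $\mu(g_i^b(A_j)\cap A_k)$ is close to $\mu(g_i^a(A_j)\cap A_k)$.

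\textbf{Step 3: from $b$ to a finite action.} The action $b$ factors through $F_1\Asterisk F_2$ with $F_j$ finite, which is virtually free. It is therefore MD by Theorem \ref{MD free amenable} and Lemma \ref{MD virtually}. So $b$, viewed as an action of $F_1\Asterisk F_2$, is a weak limit of finite actions. Pulling these back along the quotient gives finite actions of $\Gamma_1\Asterisk\Gamma_2$ that approximate $b$, and hence $a$.

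\textbf{Main obstacle.} I expect Step 1 to be the hardest part: getting a finite action on the same space that is uniformly close on the finitely many relevant elements, rather than merely close in the weak sense. The weak topology alone does not compose well under words, which is exactly why the Exel--Loring-style argument worked on a fixed Hilbert space. My first attempt would be to use that $\overline{F(\Gamma_j)}$ is the closure of a conjugation-invariant set, where weak containment and the actual topology on $A(\Gamma_j,X,\mu)$ coincide. Then I would approximate $a|_{\Gamma_j}$ by elements of $F(\Gamma_j)\cap A(\Gamma_j,X,\mu)$ in the standard Polish topology, which is generated by $\mu(g^a A\,\triangle\, g^b A)$. If this step fails, the fallback is simply to cite Tucker-Drob's Theorem \ref{MD freeprod}, since that is precisely this statement.
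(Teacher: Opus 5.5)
Your proposal is correct and matches the argument the paper relies on: the statement is cited from Tucker-Drob, and in the proof of Theorem~\ref{MD freeprod relative} the paper describes his proof as approximating $a$ by finite actions factoring through $(\Gamma_1/N_1\Asterisk\Gamma_2/N_2)/M$, which is exactly your Steps 1--3 (the measurable analogue of the proof of Theorem~\ref{fd fp}). The one correction is terminological: in Step 1 the topology must be the weak (measure-algebra) topology on $\mathrm{Aut}(X,\mu)$, not the uniform one, since uniform approximability by finite actions would force the orbit equivalence relation to be hyperfinite; in the weak topology, Step 1 is just Kechris's standard formulation of MD as density of the finite actions in $A(\Gamma_j,X,\mu)$, so it is not a real obstacle, and Step 2 holds because $\mathrm{Aut}(X,\mu)$ is a topological group in that topology.
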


However, to tackle RAAGs on chordal graphs, we need to strengthen Theorem \ref{MD freeprod} to an analog of Corollary \ref{fd am}.

\begin{definition}
    If $\alpha \colon \Lambda \to \mathrm{Aut}(\Gamma)$ is an action by automorphisms, we say that $\Gamma$ has \emph{property MD relative to $\alpha$} (or relative to $\Lambda$, if the action $\alpha$ is clear from the context) if every p.m.p.\ action of $\Gamma$ is a weak limit of actions that factor through a finite quotient $\Gamma/N$ such that $\alpha_l(N) = N$ for all $l \in \Lambda$.
\end{definition}

As in Remark \ref{relative FD fg}, note that if $\Gamma$ is finitely generated and MD, then it is MD relative to $\Lambda$, for any $\alpha \colon \Lambda \to \mathrm{Aut}(\Gamma)$.



First we strengthen \cite[Theorem 1.4]{MD:bowen:tuckerdrob} to an analog of Theorem \ref{SS:semidirect} (that is \cite[Proposition 4.5 and Theorem 5.2]{Shulman:2023aa}).

\begin{theorem}
\label{MD amenableext iff}
    Let $\Gamma$ be a group and let $\Lambda$ be an amenable residually finite group acting on $\Gamma$. Then $\Gamma \rtimes \Lambda$ is MD if and only if $\Gamma$ is MD relative to $\Lambda$.
\end{theorem}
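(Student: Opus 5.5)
We prove the two directions separately, following the Shulman--Skalski argument for Theorem \ref{SS:semidirect} but in the measurable setting, with \cite[Theorem 1.1]{MD:bowen:tuckerdrob} playing the role of co-amenability. Throughout we write $G \coloneqq \Gamma \rtimes \Lambda$.

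\emph{MD of $G$ implies relative MD of $\Gamma$.} Let $a \in A(\Gamma, X, \mu)$. Coinduce to obtain $\hat a \coloneqq \cind_\Gamma^{G}(a)$; its restriction to $\Gamma$ contains $a$ as a factor, namely the coordinate at the identity coset. Since $G$ is MD, $\hat a$ is a weak limit of finite actions $b_i$ of $G$, each factoring through some $G/M_i$ with $M_i$ normal of finite index. Put $N_i \coloneqq M_i \cap \Gamma$. This is normal in $G$, so $\alpha_l(N_i) = N_i$ for all $l \in \Lambda$, and $b_i|_\Gamma$ factors through $\Gamma/N_i$. Restriction to $\Gamma$ is continuous for the weak topology, so $b_i|_\Gamma \to \hat a|_\Gamma$. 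It remains to pass from $\hat a|_\Gamma$ to its factor $a$. Sets pulled back from the identity coordinate compute the quantities in \eqref{weak containment} for $a$ exactly, so $a \prec \hat a|_\Gamma$ with witnesses given by these cylinder sets. Approximating those cylinders inside the $b_i$ yields finite $\Lambda$-invariant approximants of $a$. One needs these approximants to live on a single standard space, which we achieve by the usual trick of taking products with the trivial action and using Kechris' characterization of the closure via $\prec$.

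\emph{Relative MD of $\Gamma$ implies MD of $G$.} Let $c \in A(G, X, \mu)$. By \cite[Theorem 1.1]{MD:bowen:tuckerdrob}, since $G/\Gamma \cong \Lambda$ is amenable, we have $c \prec \cind_\Gamma^G(c|_\Gamma) \times p_\Lambda$. Because $\Lambda$ is residually finite, $p_\Lambda \prec \iota_G \times p_G$ by \cite[Lemma 2.3]{MD:bowen:tuckerdrob}. By \cite[Lemma 2.2]{MD:bowen:tuckerdrob} it therefore suffices to show $\cind_\Gamma^G(c|_\Gamma) \in \overline{F(G)}$, exactly as in the proof of Theorem \ref{MD LS}. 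Relative MD gives finite actions $a_i \to c|_\Gamma$, each factoring through $\Gamma/N_i$ with $N_i$ being $\Lambda$-invariant. Continuity of $\cind$ gives $\cind_\Gamma^G(a_i) \to \cind_\Gamma^G(c|_\Gamma)$. By \cite[Lemma 4.15, Propositions A.1, A.2]{MD:kechris}, each $\cind_\Gamma^G(a_i) \prec s_{G, G/N_i, X}$.

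The key point, and the main obstacle, is that $N_i$ is now \emph{normal} in $G$, because it is $\Lambda$-invariant. Hence $G/N_i \cong (\Gamma/N_i) \rtimes \Lambda$ is finite-by-(residually finite amenable), so it is residually finite and amenable. The shift $s_{G,G/N_i,X}$ factors through this group, and by Theorem \ref{MD free amenable} that group is MD. So $s_{G,G/N_i,X}$ is a weak limit of finite actions of $G/N_i$, hence of $G$, and we conclude.

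The delicate points are, first, checking that $(\Gamma/N_i)\rtimes\Lambda$ is residually finite. This holds because $\Lambda$ is residually finite and the action on the finite group $\Gamma/N_i$ has a finite-index kernel $\Lambda_0$, so $(\Gamma/N_i) \rtimes \Lambda$ contains $\Gamma/N_i \times \Lambda_0$ with finite index. Second, one must verify the factor argument in the forward direction carefully. We expect that forward direction to be the harder bookkeeping, and would lean on Tucker-Drob's $\prec$-calculus there.
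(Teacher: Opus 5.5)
Your proposal is correct and follows the paper's route. The forward direction is the same coinduce--restrict--intersect argument, with Kechris's $a \prec \cind_\Gamma^{\Gamma\rtimes\Lambda}(a)|_\Gamma$ made explicit via the identity-coset factor, and your converse spells out what the paper delegates to Bowen--Tucker-Drob's Theorem 1.4 and Kechris's argument: $\Lambda$-invariance makes the kernels $N_i$ normal in $\Gamma\rtimes\Lambda$, so $(\Gamma/N_i)\rtimes\Lambda$ is amenable and residually finite, hence MD. The only loose end is that for finite $\Lambda$ the action $p_\Lambda$ lives on an atomic space, but that case follows at once from Lemma \ref{MD virtually}.
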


\begin{proof}
    Suppose that $\Gamma \rtimes \Lambda$ is MD, and let $a \in A(\Gamma, X, \mu)$. Then $\cind_{\Gamma}^{\Gamma \rtimes \Lambda}(a) \in \overline{F(\Gamma \rtimes \Lambda)}$. By \cite[p. 502]{MD:kechris}, we have $a \prec \cind_{\Gamma}^{\Gamma \rtimes \Lambda}(a)|_{\Gamma}$, and this is a limit of finite actions that factor through restrictions of quotients of $\Gamma \rtimes \Lambda$. Hence these actions factor through quotients by normal subgroups that are $\Lambda$-invariant, so $\Gamma$ is MD relative to $\Lambda$.

    The converse direction is only a slight strengthening of \cite[Theorem 1.4]{MD:bowen:tuckerdrob}, whose proof uses \cite[Theorem 1.1]{MD:bowen:tuckerdrob} combined with the argument in \cite[p. 488]{MD:kechris}. This argument only uses that the normal subgroups of $\Gamma$ appearing as kernels of a dense set of finite actions can be chosen to be normal in $\Gamma \rtimes \Lambda$, and this follows from the assumption of relative MD.
\end{proof}

We can now prove a relative version of Theorem \ref{MD freeprod}, which is an analog of Theorem \ref{fd fp}.

\begin{theorem}
\label{MD freeprod relative}
Let $\Gamma_1$ and $\Gamma_2$ be groups, and let $\Lambda$ be a group acting on both $\Gamma_1$ and $\Gamma_2$ by automorphisms.  Let $\Lambda$ act on the free product $\Gamma_1\Asterisk\Gamma_2$ via the induced action.  Then if $\Gamma_1$ and $\Gamma_2$ both have MD relative to $\Lambda$, so does $\Gamma_1\Asterisk\Gamma_2$.
\end{theorem}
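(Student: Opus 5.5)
We follow the proof of Theorem \ref{fd fp}, with weak containment of p.m.p.\ actions replacing Fell approximation of representations. The plan has three steps: approximate each factor's restriction by finite actions, glue these into actions of the free product that factor through a free product of finite groups, and then pass to a genuinely finite quotient with finite $\Lambda$-orbit.

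\textbf{Step 1: approximating on each factor.} Let $a \in A(\Gamma_1\Asterisk\Gamma_2, X, \mu)$. By relative MD, each restriction $a|_{\Gamma_j}$ is a weak limit of actions $b_i^{(j)}$ of $\Gamma_j$. Each $b_i^{(j)}$ factors through a finite quotient $\Gamma_j/N_i^{(j)}$, where $N_i^{(j)}$ is $\Lambda$-invariant. Using the finite-action characterisation from \cite[Lemma 4.15]{MD:kechris}, as in the proof of Lemma \ref{MD efficient}, we may take these to be finite actions on the same space $(X,\mu)$.

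\textbf{Step 2: gluing.} This is the main obstacle. Since the factors are free, any pair of actions $(b_i^{(1)}, b_i^{(2)})$ on $(X,\mu)$ defines an action $b_i$ of $\Gamma_1\Asterisk\Gamma_2$. We need $b_i \to a$ in the weak topology. Approximating each factor separately does not obviously give this: the sets $B_j$ witnessing \eqref{weak containment} for $\Gamma_1$ and for $\Gamma_2$ must be the same sets, and mixed words $g_1h_1g_2\cdots$ must also be controlled. We would use the same device as Tucker-Drob's proof of Theorem \ref{MD freeprod}. One route is the framework of Tucker-Drob \cite[Theorem 4.8]{MD:tuckerdrob}. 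Another is the pointwise form of convergence, in the uniform (measure algebra) topology on $A(\Gamma_j, X, \mu)$ modulo conjugation. In that form one approximates $a|_{\Gamma_j}$ by $\theta_i^{(j)}\, b_i^{(j)}\, (\theta_i^{(j)})^{-1}$, with $\theta_i^{(j)}$ measure-preserving automorphisms, so that the conjugates converge pointwise on the generators of $\Gamma_j$. Conjugated actions of the two factors on one space then combine into an action of the free product converging to $a$, because convergence in the measure algebra is compatible with composition. The conjugated actions still factor through $\Gamma_j/N_i^{(j)}$. The practical plan is to quote Tucker-Drob's argument verbatim and observe that it never changes the kernels of the approximating finite actions.

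\textbf{Step 3: finite quotient with finite $\Lambda$-orbit.} Each $b_i$ factors through $F_i = \Gamma_1/N_i^{(1)} \Asterisk \Gamma_2/N_i^{(2)}$, which is virtually free and hence MD by Theorem \ref{MD free amenable} and Lemma \ref{MD virtually}. So $b_i$, viewed as an $F_i$-action, is a weak limit of actions factoring through finite quotients $F_i/M$ with $M$ normal in $F_i$. Let $L$ be the preimage of $M$ in $\Gamma_1\Asterisk\Gamma_2$. We cannot expect $L$ itself to be $\Lambda$-invariant, but we can replace it by an invariant subgroup.

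Since $N_i^{(j)}$ is $\Lambda$-invariant, $\Lambda$ acts on $F_i$, so every $\alpha_l(L)$ contains the kernel of $\Gamma_1\Asterisk\Gamma_2 \to F_i$. Thus $\alpha_l(L)$ corresponds to a normal subgroup of $F_i$ of index $[F_i:M]$. Because $F_i$ is finitely generated, it has only finitely many such subgroups, as in Remark \ref{relative FD fg}. Hence $L' = \bigcap_{l} \alpha_l(L)$ is a finite intersection, so it is a $\Lambda$-invariant normal subgroup of finite index contained in $L$. Any action factoring through the quotient by $L$ also factors through the quotient by $L'$. A diagonal argument over $i$ and these approximations then shows that $a$ is a weak limit of actions factoring through $\Lambda$-invariant finite quotients, which is what the theorem requires.
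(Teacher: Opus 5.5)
Your proposal is correct and is essentially the paper's argument: you run Tucker-Drob's free-product proof with the $N_i$ chosen $\Lambda$-invariant via relative MD, so that $\Lambda$ acts on $\Gamma_1/N_1\Asterisk\Gamma_2/N_2$, and then shrink the final finite-index normal subgroup to a $\Lambda$-invariant one. The paper takes a finite-index characteristic subgroup inside $M$, while you intersect the finite $\Lambda$-orbit of $L$; both rest on there being only finitely many normal subgroups of a given index. One small correction: in your Step 2, the approximation by conjugates, and the continuity of composition it relies on, should be in the weak topology on $\mathrm{Aut}(X,\mu)$, which is what MD provides, not the uniform topology.
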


\begin{proof}
    Let $a \in A(\Gamma_1 \Asterisk \Gamma_2, X, \mu)$. The proof of \cite[Theorem 4.8]{MD:tuckerdrob} approximates $a$ by finite actions, that factor through a finite quotient of the form $(\Gamma_1/N_1 \Asterisk \Gamma_2/N_2)/M$, where $N_i$ is a finite index normal subgroup of $\Gamma_i$, and $M$ is a finite index normal subgroup of $\Gamma_1/N_1 \Asterisk \Gamma_2/N_2$. Using that $\Gamma_i$ has MD relative to $\Lambda$, we may choose $N_i$ to be $\Lambda$-invariant, so that the kernel of $\Gamma_1 \Asterisk \Gamma_2 \to \Gamma_1 /N_1 \Asterisk \Gamma_2 / N_2$ is $\Lambda$-invariant. Now this is a finitely generated group, so we may replace $M$ with a finite index characteristic subgroup, which is in particular $\Lambda$-invariant. The kernel of $\Gamma_1 \Asterisk \Gamma_2 \to (\Gamma_1/N_1 \Asterisk \Gamma_2/N_2)/M$ is therefore $\Lambda$-invariant.
\end{proof}

This leads to the desired analog of Corollary \ref{fd am}, with the same proof.

\begin{corollary}
\label{MD amalgam}
Let $\Gamma_1$ and $\Gamma_2$ be MD groups, with a common amenable retract $\Lambda$.  Then $\Gamma_1\Asterisk_\Lambda \Gamma_2$ is MD. \qed
\end{corollary}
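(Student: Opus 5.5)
We follow the proof of Corollary \ref{fd am} verbatim, replacing each FD ingredient by its MD analogue from this appendix. Since $\Lambda$ is a retract of each $\Gamma_i$, write $\Gamma_i = N_i \rtimes \Lambda$, where $N_i$ is the kernel of the retraction $\Gamma_i\to\Lambda$. Comparing presentations, exactly as in the proof of Corollary \ref{fp r llp}, gives
\[\Gamma_1 \Asterisk_\Lambda \Gamma_2 \cong (N_1 \Asterisk N_2) \rtimes \Lambda,\]
with $\Lambda$ acting on the free product through the induced action.

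Before applying Theorem \ref{MD amenableext iff}, we must check that $\Lambda$ is amenable and residually finite; amenability holds by hypothesis. Since $\Gamma_i$ is MD, it is residually finite (MD is only defined for residually finite groups). Since $\Lambda$ is a subgroup of $\Gamma_i$, it is also residually finite; alternatively, $\Lambda$ is MD by Lemma \ref{MD virtually}. The group $\Gamma_i = N_i\rtimes\Lambda$ is MD, so the forward direction of Theorem \ref{MD amenableext iff} shows that $N_i$ is MD relative to $\Lambda$.

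By Theorem \ref{MD freeprod relative}, $N_1 \Asterisk N_2$ is then MD relative to the induced action of $\Lambda$. The reverse direction of Theorem \ref{MD amenableext iff} gives that $(N_1\Asterisk N_2)\rtimes\Lambda$ is MD, which is the claim.

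The main point needing care is that the appendix's standing convention assumes groups are finitely generated and residually finite. The kernels $N_i$ need not be finitely generated, but Theorem \ref{MD amenableext iff} and Theorem \ref{MD freeprod relative} are stated for arbitrary groups $\Gamma$. In the proof of Theorem \ref{MD freeprod relative}, finite generation is used only for $\Gamma_1/N_1\Asterisk\Gamma_2/N_2$, which is automatic. So we only need the relevant groups to be residually finite. This holds for subgroups of the MD groups $\Gamma_i$. For the amalgam itself, residual finiteness follows from the conclusion once the approximation by finite actions is established, or alternatively from $N_1\Asterisk N_2$ being residually finite and $\Lambda$ being residually finite, arguing as in the relative theory. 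Otherwise the argument is formal.
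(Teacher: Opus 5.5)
Your proof is correct and is the paper's argument: the paper proves Corollary \ref{MD amalgam} ``with the same proof'' as Corollary \ref{fd am}. That is, it uses $\Gamma_1\Asterisk_\Lambda\Gamma_2\cong(N_1\Asterisk N_2)\rtimes\Lambda$, the forward direction of Theorem \ref{MD amenableext iff}, Theorem \ref{MD freeprod relative}, and then the reverse direction of Theorem \ref{MD amenableext iff}; your check that $\Lambda$ is residually finite correctly supplies a hypothesis of that theorem. One caution on your last paragraph: residual finiteness of $N_1\Asterisk N_2$ and of $\Lambda$ alone does not give residual finiteness of $(N_1\Asterisk N_2)\rtimes\Lambda$ when the normal factor is infinitely generated (for example, $S_3\wr\Z$ is not residually finite), so drop that alternative and rely instead on the $\Lambda$-invariant finite quotients furnished by relative MD, or simply on the conclusion of Theorem \ref{MD amenableext iff}.
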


\subsection{New examples of MD groups}

We can finally prove Theorem \ref{intro thm md} from the introduction.

\begin{theorem}
\label{MD list}
    If $\Gamma$ is finitely generated and satisfies one of the following properties, then it has property (E)MD, and in particular it is stable in finite actions.
    \begin{enumerate}[(i)]
        \item\label{MD list manifold} $\Gamma = \pi_1(M)$, where $M$ is a connected manifold of dimension at most $3$;
        \item\label{MD list vfbc} $\Gamma$ is virtually free-by-cyclic;
        \item\label{MD list 1rel} $\Gamma$ is a one-relator group, and $\Gamma$ has either torsion, negative immersions, non-trivial center or a small cancellation relation;
        \item\label{MD list limit} $\Gamma$ is a limit group;
        \item\label{MD list raag} $\Gamma$ is a right-angled Artin group on a chordal graph.
    \end{enumerate}
\end{theorem}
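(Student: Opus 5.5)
The plan is to mirror the FD proofs of Section~\ref{s:ex fd} step by step, substituting the MD permanence results of this appendix for their FD counterparts. First I deal with the ``E'' and ``stability'' parts once and for all. All groups in the list are a-T-menable: they lie in Linnell's class $\mathcal{C}$ by Section~\ref{s:ex} and Remark~\ref{linnell:atmenable}. Infinite a-T-menable groups do not have property (T), so Remark~\ref{MD vs EMD} upgrades MD to EMD. Finite groups are trivially EMD. Stability in finite actions then follows from Remark~\ref{stability in finite actions}. It therefore suffices to prove MD.

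For \eqref{MD list vfbc}, I follow Proposition~\ref{vfbc fd}.
\begin{itemize}
\item By Linton's embedding, a finitely generated $F\rtimes\Z$ embeds in $F_0\rtimes\Z$ with $F_0$ finitely generated free.
\item $F_0$ is MD by Theorem~\ref{MD free amenable}. Being finitely generated, it is MD relative to $\Z$.
\item So $F_0\rtimes\Z$ is MD by Theorem~\ref{MD amenableext iff}.
\item Pass to subgroups and finite-index overgroups using Lemma~\ref{MD virtually}.
\end{itemize}
Case \eqref{MD list 1rel} then follows from Examples~\ref{or tor}, \ref{or ni}, \ref{or center} and~\ref{or sc}.

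For \eqref{MD list limit}, I repeat the proof of Proposition~\ref{limit fd} using Theorem~\ref{MD LS}. The free normal subgroup $\Lambda$ with $\Gamma/\Lambda$ torsion-free nilpotent is a directed union of finitely generated free subgroups. These are MD, and they are efficient because finitely generated subgroups of limit groups are separable; subgroups of such subgroups are again finitely generated, so separability gives efficiency. The quotient $\Gamma/\Lambda$ is finitely generated nilpotent, hence amenable and residually finite. If $\Gamma/\Lambda$ is finite, $\Gamma$ is virtually free and Lemma~\ref{MD virtually} applies directly.

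For \eqref{MD list raag}, I induct on the number of vertices exactly as in Proposition~\ref{raag fd}. Disconnected graphs give free products, handled by Theorem~\ref{MD freeprod}. Dirac decompositions give amalgams over free abelian retracts, handled by Corollary~\ref{MD amalgam}.

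For \eqref{MD list manifold}, I reduce to dimension three by taking products with a circle and using subgroup closure; this needs $\Gamma\times\Z$ to be MD, which follows from Theorem~\ref{MD amenableext iff} with the trivial action. Then I follow the proof of Theorem~\ref{manifold fd}.
\begin{itemize}
\item A finite-index subgroup is a free product of a free group and groups $\pi_1(M_i)$. Theorem~\ref{MD freeprod} and Lemma~\ref{MD virtually} reduce to the case where $M$ is compact, aspherical, with incompressible boundary.
\item If $M$ has nonempty boundary, $\pi_1(M)$ is virtually free-by-cyclic, which is case \eqref{MD list vfbc}.
\item Closed non-graph manifolds are virtually fibered with surface fiber. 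Surface groups are MD (a closed surface group is free-by-$\Z$ with finitely generated fiber after passing to an appropriate fibration, or alternatively a limit group in the orientable case), so these are handled by Theorem~\ref{MD amenableext iff} and Lemma~\ref{MD virtually}.
\item Seifert fibered manifolds: the proof of Lemma~\ref{graph fd easy} gives $Z\times F$ as the kernel of a map to $\Z$. $Z\times F$ is MD, and is a directed union of $Z\times F_0$ with $F_0$ finitely generated. These pieces are finitely generated, hence efficient by LERF, so Theorem~\ref{MD LS} applies.
\item Virtual torus bundles are amenable and residually finite, so Theorem~\ref{MD free amenable} applies.
\end{itemize}

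The main obstacle is closed graph manifolds. My plan is to reuse Lemma~\ref{graph fd final step} verbatim: $\Gamma=K\rtimes\langle t\rangle$, and $K$ is the directed union of conjugates of the groups $K_p$. Each $K_p$ is efficient, and it is the fundamental group of a graph manifold with nonempty boundary. That graph manifold is virtually free-by-cyclic, so $K_p$ is MD by case \eqref{MD list vfbc}. Since $\Gamma/K\cong\Z$ is infinite, amenable and residually finite, Theorem~\ref{MD LS} concludes. The delicate point is checking that the hypotheses of Theorem~\ref{MD LS} hold exactly in this setting. Normality of $K$ and infiniteness of the quotient hold here, unlike in the merely co-amenable setting of Theorem~\ref{LS:coamenable}. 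The other thing to confirm is that the efficiency notion of Definition~\ref{def:efficient} is the one used in Lemma~\ref{MD efficient}.
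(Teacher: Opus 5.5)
Your proposal is correct and follows the paper's proof essentially step for step. The one variation, upgrading MD to EMD via a-T-menability (membership in Linnell's class $\mathcal{C}$) rather than via the groups virtually surjecting onto $\Z$, is equally valid.

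Two slips should be fixed, though neither breaks the argument. First, MD of $\pi_1(M)\times\Z=\pi_1(M\times S^1)$ must come from the $3$-dimensional case, since deducing it from Theorem~\ref{MD amenableext iff} presupposes that $\pi_1(M)$ is MD. Second, a closed surface group of genus at least $2$ is never (finitely generated free)-by-$\Z$, because the kernel of any surjection to $\Z$ is infinitely generated; rely instead on your limit-group alternative, or on case~\eqref{MD list vfbc}, which allows infinitely generated free fibres.
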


We note that the result for limit groups was already known \cite[Theorem 4.4]{treeable}, and proved in the same way, using \cite[Proposition 4.3]{treeable} in place of our Theorem \ref{MD LS}.

\begin{proof}
We prove everything from MD, which directly implies stability in finite actions (Remark \ref{stability in finite actions}). EMD follows from Remark \ref{MD vs EMD}, since none of these groups have property (T) when they are infinite, as in that case they all virtually map onto $\Z$. This is by definition for virtually free-by-cyclic groups, while (infinite) one-relator groups, limit groups and RAAGs all have infinite abelianization. For (infinite) $3$-manifold fundamental groups, this follows from the arguments below.

Using Lemma \ref{MD virtually} and Theorems \ref{MD free amenable} and \ref{MD amenableext iff}, the same argument as Proposition \ref{vfbc fd} gives Item \eqref{MD list vfbc}. Item \eqref{MD list 1rel} is a special case. Item \eqref{MD list limit} follows from the same argument as Proposition \ref{limit fd}, using Theorems \ref{MD free amenable} and \ref{MD LS}. Item \eqref{MD list raag} follows from the same argument as Proposition \ref{raag fd}, using Theorems \ref{MD freeprod} and \ref{MD amalgam}.

Let now $M$ be a $3$-manifold such that $\pi_1(M)$ is finitely generated. As in the proof of Theorem \ref{manifold fd}, using Theorem \ref{MD freeprod}, we reduce to the case in wich $M$ is a compact, aspherical and has incompressible boundary. Since virtually free-by-cyclic groups are MD, \cite[Theorem 1.1]{kielak:linton:manifolds} covers the case of non-empty boundary. If $M$ is closed and not a graph manifold, then it is virtually fibered (see the references before Theorem \ref{nongraph fd}) and so it is MD: see \cite[Proposition 3.10]{ifsv}, although it follows also from Lemma \ref{MD virtually} and Theorem \ref{MD amenableext iff}.

So we may assume that $M$ is a closed graph manifold. The analog of Lemma \ref{graph fd easy} follows again from Lemma \ref{MD virtually} and Theorems \ref{MD free amenable}, \ref{MD LS} and \ref{MD amenableext iff}, as Seifert fibered manifolds have subgroup separable fundamental group and virtually map to $\Z$ with kernel a direct product of a free group and $\Z$; and virtual torus bundles follow more directly from the fact that amenable residually finite groups are MD (Theorem \ref{MD free amenable}). In the remaining cases, as in the proof of Proposition \ref{graph fd}, thanks to Lemma \ref{MD virtually} we pass to a finite index subgroup $\Gamma < \pi_1(M)$ such that $\Gamma = K \rtimes \Z$, where $K$ is a directed union of subgroups of $\Gamma$ that are fundamental groups of compact graph manifolds with non-empty boundary, hence MD, and efficient in $\Gamma$. Theorem \ref{MD LS} applies and we conclude.
\end{proof}

\footnotesize

\addcontentsline{toc}{section}{References}

\bibliography{Generalbib}

\vspace{0.5cm}

\normalsize

\noindent{\textsc{Department of Pure Mathematics and Mathematical Statistics, University of Cambridge, UK}}

\noindent{\textit{E-mail address:} \texttt{ff373@cam.ac.uk}}

\vspace{0.2cm}

\noindent{\textsc{Department of Mathematics,
University of {Hawai\kern.05em`\kern.05em\relax i} at M{\=a}noa}}

\noindent{\textit{E-mail address:} \texttt{rwillett@hawaii.edu}}

\end{document}